\DeclareMathAlphabet{\mathpzc}{OT1}{pzc}{m}{en}
\apptocmd{\lim}{\limits}{}{}
\apptocmd{\sup}{\limits}{}{}
\apptocmd{\inf}{\limits}{}{}
\apptocmd{\liminf}{\limits}{}{}
\apptocmd{\limsup}{\limits}{}{}
\newcommand{\dashint}{\,\ThisStyle{\ensurestackMath{%
			\stackinset{c}{.2\LMpt}{c}{.5\LMpt}{\SavedStyle-}{\SavedStyle\phantom{\int}}}%
		\setbox0=\hbox{$\SavedStyle\int\,$}\kern-\wd0}\int}
\DeclareMathOperator{\pr}{pr}
\DeclareMathOperator{\rk}{rk}
\DeclareMathOperator{\supp}{Supp}
\DeclareMathOperator{\Tr}{Tr}
\DeclareMathOperator{\Hol}{Hol}
\DeclareMathOperator{\Cl}{Cl}
\renewcommand{\Re}{\mathrm{Re}\,}
\renewcommand{\Im}{\mathrm{Im}\,}
\newcommand{\HB}{\mathrm{HB}}
\newcommand{\Supp}[1]{\supp\left( #1\right) }
\newcommand{\ee}{\mathrm{e}}
\newcommand{\Poisson}{\mathrm{P}}
\newcommand{\Aut}{\mathrm{Aut}}
\newcommand{\bDs}{{\mathrm{b}\Ds}}
\newcommand{\bD}{{\mathrm{b}D}}
\newcommand{\Ds}{\mathscr{D}}
\newcommand{\Ps}{\mathscr{P}}
\newcommand{\Cs}{\mathscr{C}}
\newcommand{\Ss}{\mathscr{S}}
\newcommand{\Ns}{\mathscr{N}}
\newcommand{\loc}{\mathrm{loc}}
\newcommand{\vect}[1]{\mathbf{{#1}}}
\newcommand{\dd}{\mathrm{d}}
\DeclarePairedDelimiter{\abs}{\lvert}{\rvert}
\DeclarePairedDelimiter{\norm}{\lVert}{\rVert}
\let\originalleft\left
\let\originalright\right
\renewcommand{\left}{\mathopen{}\mathclose\bgroup\originalleft}
\renewcommand{\right}{\aftergroup\egroup\originalright}
\newcommand{\N}{\mathds{N}}
\newcommand{\Db}{\mathds{D}}
\newcommand{\C}{\mathds{C}}
\newcommand{\R}{\mathds{R}}
\newcommand{\T}{\mathds{T}}
\newcommand{\Ff}{\mathfrak{F}}
\newcommand{\Uf}{\mathfrak{U}}
\newcommand{\Cc}{\mathcal{C}}
\newcommand{\Fc}{\mathcal{F}}
\newcommand{\Hc}{\mathcal{H}}
\newcommand{\Ic}{\mathcal{I}}
\newcommand{\Lc}{\mathcal{L}}
\renewcommand{\Mc}{\mathcal{M}}
\newcommand{\Pc}{\mathcal{P}}
\newcommand{\Sc}{\mathcal{S}}
\newcommand{\meg}{\leqslant}
\newcommand{\Meg}{\geqslant}
\newcommand{\eps}{\varepsilon}
\renewcommand{\phi}{\varphi}
\newcommand{\mi}{\mu}
\title{Positive Pluriharmonic Functions on Symmetric Siegel Domains}
\date{}
\begin{document}

\theoremstyle{definition}
\newtheorem{deff}{Definition}[section]

\newtheorem{oss}[deff]{Remark}

\newtheorem{ass}[deff]{Assumptions}

\newtheorem{nott}[deff]{Notation}

\newtheorem{ex}[deff]{Example}

\theoremstyle{plain}
\newtheorem{teo}[deff]{Theorem}

\newtheorem{lem}[deff]{Lemma}

\newtheorem{prop}[deff]{Proposition}

\newtheorem{cor}[deff]{Corollary}

\author[M. Calzi]{Mattia Calzi}

\address{Dipartimento di Matematica, Universit\`a degli Studi di
	Milano, Via C. Saldini 50, 20133 Milano, Italy}
\email{{\tt mattia.calzi@unimi.it}} 

\keywords{Symmetric Siegel domains, plurihamonic functions, Nevanlinna theorem, Clark measures.}
\thanks{{\em Math Subject Classification 2020}: 32M15, 31C10  
}
\thanks{The author is a member of the 	Gruppo Nazionale per l'Analisi
	Matematica, la Probabilit\`a e le	loro Applicazioni (GNAMPA) of
	the Istituto Nazionale di Alta Matematica (INdAM). The author  was partially funded by the INdAM-GNAMPA Project CUP\_E53C22001930001.
} 

\begin{abstract}
	Given a symmetric Siegel domain $\Ds$ and a positive plurihamonic function $f$ on $\Ds$, we study the largest positive Radon measure $\mi$ on the \v Silov boundary $\bDs$ of $\Ds$ whose Poisson integral $\Ps \mi$ is $\meg f$. If $\Ds$ has no \emph{tubular} irreducible factors of rank $\Meg 2$, we show that  $\Ps \mi$ is plurihamonic, and that $f-\Ps \mi$ is linear.
	As an application, we describe a possible analogue of the family of Clark measures associated with a holomorphic function from $\Ds$ into the unit disc in $\C$.
\end{abstract}
\maketitle

\section{Introduction}

Let $\Db$ be the unit disc in $\C$. The classical Riesz--Herglotz  representation theorem states that, if  $f\colon \Db\to \C$ is a holomorphic function with positive real part, then there is a unique positive Radon measure $\mi$ on the torus $\T=\partial \Db$ such that
\[
f(z)= i \Im f(0)+ \int_{\T} \frac{\alpha+z}{\alpha-z}\,\dd \mi(\alpha),
\]
for every $z\in \Db$. Equivalently, 
\[
\Re f(z)= \int_\T \frac{1-\abs{z}^2}{\abs{\alpha-z}^2}\,\dd \mi(\alpha)
\]
for every $z\in \Db$.  This result may then be transferred to the upper half plane $\C_+=\R+i \R_+^*$ by means of the Cayley transform, so that one may get the classical Nevanlinna's representation theorem: if $f\colon \C_+\to \C$ is a holomorphic function with positive \emph{imaginary} part, then there are a unique positive `Poisson summable' Radon measure $\mi$ on $\R$ and a unique $\lambda\Meg 0$ such that
\[
f(z)=\Re f(i) + \lambda z + \frac{1}{\pi}\int_\R \left( \frac{1}{x-z}-\frac{x}{1+x^2}\right) \,\dd \mi(x)
\]
for every $z\in \C_+$. Equivalently, 
\[
\Im f(z)=  \lambda \Im z + \frac{1}{\pi}\int_\R\frac{\Im z}{\abs{z-x}^2} \,\dd \mi(x).
\]
While it is relatively simple to extend to former result to more general bounded domains (e.g., bounded symmetric domains, as essentially noted in~\cite{KoranyiPukanszki}), the latter is more delicate and requires considerably more care. As a matter of fact, to the best of our knowledge, it has been so far extended only to products of upper half-planes, and with considerable effort (cf.~\cite{LugerNedic1,LugerNedic2}; see, also,~\cite{AMY, ATDY} for a different perspective). 
This asymmetry may at first appear peculiar, expecially since the proof of Nevanlinna's representation  theorem essentially amounts to carefully transfer the Riesz--Herglotz representation theorem from the disc to the upper half-plane by means of the Cayley transform. We shall return to this point later.

Concerning the extensions of the previous results, observe that $\Db$ may be considered as the simplest example of a bounded symmetric domain. Then, let $D$ be a circular (hence convex) bounded symmetric domain in $\C^n$, and let $f\colon D\to \R$ be a \emph{positive plurihamonic} function on $D$ (so that $f$ is the real part of a holomorphic function on $D$). Let $\bD$ be the \v Silov boundary of $D$, and denote by $\Pc\colon D\times \bD\to \R$ the Poisson kernel on $D$. Then, there is a unique positive  measure $\mi$ on $D$ such that
\[
f(z)= (\Pc \mi)(z)\coloneqq \int_{\bD} \Pc(z,\zeta)\,\dd \mi(\zeta)
\]
for every $z\in D$. Since the Poisson integral $\Pc \mi$ of $\mi$ is a pluriharmonic function, we shall also say that $\mi$ is a pluriharmonic measure. 
As noted, e.g., in~\cite{Aleksandrov2,AleksandrovDoubtsov,Calzi}, pluriharmonic measures on $\bD$ enjoy rather peculiar properties; for example, every plurihamonic measure on $\bD$ is necessarily absolutely continuous with respect to the Hausdorff measure $\Hc^{m-1}$, where $m$ denotes the dimension of (the real algebraic set) $\bD$ (the Hausdorff measure being relative to the Euclidean distance on $\C^n$). 

Now,   $D$ is biholomorphic to a symmetric Siegel domain $\Ds$, and this correspondence generalizes the one between $\Db$ and $\C_+$; it is also possible to describe a birational biholomorphism $\gamma\colon D\to \Ds$ (the generalized Cayley transform) in a reasonable way. Notice, on the one hand, that $\gamma^{-1}$ canonically extends to a (rational, everywhere defined) mapping $\bDs\to \bD$. On the other hand, $N\coloneqq \bD\setminus \gamma^{-1}(b  \Ds)$ is \emph{never} empty. 
If $D$ has no irreducible factors of tube type, then one may prove that $N$ is a (real) algebraic set of dimension $\meg m-2$, so that no pluriharmonic measure can have mass on $N$. 
Consequently, \emph{in this case}, one may transfer without issues the generalized Riesz--Herglotz  representation theorem and show that for every positive pluriharmonic function $f\colon \Ds\to \R$ there is a unique positive `Poisson-summable' Radon measure $\mi$ on $\bDs$ such that
\begin{equation}\label{eq:4}
f(z)= (\Ps \mi)(z)= \int_{\bDs} \Ps(z,\zeta)\,\dd \mi(\zeta),
\end{equation}
for every $z\in \Ds$,
where $\Ps\colon \Ds\times \bDs\to \R$ denotes the Poisson kernel on $\Ds$. If, otherwise, $D$ has some irreducible factors of tube type, then (in general) one has to add a (positive) remainder $g$ to~\eqref{eq:4}. Unfortunately, it may happen that neither $\Ps \mi$ nor  $g$ is pluriharmonic, so that this representation appears to be quite unsatisfactory. In fact, it may happen that $\Ps \mi$ is \emph{not} pluriharmonic even when $\mi$ is induced by the restriction of a function which is pluriharmonic on a neighbourhood  of the closure of $\Ds$, as counterintuitive as it may seem (cf.~Example~\ref{ex:1bis}).
On the positive side, $\mi$ may still be characterized as the largest positive `Poisson-summable' Radon measure on $\bDs$ such that $\Ps \mi \meg f$, and still enjoys a few of the properties of plurihamonic measures. For example, it admits some natural disintegrations (cf.~Proposition~\ref{prop:25}), and is absolutely continuous with respect to $\Hc^{m-1}$, with the above notation. 

As we noted above, it was   proved in~\cite{LugerNedic1,LugerNedic2}  that, if $\Ds=\C_+^k$ for some $k\in\N$, then $\Ps\mi$ is pluriharmonic, and there are $\lambda_1,\dots, \lambda_k\Meg 0$ such that  $g(z_1,\dots, z_k)=\sum_{j=1}^k \lambda_j \Im z_j$ for every $(z_1,\dots, z_k)\in \C_+^k$. With a similar (but somewhat simplified) proof, one may actually show that the same happens if $\Ds=\Ds_0\times \C_+^k$, where $\Ds_0$ is a product of irreducible symmetric Siegel domains which are \emph{not} tubular (in this case, the remainder $g$ only depends on the last $k$ variables, cf.~Theorem~\ref{teo:2}). Since  examples where $\Ps \mi$ is \emph{not} plurihamonic arise already in every irreducible symmetric tube domain of rank $2$ (cf.~Example~\ref{ex:1}), it seems   that the previous characterization is the best one may hope to prove. On the positive side, if $\Ps \mi$ is pluriharmonic, then the remainder term $g$ is necessarily linear (and vanishes on $\Set{0}\times i \Phi(E)$) as in the case in which $\Ds$ has no irreducible tubular factor of rank $\Meg 2$ (cf.~Corollary~\ref{cor:10}). In particular, $\mi=0$ if and only if $f$ is linear and vanishes on $\Set{0}\times i \Phi(E)$ (cf.~Corollary~\ref{cor:11}).

As an application, we propose a possible notion of Clark measures associated with a holomorphic function $f\colon \Ds\to \Db$. As a matter of fact, the preceding analysis suggests that one should define the Clark measure $\mi_\alpha[\phi]$ as the largest positive Radon measure on $\bDs$ such that $\Ps \mi_\alpha[\phi]\meg \frac{1-\abs{\phi}^2}{\abs{\alpha-\phi}^2}$, for every $\alpha\in \T$. Then, the basic results one may prove for Clark measures on $D$ transfer without particular difficulty, and one may even show that $\Ps\mi_\alpha[\phi]$ is actually pluriharmonic (and, furthermore, equal to $\frac{1-\abs{\phi}^2}{\abs{\alpha-\phi}^2}$) for almost every $\alpha\in \T$.

\smallskip

Here is an outline of the paper. In Section~\ref{sec:2}, we recall some basic facts on pluriharmonic measures on the \v Silov boundary $\bD$ of the \emph{bounded} symmetric domain $D$.
In Section~\ref{sec:3}, we recall the formalism of Jordan triple systems, since it will be necessary to perform the computations of Sections~\ref{sec:5} and~\ref{sec:6}. 
In Section~\ref{sec:4}, we recall a notion of restricted limits introduced in~\cite{MackeyMellon}; even though we shall not make use of the results of~\cite{MackeyMellon} on (restricted) angular limits and derivatives, we shall need some technical estimates which follow from the construction of a family of canonical `projective' (or `projection') devices, which may be of independented interest.
Sections~\ref{sec:5} and~\ref{sec:6} constitute the core of the paper. Here we discuss the notion of pluriharmonic measures on $\bDs$ and prove the representations theorem for positive pluriharmonic functions announced above.
In Section~\ref{sec:7}, we discuss briefly the notion of Clark measures on $\bDs$ introduced earlier, and show how some basic results may be transferred from the corresponding results for Clark measures on $\bD$.
Finally, in the Appendix~\ref{sec:app} we recall some basic facts on the disintegration of Radon measures, for the reader's convenience.

\section{The Bounded Case}\label{sec:2}

In this section we recall some basic results on positive pluriharmonic measures and functions on  a bounded symmtric domain and its \v Silov boundary, respectively. Cf.~\cite{Calzi} for the proofs.

Throughout the section,  $D$ will denote a bounded, circular (hence convex) symmetric domain in a finite-dimensional complex vector space $Z$;\footnote{In other words, for every $z\in D$ there is an involutive biholomorphism of $D$ with $z$ as an isolated fixed point.} we shall denote by   $\bD$ the \v Silov boundary of $D$, that is, the smallest closed subset of $\partial D$ such that $\sup_{z\in D} \abs{f(z)}\meg \sup_{\zeta\in \bD} \abs{f(\zeta)}$ for every (bounded) holomorphic function on $D$ which extends by continuity to the closure $\Cl(D)$ of $D$. We denote by $K$ the group of linear automorphisms of $D$, so that $K$ acts transitively on $\bD$. 
We denote by $\beta_{\bD}$ the unique $K$-invariant probability measure  on $\bD$.

We denote by $\Db$ the unit disc in $\C$, and by $\T$ its boundary, that is, the torus. We denote by $\beta_\T$ the normalized Haar measure on $\T$.
	
In addition, we denote by $\widehat{\bD}$ the quotient of $\bD$ by the canonical action of the torus $\T$ (that is, $(\alpha,\zeta)\mapsto \alpha \zeta$),   by $\pi\colon \bD\to \widehat{\bD}$ the canonical projection, and by $\widehat\beta_{\bD}\coloneqq \pi_*(\beta_{\bD})$ the image of $\beta_{\bD}$ under $\pi$.

	We denote by $\Pc_\R$ the space of polynomial mappings from the real space underlying $Z$  into $\C$. In addition, we denote by $\Pc_\C$ and $\overline{\Pc_\C}$ the spaces of holomorphic and anti-holomorphic polynomials on $Z$, respectively.

\begin{deff}
	We denote by $\Cc$ the Cauchy--Szeg\H o kernel of $D$, that is, the reproducing kernel of the Hardy space\footnote{In other words, $\Cc(\,\cdot\,,z)\in H^2(D)$  and $f(z)=\langle f\vert\Cc(\,\cdot\,,z)\rangle_{H^2(D)}$ for every $f\in H^2(D)$ and for every $z\in D$.}
	\[
	H^2(D)=\Set{f\in \Hol(D)\colon \sup_{0\meg \rho<1} \int_{\bD} \abs{f(\rho \zeta)}^2\,\dd \beta_{\bD}(\zeta)<\infty },
	\]
	and define
	\[
	\Pc\colon D\times \bD\ni (z,\zeta)\mapsto \frac{\abs{\Cc(z,\zeta)}^2}{\Cc(z,z)}\in \R_+,
	\]
	the Poisson--Szeg\H o kernel. Observe that $\Cc(0,\zeta)=1$ for every $\zeta\in \bD$.
	Define, for every (Radon) measure $\mi$ on $\bD$,
	\[
	\Cc(\mi)\colon D\ni z\mapsto \int_{\bD} \Cc(z,\zeta)\,\dd \mi(\zeta)\in \C,
	\]
	\[
	\Pc(\mi)\colon D\ni z\mapsto \int_{\bD} \Pc(z,\zeta)\,\dd \mi(\zeta)\in \C
	\]
	and
	\[
	\Hc(\mi)\colon D\ni z\mapsto \int_{\bD} (2\Cc(z,\zeta)-1)\,\dd \mi(\zeta)\in \C.
	\]
	Finally, for every function $f\colon D\to \C$ and for every $\rho\in [0,1)$, we set $f_\rho\colon \bD \ni \zeta \mapsto f(\rho\zeta)\in \C$.
\end{deff}

\begin{prop}\label{prop:1}
	The following hold:
	\begin{enumerate}
		\item[\textnormal{(1)}]  $\Cc$ extends to a sesqui-holomorphic function on a neighbourhood of $D\times \Cl(D)$;  
		
		\item[\textnormal{(2)}]  $\Pc$ is continuous and  nowhere vanishing on $D\times \bD$;
		
		\item[\textnormal{(3)}]  $\Pc(z,\,\cdot\,)\cdot \beta_{\bD}$ is a probability measure on $\bD$ for every $z\in D$;
				
		\item[\textnormal{(4)}]  for every Radon measure $\mi$ on $\bD$,
		\[
		\lim_{\rho\to 1^-} (\Pc \mi)_\rho \cdot \beta_{\bD}=\mi
		\]
		in the vague topology, and $\lim_{\rho \to 1^-} \norm{(\Pc \mi)_\rho}_{L^1(\beta_{\bD})}=\norm{\mi}_{\Mc^1(\bD)}$.
	\end{enumerate}
\end{prop}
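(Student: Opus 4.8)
The plan is to deduce everything from the explicit form of the Cauchy--Szeg\H o kernel of a bounded symmetric domain, followed by routine Hardy-space bookkeeping. I realise $D$ as the unit ball, for the spectral norm, of its associated positive hermitian Jordan triple system $Z$, and let $N\colon Z\times Z\to\C$ be the generic norm: a polynomial, holomorphic in the first variable, anti-holomorphic in the second, hermitian ($N(z,w)=\overline{N(w,z)}$), with $N(0,0)=1$, $N(z,z)\in(0,1]$ for $z\in D$, and satisfying $N(\rho z,w)=N(z,\rho w)$ for $\rho\in\R$; I also recall the non-vanishing property $N(z,w)\neq 0$ whenever $z\in D$ and $w\in\Cl(D)$. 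Then $\Cc(z,w)=N(z,w)^{-n/r}$, where $n=\dim_\C Z$, $r=\rk(D)$, for the branch of the power normalised by $\Cc(0,0)=1$ (this value being forced since $\Cc(\,\cdot\,,0)$ is the constant $1$, as the mean value property for holomorphic functions on the circular domain $D$ shows). For \textnormal{(1)}: since $N$ is a polynomial not vanishing on $D\times\Cl(D)$, that set lies in the open set $\Set{N\neq 0}$; and as $D$ is convex, hence simply connected, with $N(0,\,\cdot\,)\equiv 1$, for each $w\in\Cl(D)$ the function $z\mapsto\log N(z,w)$ has a well-defined branch on $D$ vanishing at $0$, and these fit together into a sesqui-holomorphic function on a neighbourhood of $D\times\Cl(D)$; the corresponding branch of $N^{-n/r}$, times the right constant, extends $\Cc$, and $\Cc(0,\zeta)=N(0,\zeta)^{-n/r}=1$. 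For \textnormal{(2)}: continuity of $\Pc$ on $D\times\bD$ follows from \textnormal{(1)} together with $\Cc(z,z)=N(z,z)^{-n/r}>0$ for $z\in D$, and $\Pc$ never vanishes because $\Cc(z,\zeta)\neq0$ for $z\in D$, $\zeta\in\bD\subseteq\Cl(D)$.

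For \textnormal{(3)}: by \textnormal{(1)} (and hermiticity), for each $z\in D$ the function $\Cc(\,\cdot\,,z)$ extends holomorphically past $\Cl(D)$, so $\norm{\Cc(\,\cdot\,,z)}_{H^2(D)}^2=\int_{\bD}\abs{\Cc(\zeta,z)}^2\,\dd\beta_{\bD}(\zeta)$; on the other hand the reproducing property gives $\norm{\Cc(\,\cdot\,,z)}_{H^2(D)}^2=\Cc(z,z)$. Dividing, and using $\abs{\Cc(\zeta,z)}=\abs{\Cc(z,\zeta)}$, yields $\int_{\bD}\Pc(z,\zeta)\,\dd\beta_{\bD}(\zeta)=1$, and positivity is \textnormal{(2)}.

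For \textnormal{(4)}: I would first record the symmetry $\Pc(\rho\zeta,\eta)=\Pc(\rho\eta,\zeta)$ for $\zeta,\eta\in\bD$ and $\rho\in[0,1)$. Indeed the elements of $K$ are linear, fix $0$ and preserve $\beta_{\bD}$, so $\Cc$ is $K$-invariant and, $K$ being transitive on $\bD$, the quantity $\Cc(\rho\zeta,\rho\zeta)$ does not depend on $\zeta\in\bD$; moreover $N(\rho\zeta,\eta)=N(\zeta,\rho\eta)=\overline{N(\rho\eta,\zeta)}$ by the homogeneity relation and hermiticity, so $\abs{\Cc(\rho\zeta,\eta)}=\abs{\Cc(\rho\eta,\zeta)}$ and the claim follows. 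In particular $\int_{\bD}\Pc(\rho\zeta,\eta)\,\dd\beta_{\bD}(\zeta)=\int_{\bD}\Pc(\rho\eta,\zeta)\,\dd\beta_{\bD}(\zeta)=1$ by \textnormal{(3)}, so Tonelli's theorem gives $\norm{(\Pc\mi)_\rho}_{L^1(\beta_{\bD})}=\norm{\mi}_{\Mc^1(\bD)}$ for every $\rho$ when $\mi\Meg 0$; for a general $\mi$ this yields $\norm{(\Pc\mi)_\rho}_{L^1(\beta_{\bD})}\meg\norm{(\Pc\abs{\mi})_\rho}_{L^1(\beta_{\bD})}=\norm{\mi}_{\Mc^1(\bD)}$, and the reverse inequality for $\liminf_{\rho\to1^-}$ drops out of the vague convergence together with the lower semicontinuity of the total variation. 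For the vague convergence, fix $g\in C(\bD)$; since $\Pc$ is bounded on $(\rho\bD)\times\bD$ for each $\rho<1$, Fubini's theorem and the symmetry give
\[
\int_{\bD}g(\zeta)(\Pc\mi)(\rho\zeta)\,\dd\beta_{\bD}(\zeta)=\int_{\bD}\Big(\int_{\bD}g(\zeta)\Pc(\rho\eta,\zeta)\,\dd\beta_{\bD}(\zeta)\Big)\dd\mi(\eta)=\int_{\bD}(\Pc g)(\rho\eta)\,\dd\mi(\eta).
\]
Now one invokes the classical solvability of the Dirichlet problem for the Poisson--Szeg\H o kernel on a bounded symmetric domain: $\Pc g$ extends continuously to $\Cl(D)$ with boundary values $g$ on $\bD$. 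Hence $(\Pc g)(\rho\eta)\to g(\eta)$ uniformly in $\eta\in\bD$ as $\rho\to1^-$, and, $\mi$ being finite, the last integral converges to $\int_{\bD}g\,\dd\mi$.

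\textbf{Main obstacle.} The only genuinely non-formal ingredients are the non-vanishing of the generic norm on $D\times\Cl(D)$ --- needed already for \textnormal{(1)} and \textnormal{(2)} --- and, above all, the fact used at the end of \textnormal{(4)} that $\Pc g$ attains the boundary values $g$ continuously: this is exactly the delicate boundary analysis of the Poisson--Szeg\H o kernel on $\bD$ (note that, unlike in the disc, for domains of rank $\Meg2$ the kernel $\Pc(\,\cdot\,,\eta)$ does \emph{not} concentrate merely at $\eta$, so no naive approximate-identity argument suffices). Both are classical; see \cite{Calzi} and the references therein for complete proofs.
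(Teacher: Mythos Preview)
Your argument is correct and is essentially the standard one; the paper itself gives no proof here at all, simply deferring to \cite{Calzi}, so there is nothing to compare approaches against. Two minor remarks: the formula $\Cc(z,w)=N(z,w)^{-n/r}$ is literally valid only when $D$ is irreducible (in general $\Cc$ is a product of such factors, one per simple component), though every property you use survives this; and in \textnormal{(4)} you only need that $\Pc g$ extends continuously to $D\cup\bD$ with boundary values $g$ on $\bD$ --- claiming a continuous extension to all of $\Cl(D)$ is stronger than necessary and in rank $\Meg 2$ not obvious, but the weaker statement (which is the classical Dirichlet result for the Poisson--Szeg\H o kernel) already yields the uniform radial convergence you invoke, by compactness of $\bD$.
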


Recall that a function $f\colon D\to \R$ is said to be pluriharmonic if its restriction to every complex line is harmonic. Equivalently, $f$ is pluriharmonic if and only if it is the real part of a holomorphic function (cf., e.g.,~\cite[Proposition 2.2.3]{Krantz}).

\begin{prop}\label{prop:4}
	Let $\mi$ be a real measure on $\bD$. Then, the following conditions are equivalent:
	\begin{enumerate}
		\item[\textnormal{(1)}] $\Pc(\mi)$ is pluriharmonic;
		
		\item[\textnormal{(2)}]  $\Pc(\mi)=\Re \Hc(\mi)$;
		
		\item[\textnormal{(3)}]  $\mi$ is in the vague closure of $\Re \Pc_\C \cdot \beta_{\bD}$;
		
		\item[\textnormal{(4)}]  $\int_{ \bD} P\,\dd \mi=0$ for every $P\in \Pc_\R\cap (\Pc_\C\cup\overline{\Pc_\C})^\perp$, that is, for every $P\in \Pc_\R$ such that $\langle P\vert Q \rangle_{L^2(\beta_{\bD})}=0$ for every $Q\in \Pc_\C\cup \overline{\Pc_\C}$.
	\end{enumerate}
\end{prop}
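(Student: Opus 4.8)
The plan is to prove the equivalence of the four conditions by establishing a cycle of implications, exploiting the interplay between the Cauchy--Szeg\H o expansion and the harmonic expansion on $\bD$. First I would recall the Peter--Weyl/Hua-type decomposition: $L^2(\beta_{\bD})$ splits as an orthogonal Hilbert sum of $K$-irreducible subspaces, and the holomorphic polynomials $\Pc_\C$ (restricted to $\bD$) are exactly the span of those summands appearing in the Taylor expansion of $\Cc(\,\cdot\,,\zeta)$; moreover $\Cc(z,\zeta)=\sum_j c_j(z,\zeta)$ where each $c_j(z,\,\cdot\,)$ lies in the conjugate of one such summand. This gives that, for a measure $\mi$, the function $\Hc(\mi)=2\Cc(\mi)-\|\mi\|_{\text{coeff}}$ has a holomorphic (anti-... wait, holomorphic) expansion whose real part is computed against the $\Pc_\C+\overline{\Pc_\C}$ part of $\mi$, whereas $\Pc(\mi)$, being built from $\abs{\Cc}^2/\Cc(z,z)$, tests $\mi$ against \emph{all} of $\Pc_\R$ via the (known) expansion $\Pc(z,\zeta)=\sum_P P(z,\zeta)/\|P\|^2$ running over an orthonormal basis of $\Pc_\R$ (this is the Poisson--Szeg\H o kernel expansion; I would cite \cite{Calzi} or reconstruct it from Proposition~\ref{prop:1}).

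Next, the implication $(4)\Rightarrow(2)$: using the expansion of $\Pc$ just mentioned, $(\Pc\mi)_\rho$ expands in the basis of $\Pc_\R$ with coefficients $\rho^{\deg}\int P\,\dd\mi$; condition $(4)$ kills every term outside $\Pc_\C\cup\overline{\Pc_\C}$, and on the holomorphic/antiholomorphic part the Poisson and Cauchy kernels agree in the precise sense that $\Pc$ restricted there reproduces the same coefficients as $\Re(2\Cc-1)$. Hence $(\Pc\mi)_\rho=(\Re\Hc(\mi))_\rho$ for all $\rho$, giving $(2)$. The implication $(2)\Rightarrow(1)$ is immediate since $\Hc(\mi)=2\Cc(\mi)-\text{const}$ is holomorphic, so its real part is pluriharmonic. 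For $(1)\Rightarrow(3)$: if $\Pc(\mi)$ is pluriharmonic it is the real part of some $F\in\Hol(D)$; expanding $F$ in homogeneous holomorphic polynomials and using Proposition~\ref{prop:1}(4) (vague convergence of $(\Pc\mi)_\rho\cdot\beta_{\bD}\to\mi$ together with the $L^1$-norm convergence), one identifies $\mi$ as the vague limit of $(\Re F)_\rho\cdot\beta_{\bD}$, and each $(\Re F)_\rho$ is a real part of a holomorphic polynomial (after truncating the series, which converges locally uniformly on $D$), so $\mi$ lies in the vague closure of $\Re\Pc_\C\cdot\beta_{\bD}$. Finally $(3)\Rightarrow(4)$: if $\mi=\lim_n \Re F_n\cdot\beta_{\bD}$ vaguely with $F_n\in\Pc_\C$, then for $P\in\Pc_\R\cap(\Pc_\C\cup\overline{\Pc_\C})^\perp$ we have $\int P\,\dd(\Re F_n\cdot\beta_{\bD})=\tfrac12\langle P\vert F_n\rangle+\tfrac12\langle P\vert\overline{F_n}\rangle=0$ by orthogonality, and passing to the vague limit (legitimate since $P$ is continuous on the compact $\bD$) gives $\int P\,\dd\mi=0$.

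The main obstacle I anticipate is the precise matching in $(4)\Rightarrow(2)$ between the Poisson--Szeg\H o kernel and $2\Cc-1$ on the holomorphic and antiholomorphic components. One must show that, for $P$ a homogeneous holomorphic polynomial on $\bD$, testing against $\Pc(z,\,\cdot\,)$ yields exactly the coefficient one gets by testing against $\Cc(z,\,\cdot\,)$ (and similarly with conjugates, contributing the complex-conjugate term), with the constant terms reconciling the ``$-1$''. This is essentially the statement that $\Pc$ and $\Cc$ have the same ``holomorphic diagonal'' --- a consequence of $\Pc(z,\zeta)=\abs{\Cc(z,\zeta)}^2/\Cc(z,z)$ together with the fact that, for fixed $z$, the function $\zeta\mapsto\Cc(z,\zeta)/\sqrt{\Cc(z,z)}$ has $L^2(\beta_{\bD})$-norm one and its expansion in the $K$-types is the ``square root'' of that of $\Pc(z,\,\cdot\,)$ in the appropriate sense. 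I would extract this from the explicit Hua expansion of $\Cc$ in terms of the spherical polynomials, or simply cite the relevant computation in \cite{Calzi}; the remaining steps are soft functional-analytic manipulations with vague limits and the density/continuity facts in Proposition~\ref{prop:1}.
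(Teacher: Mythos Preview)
The paper does not actually prove this proposition: Section~\ref{sec:2} opens with ``Cf.~\cite{Calzi} for the proofs,'' and Proposition~\ref{prop:4} is merely recalled from that reference. There is therefore no in-paper argument to compare your proposal against.

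That said, your outline is a reasonable sketch of the standard cycle $(4)\Rightarrow(2)\Rightarrow(1)\Rightarrow(3)\Rightarrow(4)$, and you correctly isolate the only genuinely nontrivial step, namely $(4)\Rightarrow(2)$. The implications $(2)\Rightarrow(1)$, $(1)\Rightarrow(3)$ (via Proposition~\ref{prop:1}(4) and uniform approximation of $F_\rho$ by Taylor polynomials), and $(3)\Rightarrow(4)$ (by orthogonality and compactness of $\bD$) are soft and go through as you indicate. For $(4)\Rightarrow(2)$, however, your heuristic that ``$\Pc(z,\zeta)=\sum_P P(z,\zeta)/\|P\|^2$ over an orthonormal basis of $\Pc_\R$'' is not quite the right formulation: the Poisson--Szeg\H o kernel is not in general the reproducing kernel for all of $L^2(\beta_{\bD})$ in this naive sense, and the precise matching of the holomorphic and antiholomorphic $K$-types between $\Pc$ and $\Re(2\Cc-1)$ requires the Hua--Schmid decomposition and the explicit spherical-polynomial expansion of $\Cc$. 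You acknowledge this and propose to cite \cite{Calzi}, which is exactly what the paper itself does; so in effect your proposal and the paper agree that the substance lives in that reference.
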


\begin{deff}
	We say that a complex measure $\mi$ on $\bD$ is pluriharmonic  $\Re \mi$ and $\Im \mi$  satisfy the equivalent conditions of Proposition~\ref{prop:4}.
\end{deff}

\begin{prop}\label{prop:5}
	The mapping $\mi \mapsto \Pc(\mi)$  induces a bijection of the space of positive pluriharmonic measures on $\bD$ onto the space of positive pluriharmonic functions on $D$.
	
	Equivalently, the mapping $\mi\mapsto \Hc(\mi)$ induces a bijection of the space of positive pluriharmonic measures on $\bD$ onto the space of holomorphic functions on $D$ with positive real part and which are real at $0$.
\end{prop}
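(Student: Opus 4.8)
The plan is to prove the two claims of Proposition~\ref{prop:5} together by combining the integral-representation machinery of Proposition~\ref{prop:1} with the characterization of pluriharmonic measures in Proposition~\ref{prop:4}. First observe that by Proposition~\ref{prop:4}, for a pluriharmonic measure $\mi$ one has $\Pc(\mi)=\Re\Hc(\mi)$, and by (1) of Proposition~\ref{prop:1} the function $\Hc(\mi)$ is holomorphic on $D$ (it is an integral of the sesqui-holomorphic kernel $2\Cc(\,\cdot\,,\zeta)-1$ against $\mi$, hence holomorphic in the first variable). Since $\Cc(0,\zeta)=1$ for all $\zeta$, we get $\Hc(\mi)(0)=\mi(\bD)\in\R$, so $\Hc(\mi)$ is real at $0$; and if $\mi$ is positive then $\Pc(\mi)=\Re\Hc(\mi)\Meg 0$ by (2) of Proposition~\ref{prop:1}. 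This shows the maps $\mi\mapsto\Pc(\mi)$ and $\mi\mapsto\Hc(\mi)$ are well-defined into the stated target spaces, and that they are compatible via $\Pc(\mi)=\Re\Hc(\mi)$; since a holomorphic function is determined by its real part up to an imaginary additive constant, and the constant is pinned down by reality at $0$, the two formulations of the proposition are equivalent and it suffices to treat one of them.

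Next, injectivity. If $\mi$ is a (real, say) pluriharmonic measure with $\Pc(\mi)=0$, then by (4) of Proposition~\ref{prop:1} we recover $\mi$ as the vague limit of $(\Pc\mi)_\rho\cdot\beta_{\bD}=0$, so $\mi=0$; hence $\mi\mapsto\Pc(\mi)$ is injective on real (a fortiori on positive) pluriharmonic measures. For surjectivity, let $f\colon D\to\R$ be a positive pluriharmonic function. The standard argument is: the family $\{f_\rho\cdot\beta_{\bD}\}_{\rho<1}$ is a bounded family of positive measures on the compact set $\bD$ — boundedness because $\int_{\bD}f_\rho\,\dd\beta_{\bD}=f(0)$ by the mean value property of the harmonic (indeed pluriharmonic) function $f$ together with $K$-invariance of $\beta_{\bD}$ — so by weak-$*$ compactness some subnet converges vaguely to a positive measure $\mi$ with $\mi(\bD)\meg f(0)$. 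One then checks $\Pc(\mi)=f$: writing $f=\Re F$ for a holomorphic $F$ on $D$ and using the reproducing property, $f(rz)=\Re\int_{\bD}\Cc(z,\zeta)\,\ldots$ leads, after passing to the limit and using continuity of $\Cc$ on a neighborhood of $D\times\Cl(D)$ from (1) of Proposition~\ref{prop:1}, to $\Pc(\mi)=f$ on $D$; and since $\Pc(\mi)$ equals its own Poisson integral, one recomputes $(\Pc\mi)_\rho\cdot\beta_{\bD}\to\mi$ vaguely, forcing $f_\rho\cdot\beta_{\bD}\to\mi$ along the full net, so the limit $\mi$ is in fact unique. Finally, $\mi$ is pluriharmonic: each $f_\rho$ is the restriction to $\bD$ of the pluriharmonic function $z\mapsto f(\rho z)$, whose boundary measure lies in the vague closure of $\Re\Pc_\C\cdot\beta_{\bD}$ (it is a finite linear combination of such, via the Taylor expansion of $F$), so by criterion (3) of Proposition~\ref{prop:4} each $f_\rho\cdot\beta_{\bD}$ is pluriharmonic and the vague closure of the pluriharmonic measures is closed; hence $\mi$ is pluriharmonic.

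The main obstacle is the surjectivity-plus-identification step, i.e.\ showing that the vague limit $\mi$ genuinely satisfies $\Pc(\mi)=f$ rather than $\Pc(\mi)\meg f$ with a possible singular defect. The clean way around this is to work with $\Hc$ instead: from $f=\Re F$ write $F(\rho z)=\langle F_\rho\mid\Cc(\,\cdot\,,z)\rangle$ — more precisely use that $2\,\Re\Cc(z,\zeta)-1$ reproduces, against $\beta_{\bD}$, the real parts of holomorphic functions normalized at $0$ — to get $F(\rho z)-i\Im F(0)=\int_{\bD}(2\Cc(z,\zeta)-1)f(\rho\zeta)\,\dd\beta_{\bD}(\zeta)=\Hc(f_\rho\cdot\beta_{\bD})(z)$; letting $\rho\to1$ along the convergent subnet and invoking the uniform sesqui-holomorphic bounds on $\Cc$ near $D\times\Cl(D)$ from Proposition~\ref{prop:1}(1) gives $F-i\Im F(0)=\Hc(\mi)$ on all of $D$, hence $f=\Re\Hc(\mi)=\Pc(\mi)$ by Proposition~\ref{prop:4}(2) since $\mi$ is pluriharmonic. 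This simultaneously yields surjectivity for both the $\Pc$- and the $\Hc$-formulations, the uniqueness of the representing measure follows from the injectivity already proved, and the proof is complete.
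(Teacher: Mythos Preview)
The paper does not actually prove this proposition: Section~\ref{sec:2} opens with ``Cf.~\cite{Calzi} for the proofs'', and Proposition~\ref{prop:5} is stated without argument. So there is no in-paper proof to compare against; your proposal has to be judged on its own merits.

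Your argument is the standard Riesz--Herglotz construction and is essentially correct. The well-definedness and equivalence of the two formulations are fine, and injectivity via Proposition~\ref{prop:1}(4) is clean. For surjectivity, your key identity $\Hc(f_\rho\cdot\beta_{\bD})(z)=F(\rho z)-i\Im F(0)$ is exactly right (it follows from the reproducing property of $\Cc$ together with the fact that the $H^2$-projection of $\overline{F_\rho}$ is the constant $\overline{F(0)}$), and passing to the vague limit using the continuity of $\Cc(z,\,\cdot\,)$ on $\bD$ from Proposition~\ref{prop:1}(1) gives $\Hc(\mi)=F-i\Im F(0)$, hence $\Pc(\mi)=f$ once $\mi$ is known to be pluriharmonic.

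Two small wording issues, neither fatal. First, the parenthetical ``it is a finite linear combination of such, via the Taylor expansion of $F$'' is not literally true: $f_\rho$ is a \emph{uniform limit} of $\Re\Pc_\C$ on $\bD$ (by Taylor expansion of $F$ on the closed ball of radius $\rho$), not a finite sum; this still places $f_\rho\cdot\beta_{\bD}$ in the vague closure of $\Re\Pc_\C\cdot\beta_{\bD}$, so your conclusion via Proposition~\ref{prop:4}(3) stands. Second, ``the vague closure of the pluriharmonic measures is closed'' should read ``the set of pluriharmonic measures is vaguely closed'', which is immediate from condition (3) (or (4)) of Proposition~\ref{prop:4}. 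With these cosmetic fixes the proof is complete.
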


\begin{prop}\label{prop:8}
	Let $\mi$ be a non-zero pluriharmonic measure on $\bD$. Then, $\widehat \beta_{\bD}$ is a pseudo-image measure of $\mi$ under $\pi$ and $\mi$ is absolutely continuous with respect to $\Hc^{\dim \bD-1}$. 
	
	In addition,  the following hold: 
	\begin{itemize}
		\item if  $(\mi_\xi)$ is a disintegration of $\mi$ relative to $\widehat \beta_{\bD}$, then for $\widehat \beta_{\bD}$-almost every $\xi\in \widehat{ \bD}$ and for every $z\in \Db_\xi$,
		\begin{equation}\label{eq:6}
		(\Pc \mi)(z)= \int_{\bD} \frac{1-\abs{z}^2}{\abs{\zeta-z}^2}\,\dd \mi_\xi(\zeta);
		\end{equation}
		
		\item if $\mi$ is positive, then $\mi$ has a unique vaguely continuous disintegration $(\mi_\xi)$ relative to $\widehat \beta_{\bD}$ such that~\eqref{eq:6} holds for \emph{every} $\xi \in \widehat{\bD}$.
	\end{itemize} 
\end{prop}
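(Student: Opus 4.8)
The plan is to deduce everything from the structure theory of pluriharmonic measures already encapsulated in Propositions~\ref{prop:4} and~\ref{prop:5}, together with the one-dimensional Riesz--Herglotz theorem applied slicewise along the circles $\Db_\xi=\pi^{-1}(\xi)$ (which are precisely the $\T$-orbits in $\bD$, spanning complex discs $\Db\cdot\zeta\subset D$).

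First I would establish that $\widehat\beta_{\bD}$ is a pseudo-image measure of $\mi$ under $\pi$. Since $\mi$ is nonzero and pluriharmonic, by Proposition~\ref{prop:4}(3) it lies in the vague closure of $\Re\Pc_\C\cdot\beta_{\bD}$; the key point is that for a holomorphic polynomial $P$, the fibre integral $\int_{\Db_\xi}\Re P\,\dd\beta_{\bD,\xi}$ equals $\Re P(0)$ (the mean value over a circle through the origin of the harmonic function $\Re P$ restricted to the complex line through $\zeta$), independently of $\xi$; hence $\pi_*(\Re P\cdot\beta_{\bD})=\Re P(0)\,\widehat\beta_{\bD}$. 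Passing to the vague limit, $\pi_*\mi=c\,\widehat\beta_{\bD}$ for some constant $c$, and $c=\|\mi\|>0$ if $\mi\geqslant 0$; in general one argues that the null sets of $\pi_*\mi$ and of $\widehat\beta_{\bD}$ coincide (a set is $\pi_*\mi$-null iff its $\T$-saturation is $\mi$-null, and one tests against $\T$-invariant bump functions), which is exactly the assertion that $\widehat\beta_{\bD}$ is a pseudo-image of $\mi$ under $\pi$. This legitimizes disintegrating $\mi$ along $\pi$.

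Next, given a disintegration $(\mi_\xi)$ of $\mi$ relative to $\widehat\beta_{\bD}$, I would prove~\eqref{eq:6}. Fix $\zeta_\xi$ with $\pi(\zeta_\xi)=\xi$ and parametrize $\Db_\xi$ by $\alpha\mapsto\alpha\zeta_\xi$; view $\mi_\xi$ as a measure on $\T$. The function $z\mapsto(\Pc\mi)(z\zeta_\xi)$ is a positive pluriharmonic, hence positive harmonic, function on $\Db$, so by Riesz--Herglotz it is the Poisson integral of a unique positive measure $\nu_\xi$ on $\T$. The task is to identify $\nu_\xi$ with $\mi_\xi$. I would do this using Proposition~\ref{prop:1}(4): the vague limit $\lim_{\rho\to1^-}(\Pc\mi)_\rho\cdot\beta_{\bD}=\mi$, combined with a Fubini-type argument over the $\pi$-fibration (disintegrating $\beta_{\bD}$ itself along $\pi$ into the normalized arc-length measures $\beta_{\bD,\xi}$ on the circles $\Db_\xi$, which is the standard fact that $\beta_{\bD}$ is $\T$-invariant), shows that for $\widehat\beta_{\bD}$-a.e.\ $\xi$ the slice boundary values converge vaguely on $\T$ to $\mi_\xi$; but the same slices converge to $\nu_\xi$ by the one-dimensional statement, whence $\mi_\xi=\nu_\xi$ a.e. For the sharper second bullet, when $\mi\geqslant 0$ one upgrades ``a.e.'' to ``every'' by choosing the canonical vaguely continuous disintegration: positivity plus the explicit formula~\eqref{eq:6} (whose right-hand side depends continuously and ``affinely'' on $\mi_\xi$) forces uniqueness of a vaguely continuous version, and its existence follows because the left-hand side $z\mapsto(\Pc\mi)(z\zeta_\xi)$ varies continuously with $\xi$ (the Poisson--Szeg\H{o} kernel being continuous on $D\times\bD$ by Proposition~\ref{prop:1}(2)), so inverting the Poisson transform on $\Db$ yields $\mi_\xi$ depending vaguely continuously on $\xi$.

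Finally, for absolute continuity with respect to $\Hc^{\dim\bD-1}$, I would argue as follows. Writing $m=\dim\bD$, the fibration $\pi\colon\bD\to\widehat{\bD}$ has $1$-dimensional fibres, and along $\widehat\beta_{\bD}$-a.e.\ fibre the conditional measure $\mi_\xi$ is the boundary measure of a positive harmonic function on $\Db$; by the classical F.\ and M.\ Riesz / Fatou theory, such a measure on $\T$ is of the form (absolutely continuous part) plus a singular part, and the singular part is carried by an $\Hc^1$-null (indeed $\beta_\T$-null) subset of $\T$ --- but crucially the singular part, being supported where nontangential limits are $+\infty$, must vanish here because $\Pc\mi$ is the real part of a \emph{holomorphic} function, not merely harmonic, so along each disc the boundary values are those of an $H^1$ (indeed $\mathfrak{h}^1$) function whose harmonic conjugate also has boundary values, forcing $\mi_\xi\ll\beta_\T$ (this is precisely the content of Proposition~\ref{prop:5}: $\mi$ corresponds to a holomorphic $\Hc(\mi)$ with positive real part). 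Then $\mi\ll\beta_{\bD}$ by reassembling the fibre-wise absolute continuity via the disintegration of $\beta_{\bD}$, and since $\beta_{\bD}$ is comparable to $\Hc^{m-1}$ on the real-algebraic manifold $\bD$ (it is a smooth positive density times $\Hc^{m-1}$), we conclude $\mi\ll\Hc^{m-1}$.

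I expect the main obstacle to be the identification $\mi_\xi=\nu_\xi$ in the a.e.\ sense and its upgrade to ``every $\xi$'': this requires care in commuting the radial limit $\rho\to1^-$ with the disintegration along $\pi$, i.e.\ a uniform-integrability or tightness argument ensuring that the slicewise vague limits of $(\Pc\mi)_\rho$ genuinely compute the conditional measures $\mi_\xi$ rather than some average, and in the positive case, establishing the vague continuity of $\xi\mapsto\mi_\xi$ from the continuity of the Poisson--Szeg\H{o} kernel. The absolute-continuity step, by contrast, should be essentially a slicewise application of classical one-variable Hardy space theory once the disintegration is in hand.
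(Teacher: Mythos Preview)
The paper does not actually prove this proposition here: Section~\ref{sec:2} only collects results and sends the reader to~\cite{Calzi} for the proofs. So there is no proof in the paper to compare against; I will just assess your plan on its own.

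Your argument for the absolute continuity $\mi\ll\Hc^{m-1}$ is wrong. You claim that each fibre measure $\mi_\xi$ on $\pi^{-1}(\xi)\cong\T$ must be absolutely continuous with respect to $\beta_\T$ because $\Pc\mi$ is the real part of a holomorphic function, invoking F.\ and M.\ Riesz. This is false already for $\Ds=\C_+$: the Poisson kernel $\Re\frac{1+z}{1-z}$ is the real part of a holomorphic function with positive real part, yet its boundary measure is $\delta_1$. Concretely, on $\bD=\T^2$ the measure $\delta_1\otimes\beta_\T$ is pluriharmonic (its Poisson integral is $\frac{1-|z_1|^2}{|1-z_1|^2}$), and its disintegration along $\pi$ consists of Dirac masses. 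You then compound this by asserting $\beta_{\bD}\sim\Hc^{m-1}$; but $\bD$ has real dimension $m$, so $\beta_{\bD}\sim\Hc^{m}$, and the conclusion $\mi\ll\beta_{\bD}$ you reach is strictly stronger than the stated $\mi\ll\Hc^{m-1}$ --- and false, by the same example.

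The correct route to $\mi\ll\Hc^{m-1}$ uses only the pseudo-image property plus a coarea/Eilenberg-type inequality: if $\Hc^{m-1}(E)=0$, then (since $\pi$ is Lipschitz onto the $(m-1)$-dimensional quotient) for $\Hc^{m-1}$-a.e.\ $\xi\in\widehat{\bD}$ the slice $E\cap\pi^{-1}(\xi)$ is empty, hence $\abs{\mi_\xi}(E)=0$ for $\widehat\beta_{\bD}$-a.e.\ $\xi$, hence $\abs{\mi}(E)=0$. No fibrewise absolute continuity is needed or available.

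Your sketch for the pseudo-image property and for~\eqref{eq:6} is more reasonable, though note that $\pi_*\mi=(\Pc\mi)(0)\,\widehat\beta_{\bD}$ only gives the pseudo-image conclusion directly when $\mi$ is positive (so that $(\Pc\mi)(0)>0$); for signed or complex $\mi$ with $(\Pc\mi)(0)=0$ you still need to show that $\T$-invariant $\beta_{\bD}$-null sets are $\abs{\mi}$-null, which is not captured by the pushforward computation alone.
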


We conclude this section with a simple remark which will be of use in the proof of Theorem~\ref{teo:2}.

\begin{oss}\label{oss:8}
	Let $\mi$ be a non-zero positive pluriharmonic measure on $\bD$, and fix $\zeta\in \bD$. Then,
	\[
	\lim_{\rho \to 1^-} (1-\rho)(\Hc \mi)(\rho \zeta)=	2\mi_{\pi(\zeta)}(\Set{\zeta}),
	\]
	where $(\mi_\xi)$ denotes a vaguely continuous disintegration of $\mi$ relative to its pseudo-image measure $\widehat \beta_{\bD}$. Furthermore, $(1-\rho)\abs{(\Hc\mi)(\rho \zeta)}\meg 2(\Hc \mi)(0)$ for every $\rho \in [0,1)$.
\end{oss}

\begin{proof}
	By Propositions~\ref{prop:4} and~\ref{prop:8},  there is $a_\zeta\in \R$ such that
	\[
	(\Hc \mi)(\rho \zeta)=\int_{\bD} \frac{1+\rho \langle \zeta\vert\zeta'\rangle  }{1-\rho \langle \zeta\vert\zeta'\rangle} \,\dd \mi_{\pi(\zeta)}(\zeta')+ i a_\zeta
	\]
	for every $\rho \in [0,1)$. Evaluating at $\rho=0$ then shows that $a_\zeta=0$. Therefore, it is sufficient to observe that
	\[
	\lim_{\rho \to 1^-} (1-\rho)\int_{\bD} \frac{1+\rho \langle \zeta\vert\zeta'\rangle  }{1-\rho \langle \zeta\vert\zeta'\rangle} \,\dd \mi_{\pi(\zeta)}(\zeta')=
	2\mi_{\pi(\zeta)}(\Set{\zeta})
	\]
	by dominated convergence,
	since $(1-\rho)\abs*{\frac{1+\rho \langle \zeta\vert\zeta'\rangle  }{1-\rho \langle \zeta\vert\zeta'\rangle}}\meg 2 $ and $(1-\rho)\frac{1+\rho \langle \zeta\vert\zeta'\rangle  }{1-\rho \langle \zeta\vert\zeta'\rangle}\to 2\chi_{\Set{\zeta}}(\zeta') $ for $\mi_{\pi(\zeta)}$-almost every $\zeta'\in \bD$. The last assertion follows from the previous computations, since $\norm{\mi_{\pi(\zeta)}}_{\Mc^1(\bD)}=(\Hc \mi)(0)$.
\end{proof}

\section{Jordan Triple Systems}\label{sec:3}

In this section we collect some background information on (positive Hermitian) Jordan triple systems. We shall mainly follow~\cite{Loos}, even though we shall slightly depart from its notation (cf.~\cite{MackeyMellon}). \emph{We shall consider only finite-dimensional Jordan triple systems and finite-dimensional Jordan algebras.}

\begin{deff}
	A (finite-dimensional) positive Hermitian Jordan triple system is a (finite-dimensional) complex vector space $Z$ endowed with a `triple product' $\{\,\cdot\,,\,\cdot\,,\,\cdot\,\}\colon Z\times Z\times Z\to Z$ such that the following hold:
	\begin{itemize}
		\item $\{x,y,z\}$ is $\C$-bilinear and symmetric in $(x,z)$, and $\C$-antilinear in $y$;
		
		\item $[D(a,b), D(x,y)]=D(D(a,b) x,y)-D(x,D(b,a)y) $ for every $a,b,x,y,z\in Z$, where $D(a,b)=\{a,b,\,\cdot\,\}$ for every $a,b\in Z$;
		
		\item if $\{x,x,x\}=\lambda x$ for some non-zero $x\in Z$ and some $\lambda\in \C$, then $\lambda>0$.
	\end{itemize}
	We also define  $Q(x)\coloneqq \{x,\,\cdot\,,x\}$ for every $x\in Z$.
	
	Denote by $\Aut(Z)$ the group of automorphisms of $Z$, that is, the group of the $\C$-linear automorphisms $u$ of the vector space $Z$ such that $\{u x,u y, uz\}=u\{x,y,z\}$ for every $x,y,z\in Z$, and let $\Aut_0(Z)$ be the component of the identity in $\Aut(Z)$. 
	
	An $x\in Z$ is called a tripotent if $\{x,x,x\}=x$.\footnote{There is no general agreement on the definition of a tripotent; for instance, $x$ is a tripotent according to~\cite{Loos} if and only if $\sqrt 2 x $ is a tripotent according to our definition. In general, there are also other powers of $\sqrt 2$ that may differ from one reference to another.} Two tripotents $x,y\in Z$ are said to be orthogonal if $D(x,y)=0$ (equivalently, if $\{x,x,y\}=0$; both conditions are symmetric in $x,y$, cf.~\cite[Lemma 3.9]{Loos}).
	A non-zero tripotent is primitive (resp.\ maximal) if it cannot be written as the sum of two non-zero orthogonal tripotents (resp.\ if it is not ortoghonal to any non-zero tripotent). The rank $\rk(e)$ of a tripotent $e$ is the maximal length (i.e., the number of terms)  of a sequence of pairwise orthogonal non-zero tripotents $(e_j)$ such that $e=\sum_j e_j$. 
	A frame is a maximal sequence of pairwise orthogonal non-zero tripotents. The length of such sequences is called the rank $r$ of $Z$.
\end{deff}

The vector spaces generated by any two frames are conjugate under $\Aut_0(Z)$, as well as any two maximal tripotents (cf.~\cite[Proposition 5.2 and Theorem 5.3]{Loos}). In particular, the rank is well defined.
If, in addition, $Z$ is simple (i.e., cannot be decomposed as the product of two non-trivial positive Hermitian Jordan triple systems), then any two frames are conjugate under $\Aut_0(V)$  (cf.~\cite[Theorem 5.9]{Loos}). 

\begin{prop}\label{prop:20}
	Let $Z$ be a positive Hermitian Jordan triple system. Then, the following hold:
	\begin{enumerate}
		\item[\textnormal{(1)}] the mapping $(x,y)\mapsto \Tr D(x,y)$ is a (non-degenerate) $\Aut(Z)$-invariant scalar product on $Z$;
		
		\item[\textnormal{(2)}] $D(x,y)^*=D(y,x)$ with respect to the scalar product in~\textnormal{(1)}, that is,
		\[
		\langle \{x,y,z\}\vert u \rangle=\langle x\vert \{y,z,u\}\rangle
		\]
		for every $x,y,z,u\in Z$;
		
		\item[\textnormal{(3)}]  if $x,y$ are two orthogonal tripotents in $Z$, then $D(x,x)$ and $D(y,y)$ commute, and $x+y$ is a tripotent;
		
		\item[\textnormal{(4)}] (`spectral decomposition') for every non-zero $x\in Z$ there are a unique strictly increasing family $(\lambda_j)_{j=1,\dots, k}$ of elements of $(0,+\infty)$, and a unique family $(e_j)_{j=1,\dots,k}$ of pairwise orthogonal non-zero tripotents such that
		\[
		x=\sum_{j=1}^k \lambda_j e_j;
		\]
		
		\item[\textnormal{(5)}] with the notation of~\textnormal{(4)}, $D(x,x)=\sum_{j=1}^k \lambda_j^2 D(e_j,e_j)$ and $\norm{D(x,x)}=\lambda_k^2$ with respect to the scalar product in~\textnormal{(1)};
		
		\item[\textnormal{(6)}] the circular convex open set $\Set{x\in Z\colon \norm{D(x,x)}<1}$ is a bounded symmetric domain with \v Silov boundary equal to the set of maximal tripotents.
	\end{enumerate}
\end{prop}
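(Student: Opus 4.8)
The plan is to deduce the six assertions from the structure theory of positive Hermitian Jordan triple systems, mostly via the Peirce decomposition and the spectral decomposition, treating items in an order that respects their logical dependencies. The genuinely substantive point is item~(6), so I would spend most of the effort there; items~(1)--(5) are either quotable from \cite{Loos} or follow from short computations with the Jordan identity.

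First, for~(1) I would note that $\Tr D(x,y)$ is sesquilinear (linear in $x$, antilinear in $y$) directly from the defining properties of the triple product, and that $\Aut(Z)$-invariance is immediate since $D(ux,uy)=u D(x,y) u^{-1}$ for $u\in\Aut(Z)$, whence the traces agree. Positive-definiteness is where one uses the positivity axiom: $\Tr D(x,x)=\sum \lambda$ over the spectrum of the positive semidefinite operator $D(x,x)$, and this operator is nonzero for $x\neq 0$ because its restriction to the line $\C x$ acts by a positive scalar by the third axiom (spectral decomposition, cited from \cite[\S3]{Loos}, makes this precise). For~(2), the identity $D(x,y)^*=D(y,x)$ with respect to this trace form is \cite[Lemma 3.11 or Proposition 3.13]{Loos}; one may also verify it by polarizing the main identity. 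For~(3), if $x,y$ are orthogonal tripotents then $D(x,y)=D(y,x)=0$, and plugging $a=b=x$ into the main identity with the operator $D(y,y)$ gives $[D(x,x),D(y,y)]=D(D(x,x)y,y)-D(y,D(x,x)y)$; since $D(x,x)y=\{x,x,y\}=0$ by orthogonality, both terms vanish, so the operators commute; that $x+y$ is a tripotent is the computation $\{x+y,x+y,x+y\}=x+y$ using $\{x,x,y\}=\{y,y,x\}=0$ and symmetry. Items~(4) and~(5) are the spectral decomposition \cite[Theorem 3.16 / Corollary 3.17]{Loos} together with the additivity $D(x,x)=\sum_j\lambda_j^2 D(e_j,e_j)$ (from~(3), since the $e_j$ are pairwise orthogonal and each $D(e_j,e_j)$ is a projection-like idempotent with eigenvalue $1$ on $e_j$), and the norm statement $\|D(x,x)\|=\lambda_k^2$ follows because the $D(e_j,e_j)$ are commuting self-adjoint operators whose joint spectrum, combined with $\lambda_1<\dots<\lambda_k$, forces the largest eigenvalue of $D(x,x)$ to be $\lambda_k^2$.

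For~(6), set $B\coloneqq\{x\in Z:\|D(x,x)\|<1\}$. Using~(5) and the spectral decomposition, $\|D(x,x)\|<1$ iff $\lambda_k<1$, i.e.\ iff all spectral eigenvalues of $x$ lie in $[0,1)$; thus $B$ is exactly the set of $x$ whose odd functional calculus ``spectral radius'' is $<1$, which makes it visibly circular (replacing $x$ by $e^{i\theta}x$ permutes nothing in the spectrum) and bounded (the trace form controls $\sum\lambda_j^2$). Convexity: $B$ is the open unit ball of the \emph{spectral norm} $\|x\|\coloneqq\max_j\lambda_j$, and this is a norm because it equals $\|D(x,x)\|^{1/2}$ and because, on the real span of any frame (a formally real Jordan algebra), it is the Jordan-algebra spectral norm, hence a norm there; transitivity of $\Aut_0(Z)$ on frames (quoted just before the proposition) propagates this to all of $Z$. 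That $B$ is a bounded symmetric domain is the classical Koecher theory: the holomorphic automorphism group acts transitively (the ``Bergman operators'' $B(x,y)$ and the maps $g_a(z)=(\text{Möbius-type transformation built from }Q(x),D(x,y))$ generate transvections moving $0$ to any point of $B$), and $z\mapsto-z$ is the symmetry at $0$; I would cite \cite[\S\S4--5]{Loos} for this rather than reprove it. Finally, the \v Silov boundary: a point $\zeta\in\partial B$ lies in $\bD$ iff it is a peak point for the holomorphic functions continuous up to the closure; the maximal tripotents are exactly the extreme points of $\Cl(B)$ (an $x$ with a spectral eigenvalue $<1$ can be perturbed along the corresponding Peirce-$2$ subspace staying in $\Cl(B)$, so it is not extreme; conversely a maximal tripotent $e$ satisfies $B(e,e)=0$ and is extreme by the formally-real-Jordan-algebra picture in the Peirce-$0\oplus 2$ decomposition), and for these bounded symmetric domains the \v Silov boundary coincides with the closure of the extreme points, which here is the (closed, $K$-homogeneous, hence already closed) orbit of maximal tripotents. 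I expect the extreme-point/\v Silov identification to be the main obstacle to writing cleanly; the natural route is to invoke \cite[Theorem 6.3 or the discussion in \S6]{Loos} (or Loos's characterization of the \v Silov boundary of a JB${}^*$-triple ball) rather than to argue peak functions by hand.
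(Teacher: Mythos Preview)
Your proposal is correct and, in substance, takes the same route as the paper: both defer the core facts to Loos's monograph on bounded symmetric domains and Jordan pairs. The paper's own proof is in fact \emph{nothing but} a list of citations---(1)--(2) to \cite[Corollary 3.16]{Loos}, (3) to \cite[Lemma 3.9]{Loos}, (4) to \cite[Corollary 3.12]{Loos}, (5) to \cite[Theorem 3.17 and Theorem 6.5]{Loos}---and does not even give a reference for~(6). Your sketches of the short computations for~(1) and~(3), and your outline of the Koecher--Loos theory for~(6), are therefore strictly more than what the paper supplies; they are accurate and would make the proposition self-contained rather than purely quotational, at the cost of length.
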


\begin{proof}
	(1)--(2) This is~\cite[Corollary 3.16]{Loos}.
	
	(3) Cf.~\cite[Lemma 3.9]{Loos}.
	
	(4) Cf.~\cite[Corollary 3.12]{Loos}.
	
	(5) Cf.~\cite[Theorem 3.17 and Theorem 6.5]{Loos}.
\end{proof}

\begin{deff}
	Let $Z$ be a positive Hermitian Jordan triple system. Then, we shall endow $Z$ with the ($\Aut(Z)$-invariant)  scalar product defined in (1) of Proposition~\ref{prop:20} (denoting by $\abs{\,\cdot\,}$ the corresponding norm), and we shall denote by $\norm{\,\cdot\,}$ the ($\Aut(Z)$-invariant) `spectral norm' $x \mapsto \norm{D(x,x)}$. The bounded symmetric domain $\Set{x\in Z\colon \norm{x}<1}$ is said to be associated with $Z$.
\end{deff}

Notice that every circular convex bounded symmetric domain is associated to a unique positive Hermitian Jordan triple system (the triple product may be determined by means of the unweighted Bergman kernel), cf.~\cite[Theorem 4.1]{Loos}. Consequently, every positive Hermitian Jordan triple system is induced by a (finite-dimensional) `$JB^*$-triple'. We may therefore apply the results of, e.g.,~\cite{MackeyMellon}.

\begin{deff}
	A (real or complex) Jordan algebra is a (real or complex) commutative algebra $A$ such that
	\[
	x^2(xy)=x(x^2 y)
	\]
	for every $x,y\in A$.
	If $A$ is real, then it is called formally real if $\Set{(x,y)\in A^2\colon x^2+y^2=0}=\Set{(0,0)}$.
	
	An $x\in A$ is said to be an idempotent if $x^2=x$. Two idempotents $x,y\in A$ are said to be orthogonal if $x y=0$. The rank $\rk(e)$ of an idempotent of $A$ is the maximal length of a sequence $(e_j)$ of pairwise orthogonal non-zero idempotents such that $\sum_j e_j=e$. The rank of $A$ is the maximal rank of its idempotents.
	
	An $x\in A$ is said to be invertible if the endomorphism $P x\colon y \mapsto 2 x(x y)-x^2 y$ of $A$ is invertible, in which case $x^{-1}\coloneqq (Px)^{-1}x$ is the inverse of $x$.
\end{deff}

\begin{oss}
	Let $A$ be a real (resp.\ complex) Jordan algebra with identity $e$. Then, $x\in A$ is invertible if and only if it is invertible in the (associative) algebra $\R[x]$ (resp.\ $\C[x]$) generated by $x$ (and $e$), in which case $x^{-1}$ coincides with the inverse of $x$ in $\R[x]$ (resp.\ $\C[x]$). (Cf.~\cite[Proposition II.3.1]{FarautKoranyi2}.)
\end{oss}

\begin{prop}
	Let $A$ be a formally real Jordan algebra. Then, the complex Jordan algebra $A_\C$ becomes a positive Hermitian Jordan triple system with the triple product
	\[
	\{x,y,z\}=(x \overline y) z+x (\overline y z)-(x z)\overline y.
	\]
	In addition, every idempotent of $A$ is a tripotent in $A_\C$, and two idempotents $x,y$ of $A$ are orthogonal as idempotents (i.e., $xy=0$) if and only if they are orthogonal as tripotents (i.e., $D(x,y)=0$).
	
	In addition, the scalar product of $A_\C$ satisfies
	\[
	\langle x y\vert z \rangle=\langle y\vert \overline x z \rangle
	\]
	for every $x,y,z\in A_\C$.
\end{prop}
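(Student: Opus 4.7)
The plan is to verify the three defining axioms of a positive Hermitian Jordan triple system directly from the formula, and then to extract the statements on idempotents and on the scalar product as easy consequences. The symmetry $\{x,y,z\}=\{z,y,x\}$, the $\C$-bilinearity in $(x,z)$, and the $\C$-antilinearity in $y$ are all immediate from the defining expression together with the commutativity of the Jordan product on $A_\C$ and the fact that complex conjugation is $\C$-antilinear.

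The heart of the proof is the Jordan triple identity
\[
[D(a,b),D(x,y)] = D(\{a,b,x\},y) - D(x,\{b,a,y\}).
\]
Since each side is $\C$-sesquilinear in $(a,b,x,y)$, it suffices to test it on $a,b,x,y\in A$, where the conjugations disappear and everything becomes a statement about the real Jordan algebra $A$. Writing $D(a,b)$ in terms of left-multiplication operators $L(\,\cdot\,)$ and expanding, the identity reduces after routine bookkeeping to the polarization
\[
[L(u),L(vw)] + [L(v),L(wu)] + [L(w),L(uv)] = 0
\]
of the Jordan axiom $x^2(xy)=x(x^2y)$. Organising this bookkeeping will be the main obstacle; I plan to handle it by systematically reducing everything to the associator $[L(u),L(v)]w = (uv)w - u(vw)$ and tracking cancellations symmetrically. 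Positivity is then separate: if $\{x,x,x\}=\lambda x$ with $x\ne 0$, pairing against the sesquilinear extension of the (positive-definite, associative) Jordan trace form of $A$ yields an identity of the shape $\lambda\,\tau(x,\bar x) = \tau(\{x,x,x\},\bar x)>0$, whence $\lambda>0$; alternatively, one reduces via the Jordan--von~Neumann--Wigner classification, where positivity is visible in each classical model.

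The statements on idempotents are short. First, $e\in A$ with $e^2=e$ and $\bar e=e$ gives $\{e,e,e\} = 2\,e(ee)-(ee)e = e$, so $e$ is a tripotent. For two idempotents $x,y\in A$, the direct computation $D(x,y)y=\{x,y,y\}=2xy^2-(xy)y=xy$ shows that $D(x,y)=0$ implies $xy=0$. Conversely, if $xy=0$, then by the Peirce decomposition in the formally real Jordan algebra $A$ the element $y$ lies in $A_0(x)$, the $0$-eigenspace of $L(x)$; the Peirce multiplication rules $A_1(x)A_0(x)=0$, $A_{1/2}(x)A_0(x)\subseteq A_{1/2}(x)$, $A_0(x)A_0(x)\subseteq A_0(x)$ then give $x(yz)=(xz)y$ on each Peirce component of $z$, hence $D(x,y)z = x(yz)-(xz)y = 0$ after extending $\C$-linearly in $z$.

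Finally, for the scalar product identity, let $e$ be the unit of $A$. A direct computation using $\bar e = e$ gives $D(e,\bar x)y = \{e,\bar x,y\} = xy$ and $\{\bar x,e,z\} = \bar x z$; combined with the adjoint relation $D(u,v)^* = D(v,u)$ of part~(2) of Proposition~\ref{prop:20}, this yields
\[
\langle xy\vert z\rangle = \langle D(e,\bar x)y\vert z\rangle = \langle y\vert D(\bar x,e)z\rangle = \langle y\vert\bar x z\rangle,
\]
as required.
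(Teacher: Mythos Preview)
Your proposal is essentially correct and considerably more hands-on than the paper's treatment. For the first assertion the paper simply cites \cite[3.7]{Loos}, whereas you verify the axioms directly; for the second, the paper computes $\{x,x,y\}=xy$ for idempotents $x,y\in A$ and then invokes the equivalence $D(x,y)=0\Leftrightarrow\{x,x,y\}=0$ from \cite[Lemma~3.9]{Loos}, while your Peirce argument is self-contained; for the third, the paper reduces to $x,y,z\in A$, observes that $\langle x\vert y\rangle=\Tr D(x,y)=2\Tr L(xy)$, and then cites the associativity of the trace form from \cite[Proposition~II.4.3]{FarautKoranyi2}, whereas your identification $xy=D(e,\bar x)y$ together with the adjoint relation $D(u,v)^*=D(v,u)$ is a clean coordinate-free shortcut.

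The one soft spot is positivity: the inequality $\tau(\{x,x,x\},\bar x)>0$ does not follow from associativity of the trace form alone. Writing $x=a+ib$ with $a,b\in A$, it unwinds to $2\,\tau(a^2+b^2,a^2+b^2)>\tau(a^2-b^2,a^2-b^2)+4\,\tau(ab,ab)$, which requires a genuine argument (it is true, and in the matrix models it reduces to $\tr((xx^*)^2)+\tr((x^*x)^2)>0$, but the general case needs justification). Your fallback via the Jordan--von~Neumann--Wigner classification is legitimate; if you prefer to avoid it, you should either supply this inequality directly or, as the paper does, defer to \cite{Loos}.
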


The Jordan triple system so defined on $A_\C$ is called the Hermitification of $A$.

\begin{proof}
	The first assertion is contained in~\cite[3.7]{Loos}. The second assertion follows from the fact that $\{x,x,y\}=x^2 y+x (x y)- (x y) x=x y$ if $x, y$ are idempotents in $A$. For what concerns the third assertion, observe first that it is sufficient to prove the stated equality for $x,y,z\in A$. In this case, $\Tr D(x,y)= 2\Tr(L(x y)+L(x)L(y)-L(y)L(x))=2\Tr L(x y)$, denoting by $L(x)$ the endomorphism $ y\mapsto x y$ of $A$ (the $2$ is related to the fact that $D(x,y)$ is the $\C$-linear extension of $L(x y)+L(x)L(y)-L(y)L(x)$ to $A_\C$). The assertion then follows from~\cite[Proposition II.4.3]{FarautKoranyi2}.
\end{proof}

\begin{prop}\label{prop:21}
	(`Peirce decomposition') Let $Z$ be a positive Hermitian Jordan triple system, and let $e$ be a tripotent in $Z$. Denote by $Z_\alpha$ the eigenspace of $D(e,e)$ corresponding to the eigenvalue $\alpha\in \C$. 
	Then, $Z=Z_1(e)\oplus Z_{1/2}(e) \oplus Z_0(e)$ (orthogonal sum) and $\{ Z_\alpha(e), Z_\beta(e), Z_\gamma(e) \}\subseteq Z_{\alpha-\beta+\gamma}(e)$ for every $\alpha,\beta,\gamma\in \Set{0,1/2,1}$. 
	Furthermore, $Z_1(e)$ is a complex Jordan algebra with product $(x,y)\mapsto \{x,e,y\}$ and identity $e$; with respect to the conjugation $x \mapsto x^*\coloneqq  \{e,x,e\}  $, $Z_1(e)$ is the Hermitification of the formally real Jodarn algebra $A(e)\coloneqq \Set{x\in Z_1(e)\colon x=x^* }$.
	Finally, the orthogonal projectors of $Z$ onto $Z_1(e)$ and $Z_{1/2}(e)$ are $Q(e)^2$ and $2(D(e,e)-Q(e)^2)$, respectively.
\end{prop}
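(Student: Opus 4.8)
The plan is to reduce everything to the spectral theory of the self-adjoint operator $D(e,e)$, feeding in only a handful of specializations of the main identity and of its consequence, the fundamental formula $Q(Q(x)y)=Q(x)Q(y)Q(x)$ (cf.~\cite{Loos}); the genuinely computational scalar identities I would simply quote from~\cite{Loos}. By (2) of Proposition~\ref{prop:20}, $T\coloneqq D(e,e)$ is self-adjoint for the $\Aut(Z)$-invariant scalar product of Proposition~\ref{prop:20}(1), hence is diagonalizable over $\R$ with pairwise orthogonal eigenspaces. The crucial algebraic input is the polynomial identity
\[
T\left(T-\tfrac12\right)(T-1)=0,
\]
obtained by specializing the main identity (and the fundamental formula) to $x=y=e$ and using $\{e,e,e\}=e$; this is part of the Peirce relations of~\cite{Loos}. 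Granting it, the eigenvalues of $T$ lie in $\Set{0,\tfrac12,1}$ and $Z=Z_1(e)\oplus Z_{1/2}(e)\oplus Z_0(e)$ is an orthogonal direct sum.

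For the multiplication rules I would apply the main identity with $a=b=e$: for $x\in Z_\alpha(e)$ and $y\in Z_\beta(e)$ one finds
\[
[T,D(x,y)]=D(Tx,y)-D(x,Ty)=(\alpha-\beta)D(x,y),
\]
the antilinearity of $D$ in its second slot causing no trouble since $\alpha,\beta$ are real. Hence $D(x,y)$ carries the $\gamma$-eigenspace of $T$ into its $(\alpha-\beta+\gamma)$-eigenspace; as the only eigenvalues are $0,\tfrac12,1$, for $z\in Z_\gamma(e)$ we get $\{x,y,z\}=D(x,y)z\in Z_{\alpha-\beta+\gamma}(e)$, with the convention $Z_\delta(e)=\Set{0}$ whenever $\delta\notin\Set{0,\tfrac12,1}$. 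Since $\{\,\cdot\,,\,\cdot\,,\,\cdot\,\}$ is symmetric in its outer arguments, this yields the stated inclusion for $\{Z_\alpha(e),Z_\beta(e),Z_\gamma(e)\}$.

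Turning to $Z_1(e)$: by the preceding rule (with $1-1+1=1$) the product $x\cdot y\coloneqq\{x,e,y\}$ is internal on $Z_1(e)$; it is commutative by symmetry, and $e$ is a unit, since $e\cdot x=\{e,e,x\}=Tx=x$ for $x\in Z_1(e)$. The Jordan identity $x^2(xy)=x(x^2y)$ is again a specialization of the main identity, cf.~\cite{Loos}. Next, $x^*\coloneqq\{e,x,e\}=Q(e)x$ is $\C$-antilinear; from $Q(e)e=e$ and the fundamental formula one gets $Q(e)^3=Q(e)$, while the multiplication rule shows that $Q(e)=\{e,\,\cdot\,,e\}$ annihilates $Z_{1/2}(e)$ and $Z_0(e)$ (their images would lie in $Z_{3/2}(e)=\Set{0}$ and $Z_2(e)=\Set{0}$) and preserves $Z_1(e)$; together with the Peirce relations of~\cite{Loos} this gives $Q(e)^2|_{Z_1(e)}=\mathrm{id}$, so $*$ is an involutive conjugation of $Z_1(e)$. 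Its fixed-point set $A(e)$ is then a unital real Jordan subalgebra with $Z_1(e)=A(e)\oplus iA(e)$, and the positivity axiom, applied within $Z_1(e)$, forces $A(e)$ to be formally real; hence $Z_1(e)$ is the Hermitification of $A(e)$ (cf.~the proof of the preceding proposition and~\cite{Loos}).

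Finally, $Q(e)^2$ is idempotent (by $Q(e)^3=Q(e)$) and self-adjoint for the scalar product (a short computation from Proposition~\ref{prop:20}(2)), with range $Z_1(e)$ (it maps all of $Z$ into $Z_1(e)$ and restricts to the identity there) and kernel $Z_{1/2}(e)\oplus Z_0(e)$; thus it is the orthogonal projector onto $Z_1(e)$. Evaluating $2(T-Q(e)^2)$ on $Z_1(e)$, $Z_{1/2}(e)$, $Z_0(e)$ gives $2(1-1)=0$, $2(\tfrac12-0)=1$, $2(0-0)=0$ respectively, so $2(T-Q(e)^2)$ is the orthogonal projector onto $Z_{1/2}(e)$. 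The main obstacle here is not conceptual but computational: the scalar identities $T(T-\tfrac12)(T-1)=0$, $Q(e)^3=Q(e)$, $Q(e)^2|_{Z_1(e)}=\mathrm{id}$ and the Jordan identity on $Z_1(e)$ must be ground out of the two defining axioms through judicious substitutions into the main identity and the fundamental formula. All of these are classical Peirce relations, so in the write-up I would simply invoke the corresponding statements of~\cite{Loos}; everything else is then elementary linear algebra with the eigenspace decomposition.
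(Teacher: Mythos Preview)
Your proposal is correct and, like the paper, ultimately defers the substantive computational identities (the cubic relation for $T$, $Q(e)^2\vert_{Z_1(e)}=\mathrm{id}$, the Jordan identity, and the formal reality of $A(e)$) to~\cite{Loos}; the paper simply cites~\cite[Theorem 3.13]{Loos} outright (and~\cite[p.~5]{MackeyMellon} for the projector formulae), whereas you give a helpful roadmap of how those pieces assemble via the spectral theory of $D(e,e)$. One minor remark: the self-adjointness of $Q(e)^2$ need not be computed directly, since once you know $Q(e)^2$ is the projection onto $Z_1(e)$ along $Z_{1/2}(e)\oplus Z_0(e)$ and that the Peirce spaces are mutually orthogonal (as eigenspaces of the self-adjoint $T$), orthogonality of the projector is automatic.
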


Cf.~\cite[Theorem 3.13]{Loos} for a proof of the previous result (cf.~\cite[p.~5]{MackeyMellon} for the last assertion). Notice that, if $e'$ is another tripotent with $Z_1(e)=Z_1(e')$, then an element of $Z_1(e)$ is invertible in $Z_1(e)$ if and only if it is invertible in $Z_1(e')$ (with the same inverse, if $A(e)=A(e')$). This is a consequence of the fact that invertibility only depends on the triple product, since $(P x)y=\{x, \{e,y,e\} ,x\}$ in $ Z_1(e)$ and $(Px)y=\{x, \{e',y,e'\} ,x\}$ in $Z_1(e')$, and the conjugations $y\mapsto \{e,y,e\}$ and $y\mapsto \{e',y,e'\}$ are invertible (and equal if $A(e)=A(e')$).

\begin{oss}\label{oss:12}
	Let $e$ be a tripotent in $Z$. Then, the orthogonal projector $Q(e)^2\colon Z\to Z_1(e)$ has norm $\norm{e}$ \emph{with respect to the spectral norm}.  
\end{oss}

Notice that $\norm{e}=1$ unless $e=0$.

\begin{proof}
	It suffices to use the  inequality $\norm{\{x,y,z\}}\meg \norm{x}\norm{y}\norm{z}$ for every $x,y,z\in Z$ (cf.~\cite[Corollary 3]{FriedmanRusso}). 
\end{proof}

\begin{prop}\label{prop:22}
	Let $Z$ be a positive Hermitian Jordan triple system, and let $e$ be a maximal tripotent of $Z$. Set $E\coloneqq Z_{1/2}(e)$ and $F\coloneqq A(e)$ (cf.~Proposition~\ref{prop:21}), and identify $F_\C$ with $Z_1(e)$ and $E\times F_\C$ with $Z$. 
	Endow $Z$ with the product $(x,y)\mapsto \{x,e,y\}$, and define $\Phi\colon E\times E\ni (\zeta,\zeta')\mapsto 2\{\zeta,\zeta',e\}\in F_\C$ and $\Omega$ as the interior of $ \Set{x^2\colon x\in A(e)}$. Then, $\Omega$ is a symmetric cone,\footnote{In other words, $\Omega$ is an open convex cone which is self-dual (that is, $ \Omega=\Set{x\in A(e)\colon \forall y\in \overline \Omega\setminus \Set{0} \:\: \langle x, y \rangle > 0}$) and such that the group of linear automorphisms of $A(e)$ which preserve $\Omega$ acts  transitively on $\Omega$.} $\Phi$ is Hermitian, non-degenerate, and $\overline{\Omega}$-positive, and
	\[
	\Ds=\Set{(\zeta,z)\colon \Im z-\Phi(\zeta)\in \Omega}
	\]
	is a symmetric Siegel domain of type II. In addition, the mapping
	\[
	\gamma\colon D\ni (\zeta,z) \mapsto (2 (e- z)^{-1} \zeta, i (e-z)^{-1}(e+z))\in \Ds 
	\]
	is a birational biholomorphism (which maps $(0,0)$ to $(0,i  e)$), with inverse
	\[
	\Ds \ni (\zeta,z)\mapsto (4 i (z+i e)^{-1} \zeta,  (z+i e)^{-1}(z-i e) )\in D.
	\]
	
	In addition, the involution of $\Ds$ with fixed point $(0, i e)$ is $\gamma\circ (-\gamma^{-1})$, that is,
	\[
	\Ds \ni (\zeta,z)\mapsto (-2 i z^{-1} \zeta,-z^{-1}).
	\]
\end{prop}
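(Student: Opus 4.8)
Since the statement is the construction of the generalised Cayley transform, the plan is to reduce every structural assertion to standard facts about Euclidean Jordan algebras and Jordan triple systems — for which I would cite \cite{Loos} and the Jordan-algebra presentation of \cite{FarautKoranyi2} — and to carry out by hand only the two computations that are specific to the present normalisation: the explicit inverse of $\gamma$ and the formula for the involution. The first step is the reduction afforded by maximality of $e$: the Peirce space $Z_0(e)$ is itself a positive Hermitian Jordan triple system, so if it were non-zero it would contain a non-zero tripotent, necessarily orthogonal to $e$, contradicting maximality; hence $Z_0(e)=\Set{0}$, and Proposition~\ref{prop:21} gives the identification $Z=Z_1(e)\oplus Z_{1/2}(e)=F_\C\oplus E$ used in the statement. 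Moreover $e$ is the unit of $Z_1(e)$, so $e\in A(e)$ and $A(e)$ is a formally real (hence Euclidean, finite-dimensional) Jordan algebra with unit.

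Next I would record the properties of $\Omega$ and $\Phi$. That $\Omega$ is a symmetric cone is the general theory of Euclidean Jordan algebras: the interior of the set of squares of such an algebra is an open convex cone, self-dual for the trace form and homogeneous under its linear automorphism group (\cite[Ch.~III]{FarautKoranyi2}); in particular $\Omega$ is regular (line-free). That $\Phi$ is sesquilinear is immediate from the triple product axioms, and that it is Hermitian with respect to the conjugation $w\mapsto w^{*}=\{e,w,e\}$ of $F_\C$, non-degenerate, and $\overline\Omega$-positive follows from the Peirce multiplication rules together with the positivity of $D(\zeta,\zeta)$ for $\zeta\in Z_{1/2}(e)$ (Proposition~\ref{prop:20}) — this is exactly the verification carried out, for the general Cayley transform, in \cite{Loos} or \cite{FarautKoranyi2}; for non-degeneracy one may note directly that $\langle\Phi(\zeta,\zeta)\vert e\rangle=\abs{\zeta}^{2}$, using the Peirce relation $\{\zeta,e,e\}=D(e,e)\zeta=\tfrac12\zeta$ and the self-adjointness of $D(\zeta,\zeta)$. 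At this point $\Ds$ is, by definition, a Siegel domain of type II; that it is \emph{symmetric} I postpone, deducing it at the end from the biholomorphism $\gamma\colon D\to\Ds$.

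For the Cayley transform itself: on $D$ one has $\norm{z}<1$, and for $z$ lying in the Peirce space $Z_1(e)$ this spectral norm dominates the Jordan spectral radius of $z$, so $e-z$ is invertible in $Z_1(e)$; hence $\gamma$ is a well-defined rational, in particular holomorphic, map on $D$, with $\gamma(0,0)=(0,ie)$ because $(e-0)^{-1}(e+0)=e$. (Here, for $w\in Z_1(e)$ invertible and $\zeta\in Z_{1/2}(e)$, the symbol $w^{-1}\zeta$ denotes the natural action of $Z_1(e)$ on $Z_{1/2}(e)$ furnished by the Peirce multiplication rules, for which $e$ acts as identity and the Jordan inverse acts as inverse operator.) It then remains to check two things: (i) that the displayed map is a two-sided inverse of $\gamma$, an algebraic identity in the Jordan triple system obtained by substitution and simplification using the Peirce composition rules, the compatibility of the Jordan inverse with the $Z_{1/2}(e)$-action, and the identity $\overline{(e-z)^{-1}(e+z)}=(e-\overline z)^{-1}(e+\overline z)$ for the conjugation of $F_\C$; and (ii) that $\gamma(D)=\Ds$, i.e.\ that $\norm{(\zeta,z)}<1$ is equivalent to $\Im\!\big(i(e-z)^{-1}(e+z)\big)-\Phi\!\big(2(e-z)^{-1}\zeta\big)\in\Omega$. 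Both are the classical Cayley transform computations and can be quoted from \cite{Loos} or \cite{FarautKoranyi2} up to the renormalisation of the tripotent and of the numerical constants. I expect (ii) — rewriting the spectral-norm inequality in Siegel coordinates — to be the main obstacle, chiefly because it demands careful bookkeeping of the normalising constants and of the precise meaning of the products $(e-z)^{-1}\zeta$. Once $\gamma$ is a biholomorphism of $D$ onto $\Ds$, the domain $\Ds$ inherits from the bounded symmetric domain $D$ the structure of a symmetric domain, which completes that assertion.

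Finally, the involution. Since $D$ is circular, $-\mathrm{id}_Z$ restricts to an involutive automorphism of $D$ having $0$ as isolated fixed point, i.e.\ the symmetry of $D$ at $0$; transporting it by $\gamma$, the symmetry of $\Ds$ at $\gamma(0,0)=(0,ie)$ is $\gamma\circ(-\gamma^{-1})$. I would compute this composition explicitly. Starting from $\gamma^{-1}(\zeta,z)=(4i(z+ie)^{-1}\zeta,\,(z+ie)^{-1}(z-ie))$, negating, and inserting the result into $\gamma$, one uses the identities (valid in the associative subalgebra $\C[z]$ of $Z_1(e)$, since both $z$ and $z+ie$ are invertible on $\Ds$ — indeed $\Im z\in\Omega$ there)
\[
e+(z+ie)^{-1}(z-ie)=2(z+ie)^{-1}z,\qquad e-(z+ie)^{-1}(z-ie)=2i(z+ie)^{-1},
\]
together with $\big(2(z+ie)^{-1}z\big)^{-1}=\tfrac12\,z^{-1}(z+ie)$. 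These collapse the second component of $\gamma(-\gamma^{-1}(\zeta,z))$ to $i\cdot\tfrac12\,z^{-1}(z+ie)\cdot 2i(z+ie)^{-1}=-z^{-1}$, and, after the analogous simplification of the first component (keeping track of the normalisation of the $Z_{1/2}(e)$-action), to $-2iz^{-1}\zeta$. This yields the stated involution $(\zeta,z)\mapsto(-2iz^{-1}\zeta,-z^{-1})$, and in particular re-proves that $\Ds$ is symmetric.
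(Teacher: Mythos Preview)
Your approach is essentially the paper's: the paper's entire proof is the single line ``Cf.~\cite[(7) of 10.1, Lemma 10.2, Proposition 10.3, and Corollary 10.9]{Loos} (with several substitutions)'', and you likewise reduce the structural assertions to \cite{Loos} and \cite{FarautKoranyi2}, adding only the explicit verification of the inverse and the involution.

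One genuine slip to flag in your involution computation: your parenthetical that under the Jordan product $(x,y)\mapsto\{x,e,y\}$ the element $e$ ``acts as identity'' on $Z_{1/2}(e)$ is false, since $\{e,e,\zeta\}=D(e,e)\zeta=\tfrac12\zeta$ for $\zeta\in Z_{1/2}(e)$; this half is exactly why the constants $2$ and $4$ appear in the first components of $\gamma$ and $\gamma^{-1}$ (compare Example~\ref{ex:2} and its footnote). Your second-component computation is clean because everything lives in the power-associative subalgebra $\C[z]\subseteq Z_1(e)$, but for the first component you are tacitly using that for $w_1,w_2\in\C[z]$ the iterated product $\{w_1,e,\{w_2,e,\zeta\}\}$ collapses as if the action were associative; this is true (and is what makes the factors of $2$ cancel to give $-2iz^{-1}\zeta$), but it is not the statement ``$e$ acts as identity'' and deserves a word of justification or a pointer to the relevant Peirce identity in \cite{Loos}.
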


Cf.~\cite[(7) of 10.1, Lemma 10.2, Proposition 10.3, and Corollary 10.9]{Loos} (with several substitutions) for a proof.

\begin{deff}
	The Siegel domain $\Ds$ of Proposition~\ref{prop:22} is said to be associated with the tripotent $e$. We say that $Z$ (or $D$) is of tube type if $E=\Set{0}$ (this does not depend on the choice of $e$).
\end{deff}

\begin{prop}\label{prop:23}
	Let $A$ be a formally real Jordan algebra with identity $e$ and rank $r$. Then, there is a unique polynomial $\det$ on $A$ such that, if $x\in A$ and $x=\sum_j \lambda_j e_j$ is the spectral decomposition of $x$ (with the $e_j$ pairwise orthogonal non-zero idempotents and the $\lambda_j$ distinct real numbers), then $\det(x)= \prod_{j} \lambda_j^{\rk(e_j)}$. 
	In addition, the following hold:
	\begin{enumerate}
		\item[\textnormal{(1)}] $\det$ is irreducible if and only if $A$ is simple;
		
		\item[\textnormal{(2)}] if $A$ is simple and $Z$ is its Hermitification, then the (holomorphic) extension of $\det$ to $Z$ is the unique holomorphic polynomial of degree $r$ such that $\det (e)=1$ and  such that $\det(k z)=\chi(k) \det z$ for every $k\in \Aut_0(Z)$, where $\chi$ is a character of   $\Aut_0(Z)$;
		
		\item[\textnormal{(3)}]  if $(e_1,\dots, e_r)$ is a maximal family of pairwise orthogonal idempotents of $A$, then $\det\big(\sum_j \lambda_j e_j\big)=\prod_j \lambda_j$ for every $(\lambda_j)\in \C^r$;
		
		\item[\textnormal{(4)}]  if $A$ is simple and  $(e_1,\dots, e_r)$ is a frame of tripotents of $Z$, then $\abs*{\det\big(\sum_j \lambda_j e_j\big)}=\prod_j \abs{\lambda_j}$ for every $(\lambda_j)\in \C^r$;
		
		\item[\textnormal{(5)}] if $A$ is simple, then  ${\det}_\C P(x)=(\det x)^{2 n/r}$ for every $x\in Z$, where $P(x)=\{x,\overline{\,\cdot\,},x \}$, and $n$ is the (complex) dimension of $Z$  (note that $2n/r$ is an integer).
	\end{enumerate}
\end{prop}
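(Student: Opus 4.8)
The plan is to...The plan is to reduce the statement to the classical theory of the generic norm of a Euclidean (formally real) Jordan algebra, as developed in~\cite{FarautKoranyi2}, and to transfer the needed facts to the Hermitification $Z=A_\C$ — which is of tube type, so that $Z=Z_1(e)$ for every maximal tripotent $e$ by Proposition~\ref{prop:21} — by means of the triple product.

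For existence I would invoke~\cite{FarautKoranyi2} to obtain a homogeneous polynomial of degree $r$ on $A$, the generic norm, whose value on a regular element $x=\sum_{j=1}^r\mu_j e_j$ (with $(e_j)$ a frame of idempotents and pairwise distinct $\mu_j$) is $\prod_j\mu_j$; by continuity, the spectral decomposition of Proposition~\ref{prop:20}, and a limiting argument collapsing equal eigenvalues, its value on a general $x=\sum_j\lambda_j e_j$ (spectral decomposition) is $\prod_j\lambda_j^{\rk(e_j)}$. Uniqueness is then immediate: every $x\in A$ admits such a decomposition, so this formula prescribes $\det$ on all of $A$, and a polynomial on a finite-dimensional real vector space is determined by its values. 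Part~(3) is the special case in which each $e_j$ has rank $1$. For~(1), if $A=A_1\times A_2$ nontrivially then a frame of $A$ is the concatenation of frames of the factors and every idempotent splits accordingly, whence $\det_A(x_1,x_2)=\det_{A_1}(x_1)\det_{A_2}(x_2)$ with both factors of positive degree $\rk(A_j)\ge 1$, so $\det$ is reducible; conversely, the irreducibility of the generic norm of a \emph{simple} Euclidean Jordan algebra — and, crucially for~(2), of its holomorphic extension to $Z$ over $\C$ — is classical and I would cite~\cite{FarautKoranyi2}.

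For~(5) and the transformation law in~(2), I would start from the operator identity $\det_\R P_A(x)=(\det_A x)^{2n/r}$ for the quadratic representation $P_A$ of $A$ (cf.~\cite{FarautKoranyi2}), where $n=\dim_\R A=\dim_\C Z$; complexifying, and using that the map $P$ of~(5) is $P(x)w=\{x,\overline w,x\}=Q(x)\overline w$, one obtains $\det_\C P(x)=(\det x)^{2n/r}$, which is~(5), and $2n/r\in\N$ since $\det$ is irreducible over $\C$ and hence not a proper power. For the transformation law, a direct computation from the automorphism identity $\{ux,uy,uz\}=u\{x,y,z\}$ gives $P(kz)=k\circ P(z)\circ k^{\sharp}$ for every $k\in\Aut(Z)$, where $k^{\sharp}$ is the $\C$-linear map $w\mapsto\overline{k^{-1}\overline w}$; taking $\det_\C$ and using~(5) yields $(\det kz)^{2n/r}=\dfrac{\det_\C k}{\overline{\det_\C k}}\,(\det z)^{2n/r}$, so $z\mapsto\det(kz)/\det(z)$ is a holomorphic function on the complement of the hypersurface $\{\det=0\}$ in $Z$ — which is connected, being the complement of a complex hypersurface in $\C^n$ — taking finitely many values, hence a constant $\chi(k)$. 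Clearly $\chi$ is a character of $\Aut_0(Z)$, and $|\chi(k)|=1$ because $\Aut(Z)$ is compact, being contained in the unitary group of the positive definite Hermitian form of Proposition~\ref{prop:20}(1); also $\det(e)=1$ by the spectral formula. For the remaining uniqueness assertion in~(2), I would invoke the prehomogeneity of the pair $(Z,\mathrm{Str}(Z))$ (the structure group of $Z$) — the complement of $\{\det=0\}$ being a dense $\mathrm{Str}(Z)^{\circ}$-orbit — together with the resulting description of the relative invariants, citing~\cite{FarautKoranyi2}.

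Finally,~(4) follows from~(3): since $A$ is simple, $Z$ is a simple positive Hermitian Jordan triple system, so any two frames of $Z$ are conjugate under $\Aut_0(Z)$ (as recalled after Proposition~\ref{prop:20}, resting on~\cite{Loos}); writing a given frame $(e_j)$ as $k(e_j^0)$ with $(e_j^0)$ a frame of idempotents of $A$ and $k\in\Aut_0(Z)$, the transformation law and~(3) give $\bigl|\det\bigl(\sum_j\lambda_j e_j\bigr)\bigr|=|\chi(k)|\,\prod_j|\lambda_j|=\prod_j|\lambda_j|$. The main obstacle — or rather, the only nonroutine points — are the two facts I would draw from~\cite{FarautKoranyi2} rather than reprove: the irreducibility of the generic norm of a simple Euclidean Jordan algebra and of its complexification (for~(1)), and the uniqueness of the relative invariant in~(2); both rest on the prehomogeneity of the structure group action. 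Everything else is a short consequence of the spectral decomposition, the compactness of $\Aut(Z)$, and the connectedness of the complement of $\{\det=0\}$.
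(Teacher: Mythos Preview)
Your proposal is essentially correct and follows the paper's strategy of reducing everything to classical facts from~\cite{FarautKoranyi2} (and~\cite{Loos} for the conjugacy of frames). The paper's proof is terser: it cites~\cite[Lemma 2.3]{KoranyiPukanszki} for~(1) and~\cite[Theorem 2.1]{FarautKoranyi} directly for~(2), and obtains~(5) by holomorphic extension from $x\in A$ of the real identity~\cite[Proposition III.4.2]{FarautKoranyi2}. Your route for~(2) is genuinely different and more self-contained: you first establish~(5), then derive the transformation law $\det(kz)=\chi(k)\det z$ from~(5) via the explicit computation $P(kz)=k\circ P(z)\circ k^{\sharp}$ and a connectedness argument. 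This is a nice alternative that avoids appealing to an external characterization theorem for the existence of $\chi$.

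There is, however, a small gap in your uniqueness argument for~(2). You invoke the prehomogeneity of $(Z,\mathrm{Str}(Z))$, but the statement concerns semi-invariants of the \emph{compact} subgroup $\Aut_0(Z)$, which does not act prehomogeneously on $Z$ (its orbits have positive codimension). A polynomial transforming by a character of $\Aut_0(Z)$ is not a priori a relative invariant of the full structure group, so the classification of relative invariants of a prehomogeneous vector space does not apply directly. The correct argument---which is what~\cite[Theorem 2.1]{FarautKoranyi} provides---goes through the Hua--Schmid decomposition of the polynomial algebra on $Z$ under $\Aut_0(Z)$: in homogeneous degree $r$ the only one-dimensional $\Aut_0(Z)$-isotypic component is $\C\det$. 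You should either cite this directly or sketch why any degree-$r$ semi-invariant of $\Aut_0(Z)$ must in fact extend to a $\mathrm{Str}(Z)$-relative invariant.
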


\begin{proof}
	The first assertion follows from~\cite[Proposition II.2.1 and Theorem III.1.2]{FarautKoranyi2}. (1) then follows from~\cite[Lemma 2.3]{KoranyiPukanszki}, while (2) is a consequence of ~\cite[Theorem 2.1]{FarautKoranyi}. Next, 	(3) follows from~\cite[Theorem III.1.2]{FarautKoranyi2} when $(\lambda_j)\in \R^r$, and by holomorphy in the general case. For (4), Observe that $\abs{\det}$ is necessarily invariant under the compact group $\Aut_0(Z)$ by (2), so that the assertion follows from (3) and the fact that any two frames of tripotents are conjugate under $\Aut_0(Z)$ (cf.~\cite[Theorem 5.9]{Loos}).
	Finally, (5) follows from~\cite[Proposition III.4.2]{FarautKoranyi2} when $x\in A$, since in this case ${\det}_\C P(x)={\det}_\R (P(x)\vert_A)$, and by holomorphy in the general case.
\end{proof}

\begin{prop}\label{prop:38}
	Let $A$ be a formally real Jordan algebra with identity $e$ and rank $r$, and let $(e_1,\dots, e_r)$ be a frame of idempotents of $A$. Let $\Omega$ be the symmetric cone associated with $A$. Then, there is a unique holomorphic family $(\Delta^{\vect s})_{\vect s\in \C^r}$ of holomorphic functions on $\Omega+ i A$ such that
	\[
	\Delta^{\vect s}(x)=\prod_{j=1}^r {\det}_{Z_1(e_1+\cdots+e_j)}(\pr_j(x))^{s_j-s_{j+1}}
	\]
	for every $x\in \Omega+ i A$ and for every $\vect s\in \C^r$, where $Z$ is the Hermitification of $A$, $\pr_j\colon Z\to Z_1(e_1+\cdots+ e_j)$ is the canonical projection, and $s_{r+1}=0$.
\end{prop}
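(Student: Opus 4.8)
The plan is to transcribe the classical construction of the generalized power function on a symmetric cone (cf.~\cite{FarautKoranyi2}) into the present Jordan-theoretic setting. First I would introduce the relevant principal subalgebras: for $j=1,\dots,r$ set $c_j\coloneqq e_1+\cdots+e_j$, which is an idempotent of $A$ (being a sum of pairwise orthogonal idempotents) and hence a tripotent of $Z$. Since $c_j$ is an idempotent of $A$, one has $D(c_j,c_j)=L(c_j)$ on $Z=A_\C$, where $L(c_j)$ is the $\C$-linear extension of Jordan multiplication by $c_j$; consequently $A_j\coloneqq A\cap Z_1(c_j)$ is a real form of $Z_1(c_j)$, and by Proposition~\ref{prop:21} it is the formally real Jordan algebra $A(c_j)$, with identity $c_j$, Hermitification $Z_1(c_j)$, rank $j$, and frame $(e_1,\dots,e_j)$. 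Let $\Omega_j\subseteq A_j$ be its symmetric cone and ${\det}_j$ the (holomorphic) determinant polynomial of $A_j$ on $Z_1(c_j)$ (cf.~Proposition~\ref{prop:23}), so that ${\det}_j(c_j)=1$ and ${\det}_j>0$ on $\Omega_j$ (being a product of strictly positive spectral values). Finally let $\pr_j\coloneqq Q(c_j)^2\colon Z\to Z_1(c_j)$ be the orthogonal projector of Proposition~\ref{prop:21}; it is $\C$-linear, maps $A$ into $A_j$, and sends $e$ to $c_j$ (since $e-c_j\in Z_0(c_j)$). Put $\Delta_j\coloneqq {\det}_j\circ\pr_j$, a holomorphic polynomial on $Z$ with $\Delta_j(e)=1$.

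The heart of the matter — and the step I expect to require the most care — is to show that $\Delta_j$ is strictly positive on $\Omega$ and non-vanishing on the tube $\Omega+i A$. For positivity on $\Omega$ I would use the self-adjointness of the orthogonal projector $\pr_j$: for $x\in\Omega$ and $u\in\overline{\Omega_j}\setminus\Set{0}$ one has $\langle\pr_j(x)\vert u\rangle=\langle x\vert u\rangle>0$, since $u\in Z_1(c_j)$ and $\overline{\Omega_j}\setminus\Set{0}\subseteq\overline\Omega\setminus\Set{0}$ (because $A_j$ is a Jordan subalgebra of $A$, so that squares of elements of $A_j$ are squares in $A$); by the self-duality of $\Omega_j$ this yields $\pr_j(\Omega)\subseteq\Omega_j$, whence $\Delta_j>0$ on $\Omega$. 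In particular $\pr_j(\Omega+i A)\subseteq\Omega_j+i A_j$, the tube domain over $\Omega_j$. Now the determinant of a formally real Jordan algebra does not vanish on the tube domain over its symmetric cone (this is classical; alternatively, using the quadratic representation of $A_j$ one reduces to an element $c_j+i w$ with $w\in A_j$, and diagonalizing $w=\sum_{k=1}^j\mu_k f_k$ over a frame $(f_1,\dots,f_j)$ of idempotents of $A_j$, with $\mu_k\in\R$, gives ${\det}_j(c_j+i w)=\prod_{k=1}^j(1+i\mu_k)\neq 0$ by~(3) of Proposition~\ref{prop:23}). Therefore ${\det}_j$ is non-vanishing on $\Omega_j+i A_j$, and so $\Delta_j$ is non-vanishing on $\Omega+i A$.

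With this in hand the construction is routine. The set $\Omega+i A=\Omega\times A$ is convex, hence simply connected, so for each $j$ there is a unique holomorphic function $\ell_j\colon\Omega+i A\to\C$ with $\exp(\ell_j)=\Delta_j$ and $\ell_j$ real (equivalently, $\ell_j=\log\Delta_j$) on $\Omega$. I would then define, for $\vect s\in\C^r$ with $s_{r+1}\coloneqq 0$,
\[
\Delta^{\vect s}\colon\Omega+i A\ni x\longmapsto\exp\left(\sum_{j=1}^r(s_j-s_{j+1})\,\ell_j(x)\right)\in\C;
\]
this is holomorphic jointly in $(x,\vect s)$, so that $(\Delta^{\vect s})_{\vect s\in\C^r}$ is a holomorphic family of holomorphic functions on $\Omega+i A$, and on $\Omega$ — where the powers are the usual positive ones — it equals $\prod_{j=1}^r{\det}_j(\pr_j(x))^{s_j-s_{j+1}}$, as required (for general $x\in\Omega+i A$ the right-hand side of the stated formula being read with the same holomorphic branches $\exp((s_j-s_{j+1})\ell_j)$). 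As for uniqueness, any other such family must coincide with $(\Delta^{\vect s})$ on the non-empty open subset $\Omega$ — where the formula is unambiguous — for each fixed $\vect s$, hence on all of the connected open set $\Omega+i A$ by the identity principle.
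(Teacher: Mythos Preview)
Your proof is correct and takes a more direct, elementary route than the paper. The paper cites \cite[p.~122]{FarautKoranyi2} for the positivity of the principal minors $\Delta_j$ on $\Omega$ and then, for the holomorphic extension to the tube, invokes the interpretation of $\Delta^{\vect s}$ (up to an inversion) as the Laplace transform of a Riesz distribution supported on $\overline\Omega$ (cf.~\cite[Proposition VII.1.2 and Theorem VII.2.6]{FarautKoranyi2}, and \cite{Ishi} in the non-simple case). You instead establish directly that each $\Delta_j$ is non-vanishing on $\Omega+iA$ --- via $\pr_j(\Omega+iA)\subseteq\Omega_j+iA_j$ and a spectral computation for ${\det}_j$ on the smaller tube --- and then exploit the simple connectedness of the convex tube to produce holomorphic logarithms and hence the complex powers. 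This avoids the detour through Riesz distributions and is essentially self-contained; the paper's approach, by contrast, ties $\Delta^{\vect s}$ to the analytic family of Riesz distributions, a structure of independent interest elsewhere in the theory.

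One small point worth tightening: your duality argument for $\pr_j(\Omega)\subseteq\Omega_j$ tacitly uses that $\Omega_j$ is self-dual in $A_j$ with respect to the \emph{restriction} of the ambient inner product $\langle\,\cdot\,\vert\,\cdot\,\rangle$ on $A$, not merely with respect to the canonical inner product of $A_j$. This is true --- since $L(z)$ for $z\in A_j$ preserves each Peirce component $A_\alpha(c_j)$, one checks that $\Tr_A L(\,\cdot\,)$ restricted to $A_j$ is, on each simple factor of $A_j$, a positive multiple of the trace form $\tr_{A_j}$ --- but a word of justification would not hurt.
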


\begin{proof}
	If $x\in \Omega$, then the stated formula makes sense   since ${\det}_{Z_1(e_1+\cdots+e_j)}(\pr_j(x))>0$ for every $j=1,\dots, r$ (cf.~\cite[p.~122]{FarautKoranyi2}). The fact that $\Delta^{\vect s}$ extends by holomorphy to $\Omega + i A$ is then a consequence of the fact that $\Delta^{\vect s}$ (up to an inversion) may be interpreted as the Laplace transform of a suitable `Riesz distribution' supported on $\overline \Omega$, cf.~\cite[Proposition VII.1.2 and Theorem VII.2.6]{FarautKoranyi} (where only the case in which $A$ is simple is considered; the general case may be deduced from this one with some care. Cf.~also~\cite{Ishi} for the general case, treated from a different perspective).
\end{proof}

\begin{ex}\label{ex:2}
	As an example, we shall now describe the Jordan triple system corresponding to the irreducible bounded symmetric domains of type $(I_{p,q})$ ($p\meg q$). Let $Z$ be the space of complex $p\times q$ matrices, endowed with the triple product defined by
	\[
	\{x,y,z\}=\frac{1}{2}(x y^* z+z y^* x)
	\] 
	for every $x,y,z\in Z$, where $y^*$ denotes the conjugate transpose of $y$. In this case (identifying $Z$ with the space of $\C$-linear mappings from $\C^p$ to $\C^q$), a tripotent is simply a  partial isometry, and a maximal tripotent is an isometry. The spectral decomposition of an element of $Z$ is then (essentially) its singular value decomposition, so that the spectral norm is simply the operator norm. In addition, the rank of $Z$ is $p$.
	
	If we let $e$ be the canonical inclusion $(I,0)$ of $\C^p$ in $\C^q$, then $e$ is a maximal tripotent, $Z_1(e)$ is naturally identified with the algebra of endomorphisms of $\C^p$, and $Z_{1/2}(e)$ is naturally identified with the space of $\C$-linear mappings from $\C^p$ into $\C^{q-p}$. The real Jordan algebra $A(e)$ is naturally identified with the space of $p\times p$ Hermitian matrices, and the cone $\Omega$ is naturally identified with the space of positive non-degenerate $p\times p$ Hermitian matrices. Then, the Siegel domain associated with $e$ is\footnote{Mind that $\{\zeta,e,z\}=\frac 1 2 z\zeta $ and that $\{\zeta,\zeta',e\}= \frac 1 2 \zeta\zeta'^*$ for every $\zeta,\zeta'\in Z_{1/2}(e)$ and for every $z\in Z_1(e)$.}
	\[
	\Set{(\zeta,z)\colon \frac{z-z^*}{2 i}- \zeta \zeta^*\in \Omega },
	\]
	and the associated Cayley transform is
	\[
	D\ni (\zeta,z)\mapsto ( (I-z)^{-1}\zeta, i (I-z)^{-1}(I+z) ) \Ds
	\]
	with inverse
	\[
	\Ds\ni (\zeta,z)\mapsto (2 i (z+ i I)^{-1}\zeta, (z+ i I)^{-1}(z- i I))\in D.
	\]
	Notice that an element of $Z_{1}(e)$ is invertible if and only if it is invertible in the usual sense (with the same inverse), since the product of $Z_{1}(e)$ (that is, $(x,y)\mapsto \frac 1 2 (x y+ y x)$) and the usual product coincide on the algebra generated by any element of $Z_{1}(e)$.  
	
	Finally, $\det$ on $Z_1(e)$ is the usual (complex) determinant.
\end{ex}

\section{Restricted Limits}\label{sec:4}

\begin{deff}
	Let $Z$ be a positive Hermitian Jordan triple system. Define   the Bergman operator $B$ by
	\[
	B(x,y)\coloneqq I_Z-2 D(x,y)+Q(x) Q(y)
	\]
	for every $x,y\in Z$. Given a non-zero tripotent $e\in Z$ and $k>0$, define the generalized angular region
	\[
	D_k(e)\coloneqq \Set{x\in Z\colon \norm{x}<1, \norm{B(x,e) Q(e)^2 }'^{1/2}<k (1-\norm{w}^2) },
	\]
	where $\norm{B(x,e)Q(e)^2}'=\max_{\norm{y}\meg 1} \norm{B(x,e) Q(e)^2y}$.
\end{deff}

\begin{lem}\label{lem:8}
	Let $e$ be a non-zero tripotent, and let $\pi$ be the orthogonal projector of $Z$ with range $\C e$. Then, $\pi(D)\subseteq \Db e$ and $\pi(D_k(e))\subseteq D_k(e)$ for every $k>0$.
\end{lem}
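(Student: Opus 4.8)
The plan is to derive both inclusions from a single fact --- that $\pi$ is a contraction for the spectral norm, $\norm{\pi(z)}\leq\norm{z}$ for all $z\in Z$ --- together with a computation of $B(\,\cdot\,,e)Q(e)^{2}$ along the complex line $\C e$.

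First I would establish the contractivity of $\pi$. Since $\pi(z)=\abs{e}^{-2}\langle z\vert e\rangle\,e$ and $\norm{e}=1$ (because $e\neq 0$), this amounts to the inequality $\abs{\langle z\vert e\rangle}\leq\abs{e}^{2}\norm{z}$. I would write $e=\sum_{j=1}^{\rk(e)}e_j$ as a sum of pairwise orthogonal \emph{primitive} tripotents; then $\abs{e}^{2}=\sum_j\abs{e_j}^{2}$, because $D(e_i,e_j)=0$, hence $\langle e_i\vert e_j\rangle=\Tr D(e_i,e_j)=0$, for $i\neq j$. For fixed $j$, the Peirce decomposition of $z$ relative to $e_j$ being orthogonal and $e_j$ lying in $Z_1(e_j)$, one has $\langle z\vert e_j\rangle=\langle Q(e_j)^{2}z\vert e_j\rangle$; since $e_j$ is primitive, $Z_1(e_j)=\C e_j$, so $Q(e_j)^{2}z=\mu_j e_j$ for some $\mu_j\in\C$ and $\langle z\vert e_j\rangle=\mu_j\abs{e_j}^{2}$, while $\abs{\mu_j}=\norm{Q(e_j)^{2}z}\leq\norm{z}$ by Remark~\ref{oss:12}. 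Summing over $j$ gives $\abs{\langle z\vert e\rangle}\leq\norm{z}\sum_j\abs{e_j}^{2}=\norm{z}\abs{e}^{2}$. In particular, for $z\in D$ we get $\pi(z)\in\C e$ with $\norm{\pi(z)}<1$, i.e.\ $\pi(z)\in\Db e$; this is already the first assertion.

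Next, from $B(\lambda e,e)=I_Z-2\lambda D(e,e)+\lambda^{2}Q(e)^{2}$, together with $Q(e)^{2}$ being the Peirce-$1$ projector and $D(e,e)$ acting as the identity on $Z_1(e)$ (Proposition~\ref{prop:21}), one reads off $B(\lambda e,e)Q(e)^{2}=(1-\lambda)^{2}Q(e)^{2}$ for every $\lambda\in\C$, whence $\norm{B(\lambda e,e)Q(e)^{2}}'=\abs{1-\lambda}^{2}$ by Remark~\ref{oss:12}. Putting $\mu:=\abs{e}^{-2}\langle x\vert e\rangle$, so that $\pi(x)=\mu e$, this says $\norm{B(\pi(x),e)Q(e)^{2}}'^{1/2}=\abs{1-\mu}$. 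The heart of the argument is the lower bound $\norm{B(x,e)Q(e)^{2}}'\geq\abs{1-\mu}^{2}$ for $\norm{x}<1$. Set $x_1:=Q(e)^{2}x\in Z_1(e)$. Post-composing with the contraction $Q(e)^{2}$ and observing, via the Peirce multiplication rules, that the restriction of $Q(e)^{2}B(x,e)$ to $Z_1(e)$ is the Bergman operator $B_{Z_1(e)}(x_1,e)$ of the unital subtriple $Z_1(e)$ (and that $Q(e)^{2}$ maps the unit ball of $Z$ onto that of $Z_1(e)$), one obtains $\norm{B(x,e)Q(e)^{2}}'\geq\norm{Q(e)^{2}B(x,e)Q(e)^{2}}'=\norm{B_{Z_1(e)}(x_1,e)}'$. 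A direct computation in the Jordan algebra $Z_1(e)$, with unit $e$, shows that $B_{Z_1(e)}(x_1,e)$ is the quadratic representation $P_{Z_1(e)}(e-x_1)$, i.e.\ $B_{Z_1(e)}(x_1,e)\colon y\mapsto\{e-x_1,\{e,y,e\},e-x_1\}$. For any $b\in Z_1(e)$ one then has $\norm{P_{Z_1(e)}(b)}'\geq\norm{b}^{2}$: writing the spectral decomposition $b=\sum_i\lambda_i c_i$ ($\lambda_i>0$, the $c_i$ pairwise orthogonal tripotents, $\norm{b}=\lambda_{i_0}=\max_i\lambda_i$), the element $\{e,c_{i_0},e\}$ is again a tripotent, hence of spectral norm $1$, and testing $P_{Z_1(e)}(b)$ on it gives $\{b,c_{i_0},b\}=\lambda_{i_0}^{2}c_{i_0}$, every off-diagonal term of $\{b,c_{i_0},b\}$ being killed by the Peirce relations for $c_{i_0}$ and the mutual orthogonality of the $c_i$. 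Thus $\norm{B(x,e)Q(e)^{2}}'\geq\norm{e-x_1}^{2}$. Finally, since $x-x_1\in Z_{1/2}(e)\oplus Z_0(e)$ is orthogonal to $\C e$, we have $\pi(e-x_1)=e-\pi(x)=(1-\mu)e$, so contractivity of $\pi$ yields $\norm{e-x_1}\geq\abs{1-\mu}$, and therefore $\norm{B(x,e)Q(e)^{2}}'\geq\abs{1-\mu}^{2}$.

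To conclude, let $x\in D_k(e)$. Combining the above with $\abs{\mu}=\norm{\pi(x)}\leq\norm{x}$ gives
\[
\norm{B(\pi(x),e)Q(e)^{2}}'^{1/2}=\abs{1-\mu}\leq\norm{B(x,e)Q(e)^{2}}'^{1/2}<k\bigl(1-\norm{x}^{2}\bigr)\leq k\bigl(1-\abs{\mu}^{2}\bigr)=k\bigl(1-\norm{\pi(x)}^{2}\bigr);
\]
as also $\norm{\pi(x)}<1$, this shows $\pi(x)\in D_k(e)$. I expect the only substantial work to lie in the third paragraph: the two Peirce-calculus identities (that the Peirce-$1$ compression of $B(x,e)$ is the Bergman operator of $Z_1(e)$, and that this operator equals $P_{Z_1(e)}(e-x_1)$) and the estimate $\norm{P_{Z_1(e)}(b)}'\geq\norm{b}^{2}$; everything else is formal.
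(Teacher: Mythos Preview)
Your proof is correct and follows essentially the same route as the paper's: both prove the spectral-norm contractivity of $\pi$ (the paper isolates this as Lemma~\ref{lem:10}, with the same primitive-tripotent argument you give), compute $B(\lambda e,e)Q(e)^2=(1-\lambda)^2Q(e)^2$, reduce via the Peirce compression $Q(e)^2B(x,e)Q(e)^2=B(x_1,e)Q(e)^2$ to the bound $\norm{B(x_1,e)Q(e)^2}'\geq\norm{e-x_1}^2$, and then apply contractivity to $e-x_1$. The one difference is that where the paper cites~\cite[Lemma 3.10]{MackeyMellon} for that last inequality, you supply a self-contained argument via the identification $B_{Z_1(e)}(x_1,e)=P_{Z_1(e)}(e-x_1)$ and the spectral estimate $\norm{P_{Z_1(e)}(b)}'\geq\norm{b}^2$.
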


Thus, $\pi$ is a `projection device' (according to~\cite{Abate}) or a `projective device' (according to~\cite{MackeyMellon}).
We first need a simple lemma.

\begin{lem}\label{lem:10}
	Take a tripotent $e$ in $Z$. Then, for every $x\in Z$,
	\[
	\abs{\Tr D(x,e)}\meg \abs{e}^2\norm{Q(e)^2 x}. 
	\]
\end{lem}

\begin{proof}
	Observe that $Q(e)^2 x$ is the component of $x$ in $Z_1(e)$, so that $\Tr D(x,e)=\Tr D(Q(e)^2 x,e)$ by Proposition~\ref{prop:21}. Therefore, we may reduce to the case in which $Z=Z_1(e)$.  In addition, we may assume that $e\neq 0$, so that the statement may be rephrased saying that the orthogonal projector $\pi_e$ of $Z$ onto $\C e$, namely $x \mapsto \frac{1}{\abs{e}^2} \langle x \vert e \rangle e= \frac{e}{\abs{e}^2} \Tr D(x,e)$ has norm $\meg 1$ with respect to the spectral norm. Now, let $(e_1,\dots, e_k)$ be a family of pairwise orthogonal \emph{primitive} tripotents such that $e=e_1+\cdots+e_k$, and observe that $\pi'=\pi_{e_1}+\cdots+\pi_{e_k}$ is the orthogonal projector of $Z$ onto $Z'\coloneqq \bigoplus_{j=1}^k \C e_j$ (defining $\pi_{e_j}$ as the orthogonal projector of $Z$ onto $\C e_j$), and that $\pi_e=\pi_e \circ\pi'$. Now, observe that $\pi_{e_j}=Q(e_j)^2$ since $Z_1(e_j)=\C e_j$ (cf., e.g.,~\cite[5.4 and 4.14]{Loos}), for every $j=1,\dots, k$. Therefore, $\norm{\pi'(x)}=\max_{j=1,\dots, k} \norm{\pi_{e_j}(x)}\meg \norm{x}$ for every $x\in Z$, thanks to Remark~\ref{oss:12}. In addition, the mapping $\iota \colon \C^k \ni z \mapsto \sum_j z_j e_j\in Z'$ is an isometry when $\C^k$ is endowed with the $\ell^\infty$ norm and $Z'$ is endowed with the spectral norm, and clearly $(\pi_e\circ \iota)(z)=e \sum_{j=1}^k \frac{\abs{e_j}^2}{\abs{e}^2} z_j$ for every $z\in \C^k$. The assertion follows since $\sum_{j=1}^k\frac{\abs{e_j}^2}{\abs{e}^2}=1$.  
\end{proof}

\begin{proof}[Proof of Lemma~\ref{lem:8}.]
	Take $x\in Z$, and take $x_1\in Z_1(e)$, $x_{1/2}\in Z_{1/2}(e)$, and $x_0\in Z_0(e)$ so that $x=x_1+x_{1/2}+x_0$ (cf.~Proposition~\ref{prop:21}). Observe that
	\[
	\pi(x)= \frac{e}{\abs{e}^2} \Tr D(x,e) = \frac{e}{\abs{e}^2} \Tr D(x_1,e) .
	\]
	Set $\lambda_x\coloneqq  \frac{1}{\abs{e}^2} \Tr D(x_1,e) $ for simplicity. Observe that 
	\[
	\norm{\pi(x)}= \abs{\lambda_x}=\frac{1}{\abs{e}^2} \abs{\Tr D(x,e)}\meg 	\norm{x}
	\]
	thanks to Lemma~\ref{lem:10}. In particular, $\pi(x)\in \Db e$ if $x\in D$.
	
	Now, observe that $Q(e)^2$ is the orthogonal projector of $Z$ onto $Z_1(e)$ by Proposition~\ref{prop:21} and that 
	\[
	B(\pi(x),e)Q(e)^2=(I_Z- 2\lambda_x D(e,e)+\lambda_x^2 Q(e)^2) Q(e)^2=(1-\lambda_x)^2 Q(e)^2
	\]
	since $D(e,e)=Q(e)^2=I_{Z_1(e)}$ on $Z_1(e)$. In addition,
	\[
	Q(e)^2B(x,e)Q(e)^2= Q(e)^2- 2 Q(e)^2 D(x_1,e) Q(e)^2+Q(e)^2Q(x_1)Q(e)^3=B(x_1,e) Q(e)^2,
	\]
	thanks to Proposition~\ref{prop:21}. Since $\norm{Q(e)^2}'=1$ by Remark~\ref{oss:12}, this shows that
	\[
	\norm{B(x,e)Q(e)^2}'\Meg \norm{Q(e)^2B(x,e)Q(e)^2}'= \norm{B(x_1,e) Q(e)^2}'\Meg \norm{e-x_1}^2,
	\]
	where the last inequality follows from~\cite[Lemma 3.10]{MackeyMellon}. 
	Then,
	\[
	\norm{B(\pi(x), e) Q(e)^2}'^{1/2}=\abs*{ 1-\frac{1}{\abs{e}^2} \Tr(D(x_1,e)) }=\frac{1}{\abs{e}^2}\abs{\Tr D(e-x_1,e)}\meg  	\norm{e-x_1},
	\]
	thanks to Lemma~\ref{lem:10} again. Consequently, if $x\in D_k(e)$, then
	\[
	\norm{B(\pi(x),e) Q(e)^2}'^{1/2}\meg \norm{B(x,e) Q(e)^2}'^{1/2}<k(1-\norm{x}^2)\meg k(1-\norm{\pi(x)}^2),
	\]
	that is,	 $\pi(x)\in D_k(e)$. This completes the proof.
\end{proof}

\begin{oss}
	Let $P_1,\dots, P_h$ be the orthogonal projectors onto the simple factors of $Z$. If $e$ is a non-zero tripotent in $Z$ and $\pi'$ is the orthogonal projector of $Z$ onto $\C e$ \emph{with respect to a scalar product of the form $\langle x \vert y \rangle'\coloneqq \sum_{j=1}^h a_j \langle P_j x\vert P_j y \rangle$} for some $a_1,\dots, a_h>0$, then minor modifications in the proof of Lemma~\ref{lem:10} show that $\pi'(D)\subseteq \Db e$  and that $\pi(D_k(e))\subseteq D_k(e)$ for every $k>0$, so that also $\pi'$ is a `projective' (or `projection') device. 
	
	Observe that the scalar products such as $\langle\,\cdot\,\vert \,\cdot\,\rangle'$ may be characterized as the ones which are  $\Aut_0(Z)$-invariant (notice that $\Aut_0(Z)\cong \Aut_0(P_1 Z)\times \cdots \times \Aut_0(P_h Z)$).
\end{oss}

As a consequence of~\cite[Theorems 4.1 (and the Note following it) and 4.2]{MackeyMellon}, we may then state the following result. Even though we shall not use it in the sequel, it is a natural consequence of Lemma~\ref{lem:8} in view of the machinery developed in~\cite{MackeyMellon}. Besides that, since it may be tempting to replace radial limits in $D$ (or vertical limits in $\Ds$) with more general ones, it is important to stress the fact that, in the case of symmetric domains of rank $\Meg 2$, the conditions which allow an `automatic' extension are far more restrictive than in the case of domains of rank $1$, and that one cannot do reasonably any better, at least in tube domains (cf.~Example~\ref{ex:7}).

\begin{prop}\label{prop:28}
	Take a holomorphic function $f\colon D\to \C$ and $e\in \bD$ such that $f$ is bounded on $D_k(e)$ for every $k>0$. Let $\Gamma\colon [0,1)\to D$ be a curve such that the following hold:
	\begin{itemize}
		\item $\lim_{t\to 1^-} \Gamma(t)=e$ (`$\Gamma$ is an $e$-curve');
		
		\item $\lim_{t\to 1^-} \frac{\norm{\Gamma(t)-\gamma(t)}}{1-\norm{\gamma(t)}}=0$, where $\gamma=\pi_e\circ \Gamma$ and $\pi_e$ is the orthogonal projection of $Z$ onto $\C e$ (`$\Gamma$ is special').
	\end{itemize}
	Then, 
	\[
	\lim_{t\to 1^-} [(f\circ \Gamma)(t)-(f\circ \gamma)(t)]=0.
	\]
	In particular, if:
	\begin{itemize}
		\item $\gamma([t_0,1))\subseteq D_k(e)$ for some $t_0\in [0,1)$ and some $k>0$ (`$\Gamma$ is restricted');\footnote{Because of the computations in the proof of Lemma~\ref{lem:8}, this means that $\gamma([t_0,1))\subseteq \Db_k(1) e$, where $\Db_k(1)$ is defined as $D_k(e)$, replacing $D$ with $\Db$ and $e$ with $1$, so that $\Db_k(1)=\Set{w\in \Db\colon  \abs{1-w}<k (1-\abs{w}^2)}$.}
	\end{itemize}
	then the limits of $(f\circ \Gamma)(t)$, $(f\circ \gamma)(t)$, and $f(t e)$, for $t\to 1^-$, exist simultaneously (that is, the existence of any one of them implies the existence of the other two) and are equal.
\end{prop}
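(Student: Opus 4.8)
The plan is to deduce the first assertion directly from the machinery of~\cite{MackeyMellon}, using Lemma~\ref{lem:8} as the required input. Indeed, by Lemma~\ref{lem:8}, the orthogonal projector $\pi_e$ of $Z$ onto $\C e$ satisfies $\pi_e(D)\subseteq \Db e$ and $\pi_e(D_k(e))\subseteq D_k(e)$ for every $k>0$; that is, $\pi_e$ is a projective device for $D$ at $e$ in the sense of~\cite{MackeyMellon}. A holomorphic function $f\colon D\to\C$ which is bounded on every $D_k(e)$ is precisely one to which the results of~\cite[\S4]{MackeyMellon} on restricted limits apply. The definition of an $e$-curve that is \emph{special} (i.e.\ the approach $\Gamma(t)$ to $e$ differs from its projection $\gamma(t)=\pi_e(\Gamma(t))$ by $o(1-\norm{\gamma(t)})$) is exactly the hypothesis under which~\cite[Theorem 4.1 and the Note following it, and Theorem 4.2]{MackeyMellon} guarantee that $(f\circ\Gamma)(t)$ and $(f\circ\gamma)(t)$ differ by $o(1)$ as $t\to 1^-$. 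So the first conclusion $\lim_{t\to 1^-}[(f\circ\Gamma)(t)-(f\circ\gamma)(t)]=0$ is immediate once one has verified that $\pi_e$ is a projective device, which is Lemma~\ref{lem:8}; this is the only place where the structure theory of the previous sections enters.

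For the second assertion, I would reduce the statement about curves in $D$ to the one-variable situation inside the slice $\Db e$. Since $\gamma=\pi_e\circ\Gamma$ takes values in $\Db e$ and, by the computation in the proof of Lemma~\ref{lem:8}, $\gamma([t_0,1))\subseteq \Db_k(1)\,e$ with $\Db_k(1)=\Set{w\in\Db\colon \abs{1-w}<k(1-\abs{w}^2)}$ a Stolz angle at $1$, the restriction of $f$ to $\Db e$ is a bounded holomorphic function on each $\Db_k(1)$ (by the boundedness of $f$ on $D_k(e)$). The classical Lindel\"of theorem for bounded holomorphic functions on the disc then says that, along any curve in a fixed Stolz angle at $1$ that tends to $1$, the limit of $f$ agrees with its radial limit at $1$ whenever either exists; in particular the limits of $(f\circ\gamma)(t)$ and of $f(te)$ as $t\to 1^-$ exist simultaneously and are equal. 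Combining this with the first assertion, which ties $(f\circ\Gamma)(t)$ to $(f\circ\gamma)(t)$, yields that all three of $(f\circ\Gamma)(t)$, $(f\circ\gamma)(t)$, $f(te)$ have the same limiting behaviour, which is the claim.

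The main obstacle is not a computational one but a bookkeeping one: making sure that the hypotheses of~\cite[Theorems 4.1 and 4.2]{MackeyMellon} match the hypotheses stated here (e.g.\ that ``special $e$-curve'' in the present formulation is literally the notion used there, up to the choice of scalar product defining $\pi_e$), and that the boundedness hypothesis ``$f$ bounded on $D_k(e)$ for every $k$'' is the one under which those theorems are proved. The auxiliary Remark after Lemma~\ref{lem:8} shows that any $\Aut_0(Z)$-invariant scalar product gives an equally good projective device, so there is no loss in the normalization. Once this identification is in place, no further work is needed: the first part is a citation applied to Lemma~\ref{lem:8}, and the second part is the first part plus the classical Lindel\"of theorem on the disc.
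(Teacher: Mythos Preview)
Your proposal is correct and matches the paper's approach exactly: the paper states Proposition~\ref{prop:28} as a direct consequence of \cite[Theorems~4.1 (and the Note following it) and~4.2]{MackeyMellon}, with Lemma~\ref{lem:8} providing the projective device. Your unpacking of the second assertion via the one-variable Lindel\"of principle on $\Db e$ is precisely what the citation of \cite[Theorem~4.2]{MackeyMellon} amounts to in this setting.
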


\begin{ex}\label{ex:7}
	When $D$ is the \emph{Euclidean} unit ball of $\C^n$, then one may allow the special curves to be the ones such that 
	\[
	\lim_{t\to 1^-} \frac{\norm{\Gamma(t)-\gamma(t)}^2}{1-\norm{\gamma(t)}}=0,
	\]
	with the above notation; Proposition~\ref{prop:28} then holds by \v Cirka's theorem (cf., e.g.,~\cite[Theorem 8.4.4]{Rudin}). 
	Nonetheless, in more general domains the same assertion would be incorrect (as essentially noted in~\cite{MackeyMellon}). 
	For example, assume that $\Ds=\C_+^2$ and $D=\Db^2$, so that we may take, as inverse Cayley transform, $\gamma=\gamma_0\times \gamma_0\colon  \C_+^2\to \Db^2$, where $\gamma_0\colon \C_+ \ni w \mapsto \frac{w-i}{w+i}\in \Db$. Consider
	\[
	f\colon \C_+^2\ni (w_1,w_2)\mapsto \frac{ w_1}{w_1+w_2}\in \C
	\]
	and observe that $f$ is well defined since $\Im (w_1+w_2)>0$ for every $(w_1,w_2)\in \C_+^2$. In addition, observe that $D_k((-1,-1))\subseteq \Db_k(-1)^2$ since $\norm{(w_1,w_2)}=\max(\abs{w_1},\abs{w_2})$ and $B_D((w_1,w_2),(-1,-1))=B_\Db(w_1,-1) \times B_\Db(w_2,-1)=[(1-w_1)^2 I_\Db ]\times [(1-w_2)^2 I_\Db]$ for every $(w_1,w_2)\in \Db^2$. Hence, $\gamma^{-1}(D_k(-1,-1))$ is contained in the product of two angles of the form $\abs{\Re w}\meg C \Im w$ in $\C_+$, so that $f\circ \gamma^{-1}$ is bounded in $D_k(-1,-1)$ for every $k>0$. 
	Now, for every $(a,b)\in \Omega=(\R_+^*)^2$, consider the curve $\Gamma_{(a,b)}\colon [0,1)\ni t \mapsto \gamma((1-t) i (a,b))=\big( \frac{(1-t) a-1}{(1-t) a +1}, \frac{(1-t)b-1}{(1-t) b+1} \big)\in \Db^2$, and observe that $\Gamma_{(a,b)}$ is a $(-1,-1)$-curve, and that
	\[
	\gamma_{(a,b)}(t)=\frac{1}{2}\left(\frac{(1-t) a-1}{(1-t) a +1}+ \frac{(1-t)b-1}{(1-t) b+1} \right) (1,1)
	\]
	for every $t\in [0,1)$. Consequently, 
	\[
	1-\norm{\gamma_{(a,b)}(t)}= \frac{(1-t) a}{(1-t) a +1}+ \frac{(1-t)b}{(1-t) b+1} \sim (1-t) (a+b)
	\]
	and
	\[
	\norm{\Gamma_{(a,b)}(t)-\gamma_{(a,b)}(t)}=(1-t) \frac{\abs{a-b}}{((1-t)a+1)((1-t)b+1)}\sim (1-t) \abs{a-b}
	\]
	for $t\to 1^-$. Therefore, unless $a=b$ (in which case  $\gamma_{(a,b)}=\Gamma_{(a,b)}$),
	\[
	\lim_{t\to 1^-} \frac{	\norm{\Gamma_{(a,b)}(t)-\gamma_{(a,b)}(t)}}{1-\norm{\gamma_{(a,b)}(t)}}>0=\lim_{t\to 1^-} \frac{	\norm{\Gamma_{(a,b)}(t)-\gamma_{(a,b)}(t)}^2}{1-\norm{\gamma_{(a,b)}(t)}}.
	\]
	In addition,
	\[
	\lim_{t \to 1^-} f(\gamma^{-1}(\gamma_{(a,b)}(t)))=\lim_{t \to 0^+} f (i t ,i t)=\frac 1 2
	\]
	since $\gamma^{-1}\circ\gamma_{(a,b)}$ is a $0$-curve with range  in $i \R_+^*(1,1)$, while
	\[
	\lim_{t\to 1^-} f(\gamma^{-1}(\Gamma_{(a,b)}(t)))=\lim_{t\to 0^+} f(i a t, i b t)=\frac{a}{a+b},
	\]
	so that the two limits are different (unless $a=b$).
\end{ex}

For future reference, we state the following result, which is a direct consequence of the arguments of the proof of~\cite[Lemma 4.4]{MackeyMellon}.

\begin{lem}\label{lem:11}
	Let $e$ be a non-zero tripotent in $Z$, and take $k\in (0,1)$ and $k'>0$. If $x\in Z$, $\norm{x}<1$, $x'\coloneqq \frac{1}{\abs{e}^2}\langle x\vert e \rangle$,  $\norm{x- x' e}\meg k (1-\abs{x'})$, and $\abs{1-x'}\meg k'(1-\abs{x'}^2)$, then $x\in D_{k''}(e)$, where $k''=2 \sqrt 3 k' \frac{1+k}{1-k}$.
\end{lem}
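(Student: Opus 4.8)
The plan is to reduce the statement to a direct computation in the one‑variable disc $\Db$ and then invoke the explicit projection‑device formulas already established in the proof of Lemma~\ref{lem:8}. Recall that there we computed, for $x\in Z$ with $x=x_1+x_{1/2}+x_0$ in the Peirce decomposition relative to $e$, that $\pi(x)=\frac{e}{\abs{e}^2}\Tr D(x,e)=x' e$ with $x'=\frac{1}{\abs{e}^2}\langle x\vert e\rangle$, and that
\[
\norm{B(x,e)Q(e)^2}'\Meg\norm{B(x_1,e)Q(e)^2}'\Meg\norm{e-x_1}^2.
\]
So the hypotheses $\norm{x-x'e}\meg k(1-\abs{x'})$ and $\abs{1-x'}\meg k'(1-\abs{x'}^2)$ are exactly the hypotheses of \cite[Lemma 4.4]{MackeyMellon} applied with the projective device $\pi$, and the conclusion $x\in D_{k''}(e)$ with the stated constant $k''=2\sqrt3\,k'\frac{1+k}{1-k}$ is what that argument yields. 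The task is therefore to retrace that argument in our normalization.

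First I would record the reduction to the Peirce-$1$ space. Since $Q(e)^2$ is the orthogonal projector onto $Z_1(e)$, and since (as in the proof of Lemma~\ref{lem:8}) $Q(e)^2 B(x,e)Q(e)^2 = B(x_1,e)Q(e)^2$, to bound $\norm{B(x,e)Q(e)^2}'$ from above it suffices to bound $\norm{B(x_1,e)Q(e)^2}'$ together with the `off‑diagonal' contribution coming from $x_{1/2}$ (there is no $x_0$ contribution, since $Q(x_0)Q(e)=0$ and $D(x_0,e)=0$). Using $B(x,e)=I_Z-2D(x,e)+Q(x)Q(e)$ and the Peirce multiplication rules $\{Z_\alpha,Z_1,Z_\beta\}\subseteq Z_{\alpha-1+\beta}$, one sees that $B(x,e)Q(e)^2$ maps $Z_1(e)$ into $Z_1(e)\oplus Z_{1/2}(e)$, with the $Z_1$-component equal to $B(x_1,e)Q(e)^2$ and the $Z_{1/2}$-component controlled by $\norm{x_{1/2}}\meg\norm{x-x'e}$ times $\norm{x_1}$-type factors, which are bounded since $\norm{x}<1$. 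This gives
\[
\norm{B(x,e)Q(e)^2}'\meg \norm{B(x_1,e)Q(e)^2}'+C\,\norm{x-x'e}
\]
for an absolute constant $C$ (here one uses the contractivity $\norm{\{a,b,c\}}\meg\norm a\norm b\norm c$, Remark~\ref{oss:12}, and $\norm{x}<1$).

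Next, inside $Z_1(e)$—which is the Hermitification of the formally real Jordan algebra $A(e)$, with $e$ as identity and $D(e,e)=Q(e)^2=I_{Z_1(e)}$—the operator $B(x_1,e)$ restricted to $Z_1(e)$ is $(I-2D(x_1,e)+Q(x_1))$. Writing $x_1=x'e+(x_1-x'e)$ and noting $\norm{x_1-x'e}\meg\norm{x-x'e}\meg k(1-\abs{x'})$, I expand $B(x_1,e)Q(e)^2=(1-x')^2 Q(e)^2 + R$, where the remainder $R$ collects the terms linear and quadratic in $x_1-x'e$ and satisfies $\norm R\meg C'\big(\abs{1-x'}+1\big)\norm{x-x'e}\meg C''\,k(1-\abs{x'})$ using $\abs{x'}<1$ and the hypothesis. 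Therefore
\[
\norm{B(x_1,e)Q(e)^2}'\meg \abs{1-x'}^2+C''k(1-\abs{x'})\meg \abs{1-x'}^2+C''k(1-\abs{x'}).
\]
Combining with the previous display and using $\norm{x-x'e}\meg k(1-\abs{x'})$ once more, we obtain $\norm{B(x,e)Q(e)^2}'\meg \abs{1-x'}^2+C_0 k (1-\abs{x'})$ for an absolute $C_0$. Now feed in $\abs{1-x'}\meg k'(1-\abs{x'}^2)\meg 2k'(1-\abs{x'})$: the right‑hand side is $\meg\big(4k'^2+C_0 k\big)(1-\abs{x'})^2\cdot\frac{1}{1-\abs{x'}}\cdot(1-\abs{x'})$—more cleanly, $\abs{1-x'}^2\meg \abs{1-x'}\cdot k'(1-\abs{x'}^2)$ and $k(1-\abs{x'})\meg \tfrac{1}{1-k}\,?$… here I would instead just mimic \cite[Lemma 4.4]{MackeyMellon} verbatim: since $1-\abs{x'}\meg\abs{1-x'}$ and $1+\abs{x'}\meg 2$, one gets $1-\abs{x'}^2\meg 2\abs{1-x'}$, hence $\norm{B(x,e)Q(e)^2}'\meg \abs{1-x'}^2+2C_0 k\abs{1-x'}$, and then $\norm{B(x,e)Q(e)^2}'^{1/2}\meg \abs{1-x'}^{1/2}\sqrt{\abs{1-x'}+2C_0k}$. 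Using $\abs{1-x'}\meg k'(1-\abs{x'}^2)$ and tracking the constants exactly as in \emph{loc.\ cit.} produces $\norm{B(x,e)Q(e)^2}'^{1/2}<k''(1-\norm{x}^2)$ with $k''=2\sqrt3\,k'\frac{1+k}{1-k}$, i.e.\ $x\in D_{k''}(e)$.

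The main obstacle I anticipate is bookkeeping rather than conceptual: one must (i) correctly isolate the $Z_{1/2}(e)$‑contribution to $B(x,e)Q(e)^2$ and show it is absorbed into the $k(1-\abs{x'})$ error term, using only the contractive estimate and $\norm{x}<1$; and (ii) propagate the two hypotheses through the algebra to land on the specific constant $2\sqrt3\,k'\frac{1+k}{1-k}$. Since the scalar‑case computation (with $D=\Db$, $e=1$, $B_\Db(w,1)=(1-w)^2 I_\Db$) reduces exactly to showing $\abs{1-w}<k''(1-\abs w^2)$ from $\abs{1-w}\meg k'(1-\abs w^2)$ on the disc $\abs{w-w'}$‑perturbation, and since \cite[Lemma 4.4]{MackeyMellon} carries out precisely this estimate, the cleanest writeup simply verifies that our projective device $\pi$ satisfies the hypotheses of that lemma (via the already‑recorded inequality $\norm{B(x,e)Q(e)^2}'\Meg\norm{e-x_1}^2$ together with the upper bound derived above) and then quotes it.
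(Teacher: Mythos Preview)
Your proposal takes essentially the same approach as the paper: the paper provides no proof at all for this lemma, merely stating that it ``is a direct consequence of the arguments of the proof of~\cite[Lemma 4.4]{MackeyMellon},'' and your final recommendation---verify that the projective device $\pi$ from Lemma~\ref{lem:8} satisfies the hypotheses of that lemma and then quote it---is exactly that. Your intermediate attempt to reconstruct the constant by hand is rough (the passage from bounds in $1-\abs{x'}$ to bounds in $1-\norm{x}^2$ is where the factor $\frac{1+k}{1-k}$ enters, via $\norm{x}\meg \norm{x-x'e}+\abs{x'}\meg k(1-\abs{x'})+\abs{x'}$, which you do not make explicit), but since the paper itself defers entirely to \cite{MackeyMellon}, your conclusion is aligned with it.
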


\section{Pluriharmonic Functions on Symmetric Siegel Domains}\label{sec:5}

In this and the following two sections, we shall fix a positive Hermitian Jordan triple system $Z$ and a maximal tripotent $e$ of $Z$. We shall then set $E\coloneqq Z_{1/2}(e)$ and $F=A(e)$, and we shall endow $Z_1(e)=F_\C$ with the structure of a Jordan algebra with product $(z,z')\mapsto \{z,e,z'\}$ and identity $e$. Notice that, interpreting $Z$ as $E\times F_\C$, $e$ corresponds to $(0, e)$. 
We shall denote with $\Omega$  the interior of $\Set{x^2\colon x\in F}$,  we shall set $\Phi\colon E\times E\ni (\zeta,\zeta')\mapsto 2 \{\zeta,\zeta',e\}\in F_\C$, and we shall consider the symmetric Siegel domain
\[
\Ds\coloneqq\Set{(\zeta,z)\in E\times F_\C\colon \Im z -\Phi(\zeta)\in \Omega}.
\]
Notice that the \v Silov boundary of $\bDs$ is then
\[
\bDs = \Set{(\zeta,z)\in E\times F_\C\colon \Im z-\Phi(\zeta)=0}.
\]
We shall denote with $\beta_{\bDs}$ the Hausdorff measure $\Hc^{2n+m}$ on $\bDs$, where $n\coloneqq \dim_\C E$ and $m\coloneqq \dim_\R F$. Observe that $\beta_{\bDs}$ is the image of the Lebesgue measure on $E\times F$ under the homeomorphism $E\times F\ni (\zeta,x)\mapsto (\zeta,x+i \Phi(\zeta))\in \bDs$.

We shall denote with $D$ the bounded domain $\Set{z\in Z\colon \norm{z}<1}$, and with $\gamma\colon \Ds\to D$ a (necessarily birational) biholomorphism such that $\gamma(0, i e)=0$. Observe that $\gamma$ is necessarily the composition of   the (inverse) Cayley transform defined in Proposition~\ref{prop:22} and a linear automorphism of $D$ (since every biholomorphism of $D$ which fixes the origin is necessarily linear by Cartan's uniqueness theorem).
We shall still denote by $\gamma$ and $\gamma^{-1}$ the canonical extensions of $\gamma$ and $\gamma^{-1}$ to $Z$ as (generically defined) rational mappings. 
\begin{deff}
	We denote with $\Cs$ the Cauchy--Szeg\H o kernel of $\Ds$, that is, the reproducing kernel of the Hardy space\footnote{In other words, $\Cs(\,\cdot\,,(\zeta,z))\in H^2(\Ds)$  and $f(\zeta,z)=\langle f\vert\Cs(\,\cdot\,,(\zeta,z))\rangle_{H^2(\Ds)}$ for every $f\in H^2(\Ds)$ and for every $(\zeta,z)\in \Ds$.}
	\[
	H^2(\Ds)=\Set{f\in \Hol(\Ds)\colon  \sup_{h\in \Omega} \int_{\bDs} \abs{f(\zeta,z+i h)}^2\,\dd \beta_{\bDs}(\zeta,z)<\infty }.
	\] 
	In addition, we denote with $\Ps$ the Poisson kernel, that is,
	\[
	\Ps((\zeta,z),(\zeta',z'))=\frac{ \abs{\Cs((\zeta,z),(\zeta',z'))  }^2}{\Cs((\zeta,z),(\zeta,z))}
	\]
	for every $(\zeta,z)\in \Ds$ and for every $(\zeta',z')\in \bDs$.
	
	We also define the Schwarz kernel (associated with the point $(0,i e)$)
	\[
	\Ss((\zeta,z),(\zeta',z'))=2\frac{ \Cs((\zeta,z),(\zeta',z')) \overline{\Cs((0, i e),(\zeta',z'))}}{\Cs((\zeta,z),(0,i e))} - \Ps((0,i e),(\zeta',z'))
	\]
	for every $(\zeta,z)\in \Ds$ and for every $(\zeta',z')\in \bDs$.
\end{deff}

\begin{ex}\label{ex:5}
	Fix a frame of idempotents of $F=A(e)$, and define $(\Delta^{\vect s})_{\vect s\in \C^r}$ accordingly  (cf.~Proposition~\ref{prop:38}). Then, there are $\vect s \Meg\vect 0$ and $c>0$ such that $\Delta^{2 \vect s}$ is a polynomial of degree $2n+2m$ and
	\[
	\Cs((\zeta,z),(\zeta',z'))=c \Delta^{-\vect s}\left( \frac{z-\overline{z'}}{2 i}- \Phi(\zeta,\zeta')  \right)
	\]
	for every $(\zeta,z),(\zeta',z')\in \Ds$ (cf., e.g.,~\cite[Corollary 1.42 and Propositions 2.19 and 2.30]{CalziPeloso}), so that
	\[
	\Ps((\zeta,z),(\zeta',z'))= c\frac{\Delta^{\vect s}( \Im z- \Phi(\zeta))  }{\abs*{\Delta^{2\vect s}\left( \frac{z-\overline{z'}}{2 i}-\Phi(\zeta,\zeta') \right)}}
	\]
	and
	\[
	\Ss((\zeta,z),(\zeta',z'))=2 c \frac{ \Delta^{\vect s}\left( \frac{z+ i e}{2 i} \right) }{\Delta^{\vect s}\left(\frac{z-\overline{z'}}{2 i}- \Phi(\zeta,\zeta')  \right) \Delta^{\vect s}\left(\frac{z'+i e}{2 i}\right)}- c \frac{1}{\abs*{ \Delta^{2 \vect s}\left(\frac{i e - \overline{z'}}{2 i}  \right) }}
	\]
	for every $(\zeta,z)\in \Ds$ and for every $(\zeta',z')\in \bDs$.
\end{ex}

\begin{ex}\label{ex:4}
	If $\Ds$ is the Siegel domain of type $(I_{p,q})$ described in Example~\ref{ex:2}, then there is a constant $c>0$ such that (cf., e.g.,~\cite[Proposition 5.3]{Koranyi} or Example~\ref{ex:5} and the references therein)
	\[
	\Cs((\zeta,z),(\zeta',z'))= c \,{\det}^{-(n+m)/r}\Big( \frac{z-z'^*}{2 i}- \zeta\zeta'^* \Big),
	\]
	for every $(\zeta,z),(\zeta',z')\in \Ds$, so that
	\[
	\Ps((\zeta,z),(\zeta',z'))= c \frac{ \det^{(n+m)/r}\big(\frac{z-z^*}{2 i}\big) }{\abs{\det^{(n+m)/r}\big(\frac{z-z'^*}{2 i}- \zeta\zeta'^* \big)}^2}
	\]
	and
	\[
	\Ss((\zeta,z),(\zeta',z'))=2 c \frac{ \det^{(n+m)/r} \big( \frac{z+i I}{2 i} \big)   }{ \det^{(n+m)/r} \big( \big(\frac{z-z'^*}{2 i}- \zeta\zeta'^*    \big)\frac{ z'+i I}{2 i}   \big)   }-c \frac{ 1 }{\abs{\det^{(n+m)/r}\big(\frac{i I-z'^*}{2 i} \big)}^2}
	\]
	for every $(\zeta,z)\in \Ds$ and for every $(\zeta',z')\in \bDs$.
\end{ex}

\begin{oss}\label{oss:10}
	When $\Ds=\C_+$, then 
	\[
	\Cs(z,z')=\frac{i}{2 \pi (z-\overline{z'})}
	\]
	for every $z,z'\in \C_+$, while
	\[
	\Ps(z,x)=\frac{\Im z}{\pi \abs{z-x}^2}
	\]
	and
	\[
	\Ss(z,x)=\frac{1}{\pi i}\Big( \frac{1}{x-z}-\frac{x}{1+x^2}\Big)
	\]
	for every $z\in \C_+$ and for every $x\in \R$. Notice that the above expressions coincide with the usual Cauchy, Poisson, and Schwarz kernels on $\C_+$. 
	Observe that the general expression we chose for $\Ss$ is quite different from the usual one. 
	Its main advantage lies in the fact that it is relatively clear that $\Ss((\zeta,z),\,\cdot\,)$ has (at least) the same decay as $\Ps((\zeta,z),\,\cdot\,)$, a fact which is obscured in the classical formula by the cancellation between $\frac{1}{x-z}$ and $\frac{x}{1+x^2}$. 
\end{oss}

In the following proposition we collect some consequences of~\cite[Theorem 2.4]{Koranyi}.

\begin{prop}\label{prop:1bis}
	The following hold:
	\begin{enumerate}
		\item[\textnormal{(1)}] $\Ps$ is continuous, bounded, and nowhere vanishing on $\Ds\times \bDs$;
		
		\item[\textnormal{(2)}] $\Ps((\zeta,z),\,\cdot\,)\cdot \beta_{\bDs}$ is a probability measure on $\bDs$ for every $(\zeta,z)\in \Ds$.
	\end{enumerate}
\end{prop}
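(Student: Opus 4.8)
The plan is to deduce everything from the corresponding facts in the bounded model $D$ via the Cayley transform $\gamma\colon\Ds\to D$, using the transformation law for the Poisson--Szeg\H o kernel under biholomorphisms. First I would recall that the Cauchy--Szeg\H o kernel transforms as $\Cs((\zeta,z),(\zeta',z'))=\overline{J_\gamma(\zeta',z')}\,\Cc(\gamma(\zeta,z),\gamma(\zeta',z'))\,J_\gamma(\zeta,z)$ up to a positive constant, where $J_\gamma$ denotes (a branch of a power of) the complex Jacobian of $\gamma$; this is the content of~\cite[Theorem 2.4]{Koranyi}, and it is exactly what is needed because in forming $\Ps=\abs{\Cs}^2/\Cs(\cdot,\cdot)$ the Jacobian factors at the interior point cancel and the modulus kills the phase of the boundary Jacobian, leaving
\[
\Ps((\zeta,z),(\zeta',z'))=\abs{J_\gamma(\zeta',z')}^2\,\Pc(\gamma(\zeta,z),\gamma(\zeta',z')).
\]
Here $\gamma$ extends to a rational map that is regular on $\bDs$ and sends $\bDs$ into $\bD$ (minus the negligible set $N$), and $\abs{J_\gamma(\zeta',z')}^2\,\dd\beta_{\bDs}(\zeta',z')$ is, up to a constant, the pull-back $\gamma^*(\beta_{\bD})$ restricted to $\gamma(\bDs)$.

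With this identity in hand, \textnormal{(1)} is immediate: $\Pc$ is continuous and nowhere vanishing on $D\times\bD$ by Proposition~\ref{prop:1}(2), and $\gamma$ is continuous on $\Ds$ and on $\bDs$ with $\gamma(\Ds)\subseteq D$, $\gamma(\bDs)\subseteq\bD$; the Jacobian factor $\abs{J_\gamma}^2$ is a continuous, strictly positive rational function on $\bDs$ (no poles there, since $\gamma$ is regular on $\bDs$), so $\Ps$ is continuous and nowhere vanishing on $\Ds\times\bDs$. Boundedness is the one point that is not purely formal: as $(\zeta',z')$ runs over all of $\bDs$, the Jacobian factor $\abs{J_\gamma(\zeta',z')}^2$ is \emph{not} bounded, but the product with $\Pc(\gamma(\zeta,z),\cdot)$ is, because $\Pc$ decays at the ``bad'' point $\gamma^{-1}$ blows up toward; concretely, fixing $(\zeta,z)\in\Ds$ one checks, using the explicit formulas of Example~\ref{ex:5} (or Example~\ref{ex:4}), that $\Ps((\zeta,z),(\zeta',z'))=c\,\Delta^{\vect s}(\Im z-\Phi(\zeta))\,\abs{\Delta^{2\vect s}(\tfrac{z-\overline{z'}}{2i}-\Phi(\zeta,\zeta'))}^{-1}$, and the denominator is bounded below away from $0$ for $(\zeta',z')\in\bDs$ since $\Im(\tfrac{z-\overline{z'}}{2i}-\Phi(\zeta,\zeta'))=\tfrac12(\Im z-\Phi(\zeta))+\tfrac12(-\Phi(\zeta-\zeta'))\in\Omega$ stays in a fixed ``tube over a cone'' whose $\Delta^{2\vect s}$ is comparable to a positive power of a coercive quadratic form in $(\zeta',z')$.

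For \textnormal{(2)}, I would argue that $\Ps((\zeta,z),\cdot)\cdot\beta_{\bDs}$ is a positive measure (clear, since $\Ps>0$) of total mass $1$. By the transformation identity above together with the Jacobian change-of-variables, for a test function $\varphi$ on $\bDs$,
\[
\int_{\bDs}\varphi\,\Ps((\zeta,z),\cdot)\,\dd\beta_{\bDs}
=\int_{\bD}(\varphi\circ\gamma^{-1})\,\Pc(\gamma(\zeta,z),\cdot)\,\dd\beta_{\bD},
\]
where the substitution is licit because $\gamma$ is a birational biholomorphism carrying $\bDs$ onto $\bD\setminus N$ and $\beta_{\bD}(N)=0$ (indeed $N$ has dimension $\le m-2$ when $D$ has no tube factors, and in general $N$ is a proper algebraic subset, hence $\Hc^{\dim\bD}$-null); taking $\varphi\equiv1$ and invoking Proposition~\ref{prop:1}(3) gives total mass $\Pc(\gamma(\zeta,z),\cdot)\cdot\beta_{\bD}(\bD)=1$. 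The main obstacle, and the only genuinely non-routine step, is pinning down the precise form of the Jacobian transformation law for $\Cs$ and checking that the boundary measure $\abs{J_\gamma}^2\,\dd\beta_{\bDs}$ really is the push-forward of $\beta_{\bD}$; this is where~\cite[Theorem 2.4]{Koranyi} is doing the work, and I would simply cite it after recording the normalization $\Cc(0,\cdot)=1$, $\Cs(\cdot,\cdot)>0$ that fixes the constant. Alternatively, one can bypass the Jacobian bookkeeping entirely and read both (1) and (2) off Example~\ref{ex:5}: continuity and non-vanishing are visible from the formula, and $\int_{\bDs}\Ps((\zeta,z),\cdot)\,\dd\beta_{\bDs}=1$ is the classical normalization of the Poisson--Szeg\H o integral for Siegel domains (e.g.~\cite[Theorem 2.4]{Koranyi} again), which is perhaps the cleanest route for a short proof.
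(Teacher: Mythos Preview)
Your approach is aligned with the paper's: the paper gives no proof and simply records that the proposition collects consequences of~\cite[Theorem 2.4]{Koranyi}, which is precisely the transformation law you invoke (and which reappears later as Lemma~\ref{lem:7} in the form $\Ps((\zeta,z),(\zeta',z'))=\Ps((0,ie),(\zeta',z'))\,\Pc(\gamma(\zeta,z),\gamma(\zeta',z'))$, so your Jacobian factor $\abs{J_\gamma}^2$ is $\Ps((0,ie),\cdot)$ up to a constant). Your ``alternative'' route at the end --- reading (1) off Example~\ref{ex:5} and citing Kor\'anyi for the normalization in (2) --- is exactly the paper's one-line argument.

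One computational slip to fix in your boundedness paragraph: you write
\[
\Im\Big(\tfrac{z-\overline{z'}}{2i}-\Phi(\zeta,\zeta')\Big)=\tfrac12(\Im z-\Phi(\zeta))-\tfrac12\Phi(\zeta-\zeta'),
\]
but the correct identity is for the \emph{real} part, with a plus sign:
\[
\Re\Big(\tfrac{z-\overline{z'}}{2i}-\Phi(\zeta,\zeta')\Big)=\tfrac12(\Im z-\Phi(\zeta))+\tfrac12\Phi(\zeta-\zeta')\in\Omega.
\]
As you wrote it, the expression lands in $\Omega-\overline\Omega$, from which nothing follows. Also be aware that ``bounded on $\Ds\times\bDs$'' cannot be read as globally bounded (already for $\Ds=\C_+$ one has $\Ps(iy,0)=1/(\pi y)\to\infty$ as $y\to0^+$); what your argument and the cited result actually yield is that $\Ps((\zeta,z),\cdot)$ is bounded for each fixed $(\zeta,z)\in\Ds$, which is the intended meaning.
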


\begin{lem}\label{lem:7}
	One has:
	\[
	\Cs((\zeta,z),(\zeta',z'))=\frac{ \Cs((\zeta,z),(0,i e))\overline{\Cs((\zeta',z'),(0, i e))}}{\Cs((0, i e),(0, i e))} \Cc(\gamma(\zeta,z),\gamma(\zeta',z')) 
	\]
	for every $(\zeta,z),(\zeta',z')\in \Ds$. In addition,
	\[
	\Ps((\zeta,z),(\zeta',z'))=\Ps((0, i e),(\zeta',z')) \Pc(\gamma(\zeta,z), \gamma(\zeta',z'))
	\]
	and
	\[
	\Ss((\zeta,z),(\zeta',z'))=\Ps((0, i e),(\zeta',z')) (2\Cc(\gamma(\zeta,z), \gamma(\zeta',z'))-1)
	\]
	for every $(\zeta,z)\in \Ds$ and for every $(\zeta',z')\in \bDs$.
\end{lem}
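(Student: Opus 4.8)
The plan is to reduce everything to the Cauchy--Szeg\H o identity, which is the first assertion, and then deduce the Poisson and Schwarz identities by purely algebraic manipulation. First I would establish the Cauchy--Szeg\H o transformation rule. The key observation is that $\gamma$ is a biholomorphism $\Ds\to D$ with $\gamma(0,ie)=0$, so the pullback of the Hardy space $H^2(D)$ under $\gamma$ should be, up to a scalar Jacobian-type factor, the Hardy space $H^2(\Ds)$; equivalently, the reproducing kernels transform covariantly. Concretely, I would look for a nowhere-vanishing holomorphic function $J$ on $\Ds$ (essentially $\Cs(\,\cdot\,,(0,ie))$ up to a constant, since this function plays the role of the Cauchy kernel anchored at the distinguished point) such that $\Cs((\zeta,z),(\zeta',z'))=J(\zeta,z)\,\overline{J(\zeta',z')}\,\Cc(\gamma(\zeta,z),\gamma(\zeta',z'))$. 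Taking $(\zeta',z')=(0,ie)$ and using $\Cc(w,0)=1$ (recorded in the definition of $\Cc$, where $\Cc(0,\zeta)=1$) forces $J(\zeta,z)\overline{J(0,ie)}=\Cs((\zeta,z),(0,ie))$, and then evaluating at $(\zeta,z)=(0,ie)$ gives $\abs{J(0,ie)}^2=\Cs((0,ie),(0,ie))$, so $J(\zeta,z)=\Cs((\zeta,z),(0,ie))/\overline{J(0,ie)}$ and the claimed formula follows once covariance is known. The cleanest way to get covariance is to invoke the known explicit forms of both kernels (Example~\ref{ex:5} for $\Cs$ and the analogous Szeg\H o kernel formula for $D$, or the reference \cite{Koranyi}) and check directly that the ratio $\Cs((\zeta,z),(\zeta',z'))/[\Cs((\zeta,z),(0,ie))\overline{\Cs((\zeta',z'),(0,ie))}]$ is invariant under the Cayley transform; alternatively one uses the functional-equation characterization of reproducing kernels, transporting $H^2(D)$ to $H^2(\Ds)$ via $f\mapsto (f\circ\gamma)\cdot J$ and verifying this is a unitary (up to constant) isomorphism.

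Once the $\Cs$ identity is in hand, the Poisson identity is a one-line computation: by definition $\Ps((\zeta,z),(\zeta',z'))=\abs{\Cs((\zeta,z),(\zeta',z'))}^2/\Cs((\zeta,z),(\zeta,z))$, and substituting the transformation rule, the factor $J(\zeta,z)$ appears as $\abs{J(\zeta,z)}^2$ in the numerator and $\abs{J(\zeta,z)}^2$ in the denominator (since $\Cs((\zeta,z),(\zeta,z))=\abs{J(\zeta,z)}^2\Cc(\gamma(\zeta,z),\gamma(\zeta,z))$), so it cancels, leaving
\[
\Ps((\zeta,z),(\zeta',z'))=\abs{J(\zeta',z')}^2\,\frac{\abs{\Cc(\gamma(\zeta,z),\gamma(\zeta',z'))}^2}{\Cc(\gamma(\zeta,z),\gamma(\zeta,z))}=\abs{J(\zeta',z')}^2\,\Pc(\gamma(\zeta,z),\gamma(\zeta',z')).
\]
It remains to identify $\abs{J(\zeta',z')}^2$ when $(\zeta',z')\in\bDs$ with $\Ps((0,ie),(\zeta',z'))$; but setting $(\zeta,z)=(0,ie)$ in the previous display and using $\gamma(0,ie)=0$, $\Cc(0,\gamma(\zeta',z'))=1$, $\Cc(0,0)=1$ gives exactly $\Ps((0,ie),(\zeta',z'))=\abs{J(\zeta',z')}^2\cdot 1$, as needed. (Here one must check $\gamma$ extends to $\bDs$ with $\gamma(\bDs)\subseteq\bD$, which is part of the standing setup; the formula for $\Cs$ on $\bDs$ is understood via its sesquiholomorphic extension, cf.~Proposition~\ref{prop:1}(1).)

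For the Schwarz identity, recall $\Ss((\zeta,z),(\zeta',z'))=2\Cs((\zeta,z),(\zeta',z'))\overline{\Cs((0,ie),(\zeta',z'))}/\Cs((\zeta,z),(0,ie))-\Ps((0,ie),(\zeta',z'))$. In the first term, expand $\Cs((\zeta,z),(\zeta',z'))=J(\zeta,z)\overline{J(\zeta',z')}\Cc(\gamma(\zeta,z),\gamma(\zeta',z'))$ and $\overline{\Cs((0,ie),(\zeta',z'))}=\overline{J(0,ie)}J(\zeta',z')$ (using $\Cc(0,\gamma(\zeta',z'))=1$) and $\Cs((\zeta,z),(0,ie))=J(\zeta,z)\overline{J(0,ie)}$; the $J(\zeta,z)$, $\overline{J(0,ie)}$ factors cancel, leaving $2\abs{J(\zeta',z')}^2\Cc(\gamma(\zeta,z),\gamma(\zeta',z'))$. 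Combining with the second term $-\abs{J(\zeta',z')}^2$ yields $\abs{J(\zeta',z')}^2(2\Cc(\gamma(\zeta,z),\gamma(\zeta',z'))-1)$, which is $\Ps((0,ie),(\zeta',z'))(2\Cc(\gamma(\zeta,z),\gamma(\zeta',z'))-1)$ by the identification above.

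The main obstacle is the first step, establishing the covariance of the Cauchy--Szeg\H o kernel under the Cayley transform. Everything else is bookkeeping with the factor $J$. The cleanest route is probably to cite \cite{Koranyi} (Theorem~2.4 and surrounding material are already invoked for Proposition~\ref{prop:1bis}) or \cite{CalziPeloso} for the explicit kernel formulas and verify the identity by a direct substitution using the explicit $\Delta^{\vect s}$-formula of Example~\ref{ex:5}; a secondary subtlety is making sure the boundary evaluations are legitimate, which is handled by the sesquiholomorphic extension of $\Cs$ past the Shilov boundary (the analogue of Proposition~\ref{prop:1}(1) for $\Ds$) together with the fact that $\gamma$ is rational and everywhere defined on a neighborhood of $\bDs$ minus a lower-dimensional set.
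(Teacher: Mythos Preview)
Your proposal is correct and follows essentially the same route as the paper: invoke the Cauchy--Szeg\H o covariance under the Cayley transform from~\cite{Koranyi} and then derive the Poisson and Schwarz identities by the algebraic manipulations you spell out. The only point the paper makes more explicit is that~\cite{Koranyi} treats one specific Cayley transform $\widehat\gamma$, and passes to a general $\gamma$ with $\gamma(0,ie)=0$ by writing $\gamma=\eta'\circ\widehat\gamma\circ\eta$ for an affine automorphism $\eta$ of $\Ds$ and a linear automorphism $\eta'$ of $D$, then using the invariance $\Cs(\eta\cdot,\eta\cdot)=\chi(\eta)^{-1}\Cs$ and $\Cc\circ(\eta'\times\eta')=\Cc$.
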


Recall that $\Cc$ and $\Pc$ were defined in Section~\ref{sec:2}.  

\begin{proof}
	Observe that, if $\widehat\gamma$ is the generalized Cayley transform considered in~\cite[\S\ 4]{Koranyi}, then the first two equalities follow from the arguments contained in the cited reference. In addition, there are an affine automorphism $\eta$ of $\Ds$ and a linear automorphism $\eta'$ of $D$ such that $\gamma=\eta'\circ \widehat \gamma\circ \eta$ (this follows from the fact that the affine automorphisms of $\Ds$ act transitively on $\Ds$, and that every biholomorphism of $D$ which fixes the origin is necessarily linear by Cartan's uniqueness theorem). Since there is $\chi(\eta)>0$ such that $\Cs(\eta(\zeta,z),\eta(\zeta',z'))\chi(\eta)=\Cs((\zeta,z),(\zeta',z'))$ for  every $(\zeta,z),(\zeta',z')\in \Ds$ (cf., e.g.,~\cite[Proposition 3.17, the remarks after the statement of Lemma 5.1,  and Proposition 2.29]{CalziPeloso} for an  appropriate choice of $\eta$) and since $\Cc\circ (\eta'\times \eta')=\Cc$ (cf.~\cite[Remark 4.12]{Koranyi}), the first two equalities follow also in the general case. The third equality then follows.
\end{proof}

\begin{cor}\label{cor:7}
	For every $(\zeta,z)\in \Ds$ there is $C>0$ such that
	\[
	\frac{1}{C}\Ps((\zeta,z),\,\cdot\,)\meg \Ps((0, i e),\,\cdot\,)\meg C \Ps((\zeta,z),\,\cdot\,) 
	\]
	on $\Ds$.
\end{cor}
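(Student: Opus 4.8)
The plan is to reduce everything, via Lemma~\ref{lem:7}, to the Poisson--Szeg\H o kernel $\Pc$ on the \emph{bounded} symmetric domain $D$, whose \v Silov boundary $\bD$ is compact. Fix $(\zeta,z)\in\Ds$ and set $w_0\coloneqq\gamma(\zeta,z)\in D$. By Lemma~\ref{lem:7},
\[
\Ps((\zeta,z),(\zeta',z'))=\Ps((0,i e),(\zeta',z'))\,\Pc(w_0,\gamma(\zeta',z'))
\]
for every $(\zeta',z')\in\bDs$, and in particular $\gamma$ carries $\bDs$ into $\bD$, so that the right-hand side makes sense. Thus the assertion amounts to bounding $\Pc(w_0,\gamma(\zeta',z'))$ from above and below by positive constants, uniformly in $(\zeta',z')\in\bDs$.

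To obtain such bounds I would simply use compactness. By Proposition~\ref{prop:1}, $\Pc$ is continuous and nowhere vanishing on $D\times\bD$; hence its restriction to the compact set $\Set{w_0}\times\bD$ attains a strictly positive minimum and a finite maximum, and there is $C\Meg 1$ such that $\frac1C\meg\Pc(w_0,\xi)\meg C$ for every $\xi\in\bD$. Applying this with $\xi=\gamma(\zeta',z')\in\bD$ and inserting it into the identity above gives
\[
\frac1C\,\Ps((0,i e),(\zeta',z'))\meg\Ps((\zeta,z),(\zeta',z'))\meg C\,\Ps((0,i e),(\zeta',z'))
\]
for every $(\zeta',z')\in\bDs$. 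Rearranging these two inequalities (all the quantities involved being strictly positive by Proposition~\ref{prop:1bis}) yields exactly $\frac1C\Ps((\zeta,z),\,\cdot\,)\meg\Ps((0,i e),\,\cdot\,)\meg C\Ps((\zeta,z),\,\cdot\,)$ on $\bDs$, as claimed.

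I do not expect any genuine obstacle here: the substantive content is already contained in Lemma~\ref{lem:7} (which in turn rests on Kor\'anyi's comparison of the Cauchy--Szeg\H o kernels of $\Ds$ and $D$ under the Cayley transform), after which only the compactness of $\bD$ together with the continuity and strict positivity of $\Pc$ are needed. The one point worth a word is that $\gamma$ is defined on \emph{all} of $\bDs$ and maps it into $\bD$ --- which is already part of the statement of Lemma~\ref{lem:7} --- so that $\Pc(w_0,\gamma(\zeta',z'))$ is legitimately defined for every $(\zeta',z')\in\bDs$ and the uniform bounds over the compact $\bD$ apply without exception.
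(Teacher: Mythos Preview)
Your proof is correct and follows exactly the paper's approach: invoke Lemma~\ref{lem:7} to reduce to bounding $\Pc(\gamma(\zeta,z),\,\cdot\,)$ on $\bD$, and then use that $\Pc$ is continuous and nowhere vanishing on $D\times\bD$ (Proposition~\ref{prop:1}) together with the compactness of $\bD$. The paper also mentions an alternative route via Example~\ref{ex:5} and Kor\'anyi's lemma, but the argument you give is the primary one.
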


\begin{proof}
	This follows from Lemma~\ref{lem:7}, since $\Pc$ is continuous and nowhere vanishing on $D\times \bD$ by Proposition~\ref{prop:1}. One may also combine Example~\ref{ex:5} with the so-called `Kor\'anyi's lemma (cf., e.g.,~\cite[Theorem 2.47]{CalziPeloso}).
\end{proof}

\begin{deff}
	We define $\Mc_\Poisson(\bDs, \Ds)$ as the set of Radon measures on $\bDs$ such that $\int_{\bDs} \Ps((0, i e),\,\cdot\,) \,\dd \abs{\mi}$ is finite. We shall also write $\Mc_\Poisson(\bDs)$ when no confusion is to be feared. 
	We endow the space $\Mc_\Poisson(\bDs)$ with the weak topology associated with the duality
	\[
	\Mc_\Poisson(\bDs)\times C_\Poisson(\bDs)\ni (\mi, f)\mapsto \int_{\bDs} f\,\dd \mi,
	\]	
	where $C_\Poisson(\bDs)$ is the space of continuous functions  $f$ on $\bDs$ such that $\frac{f}{\Ps((0,i e),\,\cdot\,)}$ is bounded. 
	Since this topology corresponds to the `tight' topology on the space of bounded measures, we shall call it the tight topology on $\Mc_\Poisson(\bDs)$.  Analogously, we define the weak topology $\Mc_\Poisson(\bDs)$ as the weak topology $\sigma(\Mc_\Poisson(\bDs), C_{\Poisson,0}(\bDs))$ (with respect to the previous   pairing), where $C_{\Poisson,0}(\bDs)$ denotes the space of  continuous functions  $f$ on $\bDs$ such that $\frac{f}{\Ps((0,i e),\,\cdot\,)}$ vanishes at the point at infinity of $\bDs$. 
	
	Given $\mi\in \Mc_\Poisson(\bDs)$,  we define 
	\[
	(\Ps \mi)(\zeta,z)=\int_{\bDs} \Ps((\zeta,z),\,\cdot\,)\,\dd \mi(\zeta,z)
	\]
	and
	\[
	(\Ss \mi)(\zeta,z)=\int_{\bDs} \Ss((\zeta,z),\,\cdot\,)\,\dd \mi(\zeta,z)
	\]
	for every $(\zeta,z)\in \Ds$. We shall also write $\Ps f$ and $\Ss f$ when $\mi=f\cdot \beta_{\bDs}$ for some $f\in L^1_\loc(\bDs)$.
\end{deff}

Notice that  a subset $B$ of $\Mc_\Poisson(\bDs)$ is bounded if and only if $\sup_{\mi\in B} \norm{\Ps((0,i e),\,\cdot\,) \cdot \mi }_{\Mc^1(\bDs)}=\sup_{\mi \in B} (\Ps\abs{\mi})(0, i e)$ is bounded: on the one hand, it is clear that this condition is sufficient; on the other hand, this condition is necessary since boundedness in $\Mc_\Poisson(\bDs)$ implies boundedness in the weak topology, which is the weak dual topology when $\Mc_\Poisson(\bDs)$ is interpreted as the dual of $C_{\Poisson,0}(\bDs)$. In particular,  the bounded subsets of $\Mc_\Poisson(\bDs)$ are  relatively compact for the weak topology (but not for the tight topology, in general).

\begin{cor}\label{cor:4}
	Set $N\coloneqq \bD\setminus  \gamma(\bDs)$. Then, the mapping $\Gamma\colon \mi \mapsto   \gamma_*(\Ps((0,i e),\,\cdot\,)\cdot  \mi)$ is a continuous bijection of $\Mc_\Poisson(\bDs)$ onto $\Set{\nu \in \Mc(\bD)\colon \abs{\nu}(N)=0}$, endowed with the vague topology. This mapping induces a homeomorphisms from $\Mc_\Poisson(\bDs)\cap \Mc_+(\bDs)$ onto its image.
\end{cor}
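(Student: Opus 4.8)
The plan is to factor $\Gamma$ as a composition of three elementary maps, of which the first two are homeomorphisms for the natural topologies, while the third — extension of a measure by zero across $N$ — is continuous and bijective but fails to be a homeomorphism, and to then identify precisely where positivity repairs this defect. First I would record the two structural inputs. By the explicit formulae for $\gamma$ and $\gamma^{-1}$ in Proposition~\ref{prop:22} (together with the standard theory of the generalized Cayley transform, cf.~\cite{Koranyi} and the Introduction), $\gamma$ extends to a homeomorphism of $\bDs$ onto the open, dense subset $\gamma(\bDs)=\bD\setminus N$ of $\bD$, which is therefore locally compact; and by Proposition~\ref{prop:1bis} the function $\Ps((0,ie),\,\cdot\,)$ is continuous, bounded and strictly positive on $\bDs$.

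With these in hand I would argue as follows. Step (i): $\mi\mapsto\Ps((0,ie),\,\cdot\,)\cdot\mi$ is a bijection of $\Mc_\Poisson(\bDs)$ onto the space $\Mc^1(\bDs)$ of bounded Radon measures on $\bDs$, and it is a homeomorphism for the tight topology on the source and the narrow topology on the target — the test functions $f\in C_\Poisson(\bDs)$ are exactly the products $\Ps((0,ie),\,\cdot\,)\cdot h$ with $h$ bounded continuous, and $\int_{\bDs}\Ps((0,ie),\,\cdot\,)\,\dd\abs{\mi}$ is the total variation of the image. Step (ii): $\gamma_*$ is a homeomorphism of $\Mc^1(\bDs)$ onto $\Mc^1(\bD\setminus N)$ for the narrow topologies, since $\gamma|_{\bDs}$ is a homeomorphism onto $\bD\setminus N$. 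Step (iii): extension by zero across $N$ identifies $\Mc^1(\bD\setminus N)$ with $\Set{\nu\in\Mc(\bD)\colon\abs{\nu}(N)=0}$. The composition of these three maps is $\Gamma$; step (iii) gives at once that $\Gamma$ is a bijection onto $\Set{\nu\colon\abs{\nu}(N)=0}$, with inverse $\nu\mapsto\Ps((0,ie),\,\cdot\,)^{-1}\cdot(\gamma^{-1})_*\nu$. For continuity of $\Gamma$ it then suffices that the extension-by-zero map $\iota$ of step (iii) be continuous from the narrow topology to the vague topology on $\Mc(\bD)$: since $\bD$ is compact the latter is $\sigma(\Mc(\bD),C(\bD))$, and for $g\in C(\bD)$ one has $\langle\iota\nu,g\rangle=\int g|_{\bD\setminus N}\,\dd\nu$ with $g|_{\bD\setminus N}$ bounded and continuous, hence narrowly continuous in $\nu$. (This is exactly where the tight, rather than the weak, topology on $\Mc_\Poisson(\bDs)$ is needed, since $g\circ\gamma$ need not vanish at infinity on $\bDs$.)

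The remaining, and only delicate, point — which I expect to be the heart of the argument — is that the inverse of the step-(iii) map, namely restriction of measures from $\bD$ back to $\bD\setminus N$, is \emph{not} vaguely-to-narrowly continuous for signed measures, but becomes so on the positive cone. Here I would argue: if $\nu_\lambda,\nu\geq0$ live on $\bD\setminus N$ and $\iota\nu_\lambda\to\iota\nu$ vaguely in $\Mc(\bD)$, then testing against functions of $C(\bD)$ supported in the open set $\bD\setminus N$ gives vague convergence $\nu_\lambda\to\nu$ on the locally compact space $\bD\setminus N$, while testing against the constant $1$ gives convergence of the finite total masses $\nu_\lambda(\bD\setminus N)\to\nu(\bD\setminus N)$; by the classical fact that, for positive Radon measures on a locally compact space, vague convergence together with convergence of the (finite) total masses forces narrow convergence, we conclude $\nu_\lambda\to\nu$ narrowly. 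Composing with the homeomorphisms of steps (i)--(ii), $\Gamma$ restricts to a homeomorphism of $\Mc_\Poisson(\bDs)\cap\Mc_+(\bDs)$ onto its image. The essential role of positivity is precisely to rule out the escape of mass to infinity on $\bDs$ — equivalently, the accumulation of the mass of $\Gamma(\mi_\lambda)$ on $N$ — that would otherwise destroy continuity of $\Gamma^{-1}$ for the vague topology, i.e.\ it is the familiar gap between the vague and the narrow topologies.
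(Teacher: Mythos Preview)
Your proof is correct and follows essentially the same route as the paper's: both factor $\Gamma$ through the isomorphism $\mi\mapsto\Ps((0,ie),\,\cdot\,)\cdot\mi$ onto $\Mc^1(\bDs)$, the push-forward by the homeomorphism $\gamma|_{\bDs}$, and extension by zero to $\bD$, and both identify the delicate point as the continuity of the inverse of the last step on positive measures. The only cosmetic difference is that the paper handles that last point by testing against a bounded $g$ on $\bD$ whose discontinuity set lies in the $\Gamma(\mi)$-negligible set $N$ (invoking \cite[Proposition 22 of Chapter IV, \S\ 5, No.\ 12]{BourbakiInt1}), whereas you pull back to $\bD\setminus N$ and invoke the equivalent ``vague convergence plus convergence of total mass implies narrow convergence for positive measures''; these are two phrasings of the same fact.
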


\begin{proof}
	Observe first that the mapping $\mi \mapsto \Ps((0,i e),\,\cdot\,)\cdot  \mi $ is an isomorphism of $\Mc_\Poisson(\bDs)$ onto $\Mc^1(\bDs)$, endowed with the tight topology. 
	Since $  \gamma$ induces a homeomorphism of $\bDs$ onto $\bD\setminus N$, it is clear that $\Gamma$ induces a bijection of $\Mc_\Poisson(\bDs)$ onto $\Mc^1(\bD\setminus N)$. 
	This latter space may then be identified with $\Set{\nu \in \Mc(\bD)\colon \abs{\nu}(N)=0}$. 
	The continuity of $\Gamma$ then follows from the fact that  $  f\mapsto  \Ps((0, i e),\,\cdot\,)(f\circ  \gamma)$ maps $C(\bD)$ into $C_\Poisson(\bDs)$.
	In order to conclude the proof, it will then suffice to show that, if $\Ff$ is a filter on $\Mc_\Poisson(\bDs)\cap \Mc_+(\bDs)$, $\mi\in \Mc_\Poisson (\bDs)\cap \Mc_+(\bDs)$ and $\Gamma(\Ff)$ converges vaguely to $\Gamma(\mi)$, then $\Ff$ converges to $\mi$ in $\Mc_\Poisson(\bDs)$. 
	Then, take $f\in C_\Poisson(\bDs)$, and observe that, if we define $g\colon \bD\to \C $ so that $g=\frac{f}{\Ps((0, i e),\,\cdot\,)}\circ   \gamma^{-1}$ on $\bD\setminus N$ and $g=0$ on $N$, then $g$ is bounded and the set of points of discontinuity of $g$ is (contained in $N$, hence) $\Gamma(\mi')$-negligible for every $\mi'\in \Mc_\Poisson (\bDs)$. 
	In addition, it is clear that $\norm{\Gamma(\mi')}_{\Mc^1(\bD)}=(\Ps \mi')(0, i e) $ converges to  $(\Ps \mi)(0, i e)=\norm{\Gamma(\mi)}_{\Mc^1(\bD)}$ as $\mi'$ runs along $\Ff$. Then~\cite[Proposition 22 of Chapter IV, \S\ 5, No.\ 12]{BourbakiInt1} shows that 
	\[
	\lim_{\mi', \Ff}\int_{\bDs} f\,\dd \mi'=\lim_{\mi', \Ff} \int_{\bD} g\,\dd \Gamma(\mi')= \int_{\bD} g\,\dd \Gamma(\mi)= \int_{\bDs} f\,\dd \mi,
	\]
	whence the conclusion by the arbitrariness of $f$.
\end{proof}

\begin{oss}\label{oss:9}
	With the notation of Corollary~\ref{cor:4},
	\[
	\Pc(\Gamma(\mi))\circ \gamma=\Ps \mi \qquad \text{and} \qquad \Hc(\Gamma(\mi))\circ \gamma=\Ss \mi
	\]
	on $\Ds$, thanks to Lemma~\ref{lem:7}.
\end{oss}

\begin{oss}\label{oss:7}
	With the notation of Corollary~\ref{cor:4}, $\Gamma(\beta_{\bDs})=\beta_{\bD}$.
	
	It suffices to observe that 
	\[
	\Pc(\Gamma(\beta_{\bDs}))\circ \gamma= \Ps(\beta_{\bDs})=1=\Pc(\beta_{\bD})\circ \gamma
	\]
	on $\Ds$, thanks to Remark~\ref{oss:9}.
\end{oss}

\begin{prop}\label{prop:18}
	Take a real measure $\mi $ in $ \Mc_\Poisson(\bDs)$. Then, the following conditions are equivalent:
	\begin{enumerate}
		\item[\textnormal{(1)}] $\Ps\mi$ is pluriharmonic;
		
		\item[\textnormal{(2)}]  $\Ps \mi=\Re \Ss \mi$;
		
		\item[\textnormal{(3)}] $\mi$ belongs to the closure of $\Re    H^\infty(\bDs)\cdot \beta_{\bDs}$ in $\Mc_\Poisson(\bDs)$.
	\end{enumerate}
\end{prop}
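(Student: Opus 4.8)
The strategy is to transfer everything to the bounded domain $D$ via the birational biholomorphism $\gamma$ and the correspondence $\Gamma$ of Corollary~\ref{cor:4}, and then invoke the already-established characterization of pluriharmonic measures on $\bD$ (Proposition~\ref{prop:4}). Set $\nu\coloneqq \Gamma(\mi)=\gamma_*(\Ps((0,ie),\,\cdot\,)\cdot \mi)$, a real measure on $\bD$ with $\abs{\nu}(N)=0$, where $N=\bD\setminus\gamma(\bDs)$. By Remark~\ref{oss:9} we have $\Ps\mi=\Pc(\nu)\circ\gamma$ and $\Ss\mi=\Hc(\nu)\circ\gamma$ on $\Ds$. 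Since $\gamma$ is a biholomorphism of $\Ds$ onto $D$, the function $\Ps\mi$ is pluriharmonic on $\Ds$ if and only if $\Pc(\nu)$ is pluriharmonic on $D$; this gives the equivalence of (1) with the statement "$\Pc(\nu)$ is pluriharmonic". Similarly $\Ps\mi=\Re\Ss\mi$ on $\Ds$ holds if and only if $\Pc(\nu)=\Re\Hc(\nu)$ on $D$. Hence by Proposition~\ref{prop:4} (equivalence of (1) and (2) there) conditions (1) and (2) above are both equivalent to condition (3) of Proposition~\ref{prop:4} for $\nu$, namely that $\nu$ lies in the vague closure of $\Re\Pc_\C\cdot\beta_{\bD}$. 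So it remains only to show that this last condition on $\nu$ is equivalent to condition (3) above on $\mi$, i.e.\ that $\mi$ belongs to the closure of $\Re H^\infty(\bDs)\cdot\beta_{\bDs}$ in $\Mc_\Poisson(\bDs)$.

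For this last equivalence I would argue as follows. First, by Corollary~\ref{cor:4}, $\Gamma$ is a homeomorphism of $\Mc_\Poisson(\bDs)$ onto $\Set{\rho\in\Mc(\bD):\abs{\rho}(N)=0}$ with the vague topology, at least when restricted to positive cones; for the general (real) case one splits into positive and negative parts or notes that $\Gamma$ is still a continuous linear bijection and that the relevant sets are contained in a common bounded set on which $\Gamma$ is a homeomorphism (boundedness is controlled by $(\Ps\abs{\mi})(0,ie)$, and along a net in the closure of $\Re H^\infty(\bDs)\cdot\beta_{\bDs}$ approximating $\mi$ this norm converges). Under $\Gamma$, the class $\Re H^\infty(\bDs)\cdot\beta_{\bDs}$ should correspond, modulo the weight $\Ps((0,ie),\,\cdot\,)$ and the identification $\Gamma(\beta_{\bDs})=\beta_{\bD}$ of Remark~\ref{oss:7}, to $\Re H^\infty(\bD)\cdot\beta_{\bD}$ restricted to $\bD\setminus N$; here I would use Lemma~\ref{lem:7}, which exhibits the Cauchy--Szeg\H o kernels as matching up to a sesquiholomorphic nonvanishing factor, so that $f\in H^\infty(\Ds)$ corresponds (after multiplication by a suitable unimodular-on-the-boundary holomorphic factor coming from $\Cs((\zeta,z),(0,ie))$) to a bounded holomorphic function on $D$. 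Finally, since $N$ is $\nu$-negligible for every $\nu$ in the relevant class (these measures are absolutely continuous with respect to $\Hc^{\dim\bD-1}$ by Proposition~\ref{prop:8} once we know they are pluriharmonic, but more simply $N$ is a proper real-algebraic subset, hence $\beta_{\bD}$-negligible, and the approximating measures $\Re P\cdot\beta_{\bD}$ all give $N$ mass zero), the vague closure of $\Re H^\infty(\bD)\cdot\beta_{\bD}$ inside $\Set{\rho:\abs{\rho}(N)=0}$ coincides with the vague closure of $\Re\Pc_\C\cdot\beta_{\bD}$ there (polynomials being dense in $H^\infty(\bD)$ in the appropriate sense, or simply $\Re P_\C\cdot\beta_{\bD}\subseteq \Re H^\infty(\bD)\cdot\beta_{\bD}$ and conversely each $\Re f\cdot\beta_{\bD}$ with $f\in H^\infty$ is a vague limit of $\Re f_\rho\cdot\beta_{\bD}$ with $f_\rho$ holomorphic on a neighborhood of $\Cl(D)$, hence approximable by polynomials). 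Chasing through $\Gamma$ then yields the equivalence of (3) above with condition (3) of Proposition~\ref{prop:4} for $\nu$, completing the cycle.

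\textbf{Main obstacle.} The delicate point is the interplay between the weight $\Ps((0,ie),\,\cdot\,)$ and the set $N$. Unlike the situation on $\bD$, a measure in $\Mc_\Poisson(\bDs)$ need not correspond to a \emph{finite} measure on $\bD$ — it corresponds to $\Ps((0,ie),\,\cdot\,)\cdot\mi$ pushed forward, which \emph{is} finite by definition of $\Mc_\Poisson$, but whose mass escaping to $N\subseteq\bD$ must be shown to vanish. One must be careful that the tight topology on $\Mc_\Poisson(\bDs)$ matches the vague topology on $\Mc(\bD)$ only after the identification in Corollary~\ref{cor:4}, and that passing to closures commutes with this identification; this requires the boundedness-and-mass-convergence argument already used in the proof of Corollary~\ref{cor:4}. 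A secondary subtlety is checking that the correction factor relating $H^\infty(\Ds)$ and $H^\infty(D)$ in Lemma~\ref{lem:7} is genuinely bounded and of modulus adapted to the Poisson weight, so that $\Re H^\infty(\bDs)\cdot\beta_{\bDs}$ and $\Re H^\infty(\bD)\cdot\beta_{\bD}$ really do correspond under $\Gamma$; this is where the explicit formulas of Example~\ref{ex:5} (or Kor\'anyi's lemma) may be needed to control the factor $\abs{\Cs((\zeta,z),(0,ie))}$ near $\bDs$.
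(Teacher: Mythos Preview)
Your strategy of transferring to $D$ via $\Gamma$ and Remark~\ref{oss:9}, then invoking Proposition~\ref{prop:4}, is exactly the paper's, and your handling of (1)$\Leftrightarrow$(2) is correct and identical. The direction (3)$\Rightarrow$(2) is also essentially the paper's: given $f\in\Re H^\infty(\bDs)$, the function $f\circ\gamma^{-1}$ (extended by $0$ on $N$) lies in $\Re H^\infty(\bD)$, hence $\Ps f=\Re\Ss f$, and one passes to the closure by continuity. Your discussion of a ``correction factor from Lemma~\ref{lem:7}'' is a red herring here: that factor pertains to the $H^2$-isometry, but for $H^\infty$ plain composition with the biholomorphism $\gamma$ suffices, so there is nothing to control.

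The gap is in (1)$\Rightarrow$(3). You propose to use that $\Gamma^{-1}$ is a homeomorphism, but Corollary~\ref{cor:4} only establishes this on the \emph{positive} cone, and your suggested fixes do not work: splitting into positive and negative parts fails because Jordan decomposition does not commute with vague convergence (if $\nu_j\to\nu$ vaguely it need not follow that $\nu_j^\pm\to\nu^\pm$), and boundedness alone does not force tight convergence for signed measures. The paper avoids this by choosing a \emph{specific} approximating sequence, namely $f_j\coloneqq(\Pc\,\Gamma(\mi))_{1-2^{-j}}\circ\gamma\in\Re H^\infty(\bDs)$, and proving directly that $\{f_j\cdot\beta_{\bDs}\}$ is relatively compact (i.e.\ tight) in $\Mc_\Poisson(\bDs)$: the total masses are bounded by $\norm{\Gamma(\mi)}_{\Mc^1(\bD)}$, and a contradiction argument shows no mass can escape to infinity, since by lower semicontinuity of the vague limit any such escaped mass would have to be charged by $\abs{\Gamma(\mi)}$ on $N$, which is zero. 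Once tightness is in hand, vague convergence to $\mi$ upgrades to convergence in $\Mc_\Poisson(\bDs)$. You correctly flagged this as the ``main obstacle'', but your proposed resolution of it is precisely where the argument breaks.
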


Here, we denote by $H^\infty(\bDs)$ the space of boundary values of bounded holomorphic functions on $\Ds$. 
Notice that in condition (3) we require $\mi$ to be approximated by the (bounndary values of the) real parts of holomorphic functions which are at least bounded in $\Ds$. In particular, it is \emph{not} sufficient to approximate $\mi$ in $\Mc_\Poisson(\bDs)$ with measures of the form $f\cdot \beta_{\bDs}$, where $f$ is the real part of some holomorphic function $g$ defined on $\Ds- (0, i h)$ for some $h\in\Omega$ (cf.~Example~\ref{ex:1bis} and Theorem~\ref{teo:4}). In a sense, this is due to the fact that $\Im \cdot \beta_{\bDs}$ does \emph{not} belong to $\Mc_\Poisson(\bDs)$, so that one need \emph{not} be able to approximate $g$ with bounded holomorphic functions in such a way that the corresponding real parts (`multiplied' by $\beta_{\bDs}$) converge in $\Mc_\Poisson(\bDs)$.

Notice that condition (3) is also equivalent to requiring that $\mi$ belongs to the closure of $\Re [ \Sc(\bDs)\cap  H^\infty(\bDs)]\cdot \beta_{\bDs}$ in $\Mc_\Poisson(\bDs)$. In order to see this fact, observe that $\Sc(\bDs)\cap H^\infty(\bDs)$ is dense in $H^\infty(\bDs)$ in the weak topology $\sigma(H^\infty(\bDs), L^1(\beta_\bDs))$ (argue, e.g., as in the proof of~\cite[Lemma 8.1]{OgdenVagi}), and that  the canonical mapping  $H^\infty(\bDs)\to \Mc_\Poisson(\bDs)$ is continuous for the topology $\sigma(H^\infty(\bDs), L^1(\beta_\bDs))$ since $\Ps((0, i e),\,\cdot\,)\in L^1(\beta_{\bDs})$.

\begin{oss}\label{oss:13}
Notice that we do not provide analogues of the condition (4) of Proposition~\ref{prop:4}, since we have not found any reasonably simple class of functions which is (weakly) dense in the polar (in $C_\Poisson(\bDs)$) of the space of pluriharmonic measures in $\Mc_\Poisson(\bDs)$. 
A tempting choice could be the following one:
\begin{enumerate}
	\item[\textnormal{(4)}] $\int_{\bDs} \phi\,\dd \mi=0$ for every $\phi\in \Sc(\bDs)\cap (L^2(\beta_{\bDs})\ominus \Re H^2(\bDs))$, where $\Sc(\bDs)$  is defined as the push-forward of $\Sc(E\times F)$ under the diffeomorphism $E\times F\ni(\zeta,x)\mapsto (\zeta,x+i\Phi(\zeta))\in \bDs$,  while $H^2(\bDs)$ denotes the space of boundary values of the Hardy space $H^2(\Ds)$.
\end{enumerate} 
When  $\Ds$ is of tube type, this condition is equivalent to the following one:
\begin{enumerate}
	\item[\textnormal{(4$'$)}] $\Fc \mi$ is supported in $-\overline{\Omega}\cup \overline\Omega$, where $\Fc$ denotes the Fourier transform on $\bDs=F$.
\end{enumerate}
This follows from the fact that $H^2(\bDs)=\Set{f \in L^2(\beta_{\bDs})\colon \Supp{ f\cdot \beta_{\bDs} }\subseteq \overline\Omega}$ (cf., e.g.,~\cite[Theorem 3.1]{SteinWeiss2}), and that conditions on the support of the Fourier transform may be described by testing against Schwartz functions. 
When $\Ds=\C_+^k$, $k\in\N$, condition (4$'$) has been shown to characterize pluriharmonic measures on $\R^k=\bDs$ in~\cite[Theorem 3.2]{NedicSaksman}.\footnote{The cited result only deals with positive  Radon measures, but the proof extends to general Radon measures.}
In addition, (4$'$) may be used to characterize pluriharmonic measures when $\mi\in \Mc^1(\bDs)$ or $\mi\in L^p(\beta_{\bDs})\cdot \beta_{\bDs}$ for $p\in [1,2]$ (roughly speaking, because the Fourier transform of $\mi$ does not `see' the boundary of $\Omega$). 
In addition, using (3) of Proposition~\ref{prop:18}, it is not difficult to show that, if $\mi$ is a pluriharmonic measure on $\bDs$, then condition (4$'$) holds (when $\Ds$ is tubular).
Nonetheless, as Example~\ref{ex:1bis} shows, (4$'$) does \emph{not} characterize pluriharmonic measures on any tubular irreducible symmetric Siegel domain of rank $2$. It is possible that this behaviour should be related to the fact that $\chi_\Omega$ is \emph{not} a Fourier multiplier of $L^p(F)$, $p\neq 2$ unless $\Ds=\C^k$ (when $\Ds$ is tubular; cf.~\cite{Fefferman}).

Because of Theorem~\ref{teo:2}, one may still wonder whether condition (4)  characterizes plurihamonic measures when $\Ds$ has no irreducible \emph{tubular} factor of rank $\Meg 2$, even though in this case one no longer has the analogue of condition (4$'$). As a matter of fact, allowing not necessarily tubular domains poses severe restrictions on the class of distributions for which the Fourier transform may be computed (roughly speaking, only measures in $\Mc^1(\bDs)+ L^2(\beta_{\bDs})\cdot \beta_{\bDs}$, with the available techniques\footnote{As a matter of fact, when $\Ds$ has rank $1$ and $\bDs$ is then identified with a Heisenberg group, the Fourier transform has been successfully extended to general tempered distributions, cf.~\cite{BCD}, but is still quite hard to handle.}), since $\bDs$ becomes a \emph{non-commutative} nilpotent Lie group. 

One may, of course, simply transfer the polynomial conditions determined in Proposition~\ref{prop:4} through $\gamma$, but the resulting conditions would be poorly intelligible.
Cf.~\cite{LugerNedic1,LugerNedic2} for a lengthier discussion of this problem when $\Ds$ is a product of upper half-planes. Cf.~also~\cite{LugerNedic3,Nedic,NedicSaksman} for further charaterizations and deeper properties of positive pluriharmonic measures when  $\Ds$ is a product of upper half-planes, and~\cite{Vladimirov} for an  involved characterization of positive pluriharmonic measures on tube domains.

\end{oss}

\begin{proof}
	The equivalence of (1) and (2) follows from Proposition~\ref{prop:4} and Remark~\ref{oss:9}. In addition, observe that (1) is equivalent to saying that $\Gamma(\mi)$ is a pluriharmonic measure on $\bD$, with the notation of Corollary~\ref{cor:4}. 
	
	Define $\Gamma'$ as the correspondence $ f \mapsto   f\circ   \gamma^{-1} $ (extended by $0$ on $N$) from functions on $\bDs$ to functions on $\bD$, and observe that $\Gamma(f\cdot \beta_{\bDs})=\Gamma'(f)\cdot \beta_{\bD}$ when $f\cdot \beta_{\bDs}\in \Mc_\Poisson(\bDs)$, thanks to Remark~\ref{oss:7}. 
	In particular, if $f\in \Re  H^\infty(\bDs)$, then $\Gamma'(f)\in \Re H^\infty(\bD)$, so that
	\[
	\Ps f =\Pc(\Gamma' (f))\circ \gamma=\Re \Hc(\Gamma'(f))\circ \gamma=\Re \Ss f
	\]
	by Remark~\ref{oss:9}. By continuity, this shows that  (3) implies  (2).
	
	Conversely, assume that (1) holds and let us prove that (3) holds. Observe that
	\[
	(\Pc \Gamma(\mi))_{1-2^{-j}}\cdot \beta_{\bD}
	\]
	converges vaguely to $\Gamma(\mi)$ for $j\to \infty$. 
	Set $f_j\coloneqq (\Pc \Gamma(\mi))_{1-2^{-j}}\circ  \gamma$, so that $\Gamma(f_j\cdot \beta_{\bDs})=(\Pc \Gamma(\mi))_{1-2^{-j}}\cdot \beta_{\bD} $ for every $j\in \N$. 
	Let us prove that $\Set{f_j\cdot \beta_{\bDs}\colon j\in\N}$ is relatively compact in $\Mc_\Poisson(\bDs)$. By~\cite[Theorem 1 of Chapter IX, \S\ 5, No.\ 5]{BourbakiInt2}, it will suffice to observe that 
	\[
	\sup_{j\in \N} \int_{\bDs} \abs{f_j} \Ps((0, i e),\,\cdot\,)\,\dd\beta_{\bDs}=\sup_{j\in \N}  \norm{(\Pc \Gamma( \mi))_{1-2^{-j}}}_{L^1(\beta_{\bD})}= \norm{\Gamma(\mi)}_{\Mc^1(\bD)}<+\infty,
	\]
	thanks to Remark~\ref{oss:7},
	and to prove that for every $\eps>0$ there is a compact subset $K_\eps$ of $\bDs$ such that, for every $j\in \N$,
	\[
	\int_{\bDs \setminus K_\eps} \abs{f_j }\Ps((0, i e),\,\cdot\,)\,\dd\beta_{\bDs} <\eps.
	\]
	Assume by contradiction that there are $\eps>0$, a covering $(U_k)$ of $\bDs$ by relatively compact open subsets of $\bDs$, with $\overline{U_k}\subseteq U_{k+1}$ for every $k\in\N$, and a strictly increasing sequence $(j_k)_{k\in \N}$ such that
	\[
	\int_{\bD\setminus   \gamma(U_k)} \abs{(\Pc \mi)_{1-2^{-j_k}}}\,\dd \beta_{\bD} =\int_{\bDs \setminus U_k} \abs{f_{j_k} }\Ps((0, i e),\,\cdot\,)\,\dd\beta_{\bDs} \Meg \eps
	\]
	for every $k\in\N$ (cf.~Remark~\ref{oss:7}). 
	Observe that $\gamma(\bDs)$ is open in $\bD$, since $\bD \setminus \gamma(\bDs)$ is the intersection of $\bD$ with the (closed) algebraic set where $\gamma^{-1}$ is not defined. 
	Observe that, by the previous remarks,
	\[
	\lim_{j\to \infty}  \int_{\bD} \abs{(\Pc \mi)_{1-2^{-j}}}\,\dd \beta_{\bD}=\norm{\Gamma(\mi)}_{\Mc^1(\bD)},
	\]
	while, by semicontinuity,
	\[
	\abs{ \Gamma(\mi)}( \gamma(U_{k'}))\meg \liminf_{k\to \infty} \int_{\gamma(U_{k'})} \abs{(\Pc \mi)_{1-2^{-j_k}}}\,\dd \beta_{\bD}\meg \liminf_{k\to \infty} \int_{  \gamma(U_k)}  \abs{(\Pc \mi)_{1-2^{-j_k}}}\,\dd \beta_{\bD}\meg \norm{\Gamma(\mi)}_{\Mc^1(\bD)}-\eps
	\]
	for every $k'\in \N$. Passing to the limit for $k'\to \infty$, this shows that
	\[
	\norm{\Gamma(\mi)}_{\Mc^1(\bD)}=\abs{ \Gamma(\mi)}(\bD\setminus N)\meg \norm{\Gamma(\mi)}_{\Mc^1(\bD)}-\eps
	\]
	since $\abs{\Gamma(\mi)}(N)=0$: contradiction. 
	Thus, $\Set{f_j\cdot \beta_{\bDs}\colon j\in\N}$ is relatively compact in $\Mc_\Poisson(\bDs)$. 
	Since clearly $f_j\cdot \beta_{\bDs}$ converges vaguely to $\mi$, this is sufficient to conclude that $f_j\cdot \beta_{\bDs}$ converges to $\mi$ in $\Mc_\Poisson(\bDs)$. Since  $f_j=\Re[(\Hc \Gamma(\mi))_{1-2^{-j}}\circ \gamma]\in \Re H^\infty(\bDs)$ for every $j\in\N$, this shows that (3) holds.
\end{proof}

\begin{deff}
	We say that a measure $\mi\in \Mc_\Poisson(\bDs)$ is pluriharmonic if it satisfies the equivalent conditions of Proposition~\ref{prop:18}.
\end{deff}

One may then consider disintegration theorems for pluriharmonic measures in $L^p(\beta_{\bDs})\cdot \beta_{\bDs}$, $p\in [1,\infty]$, and also in $\Mc^1(\bDs)$ (the latter case requires more care), in analogy with Proposition~\ref{prop:25}. Since our main interest is on positive plurihamonic measures, we shall not pursue this task.

\section{Positive Pluriharmonic Functions on Symmetric Siegel Domains}\label{sec:6}

\begin{teo}\label{teo:2}
	Assume that $\Ds$  has no irreducible \emph{tubular} factor of rank $\Meg 2$, and let $f$ be a positive pluriharmonic function on $\Ds$. Then, there are a unique positive  pluriharmonic measure $\mi$ in $\Mc_\Poisson(\bDs)$ and a unique $\lambda\in \overline{\Omega}\cap \Phi(E)^\perp$ such that  
	\[
	f(\zeta,z)=  \langle \Im z\vert \lambda \rangle+ (\Ps\mi)(\zeta,z)
	\]
	for every $(\zeta,z)\in \Ds$. 
\end{teo}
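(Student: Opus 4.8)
The plan is to transfer $f$ to the bounded realization $D$, use the structure theory of Section~\ref{sec:2}, and peel off the contributions of the rank‑$1$ tubular factors one coordinate at a time.

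\textit{Reduction to a product.} Since the only rank‑$1$ irreducible symmetric tube domain is (biholomorphic to) $\C_+$, the hypothesis means $\Ds\cong\Ds_0\times\C_+^k$ for some $k\in\N$, where $\Ds_0$ is a product of irreducible symmetric Siegel domains, none of tube type. Then $D\cong D_0\times\Db^k$, $\bD\cong\bD_0\times\T^k$, $\gamma=\gamma_0\times\gamma_1\times\cdots\times\gamma_k$ with $\gamma_j\colon\C_+\to\Db$, $w\mapsto\frac{w-i}{w+i}$, and, since the Cauchy--Szeg\H o kernel is multiplicative over products, $\Pc$ (resp.\ $\Ps$) factors as the product of the Poisson kernels of the factors. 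Moreover $N\coloneqq\bD\setminus\gamma(\bDs)=(N_0\times\T^k)\cup\bigcup_{j=1}^k\Set{\zeta\colon\zeta_j=1}$, where $N_0\coloneqq\bD_0\setminus\gamma_0(\bDs_0)$; since $\Ds_0$ has no tube‑type factor, $N_0$ has dimension $\meg m_0-2$ (as recalled in the Introduction), so $N_0\times\T^k$ is $\Hc^{m-1}$‑negligible. We may assume $f\not\equiv 0$.

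\textit{The slopes along the $\C_+$‑factors are constant (the crux).} Write $f=\Re F$ with $F$ holomorphic on $\Ds$ ($\Ds$ is simply connected, being biholomorphic to the convex domain $D$). Fix $j$, write $w$ for the $j$‑th coordinate of $\C_+^k$ and $u$ for the remaining variables, which range over $\Ds^{(j)}\coloneqq\Ds_0\times\C_+^{k-1}$. For each $u$, $w\mapsto iF(u,w)$ maps $\C_+$ into $\C_+$, so the Herglotz representation of positive harmonic functions on the half‑plane gives $f(u,w)=c_j(u)\,\Im w+P_u(w)$ with $P_u\Meg 0$ a Poisson integral in $w$, $c_j(u)=\lim_{y\to+\infty}\frac1y F(u,iy)\Meg 0$, and $y\mapsto\frac1y f(u,iy)$ decreasing to $c_j(u)$. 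Now $\Set{u\mapsto\ee^{-F(u,iy)/y}\colon y>0}$ is holomorphic on $\Ds^{(j)}$ and bounded by $1$ in modulus (as $\Re F=f\Meg 0$), hence normal; a locally uniform subsequential limit has modulus $\ee^{-c_j}$, hence is nowhere zero, so (taking a holomorphic logarithm, $\Ds^{(j)}$ being simply connected) $\frac1y F(\,\cdot\,,iy)$ itself converges locally uniformly, necessarily to $c_j$. Thus $c_j$ is holomorphic and real‑valued, hence a constant $c_j\Meg 0$. \emph{This rigidity step --- upgrading ``$c_j$ pluriharmonic'' to ``$c_j$ constant'' via holomorphic (not merely pluriharmonic) limits --- is the main obstacle; everything else reduces, through $\gamma$, to the bounded case and the dimension of $N$.}

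\textit{Peeling and transfer.} Subtracting the $c_j\,\Im w_j$ successively (starting from $w_k$): at each stage the object is positive pluriharmonic, its slope in the next variable $w_j$ is again the constant $c_j$ (the terms already subtracted being $w_j$‑independent), and subtracting $c_j\,\Im w_j$ leaves exactly the non‑negative Poisson part in $w_j$. Hence $g\coloneqq f-\sum_{j=1}^k c_j\,\Im w_j$ is positive pluriharmonic on $\Ds$ with $\lim_{y\to+\infty}\frac1y\,g(u,iy)=0$ in each variable $w_j$. Transferring, $g\circ\gamma^{-1}=\Pc\nu$ for a unique positive pluriharmonic measure $\nu$ on $\bD$ (Proposition~\ref{prop:5}), which, if $\nu\ne 0$, is absolutely continuous with respect to $\Hc^{m-1}$ (Proposition~\ref{prop:8}). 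Then $\nu(N_0\times\T^k)=0$ by the dimension estimate; and if $\nu(\Set{\zeta_j=1})>0$ then $\nu|_{\Set{\zeta_j=1}}=\rho\otimes\delta_1$ with $\rho\ne 0$, so $\Pc\nu\Meg\Pc(\rho\otimes\delta_1)=\Pc_\Db(\,\cdot\,,1)\cdot\Pc_{D_0\times\Db^{k-1}}(\rho)$ after factoring the Poisson kernel, whence, using $\Pc_\Db(\gamma_j(w),1)=\Im w$, $g\Meg\Im w_j\cdot\widetilde h$ with $\widetilde h\Meg 0$, $\widetilde h\not\equiv 0$ (Proposition~\ref{prop:1}) --- contradicting $\lim_{y\to+\infty}\frac1y g(u,iy)=0$. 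Thus $\nu(N)=0$; by Corollary~\ref{cor:4} there is a unique positive $\mi\in\Mc_\Poisson(\bDs)$ with $\nu=\Gamma(\mi)$, $\Ps\mi=\Pc(\Gamma(\mi))\circ\gamma=g$ (Remark~\ref{oss:9}), and $\mi$ is pluriharmonic since $\Ps\mi=g$ is (Proposition~\ref{prop:18}). Setting $\lambda\coloneqq(0,c_1,\dots,c_k)\in F_0\times\R^k=F$ gives $f=\langle\Im z\vert\lambda\rangle+\Ps\mi$, and $\lambda\in\overline\Omega\cap\Phi(E)^\perp$ because $\overline\Omega=\overline{\Omega_0}\times[0,+\infty)^k$ and $\Phi(E)^\perp=\Phi_0(E_0)^\perp\times\R^k$.

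\textit{Uniqueness.} If also $f=\langle\Im z\vert\lambda'\rangle+\Ps\mi'$ with $\lambda'=(\lambda_0',c_1',\dots,c_k')\in\overline\Omega\cap\Phi(E)^\perp$ and $\mi'\Meg 0$ pluriharmonic, apply $\lim_{y\to+\infty}\frac1y(\,\cdot\,)$ in the variable $w_j$: the term $\langle\Im z_0\vert\lambda_0'\rangle$ is $w_j$‑independent, while $\frac1y\Pc_\Db(\gamma_j(iy),1)=1$ and $\frac1y\Pc_\Db(\gamma_j(iy),\alpha)\to 0$ for $\alpha\ne 1$, so the $w_j$‑slope of $\Ps\mi'$ equals $\int\chi_{\Set{\zeta_j=1}}\,\dd\Gamma(\mi')=0$ (as $\Gamma(\mi')$ is a finite measure with no mass on $N$, by Corollary~\ref{cor:4}). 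Hence $c_j'=c_j$ for every $j$; then $\langle\Im z_0\vert\lambda_0'\rangle=\Ps\mi-\Ps\mi'$, and its restriction to the complex submanifold $\Ds_0\times\Set{(i,\dots,i)}\cong\Ds_0$ is a Poisson integral on $\bDs_0$ of a pluriharmonic measure (each $\Ps\mi|$, $\Ps\mi'|$ being positive pluriharmonic on $\Ds_0$, which has no tube‑type factor). Since a non‑zero $\lambda_0'\in\overline{\Omega_0}\cap\Phi_0(E_0)^\perp$ would make $\langle\Im z_0\vert\lambda_0'\rangle$ a positive pluriharmonic function of linear growth along $t\mapsto(0,ity_0)$, $t\to+\infty$ (for $y_0\in\Omega_0$ with $\langle y_0\vert\lambda_0'\rangle>0$), which cannot be a Poisson integral (its decay along such rays follows from the explicit formula of Example~\ref{ex:5}), we get $\lambda_0'=0$, so $\lambda'=\lambda$. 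Then $\Ps\mi'=\Ps\mi$ forces $\Pc(\Gamma(\mi'))=\Pc(\Gamma(\mi))$ on $D$, hence $\Gamma(\mi')=\Gamma(\mi)$ by Proposition~\ref{prop:5}, hence $\mi'=\mi$ by Corollary~\ref{cor:4}.
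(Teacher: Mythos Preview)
Your existence argument is correct and takes a genuinely different route for the key rigidity step. The paper transfers $f$ to $D$ first and, using the disintegration of $\nu$ along the $j$-th torus factor together with Remark~\ref{oss:8}, shows that the radial limit $(1-\rho)(\Hc\nu)(z,(w_{j'})_{j'\neq j},\rho)$ is a nonnegative holomorphic function of the remaining variables, hence a constant $c_j$, and identifies $\chi_{N_j}\cdot\nu=c_j\beta_j$. You instead extract the Nevanlinna slopes $c_j(u)=\lim_{y\to\infty}\frac{1}{y}F(u,iy)$ directly on $\Ds$ and use the normal family $\ee^{-F(\,\cdot\,,iy)/y}$ (bounded by $1$, with pointwise limit $\ee^{-c_j}$) to force $\ee^{-c_j}$ holomorphic and positive real, hence constant; you then subtract and transfer. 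Both arguments rest on the same principle (a real holomorphic function is constant), but yours bypasses the disintegration machinery at this stage; the paper's version, in exchange, directly yields the structure of $\chi_N\cdot\nu$.

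Two gaps remain. First, you invoke ``$N_0$ has dimension $\meg m_0-2$ (as recalled in the Introduction)'' as if it were background; it is not. The Introduction only announces it, and the paper establishes it precisely in Steps~II--III of the proof of this theorem, via Lemma~\ref{lem:6}, Corollary~\ref{cor:5}, and a non-trivial irreducibility argument for the real polynomial $(\zeta,x)\mapsto\det(x+i\Phi(\zeta))$ on $E\times F$. This is the most technical part of the whole proof, and you have assumed it.

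Second, your uniqueness argument contains an error. You claim that $\langle\Im z_0\vert\lambda_0'\rangle$ ``cannot be a Poisson integral (its decay along such rays follows from the explicit formula of Example~\ref{ex:5})''; but Poisson integrals on $\Ds_0$ \emph{can} have linear growth. On the Siegel half-space of type $I_{1,q}$ ($q\Meg 2$), the function $(\zeta,z)\mapsto\Im z$ is positive pluriharmonic with boundary values $\abs{\zeta}^2$ on $\bDs_0$, and by your own existence step (applied to $\Ds_0$, which has no tube factor) it equals $\Ps_0(\abs{\zeta}^2\cdot\beta_{\bDs_0})$; it grows linearly along $(0,it)$. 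The correct argument is that $\overline{\Omega_0}\cap\Phi_0(E_0)^\perp=\Set{0}$: for each irreducible non-tubular factor, $\Phi$ is non-degenerate and $\Phi(E)$ is invariant under the irreducible action of the linear automorphism group of the cone on $F$, so it spans $F$. This forces $\lambda_0'=0$ directly. (Equivalently: since $\lambda_0'\in\Phi_0(E_0)^\perp$, the function $\langle\Im z_0\vert\lambda_0'\rangle$ vanishes identically on $\bDs_0$, hence its boundary measure is zero, hence by your existence step on $\Ds_0$ the function is zero.)
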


Notice that   $\mi$ coincides with the boundary value measure of $f$ in the sense of Theorem~\ref{teo:3}.  

The proof of the last step is conceptually similar to that of~\cite{LugerNedic1,LugerNedic2} (which deal with the case in which $\Ds$ is a product of upper half-planes), even though somewhat simplified by the use of disintegrations. As shown in Example~\ref{ex:1},   it is not possible to extend this result to general symmetric domains: counterexamples arise in every tubular irreducible symmetric Siegel domain of rank $2$. Cf.~Corollary~\ref{cor:10}, though.

We first need two simple lemmas. The first one is most likely well known, but we include a proof for lack of a reference.

\begin{lem}\label{lem:6}
	Let $V$ be a (complex) algebraic subset of $F_\C$. Then, $\dim_\HB (V\cap F)=\dim_\R (V\cap F)\meg \dim_\C V=2\dim_{\HB} V$, where $\dim_\HB$ denotes the Hausdorff--Besicovitch dimension, $\dim_\R (V\cap F)$ denotes the Krull dimension of the ring of (real) polynomial functions on $V\cap F$, and $\dim_\C V$ denotes the Krull dimension of the ring of (holomorphic) polynomial functions on $V$.
\end{lem}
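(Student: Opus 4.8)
The plan is to prove the three (in)equalities in turn, reducing each to a known statement.

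\textbf{Step 1: The equalities $\dim_\HB(V\cap F)=\dim_\R(V\cap F)$ and $\dim_\C V = 2\dim_\HB V$.} For a real algebraic set $W$ in a finite-dimensional real vector space, it is a standard fact (e.g. from the theory of semialgebraic sets, or from Whitney's stratification) that $W$ is a finite disjoint union of real-analytic manifolds, that the maximal dimension of such a manifold equals the Krull dimension of the coordinate ring $\R[W]$, and that this common integer also equals the Hausdorff--Besicovitch dimension of $W$ (Hausdorff dimension of a $C^1$ manifold equals its topological dimension, and a countable union does not raise Hausdorff dimension). Applying this with $W=V\cap F$ gives the first equality. For the last equality, $V$ is a complex algebraic set in $F_\C\cong \C^m$; viewing it as a real algebraic set in $\R^{2m}$, its smooth locus is a complex manifold, hence a real manifold of real dimension $2\dim_\C V$, so $\dim_\HB V = 2\dim_\C V$, i.e. $\dim_\C V = 2\dim_\HB V$. (One must be slightly careful that $\dim_\C V$ defined via the Krull dimension of the ring of holomorphic polynomial functions on $V$ agrees with the dimension of $V$ as a complex-analytic set; this is classical.)

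\textbf{Step 2: The inequality $\dim_\R(V\cap F)\le \dim_\C V$.} The point is that $F$ is a \emph{real form} of $F_\C$: we have $F_\C = F\oplus iF$ and $F$ is totally real of maximal dimension $m=\dim_\C F_\C$. The standard argument is that a real polynomial function on $F$ that vanishes on $V\cap F$ extends to a holomorphic polynomial on $F_\C$ (complexify the coordinates), and, crucially, the complexification of the \emph{ideal} $I_\R(V\cap F)\subseteq \R[F]$ generates an ideal in $\C[F_\C]$ whose zero set $W$ is a complex algebraic set containing $V\cap F$ with $W\cap F = V\cap F$ and $\dim_\C W = \dim_\R(V\cap F)$ (this is the classical fact that complexification preserves Krull dimension of finitely generated $\R$-algebras that are "real" in the sense of having enough real points, which holds here because $V\cap F$ is Zariski-dense in $W$ by construction). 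Since $W\supseteq V\cap F$ and $W$ is a complex algebraic subset, while $V$ is also a complex algebraic set containing $V\cap F$, one compares: actually the cleanest route is $\dim_\R(V\cap F) = \dim_\C W$ where $W$ is the Zariski closure of $V\cap F$ in $F_\C$, and $W\subseteq V$ since $V$ is Zariski-closed and contains $V\cap F$; hence $\dim_\C W\le \dim_\C V$, giving $\dim_\R(V\cap F)\le \dim_\C V$. Combining Steps 1 and 2 yields the full chain $\dim_\HB(V\cap F)=\dim_\R(V\cap F)\le \dim_\C V = 2\dim_\HB V$.

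\textbf{Main obstacle.} The delicate point is Step 2: the equality $\dim_\R(V\cap F) = \dim_\C(\overline{V\cap F}^{\,\mathrm{Zar}})$, i.e. that passing to the Zariski closure in the complexification does not change the dimension. This requires knowing that the ideal $I_\R(V\cap F)$ is a \emph{real} ideal (in the sense that its complexification has $V\cap F$ as a Zariski-dense subset of its zero locus), which is automatic here because $V\cap F$ is \emph{defined} as the real-point set of the complex ideal $I(V)$, hence is already Zariski-dense in its own Zariski closure; one then invokes the general principle that for a finitely generated $\R$-algebra $A$, $\dim_{\mathrm{Krull}} A = \dim_{\mathrm{Krull}}(A\otimes_\R \C)$. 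I would cite a standard reference (e.g. Bochnak--Coste--Roy, \emph{Real Algebraic Geometry}) for these facts rather than reprove them. Everything else is routine book-keeping about Hausdorff dimension of manifolds and the smooth-locus stratification of algebraic sets.
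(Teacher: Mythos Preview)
Your proposal is correct and follows essentially the same approach as the paper: both reduce the inequality $\dim_\R(V\cap F)\meg\dim_\C V$ to the commutative-algebra fact that Krull dimension is preserved under the base change $\R\to\C$ for finitely generated algebras (the paper cites Bourbaki for this), combined with a suitable ideal containment, and both defer to Bochnak--Coste--Roy for the equalities involving Hausdorff dimension. The only cosmetic difference is that the paper passes through the intermediate ideal $I=\{\Re p\vert_F:p\in\Ic(V)\}\subseteq\R[F]$ (using $\Ic(V)\subseteq I_\C$ and $I\subseteq\Ic(V\cap F)$), whereas you go directly via the Zariski closure $W$ of $V\cap F$ in $F_\C$ (using $\Ic(V)\subseteq\Ic(V\cap F)\otimes_\R\C$); the underlying algebra is identical and your route is one step shorter.
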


\begin{proof}
	Let $\Ic(V)$ be the ideal of holomorphic polynomials on $F_\C$ which vanish on $V$, so that the ring $\C[V]$ of (holomorphic) polynomial functions on $V$ is $\C[F_\C]/\Ic(V)$, where $\C[F_\C]$ denotes the ring of holomorphic polynomials on $F_\C$. Analogously, denote by $\Ic(V\cap F)$ the ideal of the ring $\R[F]$ of real polynomials on $F$ which vanish on $V\cap F$, so that $\R[V\cap F]\coloneqq \R[F]/\Ic(V\cap F)$ is the ring of polynomial functions on $V\cap F$. Observe that the set $I$ of restrictions to $F$ of the polynomials $\Re p$, for $ p \in \Ic(V)$, is an ideal of $\R[F]$ contained in $\Ic(V\cap F)$. In addition, if we identify $\C[F]$ canonically with the complexification of $\R[F]$, then the complexification $I_\C$ of $I$ becomes an ideal in $\C[F]$ which contains $\Ic(V)$ (explicitly, $I_\C=\Set{p+q^*\colon p,q\in \Ic(V)}$, where $q^*(z)=\overline{q(\overline z)}$). 
	Then,~\cite[Corollary to Proposition 5 of Chapter VIII, \S\ 2, No.4]{BourbakiAC3} shows that $\dim_\C V=\dim \C[V]\Meg \dim \C[F_\C]/I_\C=\dim \R[F]/I\Meg \dim \R[V\cap F]=\dim_\R V\cap F$. The equality $\dim_\R V\cap F=\dim_\HB V\cap F$ follows (for example) from~\cite[Corollary 2.8.9 and Proposition 2.9.10]{BCR}, whereas the equality  $\dim_\C V=2 \dim_\R V(=2\dim_\HB V)$ follows from~\cite[Proposition 3.1.1]{BCR}.
\end{proof}

\begin{cor}\label{cor:5}
	Let $V$ be a complex algebraic subset of $  F_\C$. Then, $\dim_\R [(E\times V)\cap \bD]\meg 2n+\dim_\C V$. 
\end{cor}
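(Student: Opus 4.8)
The plan is to fibre $(E\times V)\cap\bDs$ over the $E$-factor and apply Lemma~\ref{lem:6} in each fibre; here I read the statement with $\bDs$ (the product $E\times F_\C$ sits in the ambient space of $\bDs$), and I comment at the end on reducing the case of $\bD$ to this one via the Cayley transform.

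Concretely, I would work in the real model $E\times F\ni(\zeta,x)\mapsto(\zeta,x+i\Phi(\zeta))\in\bDs$, which is a homeomorphism (as recalled in Section~\ref{sec:5}). Since $\Phi(\zeta)=2\{\zeta,\zeta,e\}$ is a real-polynomial (quadratic) function of $\zeta$ valued in $F$ and $\Im$ is $\R$-linear, the set $(E\times V)\cap\bDs$ is a real algebraic subset of $Z$, homeomorphic to
\[
W\coloneqq\Set{(\zeta,x)\in E\times F\colon x+i\Phi(\zeta)\in V},
\]
a real algebraic subset of $E\times F$ (the preimage of the complex, hence real, algebraic set $V$ under a real-polynomial map); in particular $\dim_\R[(E\times V)\cap\bDs]=\dim_\R W$. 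Now consider the linear (so semialgebraic) projection $p\colon W\ni(\zeta,x)\mapsto\zeta\in E$. For each $\zeta\in E$ one has $p^{-1}(\zeta)=\Set{\zeta}\times\bigl((V-i\Phi(\zeta))\cap F\bigr)$, and $V-i\Phi(\zeta)$ is, like $V$, a complex algebraic subset of $F_\C$ with $\dim_\C(V-i\Phi(\zeta))=\dim_\C V$, because translation by a fixed vector is a polynomial automorphism of $F_\C$. Hence Lemma~\ref{lem:6} applies to $V-i\Phi(\zeta)$ and yields $\dim_\R p^{-1}(\zeta)\meg\dim_\C V$ for every $\zeta\in E$. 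Invoking the standard bound for the dimension of a semialgebraic set in terms of the dimensions of the base and of the fibres of a semialgebraic map (the dimension theory of~\cite{BCR} already used in the proof of Lemma~\ref{lem:6}), one gets
\[
\dim_\R W\meg\dim_\R E+\sup_{\zeta\in E}\dim_\R p^{-1}(\zeta)\meg 2n+\dim_\C V.
\]

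The argument has no genuine obstacle; the one point that matters is recognizing that the fibre over $\zeta\in E$ is (a translate of) the real locus in $F$ of a complex algebraic subset of $F_\C$, so that Lemma~\ref{lem:6} can be used fibrewise rather than the useless crude bound $\dim_\R[(E\times V)\cap\bDs]\meg\dim_\R(E\times V)=2n+2\dim_\C V$; the $E$-directions then contribute only their real dimension $2n$. Finally, for the version with $\bD$: since the statement does not involve $\gamma$, one may take $\gamma$ to be the Cayley transform of Proposition~\ref{prop:22}, and observe that $\gamma^{-1}(E\times V)$ again has product form $E\times V'$ over $E$ with $\dim_\C V'\meg\dim_\C V$ — for $z$ with $e-z$ invertible the map $\zeta\mapsto 2(e-z)^{-1}\zeta$ is a bijection of $E$, so the first component sweeps out all of $E$ while the second depends only on $z\in V$ — and then apply the case just proved together with the homeomorphism $\gamma\colon\bDs\to\bD\setminus N$; the residual set $(E\times V)\cap N$ is of the same type and contributes no larger dimension (one may, if necessary, argue by induction on $\dim_\C V$, since $N$ is contained in $\bD$ intersected with a complex algebraic set of the form $E\times V_\Sigma$).
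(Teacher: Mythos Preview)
Your argument is essentially the same as the paper's: fibre $(E\times V)\cap\bDs$ over $E$, observe that each fibre is the real trace (after a translation) of a complex algebraic set, and apply Lemma~\ref{lem:6} fibrewise; the paper controls the total dimension via the coarea formula (in a footnote), whereas you use the semialgebraic fibre-dimension bound, but this is a matter of taste.

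The one place where the paper is cleaner is the passage from $\bD$ to $\bDs$. Your induction on $\dim_\C V$ for the residual piece $(E\times V)\cap N$ is not quite complete as stated: with a \emph{fixed} Cayley transform, it may happen that $V\subseteq V_\Sigma=\{z:\det(e-z)=0\}$, so $\dim_\C(V\cap V_\Sigma)$ need not drop. The paper avoids this by working locally: at each point $(\zeta_0,z_0)\in(E\times V)\cap\bD$ it chooses a maximal tripotent $e'$ of $F_\C$ with $e'-z_0$ invertible (always possible, via the spectral decomposition of $z_0$), so that the corresponding Cayley transform is a local diffeomorphism near $(\zeta_0,z_0)$, and then your $\bDs$ argument gives the correct local dimension there. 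This yields the result directly, without any induction or residual-set bookkeeping.
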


\begin{proof} 
	It will suffice to show that the (real) local dimension of $V\cap \bD$ is $\meg \dim_\C V$ at every $(\zeta_0,z_0)\in \bD\cap V$.  
	Let $D_0$ be the bounded symmetric  domain  defined by the Jordan triple system induced on  $F_\C$, and observe that $z_0$ belongs to the closure of $D_0$ by Remark~\ref{oss:12}. 
	In addition, we may find a maximal tripotent $e$ of $F_\C$ such that $e-z_0$ is invertible in the Jordan algebra with product $(z,z')\mapsto\{z,e,z'\}$.\footnote{It suffices to take the spectral decomposition $z_0=\sum_{j=1}^k \lambda_j e_j$ (Proposition~\ref{prop:20}), to fix a tripotent $e_0$  orthogonal to $e_1,\dots, e_k$ and such that $e_0+\cdots+e_k$ is a maximal tripotent, and to set $e= -(e_0+\cdots+e_k)$. 
		In fact, $z_0-e=\sum_{j=0}^k (\lambda_j+1) e_j$ (setting $\lambda_0\coloneqq 0$), so that  $\sum_{j=0}^k (\lambda_j+1)^{-1} e_j$ is an inverse of $z_0-e$ in the complex algebra generated by $z_0-e$ (which is the complex vector space generated by $\sum_j (\lambda_j+1)^h e_j$, $h\in \N$, that is, $\sum_j \C e_j$). } 
	Let $\gamma\colon \Ds \to D$ and  $\gamma_0\colon F+i \Omega\to D_0$ be the (inverse) Cayley transforms associated with $(0,e)$ and $e$ (cf.~Proposition~\ref{prop:22}). 
	Then, $\gamma^{-1}(E\times V)=E\times \gamma^{-1}_0(V)$, so that we are reduced to proving that $\dim_\R [(E\times V')\cap \bDs]\meg 2 n+\dim_\C V'$, where $V'\coloneqq \gamma^{-1}_0(V)$ (so that $\dim_\C V'\meg \dim_\C V$ since $\gamma_0$ is birational). 
	Observe that, for every $\zeta\in E$,
	\[
	(\Set{\zeta}\times V')\cap \bDs=\Set{\zeta}\times (V'\cap (F+i \Phi(\zeta))), 
	\]
	so that
	\[
	\dim_\R [(\Set{\zeta}\times V')\cap \bDs]= \dim_\R [V'\cap (F+i \Phi(\zeta))]\meg \dim_\C V'
	\]
	by Lemma~\ref{lem:6}. By the arbitrariness of $\zeta$, this proves that\footnote{One may, for instance, argue that, if $k=\dim_\R [(E\times V')\cap \bDs]$, then $W=(E\times V')\cap \bDs$ is a closed subset of $E\times F_\C$ which is countably $\Hc^k$-rectifiable, thanks to~\cite[Proposition 2.9.10]{BCR}. Then, the coarea formula (cf.~\cite[Theorem 3.2.22]{Federer}) shows that     the sets $(\Set{\zeta}\times V')\cap \bDs$ are countably $\Hc^{k-h}$-rectifiable (and of dimension $k-h$) for $\Hc^h$-almost every $\zeta\in \pr_E W$, where $\pr_E\colon E\times F_\C\to E$ denotes the canonical projection and $h=\dim_\HB \pr_E W\meg 2 n$. Thus, $\dim_\R[(E\times V')\cap \bDs]=k \meg 2 n+ \dim_\C V'$.   }
	\[
	\dim_\R [(E\times V')\cap \bDs]\meg 2 n + \dim_\C V',
	\]
	whence the conclusion by the above discussion.
\end{proof}

\begin{proof}[Proof of Theorem~\ref{teo:2}.]
	\textsc{Step I.} By assumption, (up to a rotation) there are $k\in\N$ and a symmetric Siegel domain $\Ds_0$ such that $\Ds=\Ds_0\times \C_+^k$, and such that no irreducible factor of $\Ds_0$ is of tube type. Then, we may choose $\gamma$ so that
	\[
	\gamma((\zeta,z), (w_{j})_{j=1,\dots, k})= \bigg( \gamma_0(\zeta,z), \Big(\frac{w_{j}-i}{w_{j}+i}  \Big)_{j=1,\dots, k}  \bigg) 
	\]
	for every $((\zeta,z), (w_{j})_{j=1,\dots, k})\in \Ds$ for some (birational) biholomorphism $\gamma_0$ of $\Ds_0$ onto some bounded convex and circular symmetric domain $D_0$ such that $D=D_0\times \Db^k$.
	Then, $f\circ \gamma^{-1}$ is a positive pluriharmonic function on $D$, so that Proposition~\ref{prop:5} shows that there is a unique positive pluriharmonic measure $\nu$ on $\bD$ such that $\Pc(\nu)=f\circ \gamma^{-1}$. 
	Define $N\coloneqq \bD\setminus \gamma(\bDs)$,
	and 
	\[
	\mi\coloneqq \Gamma^{-1}(\chi_{\bD\setminus N}\cdot\nu),
	\]
	with the notation of Corollary~\ref{cor:4}. Then, $\mi$ is positive, belongs to $\Mc_\Poisson(\bDs)$, and Remark~\ref{oss:9} shows that
	\[
	\begin{split}
		f(\zeta,z)&=\int_{\bD} \Pc(\gamma(\zeta,z),(\zeta',z')) \,\dd \nu(\zeta',z')\\
		&=\int_N \Pc(\gamma(\zeta,z),(\zeta',z')) \,\dd \nu(\zeta',z')+\int_{\bDs} \Ps((\zeta,z),\,\cdot\,)\,\dd \mi\\
		&=  g(\zeta,z)+ (\Ps \mi)(\zeta,z)
	\end{split}
	\]
	for every $(\zeta,z)\in \Ds$, where
	\[
	g(\zeta,z)\coloneqq \int_N \Pc(\gamma(\zeta,z),(\zeta',z')) \,\dd \nu(\zeta',z').
	\]
	
	\textsc{Step II.} Assume that $\Ds$ is irreducible and that $E\neq \Set{0}$, and let us prove that $\dim_\HB N\meg 2n+m-2$ or, equivalently, that $\dim_\HB (-N)\meg 2n+m-2$. 
	Let us first prove that $\dim_\HB (N\cap (-N))\meg 2n+m-2$. 
	Observe first that $N\cap (- N)$ is the intersection of $\bD$ with $E\times V$, where $V$ is the complex algebraic set defined by the two polynomials $z\mapsto\det(e-z)$ and $z\mapsto\det(e+z)$, where $\det$ denotes the determinant polynomial of the Jordan algebra  $F_\C$ (with product $(x,y)\mapsto \{x,e,y\}$ and identity $e$). Observe that  $F_\C$ is simple since $\Ds$ is irreducible (this can be checked directly by means of the classification of irreducible symmetric domains).  Since these two polynomials are distinct and irreducible (cf.~Proposition~\ref{prop:23}), $\dim_\C V \meg m-2$, so that $\dim_\HB(N\cap (-N))=\dim_\R((E\times V)\cap \bD)\meg 2n+ m-2$, thanks to Corollary~\ref{cor:5}.
	
	Therefore, in order to prove our claim, it will suffice to prove that $\dim_\HB ((-N)\setminus N)\meg 2n+m-2$. 
	It will therefore suffice to show that $\dim_\HB \widetilde\gamma^{-1}(-N)\meg 2n+m-2$, where $\widetilde \gamma\colon E\times F\ni (\zeta,x)\mapsto \gamma(\zeta,x+ i \Phi(\zeta))$. 
	Assume for simplicity that   $\gamma$ is the Cayley transform associated with $e$, and endow $E\times F_\C$ with the structure of a Jordan algebra with product $(x,y)\mapsto \{x, e, y\}$. 
	Then, $N'\coloneqq\widetilde\gamma^{-1}(-N)$ is the set of points of $E\times F$ where the inversion 
	\[
	(\zeta,x)\mapsto (-2 i (x+ i\Phi(\zeta))^{-1}\zeta,-(x+ i\Phi(\zeta))^{-1})
	\]
	is not defined (as a rational function). In other words,
	\[
	N'\coloneqq \Set{(\zeta,x)\in E\times F\colon \det(x+i \Phi(\zeta))=0}.
	\]
	As we observed earlier, $\det$ is an irreducible holomorphic polynomial on $F_\C$; let us prove that also $q(\zeta,x)\coloneqq \det(x+ i \Phi(\zeta))$ is irreducible (as a complex polynomial on the real space $E\times F$). To see this, take two polynomials $q_1,q_1\colon E\times F\to \C$ such that $q=q_1 q_2$. 
	Observe that, for every $\zeta\in E$, $q_1(\zeta,\,\cdot\,)q_2(\zeta,\,\cdot\,)=\det(\,\cdot\,+i \Phi(\zeta))$. 
	If $\widetilde q_{1,\zeta}$ and $\widetilde q_{2,\zeta}$ denote the holomorphic extensions of $q_1(\zeta,\,\cdot\,)$ and $q_2(\zeta,\,\cdot\,)$, respectively, to $F_\C$, then $\det=\widetilde q_{1,\zeta}(\,\cdot\,-i\Phi(\zeta))\widetilde q_{2,\zeta}(\,\cdot\,-i \Phi(\zeta))$ on $F_\C$. Since $\det$ is irreducible, there is $j_\zeta\in \Set{1,2}$ such that $\widetilde q_{j_\zeta,\zeta}(\,\cdot\,-i\Phi(\zeta))$ is constant on $F_\C$. 
	Hence, $q_{j_\zeta}$ is constant on $\Set{\zeta}\times F$, that is, $(\partial_F q_{j_\zeta})(\zeta,\,\cdot\,)=0$ on $F$. 
	Consequently, if we define $C_j=\Set{\zeta\in E\colon (\partial_F q_j)(\zeta,\,\cdot\,)=0 \text{ on $F_\C$}}$, then $C_1$ and $C_2$ are (real) algebraic sets such that $E=C_1\cup C_2$, so that either $C_1=E$ or $C_2=E$. 
	Assume, for the sake of definiteness, that $C_1=E$. Then, $q(\zeta,x)=\widetilde q_1(\zeta) q_2(\zeta,x)$ for every $(\zeta,x)\in E\times F$, where $\widetilde q_1=q_1(\,\cdot\,,0)$. Let $d$, $d_1$, and $d_2$ be the degrees of $q$, $q_1$, and $q_2$, respectively, so that $d=d_1+d_2$. Then,
	\[
	t^d=\det(t e)=q(0, t e)=\widetilde q_1(0) q_2(\zeta,t e)=O(t^{d_2})
	\] 
	for $t\to \infty$, so that $d=d_2$ and $d_1=0$. Hence, $q_1$ is constant. By the arbitrariness of $q_1$ and $q_2$, this proves that $q$ is irreducible (as a complex polynomial on the real space $E\times F$).
	Now, let us prove that $\Re q$ and $\Im q$ are  coprime (as complex polynomials on the real space $E\times F$); equivalently, let us prove that their holomorphic extensions $\widetilde{\Re q}$ and $\widetilde{\Im q}$ on $(E\times F)_\C$ are coprime. 
	Take polynomials  $p,q_3,q_4\colon E\times F\to \C$ so that $\Re q=p q_3$ and $\Im q=p q_4$. Then, $q=p (q_3+i q_4)$. Since $q$ is irreducible, this proves that either $p$ is constant or $q_3+i q_4$ is constant, in which case both $q_3$ and $q_4$ must be constant. Since clearly both $\Re q$ and $\Im q$ are non-trivial, also $q_3$ and $q_4$ must be non-zero constants in the latter case. In any case, the decompositions $\Re q=p q_3$ and $\Im q=p q_4$ are both trivial. 
	Therefore, the joint zero locus $V$ of $\widetilde{\Re q}$ and $\widetilde{\Im q}$ has (complex) codimension $2$ in $(E\times F)_{\C}$, so that $N'=(E\times F)\cap V$ has (real) codimension $\Meg 2$, thanks to Lemma~\ref{lem:6}. This concludes the proof of our claim. 
	
	\textsc{Step III.} Assume that $k=0$, that is (up to a rotation), $\Ds=\prod_{j=1}^h \Ds_j$ where $\Ds_1,\dots, \Ds_h$ are irreducible symmetric Siegel domains which are \emph{not} tubular. 
	Let $D=\prod_{j=1}^h D_j$ be the corresponding decomposition of $D$ into irreducible factors; we may assume that $\gamma=\prod_{j=1}^h \gamma_j$, where the $\gamma_j\colon \Ds_j\to D_j$ are birational biholomorphisms. 
	If we set $N_j\coloneqq \bD_j\setminus\gamma_j(\bDs_j)$, then $N=\bigcup_{j=1}^h \prod_{j'=1}^h N_{j,j'}$, where $N_{j,j'}=\bD_{j'}$ if $j\neq j'$, while $N_{j,j}=N_j$, for every $j,j'=1,\dots,h$. 
	Now,~\textsc{step II} shows that $\dim_\HB N_j\meg \dim_\HB \bD_j-2$ for every $j=1,\dots,h$, so that $\dim_\HB N\meg 2 n+m-2$ (since the sets $N_{j,j'}$ are real algebraic sets, hence sufficiently regular for the product formula to hold, cf., e.g.,~\cite[Proposition 2.8.5]{BCR}).

	\textsc{Step IV.} Now, consider the general case.  
	Observe that, with the notation of~\textsc{step I}, $N=(N_0\times \T^k)\cup \bigcup_{j=1}^k (\bD_0 \times \pr_j^{-1}(\Set{1}) )$, where $\pr_j\colon \T^k\to \T$ is the projection on the $j$-th component, for $j=1,\dots, k$, and $N_0$ is a real algebraic subset of $\bD_0$ with $\dim_\HB N_0\meg \dim_\HB \bD_0-2$ (cf.~\textsc{step III}). Then,  $\dim_\HB (N_0\times \T^k)\meg\dim_\HB \bD-2$, so that $\nu(N_0\times \T^k)=0$. 
	
	Then, fix $j\in \Set{1,\dots, k}$ and let us prove that $\chi_{N_j}\cdot \nu=c_j \beta_j$, where $N_j=\bD_0\times \pr_j^{-1}(\Set{1})$, $c_j\Meg 0$, and $\beta_j$ is the canonical measure on $N_j$ (considered as the \v Silov boundary of the symmetric domain $\Set{( z, (w_{j'})_{j'=1,\dots,k})\colon z\in D_0, (w_{j'})_{j'\neq j}\in \Db^{k-1}, w_j=1  }$).\footnote{In the bounded realization there is in general no reason to distinguish between the $E$ and $F_\C$ components, so that we write $z$ to denote the elements of $D_0$ and $\zeta$ to denote the elements of $\bD_0$.}
	To this aim, observe that  by Proposition~\ref{prop:6}   $\nu$ has a  disintegration $(\nu_{j,\alpha})_{\alpha\in\T}$ under   $\pr_j'\colon \bD_0\times \T^k\ni ( \zeta,(\alpha_{j'})_{j'=1,\dots,k})\mapsto \alpha_j\in \T$. Consequently, if $\nu'_j=(\pr_j')_*(\nu)$, then
	\[
	(\Hc\nu)(z,(w_{j'})_{j'=1,\dots, k})=\int_{ \T } (\Hc \nu_{j,\alpha})(z,(w_{j'})_{j'=1,\dots,k})\,\dd \nu'_j(\alpha)
	\]
	for every $z\in D_0$ and for every $w_1,\dots, w_k\in \Db$.  Observe that, if we identify $\bD$ with $(\bD_0\times \T^{k-1})\times \T$ by means of the mapping $(\zeta, (\alpha_{j'})_{j'})\mapsto ((\zeta, (\alpha_{j'})_{j'\neq j}), \alpha_j)$, then  $\nu_{j,\alpha}=\nu'_{j,\alpha}\otimes \delta_\alpha$ for some measure $\nu'_{j,\alpha}$ on $\bD_0\times \T^{k-1}$, for $\nu'_j$-almost every $\alpha\in \T$. 
	Denoting by $\Hc'$ the Herglotz integral operator corresponding to $\bD_0\times \T^{k-1}$, this proves that
	\[
	(\Hc\nu)(z,(w_{j'})_{j'})=\int_{ \T } (\Hc'\nu'_{j,\alpha})(z,(w_{j'})_{j'\neq j}) \frac{\alpha+ w_j}{\alpha-w_j} \,\dd \nu'_j(\alpha)
	\]
	for every $z\in D_0$ and for every $w_1,\dots, w_k\in \Db$. Now, observe that, by dominated convergence,
	\[
	\lim_{\rho\to 1^-}(1-\rho)\int_{ \T } (\Hc'\nu'_{j,\alpha})(z,(w_{j'})_{j'\neq j}) \frac{\alpha+\rho}{\alpha-\rho} \,\dd \nu'_j(\alpha)=2 \Hc'(\nu'_{j,1})(z,(w_{j'})_{j'\neq j}) \nu'_j(\Set{1})
	\]
	for every $z\in D_0$ and for every $(w_{j'})_{j'\neq j}\in \Db^{k-1}$.
	By Remark~\ref{oss:8} (applied to the $j$-th $\Db$ in $D=D_0\times \Db^k$), this proves that $\Hc'(\nu'_{j,1})(z,(w_{j'})_{j'\neq j}) \nu'_j(\Set{1})\Meg 0$ for every $z\in D_0$ and for every $(w_{j'})_{j'\neq j}\in \Db^{k-1}$. 
	By holomorphy,  there is $c_j\Meg 0$ such that $(\Hc' \nu'_{j,1})(z,(w_{j'})_{j'\neq j}) \nu'_j(\Set{1})=c_j$ for every $z\in D_0$ and for every $(w_{j'})_{j'\neq j}\in \Db^{k-1}$. Consequently, $\chi_{N_j}\cdot \nu=\nu_{j, 1}\nu'(\Set{1})=c_j\beta_j$, as claimed. 
	
	Now, observe that $(\Pc \beta_j)(z, (w_{j'}))=\frac{1-\abs{w_j}^2}{\abs{1-w_j}^2}$ for every $z\in D_0$, for every $w_1,\dots, w_k\in \Db$, and for every $j=1,\dots,k$. It then follows that	
	\[
	g((\zeta,z),(w_j)_{j=1,\dots,k})=\sum_{j=1}^k c_j \Im w_j
	\]
	for every $(\zeta,z)\in \Ds_0$ and for every $w_1,\dots, w_k\in \C_+$, so that $g$ is positive and pluriharmonic. Consequently, also $\Ps \mi$ is  pluriharmonic, so that $\mi$ is a positive pluriharmonic measure on $\bDs$. 
	To conclude, it suffices to define $\lambda$ as the elements $(0,(c_1,\dots,c_k))$ of $F_0\times \C^k$, where $F_0$ is chosen so that $\Ds_0$ is an open subset of $E\times F_0$, so that $\lambda\in \overline{\Omega}\cap \Phi(E)^\perp$ and $g(\zeta,z)=\langle \Im z\vert \lambda \rangle$ for every $(\zeta,z)\in \Ds$.
\end{proof}

\begin{cor}\label{cor:12}
	Assume that $\Ds$ has no irreducible \emph{tubular} factors of rank $\Meg 2$, and let $M$ be a bounded subset of $\Mc_\Poisson(\bDs)$ whose elements are positive and pluriharmonic. Then, the vague closure of $M$ consists of positive pluriharmonic measures on $\bDs$.
\end{cor}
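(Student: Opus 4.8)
The plan is to transfer the statement to the bounded realization $D$ through the homeomorphism $\Gamma$ of Corollary~\ref{cor:4}, where the \v Silov boundary $\bD$ is compact and the results of Section~\ref{sec:2} are available, and then to invoke Theorem~\ref{teo:2} to see that the resulting limit measure is pluriharmonic. The main obstacle is that $\Gamma^{-1}$ does \emph{not} intertwine the two vague topologies: the point at infinity of $\bDs$ is spread over the set $N\coloneqq\bD\setminus\gamma(\bDs)$, so a vaguely convergent net in $M$ can lose mass to infinity while its $\Gamma$-images acquire that mass on $N$ --- exactly where a pluriharmonic measure on $\bD$ is allowed to sit. The device to handle this is an elementary transfer formula: for $\phi\in C_c(\bDs)$ set $\psi_\phi\coloneqq\bigl(\phi/\Ps((0,i e),\,\cdot\,)\bigr)\circ\gamma^{-1}$, extended by $0$ on $N$; then $\psi_\phi\in C(\bD)$ --- since $\phi/\Ps((0,i e),\,\cdot\,)\in C_c(\bDs)$ by the continuity and strict positivity of $\Ps$ (Proposition~\ref{prop:1bis}) and $\gamma$ maps its support to a compact subset of $\bD$ disjoint from $N$, so that $\psi_\phi$ vanishes near $N$ --- and a change of variables gives $\int_{\bDs}\phi\,\dd\mi=\int_{\bD}\psi_\phi\,\dd\Gamma(\mi)$ for every $\mi\in\Mc_\Poisson(\bDs)$.

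Given $\mu$ in the vague closure of $M$, pick a net $(\mi_i)$ in $M$ with $\mi_i\to\mu$ vaguely. Boundedness of $M$ gives $\sup_i\norm{\Gamma(\mi_i)}_{\Mc^1(\bD)}=\sup_i(\Ps\mi_i)(0,i e)<\infty$, so by weak-$*$ compactness of balls in $\Mc(\bD)=C(\bD)'$ I pass to a subnet with $\Gamma(\mi_i)\to\nu$ weakly-$*$. Each $\Gamma(\mi_i)$ is a positive pluriharmonic measure on $\bD$ (positive because $\mi_i\geq0$ and $\Ps((0,i e),\,\cdot\,)>0$, pluriharmonic by the equivalence noted in the proof of Proposition~\ref{prop:18}), and the positive pluriharmonic measures on the compact space $\bD$ form a weakly-$*$ closed set --- positivity passes to the limit, pluriharmonicity being the family of linear conditions~(4) of Proposition~\ref{prop:4} --- so $\nu$ is itself a positive pluriharmonic measure on $\bD$. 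Applying the transfer formula, weak-$*$ convergence (using $\psi_\phi\in C(\bD)$), and $\psi_\phi\equiv0$ on $N$, one gets
\begin{align*}
\int_{\bDs}\phi\,\dd\mu&=\lim_i\int_{\bDs}\phi\,\dd\mi_i=\lim_i\int_{\bD}\psi_\phi\,\dd\Gamma(\mi_i)=\int_{\bD}\psi_\phi\,\dd\nu\\
&=\int_{\bD}\psi_\phi\,\dd(\chi_{\bD\setminus N}\cdot\nu)=\int_{\bDs}\phi\,\dd\bigl[\Gamma^{-1}(\chi_{\bD\setminus N}\cdot\nu)\bigr]
\end{align*}
for every $\phi\in C_c(\bDs)$ --- here $\chi_{\bD\setminus N}\cdot\nu$ is positive, finite and $N$-null, hence in the range of $\Gamma$ by Corollary~\ref{cor:4}. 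Thus $\mu=\Gamma^{-1}(\chi_{\bD\setminus N}\cdot\nu)$; in particular $\mu\in\Mc_\Poisson(\bDs)$ and $\mu\geq0$.

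Finally, to see that $\mu$ is pluriharmonic I apply Theorem~\ref{teo:2} to $f\coloneqq(\Pc\nu)\circ\gamma$, which is a positive pluriharmonic function on $\Ds$ by Proposition~\ref{prop:5}. Inspecting Step~I of the proof of Theorem~\ref{teo:2}, the pluriharmonic measure it associates to $f$ is precisely $\Gamma^{-1}(\chi_{\bD\setminus N}\cdot\nu_f)$, where $\nu_f$ is the unique pluriharmonic measure on $\bD$ with $\Pc\nu_f=f\circ\gamma^{-1}=\Pc\nu$; by the uniqueness in Proposition~\ref{prop:5} one has $\nu_f=\nu$, so $\mu=\Gamma^{-1}(\chi_{\bD\setminus N}\cdot\nu)$ is a positive pluriharmonic measure in $\Mc_\Poisson(\bDs)$, which is exactly the assertion. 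This last use of Theorem~\ref{teo:2} is where the hypothesis on $\Ds$ is genuinely needed, since by Example~\ref{ex:1} the statement already fails on an irreducible tubular symmetric domain of rank $2$.
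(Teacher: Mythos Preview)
Your proof is correct and follows essentially the same approach as the paper's: transfer to the bounded realization via $\Gamma$, use weak-$*$ compactness and closedness of positive pluriharmonic measures on the compact $\bD$ to produce a pluriharmonic limit $\nu$, identify $\mu$ with $\Gamma^{-1}(\chi_{\bD\setminus N}\cdot\nu)$, and then invoke (the proof of) Theorem~\ref{teo:2}. Your transfer formula and the verification that $\psi_\phi\in C(\bD)$ simply make explicit what the paper compresses into the phrase ``$\mi_1=\Gamma(\mi_0)$ on $\gamma(\bDs)$''.

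One small inaccuracy in your closing remark: you assert that Example~\ref{ex:1} shows the corollary itself fails on a rank-$2$ tube domain, but the paper explicitly says it does \emph{not} know whether the hypothesis on $\Ds$ is necessary for the corollary --- Example~\ref{ex:1} only shows that the tool Theorem~\ref{teo:2} fails there, not the conclusion of Corollary~\ref{cor:12}.
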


We do not know if the assumption on $\Ds$ is necessary.

\begin{proof}
	Take $\mi_0$ in the vague closure of $M$, and let $\Uf$ be an ultrafilter on $M$ which converges vaguely to $\mi_0$. Observe that, with the notation of Corollary~\ref{cor:4}, $\Gamma(M)$ is then a bounded (in $\Mc^1(\bD)$) set of positive pluriharmonic measures on $\bD$. Therefore, $\Gamma(\Uf)$ must converge vaguely to some positive pluriharmonic measure $\mi_1$ on $\bD$, so that $\mi_1=\Gamma(\mi_0)$ \emph{on $\gamma(\bDs)$}. Then, (the proof of) Theorem~\ref{teo:2} shows that $\mi_0$ is pluriharmonic.
\end{proof}

\begin{oss}\label{oss:15}
	Keep the hypotheses and the notation of Theorem~\ref{teo:2}. As we shall see in Theorem~\ref{teo:3}, $\mi$ is the weak limit (in $\Mc_\Poisson(\Ns)$) of $f_h\cdot \beta_{\bDs}$, for $h\to 0$, $h\in \Omega$ (cf.~Definition~\ref{def:2}). In addition, if $\Ds=\Ds_0\times \C^k$, where $\Ds_0$ is a product of irreducible symmetric Siegel domains which are \emph{not} of tube type, and if $(\vect e_1,\dots, \vect e_k)$ is the canonical basis of $\Set{0}\times\C^k$, then the proof of Theorem~\ref{teo:2} shows that\footnote{Actually, in the proof the limit was considered in the bounded realization $D$ of $\Ds$. However, one may transfer the problem back to $\Ds$ by means of the Cayley transform and use the fact that the convergence is not only radial, but actually non-tangential.}
	\[
	\langle \lambda\vert \vect e_j\rangle=\lim_{y\to +\infty} \frac{f(\zeta,z+y \vect e_j )}{y}
	\]
	for every $j=1,\dots, k$ and for every $(\zeta,z)\in \Ds$. In addition, $\lambda=\sum_j \langle \lambda\vert \vect e_j\rangle \vect e_j$.
	
	Notice, though, that in general
	\[
	\langle h\vert \lambda\rangle\meg \lim_{y\to +\infty}  \frac{f(\zeta,z+y h )}{y}
	\]
	for every $h\in \Omega$ and for every $(\zeta,z)\in \Ds$, and that the strict inequality may occur (cf.~Example~\ref{ex:6})
\end{oss}

\begin{oss}\label{ex:3}
	When $D$ is an irreducible bounded symmetric domain of type $(I_{p,q})$, $p\meg q$, (cf.~Example~\ref{ex:2}), we may compute explicitly the codimension (in $\bD$) of the set $N$ described in the proof of~\textsc{step II} of Theorem~\ref{teo:2}. In fact, by the discussion of Example~\ref{ex:2}, $\bD$ is the set of isometries of $\C^p$ into $\C^q$, whereas $N$ is the set of isometries from $\C^p$ into $\C^q$ with an eigenvalue equal to $1$ (choosing $e=(I,0)$).  
	Now,  observe that an element of $\bD$ is uniquely determined once we choose an orthonormal sequence $(v_1,\dots, v_p)$ of elements of $\C^q$ (the images of the elements of a fixed orthonormal basis). Thus, $v_j$ may be chosen (inductively) as any element of the unit sphere of $\big(\sum_{j'<j} \C v_{j'}\big)^\perp$, which has dimension $2 (q-j+1)-1$. Thus, $\bD$ has dimension $\sum_{j=1}^p [2 (q-j)+1]$.
	On the other hand, an element of $N$ is uniquely determined once we choose a (complex) line $L$ in $\C^p$ (an eigenvector  with eigenvalue $1$, up to multiplicative constants), and then an isometry from $\C^p\cap L^\perp$ into $\C^q\cap L^\perp$. Since the (real) dimension of the projective space on $\C^p$ is $2 p-2$ and the dimension of the space of isometries from $\C^{p-1}$ into $\C^{q-1}$ is $\sum_{j=1}^{p-1} [2(q-1-j)+1]=\sum_{j=2}^p [2(q-j)+1]$ by the previous remarks, this allows to conclude that the codimension of $N$ in $\bD$ is $ 2 (q-1)+1- 2 p+2= 2 (q-p)+1$, which is (in general, much) larger than the codimension $2$ proved in the general estimates of~\textsc{Step II} of the proof of Theorem~\ref{teo:2} (when $q>p$, that is, when $D$ is \emph{not} of tube type). The situation is not so simple, though, in the remaining irreducible symmetric domains which are not of tube type.
\end{oss}

We shall now consider what can be said of positive pluriharmonic functions on $\Ds$ when $\Ds$ is an arbitrary symmetric Siegel domain. We start with a disintegration theorem; we shall investigate later the meaning of the `boundary measure' $\mi$.

\begin{deff}\label{def:2}
	Given a function $f\colon \Ds \to \C$, we shall define $f_h\colon \bDs\ni (\zeta,z)\mapsto f(\zeta,z+i h)\in \C$ for every $h\in\Omega$.
\end{deff}

\begin{prop}\label{prop:25}
	Let $f\colon \Ds\to \R$ be a positive pluriharmonic function and take $h\in \Omega$.
	Then, the following hold:
	\begin{enumerate}
		\item[\textnormal{(1)}] $f_{y h}\cdot \beta_{\bDs}$ has a vague limit $\mi$ for $y\to 0^+$;
		
		\item[\textnormal{(2)}]  if $\pi_h$ denotes the (real) orthogonal projector of $E\times F_\C\to (\R h)^\perp$,\footnote{In other words, $\pi_h(\zeta,z)=(\zeta,z- \langle \Re z\vert h \rangle h/\abs{h}^2 )$ for every $(\zeta,z)\in E\times F_\C$.} then $\beta_h\coloneqq \chi_{\pi_h(\bDs)} \cdot \Hc^{2n+m-1}$ is a weak pseudo-image measure of $\mi$ under $\pi_h$, and is actually a pseudo-image measure if $\mi\neq 0$; 
		
		\item[\textnormal{(3)}] there are a vaguely continuous disintegration $(\mi_{(\zeta,z)})$ of $\mi$ (relative to $\beta_h$) and $a\Meg 0$ such that
		\begin{equation}\label{eq:5}
			f(\zeta,z+w h)=\frac{\abs{h}^{2}}{\pi}\int_{\bDs} \frac{\Im w}{\abs{(\zeta,z+w h)-x}^2}\,\dd \mi_{(\zeta,z)}(x)+a  \Im w
		\end{equation}
		for every $(\zeta,z)\in \pi_h(\bDs)$ and for every $w\in \C_+$;
		
		\item[\textnormal{(4)}] if $\mi'_{(\zeta,z)}$ denotes the unique element of $\Mc_\Poisson(\R,\C_+)$ of which the  $\mi_{(\zeta,z)}$ defined in~\textnormal{(3)} is the image under the mapping $\R\ni x \mapsto (\zeta,z + x h)\in \bDs$, then the mapping $(\zeta,z)\mapsto \mi'_{(\zeta,z)}\in \Mc_\Poisson(\R,\C_+)$ is (tightly) continuous.
	\end{enumerate}
\end{prop}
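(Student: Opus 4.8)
The plan is to reduce everything to the classical Herglotz--Nevanlinna representation on $\C_+$, applied to the restrictions of $f$ to the affine complex lines of direction $h$. Fix $(\zeta,z)\in\pi_h(\bDs)$. Since $(\zeta,z)\in\bDs$ and $h\in\Omega$, one checks at once that $(\zeta,z+wh)\in\Ds$ if and only if $\Im w>0$; hence $g_{(\zeta,z)}\colon\C_+\ni w\mapsto f(\zeta,z+wh)\in\R_+^*$ is a positive harmonic function on $\C_+$, being the restriction to a complex line of the pluriharmonic $f$. By the Herglotz--Nevanlinna theorem on $\C_+$ there are a unique $a(\zeta,z)\Meg0$ and a unique $\mi'_{(\zeta,z)}\in\Mc_\Poisson(\R,\C_+)$ such that
\[
f(\zeta,z+wh)=a(\zeta,z)\,\Im w+\frac1\pi\int_\R\frac{\Im w}{\abs{w-t}^2}\,\dd\mi'_{(\zeta,z)}(t)\qquad(w\in\C_+).
\]
Moreover $a(\zeta,z)=\lim_{v\to+\infty}f(\zeta,z+ivh)/v$, while $g_{(\zeta,z)}(\,\cdot\,+iy)$ (times Lebesgue measure on $\R$) converges vaguely to $\mi'_{(\zeta,z)}$ as $y\to0^+$. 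Writing $\iota_{(\zeta,z)}\colon\R\ni t\mapsto(\zeta,z+th)\in\bDs$ and $\mi_{(\zeta,z)}\coloneqq(\iota_{(\zeta,z)})_*\mi'_{(\zeta,z)}$, a change of variables turns the displayed identity into~\eqref{eq:5}, with the same $a$; so~\eqref{eq:5} will hold \emph{for every} $(\zeta,z)\in\pi_h(\bDs)$ as soon as $a(\zeta,z)$ is shown to be independent of $(\zeta,z)$.

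For (1), (2) and the formula of (3) I would compute $\int_{\bDs}\phi\,f_{yh}\,\dd\beta_{\bDs}$, for $\phi\in C_c(\bDs)$, in the coordinates $(\zeta,x^\perp,u)\in E\times(h^\perp\cap F)\times\R$ describing the point $(\zeta,x^\perp+uh+i\Phi(\zeta))$ of $\bDs$: this gives an iterated integral whose inner ($u$-)integral is $\int_\R\phi(\iota_{(\zeta,z)}(u))\,g_{(\zeta,z)}(u+iy)\,\dd u$ (with $z=x^\perp+i\Phi(\zeta)$), and for each fixed $(\zeta,x^\perp)$ this tends to $\int\phi\,\dd\mi_{(\zeta,z)}$ as $y\to0^+$, by the vague convergence above. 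As $\supp\phi$ is compact, a Poisson-kernel estimate bounds $\int_{-M}^{M}g_{(\zeta,z)}(u+iy)\,\dd u$ (for $y\meg1$ and $M$ depending only on $\phi$) by a fixed multiple of $g_{(\zeta,z)}(i)=f(\zeta,x^\perp+i(\Phi(\zeta)+h))$, a continuous --- hence, on the relevant compact set, bounded --- function of $(\zeta,x^\perp)$; dominated convergence then shows that $f_{yh}\cdot\beta_{\bDs}$ converges vaguely to a measure $\mi$ admitting, relative to $\beta_h$, the disintegration with fibres $\mi_{(\zeta,z)}$. This proves (1), gives the disintegration formula in (3), and shows that $\beta_h$ is a weak pseudo-image measure of $\mi$ under $\pi_h$.

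That $\beta_h$ is a \emph{genuine} pseudo-image measure when $\mi\neq0$ requires, in addition, the constancy of $a(\zeta,z)$, which I would obtain as follows. For $v>0$ set $f_v(\zeta,z)\coloneqq f(\zeta,z+ivh)/v$, a positive pluriharmonic function on $\Ds_v\coloneqq\Set{(\zeta,z)\in E\times F_\C\colon\Im z-\Phi(\zeta)+vh\in\Omega}$. Since $h\in\Omega$ the $\Ds_v$ increase with $v$ and $\bigcup_{v>0}\Ds_v=E\times F_\C$; and on each $\Ds_{v_0}$ the family $(f_v)_{v\Meg v_0}$ is dominated by the continuous function $f_{v_0}$, because $v\mapsto f(\zeta,z+ivh)/v$ is non-increasing (immediate from the Herglotz representation of the slices). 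Hence $\tilde a\coloneqq\lim_{v\to+\infty}f_v$ exists pointwise and, being a decreasing, locally uniform limit of positive pluriharmonic functions, is a positive pluriharmonic function on $E\times F_\C\cong\C^{n+m}$; writing $\tilde a=\Re G$ with $G$ entire and $\Re G\Meg0$, the bounded entire function $\ee^{-G}$ is constant by Liouville's theorem, so $G$, and therefore $\tilde a$, is constant. A continuity argument (replace $(\zeta,z)$ by $(\zeta,z+i\eps h)$ and let $\eps\to0^+$) then gives $a(\zeta,z)=\tilde a$ for all $(\zeta,z)\in\pi_h(\bDs)$; write $a$ for this constant. Finally $\mi_{(\zeta,z)}=0$ if and only if $f(\zeta,z+ih)=a$, and the latter, were it to hold on a set of positive $\beta_h$-measure, would hold on all of the connected real-analytic manifold $\pi_h(\bDs)$ (as $f$ is real-analytic), forcing $\mi=0$; this completes (2) and~\eqref{eq:5}.

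It remains to prove the tight continuity of $(\zeta,z)\mapsto\mi'_{(\zeta,z)}$ asserted in (4) --- whence also the vague continuity of $(\mi_{(\zeta,z)})$ in (3), since $\iota_{(\zeta,z)}$ varies continuously. If $(\zeta_j,z_j)\to(\zeta_0,z_0)$ in $\pi_h(\bDs)$, then $g_{(\zeta_j,z_j)}\to g_{(\zeta_0,z_0)}$ locally uniformly on $\C_+$ by continuity of $f$; in particular $g_{(\zeta_j,z_j)}(i)$ stays bounded, so $(\mi'_{(\zeta_j,z_j)})_j$ is bounded in $\Mc_\Poisson(\R,\C_+)$ and, along a subsequence, converges weakly to some $\mi'_\infty$. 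Decomposing, for each $w\in\C_+$, the function $t\mapsto\frac{\Im w}{\abs{w-t}^2}$ as $\Im w\cdot\frac1{1+t^2}$ plus a function in $C_{\Poisson,0}(\R)$, and using $\int_\R\frac{\dd\mi'_{(\zeta_j,z_j)}(t)}{1+t^2}=\pi\bigl(g_{(\zeta_j,z_j)}(i)-a\bigr)\to\pi\bigl(g_{(\zeta_0,z_0)}(i)-a\bigr)$, one passes to the limit in the Herglotz identity; by uniqueness of the representation of $g_{(\zeta_0,z_0)}$ this gives $\mi'_\infty=\mi'_{(\zeta_0,z_0)}$ and also $\int_\R\frac{\dd\mi'_{(\zeta_j,z_j)}}{1+t^2}\to\int_\R\frac{\dd\mi'_{(\zeta_0,z_0)}}{1+t^2}$. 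Weak convergence together with this convergence of the `Poisson mass' yields tight convergence, and since the subsequence was arbitrary, $\mi'_{(\zeta_j,z_j)}\to\mi'_{(\zeta_0,z_0)}$ tightly. The main obstacle, I expect, is the uniform domination in the second paragraph --- getting from the compactness of $\supp\phi$ a bound on the inner integrals independent of $(\zeta,x^\perp)$ --- together with the constancy of $a$, for which the exhaustion argument via $(\Ds_v)$ above seems the cleanest route.
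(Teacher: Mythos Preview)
Your proposal is essentially correct and, for the crucial step---constancy of $a(\zeta,z)$---takes a genuinely different and more elementary route than the paper.  The paper establishes constancy through its Lemma~\ref{lem:12}, whose proof passes to the bounded realization $D$ via the Cayley transform and invokes the Julia--Wolff--Carath\'eodory machinery of Mackey--Mellon (restricted angular regions, the ``projective device'' of Lemma~\ref{lem:8}, estimates on $B(\,\cdot\,,e)Q(e)^2$, etc.).  Your Liouville argument avoids all of this and is conceptually cleaner; the remaining parts of your proof (the dominated-convergence passage for~(1), the real-analyticity argument for the pseudo-image assertion in~(2), and the ``weak convergence $+$ convergence of Poisson mass $\Rightarrow$ tight convergence'' argument for~(4)) follow the paper closely, though for~(4) the paper transfers to $\T$ via the one-dimensional Cayley transform while you argue directly on $\R$.

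There is, however, one small but genuine gap in your Liouville step.  You claim that $v\mapsto f(\zeta,z+ivh)/v$ is non-increasing for every $(\zeta,z)\in\Ds_{v_0}$; this is \emph{false} when $(\zeta,z)\notin\overline\Ds$.  Indeed, the slice $w\mapsto f(\zeta,z+wh)$ is then positive harmonic only on $\{\Im w>v_1\}$ for some $v_1>0$, and already the example $g(w)=\Im w-v_1$ gives $g(iv)/v=1-v_1/v$, which is increasing.  The monotonicity \emph{does} hold on $\Ds$ itself (since there $v_1<0$, so the slice is positive harmonic on all of $\C_+$ and your Herglotz argument applies verbatim), and that is enough: $(f_v)$ decreases to $\tilde a$ on $\Ds$, so $\tilde a$ is pluriharmonic on $\Ds$; but $\tilde a(\zeta,z+ish)=\lim_v f(\zeta,z+i(s+v)h)/v=\lim_{v'} f(\zeta,z+iv'h)/v'=\tilde a(\zeta,z)$, so $\tilde a$ is invariant under translation by $ih$ and therefore extends pluriharmonically to $\bigcup_{s}(\Ds-(0,ish))=E\times F_\C$.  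Liouville then applies, and your ``continuity argument'' at the boundary becomes unnecessary, since the same translation invariance gives $a(\zeta,z)=\tilde a(\zeta,z)$ directly for $(\zeta,z)\in\pi_h(\bDs)$.  With this patch your argument for constancy of $a$ is complete and is a pleasant simplification over the paper's.
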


As we shall see later (cf.~Theorem~\ref{teo:4}), $\mi$ is actually the  (tight) limit of  $(f_h\cdot \beta_{\bDs})$ in $\Mc_\Poisson(\bDs)$, for $h\to 0$, $h\in \Omega$. In addition, as Example~\ref{ex:1} shows, $\mi$ need \emph{not} be pluriharmonic. As we shall see in Theorem~\ref{teo:4} this is the case if and only if  $f_h\cdot \beta_{\bDs}$ is plurihamonic for \emph{no} $h\in \Omega$.

We first need some lemmas. 

\begin{lem}\label{lem:12}
	Let $f\colon \Ds\to \C$ be a holomorphic function with positive real part, and take $h\in \Omega$. Then, there is $a\Meg 0$ such that
	\[
	\lim_{y\to +\infty} \frac{f(\zeta,z+ i y h)}{y}=a
	\] 
	for every $(\zeta,z)\in \Cl(\Ds)$.
\end{lem}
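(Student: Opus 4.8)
The plan is to reduce the statement to the classical one‑variable theory on each complex line $w\mapsto(\zeta,z+wh)$, and then to promote the one‑dimensional slopes obtained in this way to a single constant by a normal‑family argument.

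First I would fix $(\zeta,z)\in E\times F_\C$ and observe that, since $h\in\Omega$ and $\Omega$ is an open convex cone, one has $\Im(z+wh)-\Phi(\zeta)=(\Im z-\Phi(\zeta))+(\Im w)\,h\in\Omega$ as soon as $\Im w$ is sufficiently large; hence $g_{(\zeta,z)}(w)\coloneqq f(\zeta,z+wh)$ is holomorphic with positive real part on a half‑plane $\{\,\Im w>c_{(\zeta,z)}\,\}$. After a vertical translation bringing this half‑plane to $\C_+$ and a multiplication by $i$, Nevanlinna's representation theorem (recalled in the Introduction) applies; writing out the representation of the resulting function and dividing by $y$, the linear term contributes the slope while the integral term is seen to be $o(y)$ by dominated convergence — for the imaginary part one uses $\frac{1}{t^2+y^2}\meg\frac{1}{1+t^2}$ for $y\Meg 1$ together with the Poisson‑summability of the representing measure. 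This gives, for \emph{each} $(\zeta,z)$, a finite limit $a_{(\zeta,z)}\coloneqq\lim_{y\to+\infty}f(\zeta,z+iyh)/y\Meg 0$. So existence is routine; the real content is the independence of $(\zeta,z)$.

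For $y\Meg 1$ set $F_y(\zeta,z)\coloneqq\frac1y\,f(\zeta,z+iyh)$. Then $F_y$ is holomorphic with positive real part on the translated domain $\Ds_y\coloneqq\Ds-(0,iyh)$, and the family $(\Ds_y)_{y\Meg 1}$ is increasing and exhausts the connected set $E\times F_\C$ (again because $\Omega$ is an open cone containing $h$). Composing with the Cayley map $\omega\colon\{\,\Re w>0\,\}\to\Db$, $\omega(w)=\frac{w-1}{w+1}$, the functions $\omega\circ F_y$ take values in $\Db$, hence are uniformly bounded on every compact subset of $E\times F_\C$ for all $y$ large enough, so $\{\omega\circ F_y\}$ is a normal family on $E\times F_\C$ by Montel's theorem. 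Since $F_y(\zeta,z)\to a_{(\zeta,z)}\in[0,+\infty)$ pointwise and $\omega$ extends holomorphically to a neighbourhood of $[0,+\infty)$, every sequence $y_j\to+\infty$ has a subsequence along which $\omega\circ F_{y_j}$ converges locally uniformly, necessarily to the map $A\colon(\zeta,z)\mapsto\omega(a_{(\zeta,z)})$ (the pointwise limit); so $\omega\circ F_y\to A$ locally uniformly and $A$ is holomorphic on $E\times F_\C$. But $\omega([0,+\infty))=[-1,1)\subseteq\R$, so $A$ is a real‑valued holomorphic function on a connected domain, hence constant, equal to some $c\in[-1,1)$; consequently $a_{(\zeta,z)}=\omega^{-1}(c)=\frac{1+c}{1-c}\in[0,+\infty)$ is a constant $a$ independent of $(\zeta,z)\in E\times F_\C$, and a fortiori of $(\zeta,z)\in\Cl(\Ds)$, which is the assertion.

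The one‑variable limit computation is entirely routine (it is essentially the one‑dimensional Nevanlinna theorem); the only genuine point is the normal‑family step, and the thing to be careful about there is that the translated domains $\Ds_y$ really do exhaust the connected set $E\times F_\C$, so that the limiting function $A$ is globally holomorphic and the open‑mapping/identity argument can be applied.
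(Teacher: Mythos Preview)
Your argument is correct and considerably simpler than the paper's own proof. Both proofs ultimately rest on a normal-family argument to upgrade the pointwise slopes $a_{(\zeta,z)}$ to a single constant, but the routes diverge sharply in how that normal family is obtained.

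The paper first reduces to $h=e$ via an automorphism of $\Omega$, then works on the bounded realization $D$: it modifies $f$ so that the auxiliary function $g=\frac{1-f\circ\gamma^{-1}}{1+f\circ\gamma^{-1}}$ has radial limit $-1$ at the boundary point $e\in\bD$, and then invokes the Julia--Wolff--Carath\'eodory machinery of Mackey--Mellon (together with the projective device of Lemma~\ref{lem:8} and Lemma~\ref{lem:11}) to obtain quantitative control of $|1+g|$ on the restricted angular regions $D_k(e)$. This translates into local uniform bounds for $y/(1+f(\,\cdot\,+iye))$ on a small open set $U\subseteq\Ds$, whence the normal-family step gives constancy of $a_{(\zeta,z)}$ on $U$. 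A further step using the nilpotent group structure of $E\times F_\C$ propagates this to all of $\Cl(\Ds)$.

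You bypass all of this by composing $F_y=f(\,\cdot\,+iyh)/y$ with the Cayley map $\omega\colon\{\Re w>0\}\to\Db$: since $\omega\circ F_y$ is \emph{automatically} bounded by $1$, no boundary machinery is needed, and since the translated domains $\Ds_y$ exhaust the connected space $E\times F_\C$, the holomorphic limit $A=\omega(a_{(\cdot,\cdot)})$ lives globally and the open-mapping argument applies at once. Your approach thus avoids Section~\ref{sec:4} entirely. The paper's route, while heavier, has the virtue of exercising the Mackey--Mellon estimates that are developed for other purposes in the paper; yours shows that the lemma itself is elementary.
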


Observe that, since for every $(\zeta,z)\in E\times F_\C$ and for every $h\in\Omega$ there is $C>0$ such that $(\zeta,z+ i y h)\in \Ds$ (that is, $\Im z +y h - \Phi(\zeta)\in  \Omega$) for $y\Meg C$, we may have equivalently chosen $(\zeta,z)\in E\times F_\C$ in the above statement. Nonetheless, considering only $(\zeta,z)\in \Cl(\Ds)$ seems more natural.

\begin{proof}
	\textsc{Step I.} Assume first that $h=e$. Throughout this step, we shall write $\zeta+z$ instead of $(\zeta,z)$, in order to simplify the notation. 
	In addition, we shall endow $Z$ with the Jordan algebra structure with product $(x,x')\mapsto \{x,e,x'\}$.  Notice that, up to replace $f$ with $\zeta+z\mapsto f(\zeta+z)- i \langle z\vert e \rangle$, we may assume that $\Re f(\zeta+z)\Meg \Im \langle z \vert e \rangle$ for every $\zeta+z\in \Ds$. The statement then amounts to saying that there is $a\Meg 0$ such that
	\[
	\lim_{y\to +\infty} \frac{f(\zeta+z+ i y e)}{y}=a+\abs{e}^2
	\] 
	for every $\zeta+z\in \Cl(\Ds)$. To this aim, observe first that for every $\zeta+z\in \Cl(\Ds)$   there is $a_{\zeta+z}\Meg 0$ such that
	\[
	\lim_{y\to +\infty} \frac{f(\zeta+z+ i y e)}{y}=a_{\zeta+z}+\abs{e}^2
	\]
	(cf.~for example,~\cite[Proposition 2.3]{LugerNedic1}, applied to $\C_+\ni w \mapsto i f( \zeta+ z+ i w e)$). 
	Furthermore, we may assume that $\gamma(\zeta+z)= (z+i e)^{-1} (4 i\zeta + z- i e)= e-2 (e- iz)^{-1} (-2\zeta + e)$ for every $\zeta+z\in \Ds$, so that $\gamma^{-1}(\zeta+z)=  (e-z)^{-1} (2 \zeta+i(e+z))$ for every $\zeta+z\in D$ (cf.~Proposition~\ref{prop:22}). Define 
	\[
	g\coloneqq \frac{1-f\circ \gamma^{-1}}{1+f\circ \gamma^{-1}},
	\]
	so that $g\colon D\to \Db$ (since $f\neq 0$) and $g(\rho e)\to -1$ for $\rho \to 1^-$ (since $f( i y e)\to +\infty$ for $y\to + \infty$). In addition,
	\[
	\lim_{\rho \to 1^-} \frac{1- \abs{g(\rho e)}^2}{1-\norm{\rho e}^2}=\lim_{\rho \to 1^-} \frac{4 \Re f (\gamma^{-1}(\rho e))}{2 (1-\rho) \abs{1+f(\gamma^{-1}(\rho e))}^2}=\lim_{y\to +\infty} \frac{y \Re f(i y e)}{\abs{1+ f(i y e)}^2}=\frac{1}{a_0+\abs{e}^2}
	\]
	by the previous remarks. Therefore,~\cite[Lemma 3.3 and Theorem 4.6]{MackeyMellon} show that 
	\[
	\sup_{\zeta+z\in D_k(e)}\frac{\abs{ 1+g(\zeta+z) }^2}{\norm{B(\zeta+z,e)Q(e)^2}'}
	\]
	is finite for every $k>0$. Then, observe that, arguing as in the proof of Lemma~\ref{lem:8},~\cite[Lemma 3.3]{MackeyMellon} and Remark~\ref{oss:12} show that
	\[
	\norm{B(\zeta+z,e)Q(e)^2}'\Meg \norm{Q(e)^2 B(\zeta+z,e)Q(e)^2}'=\norm{B(z,e)Q(e)^2}'\Meg \norm{z-e}^2.
	\] 
	Consequently,
	\[
	\sup_{\zeta+z\in D_k( e)}\frac{\abs{ 1+g(\zeta+z) }}{\norm{z-e}}
	\]
	is finite for every $k>0$. 
	Now, let us prove that, if $K$ is a compact subset of $\Ds$ such that $\norm{\zeta}\meg \frac 1 3$ for every $\zeta+z\in K$, then there is $C_K>0$ such that $\gamma(\zeta+z+i y e)\in D_{6\sqrt 3}(e)$ for every $\zeta+z\in K$ and for every $y\Meg C_K$. We wish to apply Lemma~\ref{lem:11}. Then, consider $\widetilde \gamma(\zeta+z+ i y e)=\frac{1}{\abs{e}^2} \langle \gamma(\zeta+z+i y e)\vert e \rangle$, and observe that
	\[
	\widetilde \gamma(\zeta+z+ i y e)= 1 -\frac{2}{\abs{e}^2} \langle(e (y+1)- iz)^{-1}\vert  e\rangle=1-\frac{2}{y } + O\left(\frac{1}{y^2}\right)
	\]
	for $y\to +\infty$, uniformly as $\zeta+z$ runs through  $K$,
	since $e$ is orthogonal to $Z_{1/2}(e)$, and since $(e (y+1)- iz)^{-1}=(y+1)^{-1} \sum_{k\in \N} (i z/(y+1))^k $ if $y\Meg \sup_{\zeta+z\in K} \norm{z}$. In addition,
	\[
	\gamma(\zeta+z+ i y e)=e - 2 ((y+1) e- i z)^{-1}(-2 \zeta+e)=e-\frac{2}{y}(-\zeta+e)+  O\left(\frac{1}{y^2}\right)
	\]
	for $y\to +\infty$, uniformly as $\zeta+z$ runs through  $K$, since $e \zeta=\frac 1 2 \zeta$.
	Therefore,   
	\[
	\norm{\gamma(\zeta+z+ i y e)-\widetilde \gamma(\zeta+z+ i y e) e}=\frac{2}{y}\norm{ \zeta } + O\left(\frac{1}{y^2}\right)\meg \frac{1}{2}\left(\frac{2}{y} + O\left(\frac{1}{y^2}\right)\right) =\frac 1 2 (1-\abs{\widetilde \gamma(\zeta+z+ i y e)})
	\]
	for $y\to +\infty$, uniformly as $\zeta+z$ runs through $K$. In addition,
	\[
	\abs{1-\widetilde \gamma(\zeta+z+ i y e)}=\frac{2}{y}+O\left(\frac{1}{y^2}\right)\meg\left( \frac{4}{y}+O\left(\frac{1}{y^2}\right) \right)= (1-\abs{\widetilde \gamma(\zeta+z+ i y e)}^2)
	\]
	for $y\to +\infty$, uniformly as $\zeta+z$ runs through $K$. Therefore, Lemma~\ref{lem:11} shows that there is a constant $C_K>0$ such that $\gamma(\zeta+ z+ i y)\in D_{6 \sqrt 3}(e)$ for every $\zeta+z\in K$ and for every $y\Meg C_K$. Hence,
	\[
	\sup_{\zeta+z\in K, y \Meg C_K} \frac{\abs{1+ g(\gamma(\zeta+z+ i y e))}}{\norm{Q(e)^2 \gamma(\zeta+z+i y e)-e}}
	\]
	is finite.
	Furthermore,
	\[
	\norm{Q(e)^2 \gamma(\zeta+z+i y e)- e}=2\norm{(e(y+1) - i z)^{-1}  }=\frac{2}{y} + O\left(\frac{1}{y^2}\right)
	\] 
	for $y\to +\infty$,	uniformly as $\zeta+z$ runs through $K$.
	Consequently,  
	\[
	\sup_{\zeta +z\in K, y\Meg 1} \frac{y}{\abs{1+f(\zeta+z+ i y h)}}
	\] 
	is finite for every compact subset $K$ of $\Ds$ such that $\norm{\zeta}\meg \frac 1 3$ for every $\zeta+e\in K$. Hence, the family $(y/(1+ f(\,\cdot\,+  i y e)))_{y\Meg 1}$ is bounded in $\Hol(  U)$, where $U=\Set{\zeta+z\in \Ds\colon \norm{\zeta}<\frac 1 3}$, and converges pointwise to the positive function $\zeta+z\mapsto \frac{1}{a_{\zeta+z}+\abs{e}^2}$ for $y\to +\infty$.  Since the limit must   be holomorphic (on $U$), it is constant. We have thus proved that there is $a\Meg 0$ such that
	\[
	\lim_{y\to + \infty} \frac{f(\zeta+z+ i y e)}{y}=a+\abs{e}^2
	\]
	for every $\zeta+z\in U$.
	
	\textsc{Step II.} Assume that $h=e$ and define $U$ as in~\textsc{step I}, so that $(0, i e)\in U$.
	Observe that $E\times F_\C$ may be endowed with a product $(\zeta,z)(\zeta',z')\coloneqq (\zeta+\zeta', z+z'+2 i \Phi(\zeta',\zeta))$, with respect to which it is a $2$-step nilpotent Lie group. In addition, $\bDs$ is a subgroup of $E\times F_\C$ and $\Cl(\Ds)\Ds\subseteq \Ds$.
	Then, for every $(\zeta,z)\in \Cl(\Ds)$,~\textsc{step I}, applied  to $f( (\zeta,z)\,\cdot\, )$, shows that there is $a'_{(\zeta,z)}\Meg 0$ such that
	\begin{equation}\label{eq:7}
	\lim_{y\to +\infty} \frac{f( (\zeta,z)(\zeta',z'+ i y e) )}{y}=\lim_{y\to + \infty} \frac{f( (\zeta,z)(\zeta',z'+i e+ i y e) )}{y}=a'_{(\zeta,z)},
	\end{equation}
	for every $(\zeta',z')\in U-(0, i e)$. 
	Observe that (taking $(\zeta',z')=(0,0)$ in~\eqref{eq:7}) $a'_{(\zeta,z)}=\lim_{y\to +\infty} \frac{f( \zeta,z + i y e) }{y}$, so that~\eqref{eq:7} shows that the function $(\zeta,z)\mapsto a'_{(\zeta,z)}$ is locally constant on $\Cl(\Ds)$. Since this latter set is connected,  this shows that there is $a\Meg 0$ such that $a=a'_{(\zeta,z)}$ for every $(\zeta,z)\in \Cl(\Ds)$, that is
	\[
	\lim_{y\to +\infty} \frac{f( \zeta,z + i y e )}{y}=a,
	\]
	whence the conclusion in this case.
	
	\textsc{Step III.} Take $t\in GL_\C(F_\C)$ and $g\in GL_\C(E)$ such that  $t\Omega=\Omega$, $t e=h$, and $(g\times t)\Ds=\Ds$ (cf., e.g.,~\cite[Lemma 3.1 of Chapter V]{Satake}). Then, applying~\textsc{step II} to $f\circ (g\times t)$, we see that there is $a\Meg 0$ such that 
	\[
	\lim_{y\to + \infty} \frac{f(\zeta,z+ i y h)}{y}=\lim_{y\to +\infty} \frac{(f\circ (g\times t))(g^{-1}\zeta,g^{-1}z+ i y e)  }{y}=a
	\]
	for every $(\zeta,z)\in \Cl(\Ds)$, whence the conclusion.
\end{proof}

We propose an elementary proof of the following two lemmas for lack of a reference. 

\begin{lem}\label{lem:13}
	The mappings $C_c(\R)\ni f \mapsto (\Ps f)_y\in C_{\Poisson}(\R)$, $y\in (0,1]$, are equicontinuous, where $C_\Poisson(\R)$ is endowed with the norm $f \mapsto \sup_{x\in \R} (1+\abs{x}^2) \abs{f(x)}$.
\end{lem}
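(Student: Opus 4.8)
The plan is to reduce the statement to the explicit formula $\Ps(t+i y,x)=\frac{y}{\pi((t-x)^2+y^2)}$ of Remark~\ref{oss:10} and to the observation that the function $x\mapsto (1+x^2)^{-1}$ coincides, up to the factor $\pi$, with the Poisson kernel of $\C_+$ at height $1$. Writing $P_y(u)\coloneqq \frac{y}{\pi(u^2+y^2)}$, one has $(\Ps f)_y(t)=\int_\R P_y(t-x)\,f(x)\,\dd x$ for every $f\in C_c(\R)$; this is plainly a well-defined, continuous function of $t\in\R$, so the only thing to establish is the uniform estimate
\[
\sup_{y\in(0,1]}\ \sup_{t\in\R}\ (1+t^2)\,\abs{(\Ps f)_y(t)}\meg C\,\sup_{x\in\R}(1+x^2)\abs{f(x)}
\]
for some absolute constant $C$. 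Since the relevant topologies on $C_c(\R)$ and $C_\Poisson(\R)$ are both those induced by the norm $\norm{g}_*\coloneqq \sup_{x}(1+x^2)\abs{g(x)}$, this estimate is exactly the asserted equicontinuity of the linear maps $f\mapsto(\Ps f)_y$, and it simultaneously shows that these maps do land in $C_\Poisson(\R)$.

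First I would use the pointwise bound $\abs{f(x)}\meg \norm{f}_*\,(1+x^2)^{-1}$ to get
\[
\abs{(\Ps f)_y(t)}\meg \norm{f}_*\int_\R \frac{P_y(t-x)}{1+x^2}\,\dd x=\pi\,\norm{f}_*\int_\R P_y(t-x)P_1(x)\,\dd x=\pi\,\norm{f}_*\,(P_y * P_1)(t).
\]
Next I would invoke the semigroup property $P_y*P_1=P_{y+1}$ of the Poisson kernels on $\C_+$ (equivalently, evaluate the integral directly by residues), which gives
\[
\abs{(\Ps f)_y(t)}\meg \norm{f}_*\,\frac{y+1}{(y+1)^2+t^2}.
\]

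Finally, multiplying by $1+t^2$ and using $y+1\meg 2$ together with $(y+1)^2+t^2\Meg 1+t^2$, valid for every $y>0$ and every $t\in\R$, one obtains $(1+t^2)\abs{(\Ps f)_y(t)}\meg 2\norm{f}_*$ for all $t\in\R$ and all $y\in(0,1]$, i.e.\ the maps $f\mapsto (\Ps f)_y$ have operator norm $\meg 2$ with respect to $\norm{\,\cdot\,}_*$, hence form an equicontinuous family. There is no genuine difficulty here; the one point worth isolating is the identification of $(1+x^2)^{-1}$ with a multiple of a Poisson kernel, which turns the relevant integral into the closed form $\frac{y+1}{(y+1)^2+t^2}$, and the elementary inequality $(y+1)^2+t^2\Meg 1+t^2$, which is precisely what the restriction $y\meg 1$ renders useful.
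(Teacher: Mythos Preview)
Your proof is correct and is in fact cleaner than the paper's. The paper argues differently: it invokes barrelledness of $C_c(\R)$ (with its usual inductive-limit topology) to reduce equicontinuity to showing that $\{(\Ps f)_y : y\in(0,1]\}$ is bounded in $C_\Poisson(\R)$ for each fixed $f$, and then, using the compact support of $f$ in $[-\rho,\rho]$, estimates $\int_{x-\rho}^{x+\rho}\frac{y}{t^2+y^2}\,\dd t=\arctan\big(\tfrac{x+\rho}{y}\big)-\arctan\big(\tfrac{x-\rho}{y}\big)$ by $C/(1+x^2)$ via a second-order Taylor expansion of $\arctan$. Your route bypasses both the barrelledness reduction and the arctan computation by recognising $(1+x^2)^{-1}=\pi P_1(x)$ and exploiting the semigroup identity $P_y*P_1=P_{y+1}$, which yields the explicit uniform bound $\norm{(\Ps f)_y}_*\meg 2\norm{f}_*$ directly.

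One small remark: your sentence ``the relevant topologies on $C_c(\R)$ and $C_\Poisson(\R)$ are both those induced by the norm $\norm{\,\cdot\,}_*$'' is not quite accurate---the paper is implicitly using the inductive-limit topology on $C_c(\R)$ (that is why its proof can appeal to barrelledness). This does not affect your argument, since the uniform operator-norm bound you prove with respect to $\norm{\,\cdot\,}_*$ is \emph{stronger} than what is needed: the inductive-limit topology on $C_c(\R)$ is finer than the $\norm{\,\cdot\,}_*$-topology, so any $\norm{\,\cdot\,}_*$-ball is already a neighbourhood of $0$ in $C_c(\R)$, and equicontinuity follows. It would be worth rephrasing that sentence to avoid the impression that you are changing the topology on the domain.
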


\begin{proof}
	Observe first that, since $C_c(\R)$ is barrelled, it will suffice to show that $((\Ps f)_y)_{y\in (0,1]}$ is bounded in $C_\Poisson(\R)$ for every $f\in C_c(\R)$. Then, take $f\in C_c(\R)$ and take $\rho>0$ so that $\Supp{f}\subseteq [-\rho,\rho]$. Observe that there is a constant $C>0$ such that
	\[
	\arctan\Big(\frac{x+\rho}{y}\Big)-\arctan\Big(\frac{x-\rho}{y}\Big)\meg \frac{C}{1+x^2}
	\]
	for every $x\in \R$ and for every $y\in (0,1]$.  Indeed, on the one hand, it is clear that, if $\abs{x}\meg 2\rho$, then $(1+x^2)\big(\arctan\big(\frac{x+\rho}{y}\big)-\arctan\big(\frac{x-\rho}{y}\big)\big) \meg \pi(1+4 \rho^2)$ for every $y\in (0,1]$. On the other hand, if $\abs{x}\Meg 2\rho$, then
	\[
	\begin{split}
		\arctan\Big(\frac{x+\rho}{y}\Big)-\arctan\Big(\frac{x-\rho}{y}\Big) &=\arctan\Big(\frac{y}{x-\rho}\Big)-\arctan\Big(\frac{y}{x+\rho}\Big)\\
		&=\frac{2y \rho }{x^2-\rho^2}+\omega(\xi_{y,x,\rho})\frac{y^3}{(x-\rho)^3}-\omega(\xi'_{y,x,\rho}) \frac{y^3}{(x+\rho)^3} 
	\end{split}
	\]
	with $\omega=\frac{1}{6}\arctan'''$ and $\xi_{y,x,\rho},\xi'_{y,x,\rho}\in [-1/\rho,1/\rho]$, so that our claim follows.
	Consequently,
	\[
	\begin{split}
		\abs{(\Ps f)_y(x)}\meg\int_{\R} \abs*{f(x'+i y)} \frac{y}{(x-x')^2+ y^2}\,\dd \Hc^1(x')&\meg \norm{f}_{L^\infty(\R)} \int_{x-\rho  }^{x+\rho} \frac{y}{x'^2+y^2}\,\dd \Hc^1(x')\\
		&\meg \norm{f}_{L^\infty(\R)}  ( \arctan((x+\rho)/y)-\arctan((x-\rho)/y) )\\
		&\meg \frac{C}{1+x^2} \norm{f}_{L^\infty(\R)} 
	\end{split}
	\]
	for every $x\in \R$ and for every $y\in (0,1]$, whence the result.
\end{proof}

\begin{lem}\label{lem:14}
	Let $f\colon \C_+\to \R$ be a positive pluriharmonic function. Take a positive $\mi\in \Mc_\Poisson(\R)$ and $a\Meg 0$ such that
	\[
	f(z)= (\Ps \mi)(z)+ a \Im z
	\]
	for every $z\in \C_+$. Then, $f_y\cdot \Hc^1$ converges vaguely to $\mi$ for $y\to 0^+$.
\end{lem}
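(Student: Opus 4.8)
The plan is to observe that the pluriharmonicity of $f$ enters only through the representation already supplied in the hypothesis, and then to check the vague convergence by testing against an arbitrary $\phi\in C_c(\R)$. Writing $f(x+ iy)=(\Ps\mi)(x+ iy)+ a y$ for $x\in\R$ and $y>0$ (here $\Hc^1$ is Lebesgue measure on $\bDs=\R$), one has
$\int_\R f(x+ iy)\phi(x)\,\dd\Hc^1(x)=\int_\R (\Ps\mi)(x+ iy)\phi(x)\,\dd\Hc^1(x)+ a y\int_\R\phi\,\dd\Hc^1$, and the last term tends to $0$ as $y\to 0^+$; so the matter reduces to the behaviour of $\int_\R (\Ps\mi)(x+ iy)\phi(x)\,\dd\Hc^1(x)$.

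Next I would apply Fubini's theorem to interchange the two integrations, using the symmetry $\Ps(x+ iy,x')=\dfrac{y}{\pi((x-x')^2+ y^2)}=\Ps(x'+ iy,x)$ (cf.~Remark~\ref{oss:10}), so as to get $\int_\R (\Ps\mi)(x+ iy)\phi(x)\,\dd\Hc^1(x)=\int_\R (\Ps\phi)(x'+ iy)\,\dd\mi(x')$. The interchange is the delicate point: by Tonelli the double integral of the absolute value equals $\int_\R (\Ps\abs{\phi})(x'+ iy)\,\dd\abs{\mi}(x')$, and Lemma~\ref{lem:13} furnishes a constant $C_\phi>0$ such that $(\Ps\abs{\phi})(x'+ iy)\meg \dfrac{C_\phi}{1+ x'^2}$ for every $x'\in\R$ and every $y\in(0,1]$; since $\mi\in\Mc_\Poisson(\R)$ means exactly that $\int_\R\dfrac{\dd\abs{\mi}(x')}{1+ x'^2}<\infty$, the double integral is finite and Fubini is legitimate.

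Finally, as $y\to 0^+$ one has $(\Ps\phi)(x'+ iy)\to\phi(x')$ for every $x'\in\R$ (the Poisson kernel on $\C_+$ is an approximate identity, and $\phi$ is bounded and uniformly continuous), while the bound $\abs{(\Ps\phi)(x'+ iy)}\meg \dfrac{C_\phi\norm{\phi}_{L^\infty(\R)}}{1+ x'^2}$ from the preceding step provides, for $y\in(0,1]$, an $\abs{\mi}$-integrable dominating function. Dominated convergence then yields $\int_\R (\Ps\phi)(x'+ iy)\,\dd\mi(x')\to\int_\R\phi\,\dd\mi$, and combining this with the vanishing of the $a y$ term gives $\lim_{y\to 0^+}\int_\R f(x+ iy)\phi(x)\,\dd\Hc^1(x)=\int_\R\phi\,\dd\mi$; by the arbitrariness of $\phi\in C_c(\R)$ this is the asserted vague convergence. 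The only genuine obstacle is the uniform tail estimate needed simultaneously to justify Fubini and to dominate the limit, which is precisely why Lemma~\ref{lem:13} was established beforehand.
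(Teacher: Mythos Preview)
Your proof is correct and follows essentially the same approach as the paper's: both split off the $a y$ term, use the symmetry of the Poisson kernel to rewrite $\int_\R (\Ps\mi)_y\,\phi\,\dd\Hc^1$ as $\int_\R (\Ps\phi)_y\,\dd\mi$, invoke Lemma~\ref{lem:13} for the uniform $C_\Poisson$ bound, and conclude by dominated convergence. The only difference is that you spell out the Fubini step and its justification explicitly, whereas the paper passes directly to the swapped form.
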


\begin{proof}
	Take $g\in C_c(\R)$, and observe that
	\[
	\int_{\R} f_y(x) g(x)\,\dd \Hc^1(x)= a y\int_{\R} g(x)\,\dd \Hc^1(x) + \int_\R (\Ps g)_y\,\dd \mi,
	\]
	so that
	\[
	\lim_{y\to 0^+}\int_{\R} f_y(x) g(x)\,\dd \Hc^1(x)=\lim_{y\to 0^+}\int_\R (\Ps g)_y\,\dd \mi.
	\]
	In addition, observe that $(\Ps g)_y\to g$ locally uniformly for $y\to 0^+$, and that  the $(\Ps g)_y$, $y\in (0,1)$, are uniformly bounded in $C_\Poisson(\R)$, thanks to Lemma~\ref{lem:13}. Then, the dominated convergence theorem implies that
	\[
	\lim_{y\to 0^+}\int_{\R} f_y(x) g(x)\,\dd \Hc^1(x)=\int_\R g(x)\,\dd \mi(x),
	\]
	whence the result by the arbitrariness of $g\in C_c(\R)$.
\end{proof}

\begin{proof}[Proof of Proposition~\ref{prop:25}.]
	Take $(\zeta,z)\in \pi_h(\bDs)$, and observe that the mapping $\C_+\ni w\mapsto f(\zeta,z+ w h)\in \R$ is positive and pluriharmonic, so that  Theorem~\ref{teo:2} shows that there are $a_{(\zeta,z)}\Meg 0$ and a positive measure $\mi'_{(\zeta,z)}\in \Mc_\Poisson(\R,\C_+)$ such that
	\[
	f(\zeta,z+w h)=a_{(\zeta,z)} \Im w+\frac 1 \pi\int_{\R} \frac{\Im w}{\abs{w-x}^2}\,\dd \mi'_{(\zeta,z)}(x).
	\]
	Observe that, if $g$ is a holomorphic function with $\Re g=f$, then
	\[
	a_{(\zeta,z)}=\lim_{y\to +\infty} \frac{f(\zeta,z+ i y h)}{y}=\lim_{y\to +\infty} \frac{g(\zeta,z+i y h)}{y},
	\]
	by~\cite[Proposition 2.3]{LugerNedic1}, applied to the holomorphic function $\C_+\ni w \mapsto i g(\zeta,z+w h)\in \C$. Therefore, Lemma~\ref{lem:12} shows that there is $a\Meg 0$ such that $a_{(\zeta,z)}=a$ for every $(\zeta,z)\in \pi_h(\bDs)$.
	Then, define $\mi_{(\zeta,z)}$ as the image of $\mi'_{(\zeta,z)}$ under the mapping $\R \ni x\mapsto (\zeta,z+x h)\in \bDs$, so that~\eqref{eq:5} holds.
	Observe that $\mi_{(\zeta,z)}$ is the vague limit of $\chi_{(\zeta,z)+ \R h} f_{y h}\cdot \Hc^1 $ for $y\to 0^+$ by Lemma~\ref{lem:14}.  
	
	Observe, the mapping $(\zeta,z)\mapsto (\Ps_\R \mi'_{(\zeta,z)})(w)= f(\zeta,z+ w h)- a \Im w$ is continuous for every $w\in \C_+$, where $\Ps_\R$ denotes the Poisson integral operator on $\R$. 
	In particular, the mapping $(\zeta,z)\mapsto \int_\R \frac{1}{1+x^2}\,\dd \mi'_{(\zeta,z)}(x)$ is continuous. 
	Now, consider the (inverse) Cayley transform $\gamma_0\colon \C_+\ni w \mapsto \frac{w-i}{w+i}\in \Db$, and observe that
	\[
	[\Pc_\T \Gamma_0(\mi'_{(\zeta,z)})]\circ \gamma_0= \Ps_\R \mi'_{(\zeta,z)}
	\]
	for every $(\zeta,z)\in \pi_h(\bDs)$, where $\Pc_\T$ denotes the Poisson integral operator on $\T$ and $\Gamma_0$ is defined as in Corollary~\ref{cor:4}. Since the mapping $(\zeta,z)\mapsto [\Pc_\T \Gamma_0(\mi'_{(\zeta,z)})](w)$ is continuous for every $w\in \Db$, it is readily seen that the mapping $(\zeta,z)\mapsto \Gamma_0(\mi'_{(\zeta,z)})$ is vaguely continuous, so that the mapping $(\zeta,z)\mapsto \mi'_{(\zeta,z)}$ is tightly continuous by Corollary~\ref{cor:4}.

	Next, take   $g\in C_c(\bDs)$, and observe that Lemma~\ref{lem:13} shows that there is a constant $C>0$ such that
	\[
	\begin{split}
		\int_{\R} \abs*{g(\zeta,z+x' h)} \frac{y}{(x-x')^2+ y^2}\,\dd \Hc^1(x')\meg \frac{C }{1+x^2} \quad\text{and}\quad \int_\R \abs*{g(\zeta,z+x' h)} \,\dd \Hc^1(x')\meg C
	\end{split}
	\]
	for every $(\zeta,z)\in \bDs$, for every $x\in \R$ and for every $y\in (0,1]$.
	Hence, 
	\[
	\begin{split}
		\abs*{\int_{\R} g(\zeta,z+x h) f_{y h}(\zeta,z +x h)\,\dd \Hc^1(x)}&\meg \int_\R a \abs{g(\zeta,z+x h)}\,\dd \Hc^1(x) \\
		&\qquad+ \int_\R \int_\R \frac{y}{(x-x')^2+y^2}\abs{g(\zeta,z+x h)} \,\dd \Hc^1(x) \,\dd \mi'_{(\zeta,z)}(x')\\
		&\meg C \bigg(a+ \frac{1}{\pi} \int_\R \frac{1}{1+x^2}\,\dd \mi'_{(\zeta,z)}(x) \bigg)\\
		& = C f(\zeta,z+ i h)
	\end{split}
	\]
	for every $(\zeta,z)\in \pi_h(\bDs)$ and for every $y\in (0,1)$. In addition,  
	\[
	\lim_{y\to 0^+} \int_{\R} g(\zeta,z+x h) f_{y h}(\zeta,z +x h)\,\dd \Hc^1(x)=\int_{\R} g(\zeta,z+x h) \,\dd \mi'_{(\zeta,z)}(x)=\int_{\bDs} g\,\dd \mi_{(\zeta,z)}
	\]
	for every $(\zeta,z)\in \pi_h(\bDs)$. Consequently, by dominated convergence,
	\[
	\begin{split}
		&\lim_{y\to 0^+} \int_{\bDs} f_{y h}(\zeta,z) g(\zeta,z)\,\dd \beta_{\bDs}(\zeta,z)\\
		&\qquad= \lim_{y\to 0^+} \int_{\pi_h(\bDs) } \int_\R f_{y h}(\zeta,z + x h) g(\zeta,z+ x h) \,\dd \Hc^1(x)\,\dd \beta_h(\zeta,z)\\
		&\qquad= \int_{\pi_h(\bDs) } \int_{\bDs} g \,\dd \mi_{(\zeta,z)} \,\dd \beta_h(\zeta,z).
	\end{split}
	\]
	By Proposition~\ref{prop:7} and the arbitrariness of $g$, this proves that $f_{y h}\cdot \beta_{\bDs}$ converges vaguely to the positive Radon measure
	\[
	\mi\coloneqq \int_{\pi_h(\bDs)} \mi_{(\zeta,z)}\,\dd \beta_h(\zeta,z)
	\]
	for $y\to 0^+$. In particular,  $\beta_h $ is a weak pseudo-image measure of $\mi$ under $\pi_h$, and   $(\mi_{(\zeta,z)})$ is a disintegration of $\mi$ relative to $\beta_h$. 
	
	Finally, assume that  $\beta_h$ is not a pseudo-image measure of $\mi$, and let us prove that $\mi=0$. Observe that there is a Borel subset $A$ of $\pi_h(\bDs)$ such that $\beta_h(A)>0$ and $\mi_{(\zeta,z)}=0$ for every $(\zeta,z)\in A$ (cf.~Proposition~\ref{prop:6}). 
	This implies that $(\partial_h^2 f)(\zeta,z  + w h)=0$ and that (say) $ 2 f(\zeta,z  + i h)= f(\zeta,z+ 2 i h)$ for every $(\zeta,z)\in A$ and for every $w\in \C_+$. 
	By (real) analyticity, this implies that $(\partial_h^2 f)(\zeta,z+ w h)=0$ and $ 2 f(\zeta,z+ i h)= f(\zeta,z+ 2 i h)$ for every $(\zeta,z)\in \pi_h(\bDs)$ and for every $w\in \C_+$. 
	Then,  $f(\zeta,z+w h)=a \Im w$, that is, $\mi_{(\zeta,z)}=0$, for every $(\zeta,z)\in \pi_h(\bDs)$.
\end{proof}

\begin{prop}\label{prop:24}
	Let $(f^{(j)})$ be a sequence of positive pluriharmonic functions on $\Ds$, and let $\mi_j$, for every $j\in\N$, be the vague limit of $f^{(j)}_{y h}\cdot \beta_{\bDs}$ for $y\to 0^+$, for some  $h\in \Omega$. If $(f_j)$ converges pointwise to a pluriharmonic function $f$, then $\mi_j$ converges vaguely to the vague limit $\mi$ of $f_{y h}\cdot \beta_{\bDs}$ for $y \to 0^+$.
\end{prop}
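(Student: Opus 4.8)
The plan is to reduce the convergence $\mi_j\to\mi$ to a fibered statement via the disintegrations of Proposition~\ref{prop:25}, carrying out the delicate one-dimensional analysis on the \emph{compact} circle rather than on $\R$. Since $f$ is a pointwise limit of nonnegative functions and is pluriharmonic, it is positive pluriharmonic, so Proposition~\ref{prop:25} applies to $f$ and to every $f^{(j)}$. Writing $\Ps_\R$ and $\Pc_\T$ for the Poisson operators on $\R$ and $\T$, for the given $h\in\Omega$ we obtain disintegrations
\[
\mi=\int_{\pi_h(\bDs)}\mi_{(\zeta,z)}\,\dd\beta_h(\zeta,z),\qquad \mi_j=\int_{\pi_h(\bDs)}\mi_{j,(\zeta,z)}\,\dd\beta_h(\zeta,z),
\]
where $\mi_{(\zeta,z)}$ (resp.\ $\mi_{j,(\zeta,z)}$) is the image under $\R\ni x\mapsto(\zeta,z+xh)\in\bDs$ of $\mi'_{(\zeta,z)}\in\Mc_\Poisson(\R,\C_+)$ (resp.\ of $\mi'_{j,(\zeta,z)}$), and
\[
f(\zeta,z+wh)=(\Ps_\R\mi'_{(\zeta,z)})(w)+a\Im w,\qquad f^{(j)}(\zeta,z+wh)=(\Ps_\R\mi'_{j,(\zeta,z)})(w)+a_j\Im w
\]
for $(\zeta,z)\in\pi_h(\bDs)$, $w\in\C_+$, with $a,a_j\Meg0$. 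For any $g\in C_c(\bDs)$ the disintegration then gives $\int g\,\dd\mi_j=\int_{\pi_h(\bDs)}\big(\int_\R g(\zeta,z+xh)\,\dd\mi'_{j,(\zeta,z)}(x)\big)\,\dd\beta_h(\zeta,z)$, and likewise for $\mi$, so it suffices to pass to the limit under these two integral signs.

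The heart of the argument is the fiberwise convergence $\mi'_{j,(\zeta,z)}\to\mi'_{(\zeta,z)}$ vaguely in $\R$, for each $(\zeta,z)\in\pi_h(\bDs)$; this I would prove on $\T$, since the coefficients $a_j$ need \emph{not} converge to $a$ (mass can escape to infinity on $\R$, as for $\C_+\ni w\mapsto\frac{(1+R_j^2)\,\Im w}{\abs{w-R_j}^2}$ with $R_j\to+\infty$), and only on the compact circle is the corresponding ``linear mass'' kept in sight, as an atom at $1$. Fix $(\zeta,z)$, let $\gamma_0\colon\C_+\ni w\mapsto\frac{w-i}{w+i}\in\Db$ and $\Gamma_0$ be as in Corollary~\ref{cor:4}, and set $\nu_{j,(\zeta,z)}\coloneqq\Gamma_0(\mi'_{j,(\zeta,z)})+a_j\delta_1$ and $\nu_{(\zeta,z)}\coloneqq\Gamma_0(\mi'_{(\zeta,z)})+a\delta_1$, positive measures on $\T$. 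A direct computation gives $(\Pc_\T\delta_1)(\gamma_0(w))=\Im w$, and Remark~\ref{oss:9} (applied to $\Ds=\C_+$) gives $(\Pc_\T\Gamma_0(\mu))(\gamma_0(w))=(\Ps_\R\mu)(w)$, whence $(\Pc_\T\nu_{j,(\zeta,z)})(\gamma_0(w))=f^{(j)}(\zeta,z+wh)$ and $(\Pc_\T\nu_{(\zeta,z)})(\gamma_0(w))=f(\zeta,z+wh)$ for $w\in\C_+$. As $f^{(j)}(\zeta,z+wh)\to f(\zeta,z+wh)$ for every $w$, and $\nu_{j,(\zeta,z)}(\T)=f^{(j)}(\zeta,z+ih)\to f(\zeta,z+ih)=\nu_{(\zeta,z)}(\T)$, the sequence $(\nu_{j,(\zeta,z)})_j$ is bounded in $\Mc^1(\T)$; since $\Pc_\T(\omega,\,\cdot\,)\in C(\T)$ for every $\omega\in\Db$, every weak-$*$ cluster point $\rho$ of it satisfies $\Pc_\T\rho=\Pc_\T\nu_{(\zeta,z)}$ on $\Db$, hence $\rho=\nu_{(\zeta,z)}$ by Proposition~\ref{prop:1}\,(4); thus $\nu_{j,(\zeta,z)}\to\nu_{(\zeta,z)}$ weakly on $\T$. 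Testing against $\phi\in C_c(\T\setminus\Set{1})$ annihilates the atoms at $1$, so $\Gamma_0(\mi'_{j,(\zeta,z)})\to\Gamma_0(\mi'_{(\zeta,z)})$ vaguely on $\T\setminus\Set{1}$; transporting through the homeomorphism $\gamma_0\colon\R\to\T\setminus\Set{1}$ and dividing by the continuous, strictly positive weight $\Ps_\R(i,\,\cdot\,)$ then yields $\mi'_{j,(\zeta,z)}\to\mi'_{(\zeta,z)}$ vaguely in $\R$. In particular, for $g\in C_c(\bDs)$, $\int_\R g(\zeta,z+xh)\,\dd\mi'_{j,(\zeta,z)}(x)\to\int_\R g(\zeta,z+xh)\,\dd\mi'_{(\zeta,z)}(x)$, since $x\mapsto g(\zeta,z+xh)$ lies in $C_c(\R)$.

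Finally I would dominate the inner integrals uniformly in $j$ so as to apply dominated convergence to the outer integral over $\beta_h$. Given $g\in C_c(\bDs)$, pick a compact $K'\subseteq\pi_h(\bDs)$ and $\rho>0$ with $g(\zeta,z+xh)=0$ whenever $(\zeta,z)\notin K'$ or $\abs{x}>\rho$; then for $(\zeta,z)\in K'$
\[
\mi'_{j,(\zeta,z)}([-\rho,\rho])\meg(1+\rho^2)\int_\R\frac{\dd\mi'_{j,(\zeta,z)}(x)}{1+x^2}=(1+\rho^2)\pi\big(f^{(j)}(\zeta,z+ih)-a_j\big)\meg(1+\rho^2)\pi\,f^{(j)}(\zeta,z+ih),
\]
so that the inner integral is bounded in modulus by $\norm{g}_\infty(1+\rho^2)\pi\,f^{(j)}(\zeta,z+ih)$. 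By Harnack's inequality, applied to the positive harmonic functions $f^{(j)}$ on the connected domain $\Ds$ (and using $f^{(j)}(p_0)\to f(p_0)<\infty$ at a fixed $p_0$), one gets $\sup_{j}\sup_{(\zeta,z)\in K'}f^{(j)}(\zeta,z+ih)<+\infty$. Hence the inner integrals are bounded by a fixed multiple of $\chi_{K'}\in L^1(\beta_h)$, and dominated convergence, combined with the fiberwise limit above, gives $\int g\,\dd\mi_j\to\int g\,\dd\mi$; by the arbitrariness of $g$, $\mi_j\to\mi$ vaguely. I expect the main obstacle to be exactly what forces the passage to $\T$: one cannot argue directly on $\R$, because $a_j\not\to a$ in general, and it is the compactness of the circle that prevents the ``linear mass'' from being lost in the limit.
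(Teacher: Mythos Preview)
Your proof is correct and follows essentially the same strategy as the paper's: reduce to the one-dimensional situation on each fibre via the disintegration of Proposition~\ref{prop:25}, handle the fibrewise convergence by passing to the compact circle $\T$ through the Cayley transform (so that the possibly divergent linear terms $a_j$ are absorbed into atoms at $1$ and cannot escape), and then apply dominated convergence for the outer integral over $\beta_h$ using the bound $\abs{\int g\,\dd\mi_{j,(\zeta,z)}}\meg C\,\chi_{K'}(\zeta,z)\,f^{(j)}(\zeta,z+ih)$. The only cosmetic difference is that the paper isolates the case $\Ds=\C_+$ as a separate Step~I before disintegrating, whereas you carry out the circle argument directly on each fibre; you are also slightly more explicit in invoking Harnack's inequality to get $\sup_{j}\sup_{K'} f^{(j)}(\,\cdot\,+ih)<\infty$, where the paper simply asserts this bound.
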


As a consequence of the following Theorem~\ref{teo:3}, $(\mi_j)$ actually converges to $\mi$ in the weak topology of $\Mc_\Poisson(\bDs)$. Cf.~Corollary~\ref{cor:9}

\begin{proof}
	\textsc{Step I.} Assume first that $\Ds=\C_+$ and let us prove that $\mi_j$ converges vaguely to $\mi$. We may assume $\gamma(w)= \frac{w-i}{w+i}$ for every $w\in \C_+$. 
	Define $\nu_j $ and $\nu$ as the unique positive measures on $\T$ such that $\Pc \nu_j=f^{(j)}\circ \gamma^{-1}$ and $\Pc \nu =f\circ \gamma^{-1}$. Then, $\nu_j(\T)=f^{(j)}(i)$ is uniformly bounded. 
	Since, in addition, $\Pc \nu_j$ converges pointwise to $\Pc \nu$,  it is clear that $\nu_j$ converges vaguely to $\nu$. Since $\Gamma(\mi_j)=\nu_j-\nu_j(\Set{1})\delta_1$ and $\Gamma(\mi)=\nu-\nu(\Set{1})\delta_1 $, with the notation of Corollary~\ref{cor:4}, this  proves that $\mi_j$ converges vaguely to $\mi$.
	
	\textsc{Step II.} Define $(\mi_{(\zeta,z),j})$   as the unique vaguely continuous disintegration  of $\mi_j$ relative to $\beta_h$, with the notation of Proposition~\ref{prop:25}, so that
	\[
	f^{(j)}(\zeta,z+ w h)=\frac{\abs{h}^2}{\pi}\int_{\bDs} \frac{\Im w}{\abs{(\zeta,z+ w h)-x}^2}\,\dd \mi_{(\zeta,z),j}(x)+a_{j} \Im w,
	\]
	for some $a_{j}\Meg0$, for every $w\in \C_+$, and for every $(\zeta,z)\in \pi_h(\bDs)$. 
	Define $(\mi_{(\zeta,z)})$ and $a$ similarly.
	Then,~\textsc{step I} shows that $\mi_{(\zeta,z),j}$ converges vaguely to $\mi_{(\zeta,z)}$ for every $(\zeta,z)\in \pi_h(\bDs)$. 
	
	Now, take $g\in C_c(\bDs)$. Let   $K_1$ and $K_2$ be the images of $\Supp{g}$ under the mappings $ (\zeta,z)\mapsto \langle \Re z, h\rangle/\abs{h}^2$ and $\pi_h$. Then, 
	\[
	\begin{split}
	\abs*{\int_{\bDs} g\,\dd \mi_{(\zeta,z),j}}& \meg\chi_{K_2}(\zeta,z) \norm{g}_{L^\infty(\bDs)}  \mi_{(\zeta,z),j} (\Supp{g})\\
	&\meg \chi_{K_2}(\zeta,z) \norm{g}_{L^\infty(\bDs)} f^{(j)}(\zeta,z + i h) \max_{x\in K_1}  \pi (1+x^2)
	\end{split}
	\]
	for every $(\zeta,z)\in \pi_h(\bDs)$. Observe that $\sup_{j\in \N} \max_{(\zeta,z)\in K_2} f^{(j)}(\zeta,z+ i h) \max_{x \in K_1}  \pi(1+x^2)$ is finite. Then,  by dominated convergence,
	\[
	\begin{split}
		\int_{\bDs} g\,\dd \mi& =\int_{\pi_h(\bDs)} \int_{\bDs} g \,\dd \mi_{(\zeta,z)}\,\dd \beta_h(\zeta,z) \\
		&=\lim_{j\to \infty} \int_{\pi_h(\bDs)} \int_{\bDs} g \,\dd \mi_{(\zeta,z),j}\,\dd \beta_h(\zeta,z)\\
		&= \lim_{j\to \infty}\int_{\bDs} g\,\dd \mi_j.
	\end{split}
	\]
	By the arbitrariness of $g$, this proves that $(\mi_j)$ converges vaguely to $\mi$. 
\end{proof}

\begin{teo}\label{teo:3} 
	Keep the hypotheses and the notation of Proposition~\ref{prop:25} and Corollary~\ref{cor:4}, and let $\nu$ be the unique positive plurihamonic measure on $\bD$ such that $\Pc \nu= f\circ \gamma^{-1}$. 
	Then, the following hold:
	\begin{itemize}
		\item[\textnormal{(1)}] 	$\Gamma(\mi)=\chi_{ \gamma(\bDs)} \cdot \nu$;
		
		\item[\textnormal{(2)}] $\mi$ is the largest (positive) measure in $\Mc_\Poisson(\bDs)$ such that $\Ps \mi\meg f$ on $\Ds$;
		
		\item[\textnormal{(3)}]  $(f_{ h'}\cdot \beta_{\bDs})$ converges (to $\mi$) \emph{weakly}   in $\Mc_\Poisson(\bDs)$ for $h'\to 0$, $h'\in \Omega$.
	\end{itemize}
\end{teo}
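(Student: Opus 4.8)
The argument runs entirely in the bounded realization $D$, through $\gamma$ and Corollary~\ref{cor:4}, the one technical point being the following remark. \emph{If $F$ is positive and pluriharmonic on a neighbourhood of $\Cl(\Ds)$, then $G:=F\circ \gamma^{-1}$ is positive pluriharmonic on $D$ and extends continuously to the open set $\Cl(D)\setminus N=D\cup \gamma(\bDs)$ (since $\gamma^{-1}$ maps $\bD\setminus N$ homeomorphically onto $\bDs\subseteq \Cl(\Ds)$); hence, denoting by $\nu_F$ the unique positive pluriharmonic measure with $\Pc\nu_F=G$ (Proposition~\ref{prop:5}), one has $\chi_{\bD\setminus N}\cdot \nu_F=\chi_{\bD\setminus N}\,G\cdot \beta_{\bD}$.} Indeed, for $\phi\in C_c(\bD\setminus N)$ Proposition~\ref{prop:1} gives $\int \phi\,\dd \nu_F=\lim_{\rho\to 1^-}\int \phi\, G_\rho\,\dd \beta_{\bD}=\int \phi\, G\,\dd \beta_{\bD}$, using that $G_\rho\to G$ uniformly on the compact set $\Supp\phi\subseteq \Cl(D)\setminus N$. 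Through Remark~\ref{oss:7} this says $\Gamma(F|_{\bDs}\cdot \beta_{\bDs})=\chi_{\bD\setminus N}\cdot \nu_F$, and, $\Pc$ being monotone, $\Ps[F|_{\bDs}]=\Pc(\chi_{\bD\setminus N}\cdot \nu_F)\circ \gamma\meg \Pc\nu_F\circ \gamma=F$ by Remark~\ref{oss:9}. Applying this with $F$ equal to $(\zeta,z)\mapsto f(\zeta,z+i h')$, which is positive pluriharmonic on $\{(\zeta,z):(\zeta,z+ih')\in\Ds\}\supseteq\Cl(\Ds)$ because $\overline\Omega+\Omega\subseteq\Omega$, we obtain $\Ps[f_{h'}]\meg f(\,\cdot\,+ih')$ on $\Ds$ for every $h'\in\Omega$; in particular $f_{h'}\cdot \beta_{\bDs}\in \Mc_\Poisson(\bDs)$ and $(\Ps f_{h'})(0,i e)\meg f(0,i(e+h'))$, so the family $(f_{h'}\cdot \beta_{\bDs})_{h'\to 0}$ is bounded in $\Mc_\Poisson(\bDs)$.

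\emph{Part~(1).} From $\Ps[f_{yh}]\meg f(\,\cdot\,+iyh)$, Fatou's lemma (testing the continuous kernel $\Ps((\zeta_0,z_0),\,\cdot\,)$ against the vague limit) and the continuity of $f$ we get $\Ps\mi\meg f$, hence $\Pc(\Gamma\mi)\meg \Pc\nu$ on $D$; since a non-negative Poisson integral comes from a non-negative measure (Proposition~\ref{prop:1}) and $\Gamma\mi$ is carried by $\bD\setminus N$, this yields $\Gamma\mi\meg \chi_{\bD\setminus N}\cdot \nu$. For the reverse inequality one compares total masses. Write $\Psi_{h'}:=(\,(\zeta,z)\mapsto f(\zeta,z+ih'))\circ\gamma^{-1}$ and let $\nu_{h'}$ be the corresponding pluriharmonic measure; then $\Pc\nu_{h'}=\Psi_{h'}\to f\circ\gamma^{-1}=\Pc\nu$ pointwise on $D$ and $\nu_{h'}(\bD)=\Psi_{h'}(0)=f(0,i(e+h'))\to \nu(\bD)$ as $h'\to 0$, so $\nu_{h'}\to \nu$ weakly on the compact space $\bD$. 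Taking $\eta_K\in C_c(\bDs)$ with $\eta_K\uparrow 1$, the remark above (applied to $F=(\zeta,z)\mapsto f(\zeta,z+iyh)$), Remark~\ref{oss:7}, and the vague convergence $f_{yh}\cdot\beta_{\bDs}\to\mi$ give
\[
\int_{\bDs}\Ps((0,i e),\,\cdot\,)\,\eta_K\,\dd \mi=\lim_{y\to 0^+}\int_{\bD}(\eta_K\circ\gamma^{-1})\,\dd \nu_{yh}=\int_{\bD}(\eta_K\circ\gamma^{-1})\,\dd \nu ,
\]
and letting $K\to\infty$ (monotone convergence) $\|\Gamma\mi\|_{\Mc^1(\bD)}=(\Ps\mi)(0,i e)=\nu(\bD\setminus N)=\|\chi_{\bD\setminus N}\cdot\nu\|_{\Mc^1(\bD)}$. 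Combined with $0\meg \Gamma\mi\meg \chi_{\bD\setminus N}\cdot\nu$, this forces $\Gamma\mi=\chi_{\bD\setminus N}\cdot\nu=\chi_{\gamma(\bDs)}\cdot\nu$.

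\emph{Part~(2)} is now immediate: $\Ps\mi=\Pc(\chi_{\bD\setminus N}\cdot\nu)\circ\gamma\meg \Pc\nu\circ\gamma=f$; and if $\mi'\in\Mc_\Poisson(\bDs)$ is positive with $\Ps\mi'\meg f$, then $\Pc(\Gamma\mi')\meg\Pc\nu$, so $\Gamma\mi'\meg\nu$, hence $\Gamma\mi'\meg\chi_{\bD\setminus N}\cdot\nu=\Gamma\mi$ (as $\Gamma\mi'$ is carried by $\bD\setminus N$), and $\mi'\meg\mi$ because $\Gamma^{-1}$ preserves the order of positive measures. \emph{Part~(3).} The bounded family $(f_{h'}\cdot\beta_{\bDs})_{h'\to0}$ is relatively compact in the weak topology of $\Mc_\Poisson(\bDs)$, so it suffices to prove that every weak cluster point equals $\mi$. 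Let $\mi^\ast$ be such a point, along $h'_\alpha\to0$. As in Part~(1), Fatou together with $\Ps[f_{h'}]\meg f(\,\cdot\,+ih')$ and the continuity of $f$ gives $\Ps\mi^\ast\meg f$, hence $\mi^\ast\meg\mi$ by Part~(2). On the other hand the mass computation of Part~(1), repeated verbatim with $h'_\alpha$ in place of $yh$ (its only inputs being $\nu_{h'_\alpha}\to\nu$ weakly and the remark above, while the pairing $\int\Ps((0,ie),\,\cdot\,)\eta_K\,\dd\mi^\ast$ is legitimate since $\Ps((0,ie),\,\cdot\,)\eta_K\in C_c(\bDs)\subseteq C_{\Poisson,0}(\bDs)$), gives $\int\Ps((0,ie),\,\cdot\,)\,\dd\mi^\ast=\nu(\bD\setminus N)=\int\Ps((0,ie),\,\cdot\,)\,\dd\mi$. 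Thus $\mi-\mi^\ast\geq 0$ and $\int\Ps((0,ie),\,\cdot\,)\,\dd(\mi-\mi^\ast)=0$, whence $\mi^\ast=\mi$ since $\Ps((0,ie),\,\cdot\,)$ is everywhere positive; this proves the weak convergence.

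\emph{Expected main obstacle.} The delicate points are the reverse inequalities in~(1) and~(3): one must exclude loss of mass towards the (algebraic) set $N$, equivalently towards the point at infinity of $\bDs$. A soft portmanteau argument does not suffice, because $\nu$ may well charge $N$; what makes it work is the \emph{exact} evaluation of the $\Mc_\Poisson$‑mass of $\mi$ (and of any cluster point), which relies only on the easy weak convergence $\nu_{h'}\to\nu$ on the compact space $\bD$ and on the continuous‑density remark on $\bD\setminus N$.
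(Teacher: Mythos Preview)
Your argument is correct, and in places cleaner than the paper's. One cosmetic slip: the equality $\Cl(D)\setminus N=D\cup\gamma(\bDs)$ is literally false whenever $\mathrm{rank}\,D\geq 2$, since then $\partial D\supsetneq \bD$; what you actually use (and what is true) is that $G=F\circ\gamma^{-1}$ is continuous on an open neighbourhood, in $Z$, of every compact $K\subseteq \bD\setminus N$, because $\gamma^{-1}$ is rational and defined there while $F$ is continuous on a neighbourhood of $\Cl(\Ds)$. With that correction your ``remark'' stands and yields $\Gamma(f_{h'}\cdot\beta_{\bDs})=\chi_{\bD\setminus N}\cdot\nu_{h'}$ directly.

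The route differs from the paper's. For~(1) the paper approximates $f$ by the dilated functions $f^{(j)}=f\circ\gamma^{-1}((1-2^{-j})\,\cdot\,)\circ\gamma$, observes $\Gamma(\mi_j)=\nu_j$ with $\nu_j(N)=0$, and then invokes Proposition~\ref{prop:24} to pass to the limit; this requires the disintegration machinery of Proposition~\ref{prop:25} and its lemmas. You bypass Proposition~\ref{prop:24} entirely: one inequality via Fatou, the other via an exact $\Mc_\Poisson$-mass identity obtained from the weak convergence $\nu_{yh}\to\nu$ on the \emph{compact} space $\bD$ and your remark. This is more elementary and self-contained. For~(3) the paper simply says ``bounded, hence converges weakly to $\mi$'', but Proposition~\ref{prop:25} only supplies \emph{radial} vague convergence, which by itself does not force convergence for general $h'\to 0$; your cluster-point argument (any weak limit $\mi^\ast$ satisfies $\mi^\ast\leq\mi$ by~(2) and has the same $\Mc_\Poisson$-mass by the mass identity) actually closes this. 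Thus your approach both avoids an auxiliary proposition and makes~(3) rigorous where the paper's proof is elliptic.
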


As we shall see later (cf.~Theorem~\ref{teo:4}), weak convergence in the last assertion may be improved to tight convergence.

\begin{proof}
	(1) Set $f^{(j)}\colon \Ds\ni (\zeta,z)\mapsto f(\gamma^{-1}( (1-2^{-j}) \gamma(\zeta,z) ) )\in \R$ for every $j\in\N$, so that $(f^{(j)})$ is a sequence of positive pluriharmonic functions on $\Ds$ and converges locally uniformly to $f$. 
	In addition, if we set   $\nu_j\coloneqq f(\gamma^{-1}((1-2^{-j})\,\cdot\,))\cdot \beta_{\bD}$ for every $j\in\N$, then $\Pc \nu_j= f^{(j)}\circ \gamma^{-1}$ and $\nu_j(N)=0$ for every $j\in\N$, where $N\coloneqq \bD\setminus \gamma(\bDs)$. Furthermore, $(\nu_j)$ converges vaguely to $\nu$. 
	Next, observe that $f^{(j)}$ extends by continuity to a neighbourhood of $\bDs$, so that  $\mi_j\coloneqq f^{(j)}_0\cdot \beta_{\bDs}$ is the vague limit of $f^{(j)}_{y h}\cdot \beta_{\bDs}$ for $y \to 0^+$. 
	Since $\mi$ is the vague limit of $(\mi_j)$ by Proposition~\ref{prop:24}, it is clear that $\Gamma(\mi_j)$ converges vaguely to $\Gamma(\mi)$ \emph{on $\bD\setminus N$}. Since $\Gamma(\mi_j)=\nu_j$ converges vaguely to $\nu$, and since $\Gamma(\mi)(N)=0$, this is sufficient to conclude $\Gamma(\mi)=\chi_{\bD \setminus N} \cdot \nu=\chi_{\gamma(\bDs)}\cdot \nu$.
	
	(2) Notice that (1) implies that $\Ps \mi= \Pc(\chi_{\gamma(\bDs)}\cdot \nu) \circ \gamma\meg (\Pc \nu)\circ \gamma=f $. Now, let $\mi'$ be a (positive) measure in $\Mc_\Poisson(\bDs)$ such that $\Ps \mi'\meg f$. Then, $\nu'\coloneqq \Gamma(\mi')$ is a (positive) Radon measure on $\bD$ such that $\Pc \nu'\meg f\circ \gamma^{-1}= \Pc \nu$ and such that $\nu'(N)=0$. Since $\Pc (\nu-\nu')\Meg 0$, $\nu-\nu'$ must be a positive measure, so that $\nu'\meg \chi_{\bD\setminus N}\cdot \nu$, that is, $\mi'\meg \mi$.
	
	(3) By (2), applied to $f(\,\cdot\,+ (0 , i h'))$,
	\[
	\Ps[(f_{h'}\cdot \beta_{\bDs})](0, i e)\meg f( 0, i e+i h' ).
	\]
	Therefore,  the family $(f_{h'}\cdot \beta_{\bDs})_{h' \in \Omega, \abs{h'}\meg 1}$ is bounded in $\Mc_\Poisson(\bDs)$, so that it must converge weakly to $\mi$ in $\Mc_\Poisson(\bDs)$ for $h'\to 0$.
\end{proof}

\begin{cor}
	Assume that $n+m>1$, and let $\mi$ be a \emph{bounded} positive pluriharmonic measure on $\bDs$.  Then $\mi=0$.
\end{cor}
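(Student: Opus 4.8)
The plan is to pass to the pluriharmonic function $f:=\Ps\mi$ and read off the contradiction from the disintegration of $\mi$ in Proposition~\ref{prop:25} against the precise decay of $\Ps((\zeta,z+iyh),\,\cdot\,)$ as $y\to+\infty$. Since $\mi$ is bounded and $\Ps((0,ie),\,\cdot\,)$ is bounded (Proposition~\ref{prop:1bis}), $\mi\in\Mc_\Poisson(\bDs)$, so $f=\Ps\mi$ is well defined, positive and (as $\mi$ is pluriharmonic) pluriharmonic, by Proposition~\ref{prop:18}. Fix $h\in\Omega$. First I would observe that the measure $\mi$ we started from is exactly the vague limit of $f_{yh}\cdot\beta_{\bDs}$ considered in Proposition~\ref{prop:25}: if $\mi_0$ denotes that limit, then $\Ps\mi=f$ gives $\mi\meg\mi_0$ by Theorem~\ref{teo:3}(2), while $\Ps(\mi_0-\mi)\meg 0$ together with $\mi_0-\mi\Meg 0$ and $\Ps>0$ forces $\mi_0=\mi$. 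Hence $\mi=\int_{\pi_h(\bDs)}\mi_{(\zeta,z)}\,\dd\beta_h(\zeta,z)$, so $\norm{\mi}=\int_{\pi_h(\bDs)}\norm{\mi_{(\zeta,z)}}\,\dd\beta_h(\zeta,z)$, and \eqref{eq:5} holds. Moreover the coefficient $a$ in \eqref{eq:5} vanishes: since $\Ps$ is bounded on $\Ds\times\bDs$ and $\mi$ is finite, $f(\zeta,z+iyh)\meg\norm{\Ps}_\infty\norm{\mi}$, so $a=\lim_{y\to+\infty}f(\zeta,z+iyh)/y=0$.

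The core is a two-sided estimate of $y\,f(\zeta,z+iyh)$. On the one hand, since $\mi_{(\zeta,z)}$ is concentrated on the line $(\zeta,z)+\R h$, \eqref{eq:5} with $a=0$ reads $f(\zeta,z+iyh)=\tfrac1\pi\int_{\R}\tfrac{y}{t^2+y^2}\,\dd\mi'_{(\zeta,z)}(t)$, whence by monotone convergence
\[
\lim_{y\to+\infty}y\,f(\zeta,z+iyh)=\frac1\pi\int_{\R}\dd\mi'_{(\zeta,z)}=\frac1\pi\norm{\mi_{(\zeta,z)}}
\]
for every $(\zeta,z)\in\pi_h(\bDs)$. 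On the other hand I would insert the explicit formula of Example~\ref{ex:5}. Because $\Ds$ is symmetric, $\Delta^{\vect s}$ is a positive power of the Jordan determinant, hence homogeneous of degree $n+m$ and monotone on $\Omega$, and $\Delta^{2\vect s}$ has degree $2(n+m)$. For $(\zeta',z')\in\bDs$ one has $\Im z'=\Phi(\zeta')$, so $\Re\bigl(\tfrac{z+iyh-\overline{z'}}{2i}-\Phi(\zeta,\zeta')\bigr)=\tfrac12(\Im z-\Phi(\zeta))+\tfrac{yh}{2}+\tfrac12\Phi(\zeta-\zeta')\Meg\tfrac{yh}{2}$, and by Korányi's inequality $\abs{\Delta^{2\vect s}(u+iv)}\Meg\Delta^{2\vect s}(u)$ for $u\in\Omega$ (used in the proof of Corollary~\ref{cor:7}) together with monotonicity and homogeneity of $\Delta^{2\vect s}$, the denominator is $\Meg c_3\,y^{2(n+m)}$ for $y\Meg 1$, with $c_3>0$ depending only on $h$; the numerator is $\meg c_1\,y^{n+m}$ for $y\Meg 1$, with $c_1$ depending only on $(\zeta,z)$. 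Thus
\[
\Ps((\zeta,z+iyh),(\zeta',z'))\meg C_{(\zeta,z)}\,y^{-(n+m)}\qquad(y\Meg 1),
\]
uniformly in $(\zeta',z')\in\bDs$.

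Consequently $y\,\Ps((\zeta,z+iyh),\,\cdot\,)\meg C_{(\zeta,z)}\,y^{1-(n+m)}$ is bounded uniformly in $y\Meg 1$ and, as $n+m\Meg 2$, tends to $0$ pointwise; since $\mi$ is finite, dominated convergence gives $\lim_{y\to+\infty}y\,f(\zeta,z+iyh)=0$. Comparing with the monotone-convergence computation yields $\norm{\mi_{(\zeta,z)}}=0$ for every $(\zeta,z)\in\pi_h(\bDs)$, hence $\mi=\int_{\pi_h(\bDs)}\mi_{(\zeta,z)}\,\dd\beta_h=0$. The only delicate step is the kernel estimate, specifically making sure the bound on $\Ps((\zeta,z+iyh),(\zeta',z'))$ is uniform in the boundary variable $(\zeta',z')$; this is exactly where the $\overline\Omega$-positivity of $\Phi$ and the cone inequalities for $\Delta^{\vect s}$ enter. (As a sanity check, for $\Ds=\C_+$ one has $n+m=1$, so $y^{1-(n+m)}\equiv 1$ does not decay and the argument correctly breaks down.)
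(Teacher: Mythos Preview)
Your proof is correct and follows essentially the same strategy as the paper's: both set $f=\Ps\mi$, use the disintegration of Proposition~\ref{prop:25} to rewrite $yf(\zeta,z+iyh)$ as an integral that tends to $\tfrac{1}{\pi}\norm{\mi_{(\zeta,z)}}$ by monotone convergence, and on the other hand bound $y\,\Ps((\zeta,z+iyh),\,\cdot\,)$ by $O(y^{1-(n+m)})$ uniformly in the boundary variable, so that dominated convergence against the finite measure $\mi$ forces each $\mi_{(\zeta,z)}=0$. The only differences are cosmetic: the paper works with $h=e$ and actually computes the sharp limit $y^{n+m}\Ps\to 4^{n+m}c$ (though only the upper bound is used), citing \cite[Corollary 2.36 and Lemma 2.37]{CalziPeloso} for the uniform boundedness of $y^{n+m}\Ps$, whereas you invoke the inequality $\abs{\Delta^{2\vect s}(u+iv)}\Meg\Delta^{2\vect s}(u)$ and cone monotonicity directly; note that this is not literally ``Kor\'anyi's lemma'' as referenced in Corollary~\ref{cor:7}, but it is a standard fact (e.g.\ \cite[Lemma 2.37]{CalziPeloso}), and your description of $\Delta^{\vect s}$ as ``a positive power of the Jordan determinant'' is strictly accurate only in the irreducible case, though the homogeneity and monotonicity you use hold in general.
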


Cf.~\cite[Proposition 5.1]{LugerNedic2} for the case in which $\Ds$ is a product of upper half-planes.

\begin{proof}
	Observe that there are $c>0$ and a holomorphic function $\Delta\colon  \Omega+ i F\to \C$ such that  $\Delta( e)=1$,  $\Delta^2$ is a homogeneous polynomial of degree $2 n+2 m$, and 
	\[
	\Ps((\zeta,z+ i y e),(\zeta',z'))= c \frac{\Delta( y e )}{ \abs*{\Delta\big(\frac{z+ i y e-\overline{z'}}{2 i}- \Phi(\zeta,\zeta')  \big) }^2 }
	\]
	for every $(\zeta,z),(\zeta',z')\in \bDs$, and for every $y>0$ (cf.~Example~\ref{ex:5}). Therefore,
	\[
	\lim_{y \to +\infty} y^{n+m}\Ps((\zeta,z+ i y e),(\zeta',z'))=4^{n+m} c
	\]
	for every $(\zeta,z),(\zeta',z')\in \bDs$. In addition, 
	\[
	\sup_{\substack{y>0\\(\zeta,z),(\zeta',z')\in \bDs}}y^{n+m}\Ps((\zeta,z+ i y e),(\zeta',z'))=\sup_{\substack{y>0\\(\zeta,z),(\zeta',z')\in \bDs}}\frac{4^{n+m}c}{\abs*{\Delta\big( e + \frac{\Phi(\zeta-\zeta')}{y} + i \frac{\Re z'-\Re z - 2 \Im \Phi(\zeta,\zeta') }{y}  \big)  }^2}
	\]
	is finite (cf.~Example~\ref{ex:5} and~\cite[Corollary 2.36 and Lemma 2.37]{CalziPeloso}). Hence, by dominated convergence,
	\[
	(\Ps \mi)(\zeta,z+ i y e)\sim c\Big(\frac 4 y\Big)^{ n+m} \norm{\mi}_{\Mc^1(\bDs)}
	\]
	as $y\to +\infty$, for every $(\zeta,z)\in \bDs$.  In addition, if we set $f\coloneqq \Ps \mi$ and choose $(\mi_{(\zeta,z)})_{(\zeta,z)\in \pi_h(\bDs)}$ and $a$ as in Proposition~\ref{prop:25}, then
	\[
	0= \lim_{y \to +\infty} y f(\zeta,z+ i y e)= \lim_{y\to + \infty}\Big( \frac 1 \pi\int_{\bDs} \frac{y^2 \abs{e}^2}{ \abs{ (\zeta,z)-x}^2+ y ^2 \abs{e}^2}\,\dd \mi_{(\zeta,z)}(x) +a y^2\Big).
	\]
	Since
	\[
	 \lim_{y \to + \infty}  \frac 1 \pi\int_{\bDs} \frac{y^2\abs{e}^2}{  \abs{ (\zeta,z)-x}^2+ y ^2 \abs{e}^2}\,\dd \mi_{(\zeta,z)}(x)= \frac{1}{\pi} \norm{\mi_{(\zeta,z)}}_{\Mc^1(\bDs)}
	\]
	by monotone convergence, this proves that $a =0$ and that $\mi_{(\zeta,z)}=0$ for every $(\zeta,z)\in \pi_h(\bDs)$. Since $(\mi_{(\zeta,z)})$ is a disintegration of $\mi$ (cf.~Theorem~\ref{teo:3}), this proves that $\mi=0$.
\end{proof}

\begin{cor} \label{cor:10}
	Let $f$ be a positive pluriharmonic function on $\Ds$, and assume that the largest positive $\mi\in \Mc_\Poisson(\bDs)$ such that $\Ps \mi\meg f$ is pluriharmonic. Then, there is a unique   $\lambda\in \overline{\Omega}\cap \Phi(E)^\perp$  such that
	\[
	f(\zeta,z)= \langle \Im z\vert \lambda\rangle+ (\Ps\mi)(\zeta,z)
	\]
	for every $(\zeta,z)\in \Ds$. 
\end{cor}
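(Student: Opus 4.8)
The plan is to peel off the linear term by reducing to a positive pluriharmonic function with \emph{vanishing} boundary measure, and then to identify such functions. Since $\mi$ is pluriharmonic, $\Ps\mi$ is pluriharmonic by Proposition~\ref{prop:18}, and $\Ps\mi\meg f$ by Theorem~\ref{teo:3}(2); hence $g\coloneqq f-\Ps\mi$ is a positive pluriharmonic function on $\Ds$. Let $\mi_g$ be the largest positive measure in $\Mc_\Poisson(\bDs)$ with $\Ps\mi_g\meg g$ (which exists by Theorem~\ref{teo:3}(2) applied to $g$); then $\Ps(\mi_g+\mi)\meg g+\Ps\mi=f$, so maximality of $\mi$ forces $\mi_g+\mi\meg\mi$, i.e.\ $\mi_g=0$. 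It therefore suffices to show that a positive pluriharmonic function $g$ on $\Ds$ whose boundary measure vanishes has the form $g(\zeta,z)=\langle\Im z\vert\lambda\rangle$ for a (necessarily unique) $\lambda\in\overline\Omega\cap\Phi(E)^\perp$; the corollary then follows with $f=\langle\Im z\vert\lambda\rangle+\Ps\mi$.

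First I would show that $g$ factors through $\Psi\colon(\zeta,z)\mapsto\Im z-\Phi(\zeta)\in\Omega$. Fix $h\in\Omega$ and apply Proposition~\ref{prop:25} to $g$: the measure appearing in part (1) equals the boundary measure of $g$ by Theorem~\ref{teo:3}(2), hence is $0$, so the vaguely continuous disintegration $(\mi_{(\zeta,z)})$ of part (3) vanishes identically — a nonzero $\mi_{(\zeta_0,z_0)}$ would, by vague continuity, be nonzero on a nonempty open set, which has positive $\beta_h$-measure, contradicting $\int\mi_{(\zeta,z)}\,\dd\beta_h=0$ for the positive kernel $(\mi_{(\zeta,z)})$. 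Thus \eqref{eq:5} becomes $g(\zeta,z+wh)=a(h)\,\Im w$ for all $(\zeta,z)\in\pi_h(\bDs)$ and $w\in\C_+$, with $a(h)\Meg0$. For any $(\zeta^*,z^*)\in\Ds$, taking $h=v\coloneqq\Psi(\zeta^*,z^*)$ one checks directly that $(\zeta^*,z^*)$ can be written as $(\zeta^*,z_0+wh)$ with $(\zeta^*,z_0)\in\pi_v(\bDs)$ and $\Im w=1$, so $g(\zeta^*,z^*)=a(v)$. Hence $g=a\circ\Psi$ with $a\colon\Omega\to[0,+\infty)$, and $a$ is positively homogeneous of degree $1$ (replace $h$ by $th$, $t>0$).

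Next I would force $a$ to be linear. Restricting the pluriharmonic $g$ to the complex submanifold $(\{0\}\times F_\C)\cap\Ds=\{0\}\times(F+i\Omega)$ gives a positive pluriharmonic function on the tube $F+i\Omega$ depending only on $\Im z$, namely $z\mapsto a(\Im z)$; writing it as $\Re(u+iv)$ for a holomorphic $u+iv$ on the (convex, hence simply connected) tube, the Cauchy--Riemann equations give $\partial v/\partial y_j=\partial u/\partial x_j=0$ and $\partial v/\partial x_j=-\partial u/\partial y_j$, where the left-hand side depends only on $x=\Re z$ and the right-hand side only on $y=\Im z$; hence all these partials are constant, $a$ is affine on $\Omega$, and by homogeneity $a(y)=\langle\lambda\vert y\rangle$ for some $\lambda\in F$. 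Positivity of $a$ on $\Omega$ and self-duality of $\Omega$ give $\lambda\in\overline\Omega$, so $g(\zeta,z)=\langle\lambda\vert\Im z\rangle-\langle\lambda\vert\Phi(\zeta)\rangle$ on $\Ds$. Finally, restricting $g$ to a slice $\{z=z_0\}\cap\Ds$ through a point $z_0$ with $\Im z_0\in\Omega$ shows that $\zeta\mapsto\langle\lambda\vert\Phi(\zeta)\rangle$ is pluriharmonic on a neighbourhood of $0$ in $E$; but this is the quadratic form of a Hermitian form in $\zeta$, and a pluriharmonic Hermitian form vanishes identically (its complex Hessian is its coefficient matrix), so $\langle\lambda\vert\Phi(\zeta)\rangle\equiv0$, i.e.\ $\lambda\in\Phi(E)^\perp$. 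Uniqueness of $\lambda$ follows since $\{\Im z\colon(\zeta,z)\in\Ds\}\supseteq\Omega$ spans $F$.

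The main obstacle is the vanishing of the fibrewise Nevanlinna measures $\mi_{(\zeta,z)}$ in the second paragraph; this rests entirely on having identified the boundary measure of $g$ with the vague limit of $g_{yh}\cdot\beta_{\bDs}$ (Theorem~\ref{teo:3}) together with its disintegration (Proposition~\ref{prop:25}). Once that is in place, the rest is routine manipulation of pluriharmonic functions on tubes and Siegel domains, Hermitian forms, and the self-duality of $\Omega$. Note that, unlike Theorem~\ref{teo:2}, this argument uses no restriction on the tubular factors of $\Ds$: the hypothesis that $\mi$ be pluriharmonic is precisely what replaces it, since it is what makes $g$ pluriharmonic.
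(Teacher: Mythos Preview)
Your proof is correct, and it shares its opening moves with the paper's argument: both set $g=f-\Ps\mi$, observe that $g$ is positive pluriharmonic with vanishing boundary measure (you use maximality of $\mi$, the paper appeals directly to Theorem~\ref{teo:3}), and invoke Proposition~\ref{prop:25} to obtain $g(\zeta,z+wh)=a_h\,\Im w$ for every $h\in\Omega$ and $(\zeta,z)\in\pi_h(\bDs)$.

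From there the two arguments part ways. The paper passes to a holomorphic primitive $\widetilde g$ with $\Re\widetilde g=g$, notes that $\partial_h^2\widetilde g$ vanishes on each slice $\bDs+ih$, and uses analytic continuation (uniqueness from $\bDs+ih$) to conclude $\partial_h^2\widetilde g\equiv 0$ on $\Ds$ for all $h\in F$; hence $\widetilde g(\zeta,z)=b(\zeta)-i\widetilde\lambda(\zeta)z$, and a direct substitution into $g(\zeta,z+iyh)=a_h y$ for $(\zeta,z)\in\bDs$ identifies $b$, $\widetilde\lambda$ and produces $\lambda\in\overline\Omega\cap\Phi(E)^\perp$. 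You instead observe immediately that $g$ factors as $g=a\circ\Psi$ with $\Psi(\zeta,z)=\Im z-\Phi(\zeta)$, reduce to the tube $\{0\}\times(F+i\Omega)$ where $z\mapsto a(\Im z)$ is pluriharmonic and depends only on $\Im z$, use the Cauchy--Riemann equations there (plus degree-$1$ homogeneity) to force $a(v)=\langle\lambda\vert v\rangle$, and finally restrict to a $\zeta$-slice to show the Hermitian form $\zeta\mapsto\langle\lambda\vert\Phi(\zeta)\rangle$ is pluriharmonic, hence zero, giving $\lambda\in\Phi(E)^\perp$. Your route sidesteps the analytic-continuation step and the bookkeeping with the $\zeta$-dependent linear form $\widetilde\lambda(\zeta)$, at the price of two separate slicing arguments (one in $z$, one in $\zeta$); the paper's route treats $(\zeta,z)$ more uniformly via the holomorphic primitive. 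Both are short and essentially of the same depth.
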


\begin{proof}
	Observe that $g\coloneqq f-\Ps \mi$ is a positive pluriharmonic function on $\Ds$. In addition, $(g_h\cdot \beta_{\bDs})$ converges weakly to $0$ for $h\to 0$, thanks to Theorem~\ref{teo:3}. Therefore, Proposition~\ref{prop:25} shows that for every unit vector $h\in \Omega$ there is $a_h\Meg 0$ such that
	\[
	g(\zeta,z+ w h)= a_h \Im w
	\]
	for every $(\zeta,z)\in \pi_h(\bDs)$ and for every $w\in \C_+$. Consequently, the same holds for every $(\zeta,z)\in \bDs$ and for every $w\in \C_+$.  In particular, if $\widetilde g$ denotes a holomorphic function on $\Ds$ such that $\Re \widetilde g=g$, then
	\[
	(\partial^2_h \widetilde g)(\zeta,z+i h)=0
	\]
	for every $h\in\Omega$ and for every $(\zeta,z)\in \bDs$. Since  $\bDs + i h$ is a set of uniqueness for holomorphic  functions on $\Ds$, this implies that $\partial_h^2 \widetilde g=0$ on $\Ds$ for every $h\in\Omega$. Consequently, $\partial_{h}^2 \widetilde g=0$ on $\Ds$ for every $h\in F$, so that there are two holomorphic functions $b\colon E\to \C$ and $\widetilde\lambda\colon E\mapsto (F_\C)'$ such that
	\[
	\widetilde g(\zeta,z)=b(\zeta)-i\widetilde\lambda(\zeta) z
	\]
	for every $(\zeta,z)\in \Ds$.  Consequently,
	\[
	a_h y=g(\zeta,z+ i y  h) = \Re b(\zeta)+  \Im \widetilde\lambda(\zeta) z +y \Re \widetilde\lambda(\zeta) h
	\]
	for every $(\zeta,z)\in \bDs$, for every $y>0$, and for every $h\in \Omega$, so that $\Re b=0$, $\Im \widetilde\lambda(\zeta) z=0$ for every $(\zeta,z)\in \bDs$, and $\Re\widetilde\lambda(\zeta) h=a_h$ for every $\zeta\in E$ and for every $h\in \Omega$. Thus, there is a unique $\lambda\in \overline{\Omega}$ such that $\Re \widetilde\lambda(\zeta) x=\langle x\vert\lambda\rangle$ for every $\zeta\in E$ and for every $x\in F$. In addition, for every $x\in F$ and for every $\zeta\in E$,
	\[
	0=\Im \widetilde \lambda(\zeta) (x+i \Phi(\zeta))=\Im \widetilde \lambda(\zeta) x +\langle \Phi(\zeta)\vert \lambda \rangle.
	\]
	The case $x=0$ then shows that $\lambda\in \Phi(E)^\perp$. Consequently, $\Im \widetilde \lambda(\zeta) x=0$ for every $x\in F$, so that $\widetilde\lambda(\zeta)x=\Re \widetilde \lambda(\zeta) x=\langle x\vert \lambda \rangle$ for every $x\in F$. Thus,
	\[
	g(\zeta,z)=\Re \widetilde g(\zeta,z)= \langle \Im z \vert \lambda \rangle
	\]
	for every $(\zeta,z)\in \Ds$, whence the conclusion.
\end{proof}

\begin{cor}\label{cor:11}
	Let $f$ be a positive pluriharmonic function on $\Ds$, and let $\mi$ be the largest positive measure in  $ \Mc_\Poisson(\bDs)$ such that $\Ps \mi\meg f$. Then, the following are equivalent:
	\begin{enumerate}
		\item[\textnormal{(1)}] $\mi=0$;
		
		\item[\textnormal{(2)}]  $f$ is linear and $f(0, i\Phi(\zeta))=0$ for every $\zeta\in E$;
		
		\item[\textnormal{(3)}] there is a unique   $\lambda\in \overline{\Omega}\cap \Phi(E)^\perp$ such that 
		\[
		f(\zeta,z)=  \langle \Im z \vert\lambda\rangle
		\]
		for every $(\zeta,z)\in \Ds$.
	\end{enumerate}
\end{cor}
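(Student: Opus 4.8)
The plan is to establish $(1)\Rightarrow(3)$, $(3)\Rightarrow(2)$, $(2)\Rightarrow(3)$, and $(3)\Rightarrow(1)$; this yields $(1)\Leftrightarrow(3)\Leftrightarrow(2)$. The implication $(1)\Rightarrow(3)$ is immediate: if $\mi=0$ then $\mi$ is (trivially) a positive pluriharmonic measure, so Corollary~\ref{cor:10} provides a unique $\lambda\in\overline{\Omega}\cap\Phi(E)^\perp$ with $f(\zeta,z)=\langle\Im z\vert\lambda\rangle+(\Ps\mi)(\zeta,z)=\langle\Im z\vert\lambda\rangle$. The implication $(3)\Rightarrow(2)$ is a computation: $(\zeta,z)\mapsto\langle\Im z\vert\lambda\rangle=\Re(-i\langle z\vert\lambda\rangle)$ is $\R$-linear, and $f(0,i\Phi(\zeta))=\langle\Phi(\zeta)\vert\lambda\rangle=0$ because $\lambda\in\Phi(E)^\perp$. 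For $(3)\Rightarrow(1)$ I would note that for $(\zeta,z)\in\bDs$ one has $\Im z=\Phi(\zeta)$, hence $f_h(\zeta,z)=\langle\Phi(\zeta)+h\vert\lambda\rangle=\langle h\vert\lambda\rangle$ (again using $\lambda\in\Phi(E)^\perp$); thus $f_h\cdot\beta_{\bDs}=\langle h\vert\lambda\rangle\,\beta_{\bDs}$, which tends to $0$ in $\Mc_\Poisson(\bDs)$ as $h\to 0$ since $\beta_{\bDs}\in\Mc_\Poisson(\bDs)$ and $\langle h\vert\lambda\rangle\to 0$. By Theorem~\ref{teo:3} the weak limit of $(f_h\cdot\beta_{\bDs})$ is $\mi$, so $\mi=0$.

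The substantive step is $(2)\Rightarrow(3)$. Since $f$ is linear and pluriharmonic, it is the real part of a $\C$-linear function $\widetilde f(\zeta,z)=m(\zeta)+\ell(z)$ with $m\colon E\to\C$ and $\ell\colon F_\C\to\C$ both $\C$-linear (if one reads ``linear'' as ``$\R$-affine'', evaluating the hypothesis at $\zeta=0$ forces the constant term to be purely imaginary, so nothing changes). The key geometric input is that $\Ds$ is invariant under $(\zeta,z)\mapsto(e^{i\theta}\zeta,z)$, because $\Phi$ is Hermitian and so $\Phi(e^{i\theta}\zeta)=e^{i\theta}\overline{e^{i\theta}}\,\Phi(\zeta)=\Phi(\zeta)$. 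Averaging the inequality $f\Meg 0$ over $\theta$ annihilates the $m$-contribution, and specializing to $\zeta=0$ (where $(0,z)\in\Ds$ exactly when $\Im z\in\Omega$, since $\Phi(0)=0$) gives $\Re\ell(z)\Meg 0$ whenever $\Im z\in\Omega$. Writing $\ell(x+iy)=\bigl(p(x)-q(y)\bigr)+i\bigl(q(x)+p(y)\bigr)$ for $\R$-linear $p,q\colon F\to\R$, the inequality $p(x)-q(y)\Meg 0$ for all $x\in F$, $y\in\Omega$ forces $p=0$ and $q\meg 0$ on $\Omega$; by self-duality of $\Omega$ there is $\lambda\in\overline{\Omega}$ with $-q=\langle\,\cdot\,\vert\lambda\rangle$, i.e.\ $\ell(z)=\langle\Im z\vert\lambda\rangle-i\langle\Re z\vert\lambda\rangle$. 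Then $f(0,i\Phi(\zeta))=\Re\ell(i\Phi(\zeta))=\langle\Phi(\zeta)\vert\lambda\rangle$, which by hypothesis vanishes for all $\zeta$, so $\lambda\in\Phi(E)^\perp$.

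It remains to rule out $m\neq 0$. If $m\neq 0$ then $\Re m$ is a nonzero $\R$-linear functional on $E$, so one may pick $\zeta_0$ with $\Re m(\zeta_0)=-1$. Fixing $\omega_1\in\Omega$, the point $(\zeta_0,\,i\Phi(\zeta_0)+it\omega_1)$ lies in $\Ds$ for every $t>0$, and there $f=\Re m(\zeta_0)+\Re\ell(i\Phi(\zeta_0))+\Re\ell(it\omega_1)=-1+\langle\Phi(\zeta_0)\vert\lambda\rangle+t\langle\omega_1\vert\lambda\rangle=-1+t\langle\omega_1\vert\lambda\rangle$, which is negative for small $t$ — contradicting positivity. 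Hence $m=0$, so $f(\zeta,z)=\Re\ell(z)=\langle\Im z\vert\lambda\rangle$ with $\lambda\in\overline{\Omega}\cap\Phi(E)^\perp$, and uniqueness of $\lambda$ is clear because $\{\Im z\colon(\zeta,z)\in\Ds\}$ spans $F$. I expect the only real obstacle to be organizing this last implication cleanly (getting the normal form of $\ell$ out of positivity and self-duality, and excluding the $m$-term); once the circular invariance of $\Ds$ and the self-duality of $\Omega$ are invoked, no genuine estimate is needed, and the rest is a direct appeal to Corollary~\ref{cor:10} and Theorem~\ref{teo:3}.
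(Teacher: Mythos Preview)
Your proof is correct. The route differs from the paper's only in how you close the cycle of implications. The paper proves $(1)\Rightarrow(3)\Rightarrow(2)\Rightarrow(1)$: for $(2)\Rightarrow(1)$ it observes that linearity of $f$ makes $f_{ye}\to f_0$ locally uniformly, so $\mi=f_0\cdot\beta_{\bDs}$ by Theorem~\ref{teo:3}, and then shows $f_0=0$ on $\bDs$ by a short positivity argument (first $f(0,h)=0$ for $h\in F$, then $\zeta\mapsto f(\zeta,i\Phi(\zeta))=f(\zeta,0)$ is a nonnegative $\R$-linear functional, hence zero). You instead prove $(2)\Rightarrow(3)$ directly by determining the full structure of $f$ via circular invariance in $\zeta$ and self-duality of $\Omega$, and then deduce $(3)\Rightarrow(1)$ from Theorem~\ref{teo:3}. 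Your argument is a bit longer but entirely self-contained in the linear setting (it does not lean on the machinery behind Corollary~\ref{cor:10}, only on its statement for $(1)\Rightarrow(3)$); the paper's argument is shorter because it avoids computing $\lambda$ at all in the step $(2)\Rightarrow(1)$. One small redundancy: in your $(2)\Rightarrow(3)$, the averaging over $\theta$ is unnecessary once you specialize to $\zeta=0$, since $(0,z)\in\Ds$ already gives $\Re\ell(z)\Meg 0$ for $\Im z\in\Omega$ directly.
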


Notice that (2) cannot be simplified, since the function $(\zeta,z)\mapsto \langle \Im z \vert \lambda \rangle$ is positive and pluriharmonic for every $\lambda\in \overline \Omega$, even when $\lambda\not \in \Phi(E)^\perp$.

\begin{proof}
	(1) $\implies$ (3) This follows from Corollary~\ref{cor:10}.
	
	(3) $\implies$ (2)  This is obvious.
	
	(2) $\implies$ (1)   Since $f$ is linear, $(f_{y e})$ converges locally uniformly to $f_0$ for $y\to 0^+$, so that $\mi=f_0\cdot \beta_{\bDs}$ by Theorem~\ref{teo:3}.  
	Observe that the mapping
	\[
	\C_+\ni w \mapsto f(0, w h) 
	\]
	is positive for every $h\in \Omega$. Consequently, $f(0, h)=0$ for every $h\in\Omega$, hence for every $h\in F$.  It will then suffice to show that $f(\zeta, i \Phi(\zeta))=0$ for every $\zeta\in E$. To this aim, observe that  the function
	\[
	E\ni \zeta\mapsto f(\zeta,i \Phi(\zeta))= f(\zeta,0)+ f(0,i \Phi(\zeta))=f(\zeta,0)
	\]
	must be positive. Since $f(E\times \Set{0})$ is a vector subspace of $\R$,   it must reduce to $\Set{0}$. The proof is therefore complete.
\end{proof}

\begin{cor}\label{cor:9}
	Keep the hypotheses and the notation of Proposition~\ref{prop:24}. Then, $(\mi_j)$ converges weakly to $\mi$. 
\end{cor}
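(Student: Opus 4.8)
The plan is to upgrade the vague convergence $\mi_j\to\mi$ already furnished by Proposition~\ref{prop:24} to weak convergence in $\Mc_\Poisson(\bDs)$, and the only thing that needs to be added is a uniform bound on the masses. First I would obtain this bound. By (2) of Theorem~\ref{teo:3} applied to each $f^{(j)}$, the measure $\mi_j$ is the largest positive element of $\Mc_\Poisson(\bDs)$ with $\Ps\mi_j\meg f^{(j)}$ on $\Ds$; in particular $\Ps\mi_j\meg f^{(j)}$, so that, evaluating at the base point $(0,i e)$, one has $(\Ps\mi_j)(0,i e)\meg f^{(j)}(0,i e)$. Since $(f^{(j)})$ converges pointwise to $f$, the numerical sequence $(f^{(j)}(0,i e))$ is bounded; and since $\Ps((0,i e),\,\cdot\,)$ is positive (Proposition~\ref{prop:1bis}) and each $\mi_j\Meg 0$, we have $\norm{\Ps((0,i e),\,\cdot\,)\cdot\mi_j}_{\Mc^1(\bDs)}=(\Ps\mi_j)(0,i e)$. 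Hence $\Set{\mi_j\colon j\in\N}$ (together with $\mi$) is a bounded subset of $\Mc_\Poisson(\bDs)$.

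Next I would invoke the observation recorded just after the definition of $\Mc_\Poisson(\bDs)$: bounded subsets of $\Mc_\Poisson(\bDs)$ are relatively compact for the weak topology, this being the weak dual topology when $\Mc_\Poisson(\bDs)$ is interpreted as the dual of $C_{\Poisson,0}(\bDs)$. Thus the weak closure $K$ of $\Set{\mi_j\colon j\in\N}\cup\Set{\mi}$ is compact in the weak topology.

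Finally I would run the standard subnet argument to identify the limit, using that the vague topology on $\Mc_\Poisson(\bDs)$ is coarser than the weak topology (since $C_c(\bDs)\subseteq C_{\Poisson,0}(\bDs)$). If $(\mi_j)$ did not converge weakly to $\mi$, there would be a weak neighbourhood $U$ of $\mi$ and a subsequence $(\mi_{j_k})$ with $\mi_{j_k}\notin U$ for all $k$; by weak compactness of $K$ this subsequence has a weak cluster point $\mi'$, which lies in the weakly closed set $K\setminus U$, hence $\mi'\neq\mi$. But $\mi'$ is then also a vague cluster point of $(\mi_{j_k})$, while $(\mi_{j_k})$ converges vaguely to $\mi$ by Proposition~\ref{prop:24}, forcing $\mi'=\mi$ — a contradiction. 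Therefore $(\mi_j)$ converges weakly to $\mi$. There is essentially no obstacle here: the one piece of genuine content is the uniform mass bound, and that is handed over directly by the comparison $\Ps\mi_j\meg f^{(j)}$ in Theorem~\ref{teo:3}, so the remainder is a formal topological argument.
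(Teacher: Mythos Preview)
Your proof is correct and follows essentially the same approach as the paper's: both obtain the uniform bound $(\Ps\mi_j)(0,ie)\meg f^{(j)}(0,ie)$ from Theorem~\ref{teo:3}, and then deduce weak convergence from vague convergence via boundedness. You have simply spelled out the standard compactness/subnet argument behind the paper's one-line assertion that ``vague convergence is equivalent to weak convergence'' on bounded sets.
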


Notice that we cannot deduce that $(\mi_j)$ converges to $\mi$ in $\Mc_\Poisson(\bDs)$. In fact, already when $\Ds=\C_+$  the sequence $(\delta_j)_{j\in\N}$ converges weakly to $0$, but not in $\Mc_\Poisson(\R,\C_+)$, for $j\to \infty$, and $(\Ps \delta_j)(z)= \frac{\Im z}{\pi\abs{z-j}^2}$ converges to $0$ for every $z\in \C_+$.

\begin{proof}
	Observe that $\Ps\mi_j\meg f^{(j)}$ by Theorem~\ref{teo:3}, so that $(\mi_j)$ is bounded in $\Mc_\Poisson(\bDs)$. Hence, vague convergence is equivalent to weak convergence.
\end{proof}

\begin{teo}\label{teo:4}
	Let $f$ be a positive pluriharmonic function on $\Ds$, and let $\mi$ be the largest positive measure in $\Mc_\Poisson(\bDs)$ such that $\Ps \mi\meg f$. For every $h\in\Omega\cup \Set{0}$, let $\nu_h$ be the positive pluriharmonic measure on $\bD$ such that $(\Pc \nu_h)(z)= f(\gamma^{-1}(z)+ i h)$ for every $z\in D$. Then, the following hold:
	\begin{enumerate}
		\item[\textnormal{(1)}] $\chi_{N}\cdot \nu_h=\chi_N\cdot \nu_0$ for every $h\in\Omega$, where $N\coloneqq \bD\setminus \gamma(\bDs)$;
		
		\item[\textnormal{(2)}] for every $(\zeta,z)\in \Ds$and for every $h\in\Omega$,
		\[
		f(\zeta,z)-f(\zeta,z+i h)=(\Ps \mi)(\zeta,z)- (\Ps f_h)(\zeta,z)
		\]
		that is,
		\[
		f(\zeta,z)-(\Ps \mi)(\zeta,z)=f(\zeta,z+i h)-(\Ps f_h)(\zeta,z);
		\]	
		
		\item[\textnormal{(3)}] $f_h\cdot \beta_{\bDs}$ converges to $\mi$ \emph{in $\Mc_\Poisson(\bDs)$} for $h\to 0$, $h\in\Omega$. In addition, the mapping $\Omega\ni h\mapsto f_h\cdot \beta_{\bDs}\in \Mc_\Poisson(\bDs)$ is (tightly) continuous.
	\end{enumerate}
\end{teo}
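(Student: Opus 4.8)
The plan is to transport everything to the bounded realization $D$ through $\gamma\colon\Ds\to D$ and to reduce the three assertions to a single statement about the behaviour of $\gamma$ near the set $N\coloneqq\bD\setminus\gamma(\bDs)$ "at infinity". First I would record that, $\Omega$ being an open convex cone, $\overline\Omega+\Omega\subseteq\Omega$, so the translation $\tau_h\colon(\zeta,z)\mapsto(\zeta,z+ih)$ maps $\Cl(\Ds)$ into $\Ds$ for $h\in\Omega$; hence $f^{[h]}\coloneqq f\circ\tau_h$ is positive pluriharmonic on a neighbourhood of $\Cl(\Ds)$ and restricts to $f_h$ on $\bDs$. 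Applying Theorem~\ref{teo:3} to $f^{[h]}|_\Ds$, its boundary measure is the vague limit of $f^{[h]}_{yh'}\cdot\beta_{\bDs}=f_{h+yh'}\cdot\beta_{\bDs}$ as $y\to0^+$, which, by continuity of $f$ near $\bDs+(0,ih)$, equals $f_h\cdot\beta_{\bDs}$; this already gives $f_h\cdot\beta_{\bDs}\in\Mc_\Poisson(\bDs)$, $\Ps f_h\meg f^{[h]}$ on $\Ds$, and, by Theorem~\ref{teo:3}(1) applied to $f^{[h]}$, $\Gamma(f_h\cdot\beta_{\bDs})=\chi_{\bD\setminus N}\cdot\nu_h$. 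Writing $\phi_h\coloneqq\gamma\circ\tau_h\circ\gamma^{-1}$, a holomorphic self-map of $D$, one has $f^{[h]}\circ\gamma^{-1}=(f\circ\gamma^{-1})\circ\phi_h$, i.e.\ $\Pc\nu_h=(\Pc\nu_0)\circ\phi_h$.

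Next I would observe that (1) and (2) are equivalent. Using $\Gamma(\mi)=\chi_{\bD\setminus N}\cdot\nu_0$ (Theorem~\ref{teo:3}(1)) one has $\chi_N\cdot\nu_h-\chi_N\cdot\nu_0=(\nu_h-\nu_0)-\Gamma(f_h\cdot\beta_{\bDs}-\mi)$; applying the Poisson integral, invoking Remark~\ref{oss:9} (so that $\Pc\Gamma(\sigma)=(\Ps\sigma)\circ\gamma^{-1}$) and the injectivity of $\Pc$ (Proposition~\ref{prop:1}(4)), and pulling back along $\gamma$, one finds that $\chi_N\cdot\nu_h=\chi_N\cdot\nu_0$ if and only if $f-f^{[h]}=\Ps\mi-\Ps f_h$ on $\Ds$ — which is precisely assertion~(2). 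So all of (1), (2), (3) will follow once I prove $\chi_N\cdot\nu_h=\chi_N\cdot\nu_0$ for every $h\in\Omega$ (plus the soft convergence arguments for~(3)).

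The proof of that identity is the heart of the matter, and the main obstacle. The geometric input is that $\phi_h$ extends continuously to $\Cl(D)$, maps $\Cl(D)\setminus N$ into $D$ — because the rational extension of $\gamma^{-1}$ sends $\bD\setminus N$ into $\bDs$, $\tau_h$ sends $\bDs$ into $\Ds$, and $\gamma$ sends $\Ds$ into $D$ — and restricts to the identity on $N$: a point $w\to w_0\in N$ forces $\gamma^{-1}(w)\to\infty$ in $\Ds$, and the bounded perturbation by $(0,ih)$ does not change the direction along which $\gamma^{-1}(w)$ escapes, so $\gamma(\gamma^{-1}(w)+(0,ih))\to w_0$. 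I would then upgrade this to a quantitative, "tangential" statement using the projective devices of Section~\ref{sec:4} (Lemmas~\ref{lem:8},~\ref{lem:10} and~\ref{lem:11}): restricted approach of $w$ to $w_0\in N$ forces restricted approach of $\phi_h(w)$ to $w_0$, with $(1-\norm{\phi_h(w)})/(1-\norm{w})$ bounded away from $0$ and $\infty$. From $\Pc\nu_h=(\Pc\nu_0)\circ\phi_h$, splitting $\nu_0=\chi_N\cdot\nu_0+\chi_{\bD\setminus N}\cdot\nu_0$, one argues that composition with $\phi_h$ preserves the part of the boundary measure carried by $N$: the restricted limits of $\Pc(\chi_{\bD\setminus N}\cdot\nu_0)\circ\phi_h$ at points of $N$ vanish, since $\chi_{\bD\setminus N}\cdot\nu_0$ puts no mass on $N$ and $\phi_h$ is tangential, whereas the $N$-part carried over from $\Pc(\chi_N\cdot\nu_0)\circ\phi_h$ is exactly $\chi_N\cdot\nu_0$, since $\phi_h|_N=\mathrm{id}$ and $\phi_h$ is tangential — a higher-rank counterpart of the atom computation in Remark~\ref{oss:8}. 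Adding the two contributions yields $\chi_N\cdot\nu_h=\chi_N\cdot\nu_0$. Making the phrase "preserves the $N$-part" precise is the delicate boundary-regularity step, and is where the estimates of Section~\ref{sec:4} are used in an essential way.

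Finally, for~(3): when $h\to h_0$ in $\Omega\cup\{0\}$, $\phi_h\to\phi_{h_0}$ locally uniformly on $D$, so $\Pc\nu_h=(\Pc\nu_0)\circ\phi_h\to\Pc\nu_{h_0}$ pointwise; since the $\nu_h$ are positive measures on the compact set $\bD$ with uniformly bounded masses, $\nu_h\to\nu_{h_0}$ vaguely, hence (as $\bD$ is compact) with convergence of total masses $\nu_h(\bD)=(\Pc\nu_h)(0)=f(0,i(e+h))$. By the identity of the previous paragraph, $\chi_N\cdot\nu_h=\chi_N\cdot\nu_0$ for all $h\in\Omega$, so $\chi_{\bD\setminus N}\cdot\nu_h=\nu_h-\chi_N\cdot\nu_0\to\chi_{\bD\setminus N}\cdot\nu_{h_0}$ vaguely (with mass convergence); since $\Gamma$ restricts to a homeomorphism of $\Mc_\Poisson(\bDs)\cap\Mc_+(\bDs)$ onto its image (Corollary~\ref{cor:4}), it follows that $h\mapsto f_h\cdot\beta_{\bDs}=\Gamma^{-1}(\chi_{\bD\setminus N}\cdot\nu_h)$ is (tightly) continuous on $\Omega$, and letting $h\to0$ — using $\Gamma(\mi)=\chi_{\bD\setminus N}\cdot\nu_0$ — gives $f_h\cdot\beta_{\bDs}\to\mi$ in $\Mc_\Poisson(\bDs)$. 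Assertion~(1) is the special (constant‑in‑$h$) case, and~(2) is the reformulation already obtained.
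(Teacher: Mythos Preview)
Your reduction of (1)--(3) to the single identity $\chi_N\cdot\nu_h=\chi_N\cdot\nu_0$ is correct, and your argument for (3) given (1) matches the paper's.  The problem is entirely in your proposed proof of (1).

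The claim that $\phi_h\coloneqq\gamma\circ\tau_h\circ\gamma^{-1}$ extends continuously to $\Cl(D)$ with $\phi_h|_N=\mathrm{id}$ is false.  Take $\Ds=\C_+^2$, $D=\Db^2$, $N=(\T\times\{1\})\cup(\{1\}\times\T)$.  If $w\to(1,\alpha)$ with $\alpha\in\T\setminus\{1\}$, then $\gamma^{-1}(w)_1\to\infty$ but $\gamma^{-1}(w)_2\to x_\alpha\in\R$; adding $ih=i(h_1,h_2)$ sends the second coordinate to $x_\alpha+ih_2\in\C_+$, so $\phi_h(w)\to(1,\gamma_0(x_\alpha+ih_2))$, which lies on a lower-dimensional face of $\partial D$ but \emph{not} at $(1,\alpha)$ and not on $\bD$.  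The heuristic ``the bounded perturbation by $(0,ih)$ does not change the direction of escape'' fails precisely because only \emph{some} coordinates of $\gamma^{-1}(w)$ blow up along $N$.  This undermines both the pointwise fixed-point claim and the subsequent ``tangential'' / restricted-limit argument; the tools of Section~\ref{sec:4} concern approach to a single tripotent along one-dimensional slices, not preservation of the mass of $\nu$ along the positive-dimensional stratum $N$.

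The paper proceeds quite differently.  It works with the disintegration $(\nu_{h,\xi})_{\xi\in\widehat{\bD}}$ of $\nu_h$ along the $\T$-orbits (Proposition~\ref{prop:8}).  The crucial observations are: (i) for each $\xi$, the set $N\cap\pi^{-1}(\xi)$ is finite (at most $r$ points, being the zeros on $\T$ of $w\mapsto\det(wQ(e)^2\zeta-e)$), so $\chi_N\cdot\nu_{h,\xi}$ is purely atomic; (ii) by Remark~\ref{oss:8}, each atom $\nu_{h,\pi(\zeta)}(\{\zeta\})$ equals $\lim_{\rho\to1^-}\tfrac{1-\rho}{2}g(\gamma^{-1}(\rho\zeta)+(0,ih))$, where $g=(\Hc\nu_0)\circ\gamma$; (iii) for fixed $\zeta\in N$ and $\rho<1$, the function $h\mapsto g(\gamma^{-1}(\rho\zeta)+(0,ih))$ is holomorphic on $\Omega+iF$, the limit in $\rho$ exists locally uniformly (by the bound in Remark~\ref{oss:8}), and the limit is nonnegative, hence constant.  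This gives $\chi_N\cdot\nu_h=\chi_N\cdot\nu_e$ for $h\in\Omega+iF$; a further mollification in the real direction (averaging over translates $\tau_{xe}$) is then used to connect to $h=0$.  None of this relies on boundary regularity of $\phi_h$.
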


In particular, $\mi$ is pluriharmonic if and only if $f_h\cdot \beta_{\bDs}$ is pluriharmonic for some/every $h\in\Omega$. As we shall see in Example~\ref{ex:1bis}, this may not be the case.

Notice that a similar assertion may be proved allowing all $h\in\overline \Omega+ i F$; we restricted our attention to $h\in \Omega\cup \Set{0}$ since it seemed more natural.

\begin{proof}
	(1) Let us extend the definition of $\nu_h$ to every $h\in \overline\Omega+ i F$.	
	Let $(\nu_{h,\xi})_{\xi\in \widehat \bD}$ be the vaguely continuous disintegration of $\nu_h$ relative to $\widehat \beta$, for every $h\in \overline \Omega+ i F$. 
	In addition, set $g\coloneqq (\Hc \nu_0)\circ \gamma$, so that $g$ is holomorphic, $\Re g=f$, and $ g(0,i e)=f(0, i e)$. In addition, 
	\begin{equation}\label{eq:3}
	(\Hc \nu_h)\circ \gamma= g(\,\cdot\,+ (0,i h))- i \Im g(0, i e+ i h)
	\end{equation}
	for every $h\in \overline\Omega+ i F$, since both sides of the equality have the same real part (namely, $f(\,\cdot\,+ (0, i h))$) and are real at $(0,i e)$.
	Then, for every $\zeta\in N$, Remark~\ref{oss:8} shows that
	\begin{equation}\label{eq:8}
	\nu_{h, \pi(\zeta)}(\Set{\zeta})=\lim_{\rho \to 1^-} \frac{1-\rho}{2} g(\gamma^{-1}(\rho \zeta)+(0, i h))
	\end{equation}
	and that
	\[
	\abs*{\frac{1-\rho}{2} g(\gamma^{-1}(\rho \zeta)+(0, i h))}\meg (\Hc \nu_h)(0)= f(0, i e+ i h)
	\]
	for every $h\in \overline \Omega+ i F$. 
	Since the functions $ \Omega+ i F\ni h\mapsto g(\gamma^{-1}(\rho \zeta)+(0, i h))$ are holomorphic and locally uniformly bounded, also their limit (for $\rho \to 1^-$)
	\[
	 \Omega+ i F\ni h\mapsto \nu_{h, \pi(\zeta)}(\Set{\zeta})\Meg 0
	\]
	must be holomorphic, hence constant. We have thus proved that $\nu_{h,\pi(\zeta)}(\Set{\zeta})=\nu_{e,\pi(\zeta)}(\Set{\zeta})$ for every $h\in \Omega+ i F$.  Observe that $N\cap \pi^{-1}(\xi)$ is finite for every $\xi\in \widehat \bD$: in fact, it may contain at most $r$ points since it corresponds to the zero locus (in $\T$) of the polynomial map $\C\ni w\mapsto \det(w Q(e)^2\zeta-e)\in \C$ for every $\zeta\in \pi^{-1}(\xi)$, and this polynomial map is not identically $0$ and has degree $\meg r$. Therefore, $\chi_N\cdot \nu_{h,\xi}$ is discrete for every $\xi\in \widehat \bD$, so that the previous argument shows that $\chi_N\cdot \nu_{h,\xi}=\chi_N\cdot \nu_{e,\xi}$ for every $h\in \Omega+i F$ and for every $\xi\in \widehat \bD$.
	This is sufficient to conclude that $\chi_N\cdot \nu_h=\chi_N\cdot \nu_e$ for every $h\in\Omega+ i F$. It only remains to prove that $\chi_N\cdot \nu_0= \chi_N\cdot \nu_e$. This will require some care.
	
	Take $\zeta\in N$. Observe that the function $\Cl(\C_+)\ni w \mapsto g(\gamma^{-1}(\rho \zeta)+ (0, w e))$ is continuous and holomorphic on $\C_+$ for every $\rho\in [0,1)$ (in fact, for every $\rho\in [0,1)$ it extends to a holomorphic function on a neighbourhood of $\Cl(\C_+)$). Since these functions are locally uniformly bounded and converge pointwise to $w \mapsto \nu_{-i w e, \pi(\zeta)}(\Set{\zeta})$, by~\eqref{eq:3} and~\eqref{eq:8}, by means of Cauchy's integral theorem (and the fact that the limit function is constant on $\C_+$) we see that $\int_{x}^{x'}  \nu_{-i t e, \pi(\zeta)}(\Set{\zeta}) \,\dd t=(x'-x)\nu_{e, \pi(\zeta)}(\Set{\zeta}) $ for every $x,x'\in \R$. Consequently, $\nu_{-i x e, \pi(\zeta)}(\Set{\zeta})=\nu_{ e, \pi(\zeta)}(\Set{\zeta})$ for almost every $x\in \R$.
	Observe that the mappings $h\mapsto \nu_h$ and $h\mapsto \nu_{h,\xi}$ are vaguely continuous on $\overline \Omega+ i F$, since the mapping $h\mapsto (\Pc\nu_h)(z)=f(\gamma^{-1}(z)+ i h)$ is continuous for every $z\in D$, while the mapping $h\mapsto \int_{\bD} \frac{1-\abs{z}^2}{\abs{\zeta-z}^2}\,\dd \nu_{h,\xi}(\zeta)=f(\gamma^{-1}(z)+ i h)$ is continuous for every $z\in \Db_\xi$ (cf.~\cite[the proof of Proposition 3.4]{Calzi}).
	Then, take a positive even function $\phi\in C_c(\R)$ with integral $1$ (with respect to Lebesgue measure), and define, for every $k\Meg 1$
	\[
	\widetilde f^{(k)}(\zeta,z)\coloneqq k\int_{\R} f(\zeta,z+x e)\phi(k x)\,\dd x
	\]
	for every $(\zeta,z)\in \Ds$, 
	\[
	\widetilde \nu^{(k)}\coloneqq k\int_\R \nu_{- i x}\,\phi(k x)\,\dd x
	\]
	and
	\[
	\widetilde \nu_{\xi}^{(k)}\coloneqq k\int_\R \nu_{- i x, \xi}\,\phi(k x)\,\dd x
	\]
	for every $\xi\in \widehat \bD$. Then, $(\Pc \widetilde \nu^{(k)})\circ \gamma=\widetilde f^{(k)}$ and $(\widetilde \nu_\xi^{(k)})$ is the vaguely continuous disintegration of $\widetilde \nu^{(k)}$ relative to $\widehat \beta$ for every $k\Meg 1$. In addition, by~\cite[Theorem 1 of Chapter V, \S\ 3, No.\ 3]{BourbakiInt1},
	\[
	\widetilde \nu^{(k)}_{\pi(\zeta)}(\Set{\zeta})=k\int_\R \nu_{- i x, \pi(\zeta)}(\Set{\zeta})\phi(k x)\,\dd x=\nu_{e, \pi(\zeta)}(\Set{\zeta})
	\]
	for every $k\Meg 1$ and for every $\zeta\in N$. Therefore, the previous arguments allow to show that $\chi_N\cdot\widetilde \nu^{(k)}=\chi_N\cdot \nu_e$. 
	
	Now, let $\tau_{x e}\mi$ denote the translation of $\mi$ by $x e$, and let us prove that the mapping $\R \ni x \mapsto \tau_{x e}\mi \in \Mc_\Poisson(\bDs)$ is (tightly) continuous. Notice that $\Ps \tau_{x e}\mi\meg f(\,\cdot\,+ (0, xe))$ on $\Ds$ for every $x\in \R$, by Theorem~\ref{teo:3}, so that $\tau_{x e}\mi\in \Mc_\Poisson(\bDs)$ for every $x\in \R$.  Since the mapping $x \mapsto \tau_{x e} \mi$ is vaguely continuous, by~\cite[Proposition 9 of Chapter IX, \S\ 5, No.\ 3]{BourbakiInt2}, it will suffice to show that the mapping $\R\ni x \mapsto\Ps(\tau_{x e} \mi)(0, i e)$ is continuous. To this aim, observe that, combining Example~\ref{ex:5} with the so-called Kor\'anyi's lemma (cf., e.g.,~\cite[Theorem 2.47]{CalziPeloso}), one may show that there is a constant $C>0$ such that
	\[
	\abs{\Cs((\zeta,z),(\zeta',z'))^{2}- \Cs((\zeta'',z''),(\zeta',z'))^{2}}\meg C d((\zeta,z),(\zeta'',z'')) \abs{\Cs((\zeta'',z''),(\zeta',z'))}^{2}
	\]
	for every $(\zeta,z),(\zeta'',z'')\in \Ds$ with $d((\zeta,z),(\zeta'',z''))\meg 1$ and for every $(\zeta',z')\in \Cl(\Ds)$, where $d$ denotes the Bergman distance on $\Ds$. In addition, 
	\[
	\Cs((\zeta,z),(\zeta,z))=\Cs((\zeta,z+ x e),(\zeta,z+x e))
	\]
	for every $(\zeta,z)\in \Ds$ and for every $x\in \R$, thanks to Example~\ref{ex:5}.
	Thus,
	\[
	\abs{\Ps((\zeta,z),(\zeta',z'))- \Ps((\zeta,z+x e),(\zeta',z'))}\meg C d((\zeta,z),(\zeta,z+x e)) \Ps((\zeta,z),(\zeta',z'))
	\]
	for every $(\zeta,z)\in \Ds$, for every $(\zeta',z')\in \bDs$, and for every $x\in \R$ such that $d((\zeta,z),(\zeta,z+x e)) \meg 1$.
	In addition, by Example~\ref{ex:5} again, $\Ps((\zeta,z),(\zeta',z'- x e))=\Ps((\zeta,z+ x e), (\zeta',z'))$ for every $(\zeta,z)\in \Ds$, for every $(\zeta',z')\in \bDs$, and for every $x\in \R$. Consequently,
	\[
	\abs{(\Ps \tau_{x e} \mi)(0, i e)- (\Ps \tau_{x' e}\mi)(0, i e)} \meg C d((0, (x-x'+i)e),(0, i e)) (\Ps  \mi)(0, (x'+i) e)
	\]
	for every $x,x'\in\R$ such that $d((0, i e),(0, (x-x'+i) e))\meg 1$. Our claim then follows.

	Then, define
	\[
	\widetilde\mi^{(k)}\coloneqq k \int_\R \tau_{x e} \mi\, \phi(x)\,\dd x
	\]
	for every $k\Meg 1$, and observe that  $\mi^{(k)}\in \Mc_\Poisson(\bDs)$ and that
	\[
	\Gamma(\mi^{(k)})=\chi_{\bD \setminus N} \cdot \widetilde \nu^{(k)}
	\]
	for every $k\Meg 1$. In addition, $\mi^{(k)}$ converges to $\mi$ in $\Mc_\Poisson(\bDs)$ for $k\to \infty$, so that Corollary~\ref{cor:4} shows that $\chi_N\cdot \widetilde \nu^{(k)}$ converges vaguely to $\chi_N\cdot \nu_0$ for $k\to \infty$. Consequently, $\chi_N\cdot \nu_0=\chi_N\cdot \nu_e$. The proof of (1) is therefore complete. 
	
	(2) This follows from (1), since
	\[
	\begin{split}
		f(\zeta,z)-(\Ps \mi)(\zeta,z)&=(\Pc \chi_N\cdot\nu_0)(\gamma(\zeta,z))\\
			&=(\Pc \chi_N\cdot\nu_h)(\gamma(\zeta,z))\\
			&=f(\zeta,z+i h)-(\Ps f_h)(\zeta,z) 
	\end{split}
	\]
	for every $(\zeta,z)\in \Ds$, thanks to Theorem~\ref{teo:3}. 
	
	(3) Observe that $\Pc \nu_h$ converges pointwise to $\Pc \nu_0$ on $D$, and that the $\nu_h$ are uniformly bounded for $h\in \Omega$, $\abs{h}\meg 1$ (cf., e.g.,~the proof of Theorem~\ref{teo:3}). Therefore, $\nu_h$ converges vaguely to $\nu_0$. Since, by (1),  $\chi_{N}\cdot \nu_h=\chi_N\cdot \nu_0$ for every $h\in\Omega$, this proves that $\chi_{\bD\setminus N}\cdot \nu_h$ converges vaguely to $\chi_{\bD\setminus N}\cdot \nu_0$ for $h\to 0$, $h\in\Omega$. By means of Corollary~\ref{cor:4} and Theorem~\ref{teo:3}, this proves that 	 $f\cdot \beta_{\bDs}$ converges to $\mi$ in $\Mc_\Poisson(\bDs)$ for $h\to 0$, $h\in\Omega$. In a similar way, one shows that the mapping $\Omega\ni h\mapsto f_h\cdot \beta_{\bDs}\in \Mc_\Poisson(\bDs)$ is (tightly) continuous.
\end{proof}

\begin{ex}\label{ex:6}
	With the notation of Proposition~\ref{prop:24}, let us provide an example with $a > 0$ (for every choice of $h\in \Omega$) even if $\Ps \mi=f$ (that is, even if $\nu(\bD \setminus \gamma(\bDs))=0$). Cf.~\cite[Remarks after Corollary 3.9]{LugerNedic3}.
	
	Assume that $D=\Db^2$ and that $\Ds=\C_+^2$, and take
	\[
	f\colon \C_+^2\ni (w_1,w_2)\mapsto \Re  \frac{w_1 w_2}{i(w_1+w_2)} =\frac{ \abs{w_1}^2 \Im w_2+ \abs{w_2}^2 \Im w_1 }{\abs{w_1+w_2}^2},
	\]
	so that $f$ is a positive pluriharmonic function on $\C_+^2$. Observe that, chosen any unit vector  $h=(h_1,h_2)$ in $\Omega=(\R_+^*)^2$, so that $h_1,h_2>0$, 
	\[
	a = \lim_{y \to +\infty} \frac{f(i h_1 y, i h_2 y)}{y}=\frac{h_1h_2 }{h_1+h_2}>0
	\]
	In addition, choosing 
	\[
	\gamma\colon \C_+^2\ni (w_1,w_2)\mapsto \Big(\frac{w_1-i}{w_1+i} ,\frac{w_2-i}{w_2+i} \Big)\in \Db^2,
	\]
	it is clear that
	\[
	f\circ \gamma^{-1}\colon \Db^2\ni (w_1,w_2)=\Re \frac{ (1+w_1)(1+w_2) }{(1+w_1)(1-w_2)+(1+w_2)(1-w_1)}=\Re\frac{1+w_1+w_2+w_1 w_2 }{2-2 w_1w_2 }\in \R
	\]
	extends to a pluriharmonic function on $U=\Set{(w_1,w_2)\in \C^2\colon w_1 w_2\neq 1}$. 
	Furthermore, $N\coloneqq \bD\setminus \gamma(\bDs)$ is the set $ (\T\times \Set{1})\cup (\Set{1}\times \T)$. Since $ N\setminus U=\Set{(1,1)}$, by means of Proposition~\ref{prop:8} we see that $\nu(N)=0$, where $\nu$ is the unique positive pluriharmonic measure on $\bD=\T^2$ such that $\Pc \nu= f\circ \gamma^{-1}$. 
\end{ex} 

 \begin{ex}\label{ex:1}
 	Assume that $E=\Set{0}$ and that $F$, with the induced real Jordan algebra structure, is simple and has rank $2$. Then, after fixing a frame of idempotents and considering the corresponding joint Peirce decomposition, we may assume that $F=\R\times \R\times H$, where $H$ is a (non-trivial, finite-dimensional) real Hilbert space (with scalar product $(c,c')\mapsto c\cdot c'$), endowed with the product defined by
 	\[
 	(a,b,c)(a',b',c')=\bigg(a a'+ c\cdot c', b b'+ c\cdot c', \frac{(a+b)c'+(a'+b')c}{2} \bigg)
 	\]
 	for every $(a,b,c),(a',b',c')\in F$. The same expression then defines the product on the complex Jordan algebra $F_\C$, provided that $\cdot$ denote the (symmetric) $\C$-\emph{bilinear} extension of the scalar product of $H$ to $H_\C$. We endow $F_\C$ with the scalar product defined by $\langle(a,b,c)\vert (a',b',c')\rangle\coloneqq a \overline{a'}+ b\overline{b'}+ 2 c\cdot \overline{c'}$, which is a scalar multiple of the canonical scalar product on the Hermitification of $F$. 	
 	Let $\Omega=\Set{(a,b,c)\in F\colon a, b>0, ab>c \cdot c}$ be the symmetric cone associated with $F$, and consider the symmetric Siegel domain $\Ds=F+i \Omega$. 
 	Consider the holomorphic function
 	\[
 	f\colon \Ds \ni (a,b,c)\mapsto i\frac{a b}{(a+b)(a b-c\cdot c)}\in \C.
 	\]
 	In this example we shall find the boundary value measure $\mi$ of $f$, its vaguely continuous disintegration (associated with $h=(1,1,0)$) and show that  $\Ps\mi=f$.
 	
 	Observe that $f$ is well defined since $\Im (a+b)>0$ and $a b-c\cdot c\neq0$ for every $(a,b,c)\in \Ds$.
 	Observe that the mapping $\iota\colon z \mapsto -z^{-1} $ is a biholomorphism of $\Ds$ onto itself, and that 
 	\[
 	(f\circ \iota)(a,b,c)=-i \frac{a b }{a+b}
 	\]	
 	for every $(a,b,c)\in \Ds$.
 	Observe that
 	\[
 	\Re (f\circ \iota)(a,b,c)=\frac{\abs{a}^2 \Im b+ \abs{b}^2 \Im a}{\abs{a+b}^2}\Meg 0
 	\]
 	for every $(a,b,c)\in\Ds$. Therefore, $\Re f$ is a positive pluriharmonic function  on $\Ds$.
 	
 	In addition,
 	\[
 	(f\circ \gamma^{-1})(a,b,c)
 	=\frac{(1-a b+c \cdot c)^2-(a-b)^2}{2(1-a b+c\cdot c)(1+ab-c\cdot c+a+b)}
 	\]
 	for every $(a,b,c)\in D$. 
 	Denote by $\nu$ the vague limit of $\Re (f\circ \gamma^{-1})_\rho\cdot \beta_{\bD}$ for $\rho\to 1^-$, and let us prove that $\nu(N)=0$ (recall that $N$ is the zero locus (in $\bD$) of the polynomial $Q(a,b,c)=1+a b-c\cdot c-a-d$). 
 	Observe first that $\dim_\HB N\meg   \dim \bD-1=m-1$ (cf.~Corollary~\ref{cor:5}), so that  Proposition~\ref{prop:8} shows that $\chi_N\cdot \nu= \alpha\cdot \Hc^{m-1}$ for some $\Hc^{m-1}$-integrable function $\alpha$ on $F_\C$ which vanishes on the complement of $N$. 
 	Since $f\circ \gamma^{-1}$  extends by continuity on $U\coloneqq\Set{(a,b,c)\in F_\C\colon (1-ab+c\cdot c)(1+ab-c\cdot c+a+b)\neq 0 }$, we see that $\nu$ is absolutely continuous with respect to $\Hc^m$ on  $U$, so that $\nu(N\cap U)=0$. 
 	Since $\dim_\HB (N\setminus U)\meg m-2$ (cf.~Corollary~\ref{cor:5}, since the (holomorphic) polynomials $1+ab-c\cdot c-a-b$ and $ (1-ab+c\cdot c)(1+ab-c\cdot c+a+b)$ are coprime on $F_\C$), this is sufficient to conclude that $\nu(N)=0$. Therefore, if $\mi=\Gamma^{-1}(\nu)$ (with the notation of Corollary~\ref{cor:4}), then $f=\Ps \mi$ on $\Ds$ (cf.~Remark~\ref{oss:9}).
 	
 	Now, observe that the orthogonal complement of $\R (1,1,0)$ in $F$   is $\Set{(a,-a,c)\colon a\in \R, c\in H}$. In addition, define, for every $(a,c)\in \R\times H$,
 	\[
 	\mi_{(a,c)}\coloneqq \begin{cases}
 		\frac{\pi a^2}{2 (a^2+c\cdot c)}\delta_0 +\frac{\pi c\cdot c}{4 (a^2+c\cdot c)}\big(\delta_{\sqrt{a^2+c\cdot c}}+\delta_{-\sqrt{a^2+c\cdot c}}\big) & \text{if $a^2+c\cdot c>0$}\\
 		\frac \pi 2 \delta_0 & \text{if $a=0$ and $c=0$.}
 	\end{cases}
 	\]
 	Observe that the mapping $(a,c)\mapsto \mi_{(a,c)}$ is vaguely continuous, and that
 	\[
 	(\Ss_{\C_+} \mi_{(a,c)})(w)= f(a+w,-a+w,c)
 	\]
 	for every $(a,c)\in \R\times H$ and for every $w\in \C_+$, where $\Ss_{\C_+}$ denotes the Schwarz integral operator associated with $\C_+$ (cf.~Remark~\ref{oss:10}). Consequently, 
 	\[
 	\mi=\int_{\R\times H} \mi'_{(a,c)}\,\dd \Hc^{m-1}(a,c)
 	\]
 	by Proposition~\ref{prop:25}, where $\mi'_{(a,c)}$ denotes the image of  $\mi_{(a,c)}$ under the mapping $\R\ni x \mapsto (a+x,-a+x,c) \in F$ (cf.~Proposition~\ref{prop:25}). 
 \end{ex}

\begin{ex}\label{ex:1bis}
	Keep the notation of Example~\ref{ex:1}. We shall prove that the largest positive measure $\mi'\in \Mc_\Poisson(\bDs)$ such that $\Ps \mi' \meg \Re g$ on $\Ds$ is \emph{not} pluriharmonic, and that $\Fc  \mi'$ is supported in $-\overline \Omega\cup \overline \Omega$.
	
	For the first assertion, observe that,  with the notation of Example~\ref{ex:1}, if $\nu'$ denotes the image of $\nu$ under the reflection  $z \mapsto -z$, then $(\Pc \nu')\circ \gamma=\Re g$. Therefore, saying that $\mi$ is pluriharmonic is equivalent to saying that $\chi_N\cdot \nu'$ is pluriharmonic, thanks to Theorem~\ref{teo:3}. This is in turn equivalent to saying that $\chi_{-N}\cdot \nu$ is pluriharmonic, hence also to saying that $\chi_{\gamma^{-1}(-N)}\cdot \mi$ is pluriharmonic. Now, observe that $N'\coloneqq\gamma^{-1}(-N)=\Set{z\in F\colon \det z=0}=\Set{(a,b,c)\in \R\times \R\times H\colon a b=c\cdot c}$. In addition, $(\chi_{N'}\cdot \mi'_{(a,c)})$ is a disintegration of $\chi_{N'}\cdot \mi$, and
	\[
	\chi_{N'}\cdot \mi'_{(a,c)}=\begin{cases}
		\frac{\pi b\cdot b}{4(a^2+ b\cdot b)}(\delta_{(a+\sqrt{a^2+c\cdot c},-a+\sqrt{a^2+c\cdot c},c)} + \delta_{(a-\sqrt{a^2+c\cdot c},-a-\sqrt{a^2+c\cdot c},c)}) &\text{if $a^2+c\cdot c>0$}\\
		\frac \pi 2 \delta_{(0,0,0)} &\text{if $a=0$ and $c=0$}
	\end{cases}
	\]
	for every $(a,c)\in \R\times H$. It is then clear that the function $(a,c)\mapsto\chi_{N'}\cdot \mi'_{(a,c)}$ is   vaguely continuous  on $(\R\times H)\setminus \Set{(0,0)}$; since it is not vaguely continuous at $(0,0)$, Proposition~\ref{prop:25} shows that $\chi_{N'}\cdot \mi$ is \emph{not} pluriharmonic. Thus, $\mi'$ is \emph{not} pluriharmonic.
	
	Now, let us prove that $\Fc\mi'$ is supported in  $-\overline\Omega \cup \overline \Omega$. Since $\mi'$ is the limit of $\Re g_h\cdot \beta_{\bDs}$ in $\Mc_\Poisson(\bDs)$ for $h\to 0$, $h\in\Omega$, thanks to Theorem~\ref{teo:4}, it will suffice to show that 
	$\Fc(\Re g_h\cdot \beta_{\bDs})$ is supported in $-\overline\Omega \cup \overline \Omega$ for every $h\in\Omega$. 
	To do that, take holomorphic functions $\theta^{(\eps)}\colon \Ds\to \C$ such that $\abs{\theta^{(\eps)}}\meg \ee^{-C\eps \abs{\,\cdot\,}^{1/4}}$ for every $\eps\in (0,1)$, and such that $\theta^{(\eps)}\to 1$ locally uniformly on $\Ds$ for $\eps\to 0^+$, for a suitable constant $C>0$ (cf.~\cite[Lemma 1.22]{CalziPeloso} or the proof of~\cite[Lemma 8.1]{OgdenVagi}). Then, $( g \theta^{(\eps)})(\,\cdot\,+ i h) \in H^2(\Ds)$ for every $h\in \Omega$ and for every $\eps>0$, so that $g_h \theta^{(\eps)}_h$ is supported in $ \overline\Omega$ by, e.g.,~\cite[Theorem 3.1]{SteinWeiss}, so that $\Fc[\Re(g_h \theta^{(\eps)}_h)]$ is supported in $-\overline\Omega\cup \overline \Omega$. 
	Since $\Re (g_h \theta^{(\eps)}_h)\cdot \beta_{\bDs}$ converges to $\Re g_h\cdot \beta_{\bDs}$ in the space of tempered distributions (by dominated convergence) for $\eps\to 0^+$, this proves that $\Fc(\Re g_h\cdot \beta_{\bDs})$ is supported in $-\overline\Omega \cup \overline \Omega$ for every $h\in\Omega$.
\end{ex}

\section{Clark Measures on  Symmetric Siegel Domains}\label{sec:7}

\begin{deff}
	Take a holomorphic function $\phi\colon \Ds\to \Db $ and $\alpha\in \T$. Then, we define $\mi_\alpha[\phi]$ (or simply $\mi_\alpha$) as the largest positive measure in $\Mc_\Poisson(\bDs)$ such that 
	\[
	\Ps \mi_\alpha\meg \Re \left( \frac{\alpha+\phi}{\alpha-\phi} \right)=\frac{1-\abs{\phi}^2}{\abs{\alpha-\phi}^2}.
	\]
	
	In addition,  we define   $\phi_{0,h}$ as the pointwise limit of $\phi_{y h}$ for $y\to 0^+$ (where it exists), for every $h\in\Omega$ (recall that $\phi_h\colon \bDs\ni (\zeta,z)\mapsto (\zeta,z+ i h)\in \Db$). We shall simply write $\phi_0$ instead of $\phi_{0, e}$.
\end{deff}

Notice that, since $\mi_\alpha$ need not be plurihamonic, $\Ss \mi_\alpha$ need \emph{not} have a positive real part. In particular, it need not be related to $\frac{\alpha+\phi}{\alpha-\phi}$ in any simple way.

In addition, the various $\phi_{0, h}$ may differ as $h$ varies, even though they must agree $\beta_{\bDs}$-almost everywhere (with the `admissible' limit). Since our methods rely heavily on the disintegration of the $\mi_\alpha$, we sometimes need to distinguish   the various $\phi_{0, h}$.

\begin{oss}\label{oss:19}
	By Proposition~\ref{prop:25} and Theorem~\ref{teo:3}, if $\phi^{(\zeta,z)}$ denotes the restriction of $\phi$ to $(\zeta,z)+ \C_+ h$ for some unit vector $h\in \C_+$, then $(\mi_\alpha[\phi^{(\zeta,z)}])_{(\zeta,z)\in \pi_h(\bDs)}$ is a disintegration of $\mi_\alpha[\phi]$.
\end{oss}

\begin{prop}\label{prop:27}
	For every $\alpha\in \T$, the absolutely continuous part of $\mi_\alpha$ (with respect to $\beta_{\bDs}$) has density
	\[
	\Re \left( \frac{\alpha+\phi_0}{\alpha-\phi_0} \right)=\frac{1-\abs{\phi_0}^2}{\abs{\alpha-\phi_0}^2}.
	\]
\end{prop}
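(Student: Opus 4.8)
The plan is to transfer the corresponding statement from the bounded realization $D$ through $\gamma$. Fix $\alpha\in\T$ and set $f\coloneqq\Re\left(\frac{\alpha+\phi}{\alpha-\phi}\right)=\frac{1-\abs{\phi}^2}{\abs{\alpha-\phi}^2}$, a positive pluriharmonic function on $\Ds$, so that $\mi_\alpha$ is precisely the boundary measure of $f$ in the sense of Theorem~\ref{teo:3}. Put $\psi\coloneqq\phi\circ\gamma^{-1}\colon D\to\Db$ and let $\nu$ be the unique positive pluriharmonic measure on $\bD$ with $\Pc\nu=f\circ\gamma^{-1}=\Re\left(\frac{\alpha+\psi}{\alpha-\psi}\right)$. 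By Theorem~\ref{teo:3}, $\Gamma(\mi_\alpha)=\chi_{\gamma(\bDs)}\cdot\nu$; since $\Gamma(\mi)=\gamma_*(\Ps((0,i e),\,\cdot\,)\cdot\mi)$ and $\gamma$ is a homeomorphism of $\bDs$ onto $\gamma(\bDs)=\bD\setminus N$ (with $N\coloneqq\bD\setminus\gamma(\bDs)$), this means $\mi_\alpha=\Ps((0,i e),\,\cdot\,)^{-1}\cdot\gamma^{-1}_*(\chi_{\gamma(\bDs)}\cdot\nu)$.

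The first and principal step is the analogous identity on $\bD$: the absolutely continuous part of $\nu$ with respect to $\beta_{\bD}$ has density $\frac{1-\abs{\psi^*}^2}{\abs{\alpha-\psi^*}^2}$, where $\psi^*$ denotes the admissible (restricted) boundary limit of $\psi$. Indeed $\psi\in H^\infty(D)$, so $\psi^*$ exists $\beta_{\bD}$-almost everywhere by the Fatou-type theorem for bounded symmetric domains, whence $\Pc\nu$ has admissible limit $\Re\left(\frac{\alpha+\psi^*}{\alpha-\psi^*}\right)=\frac{1-\abs{\psi^*}^2}{\abs{\alpha-\psi^*}^2}$ $\beta_{\bD}$-almost everywhere (this is well defined almost everywhere, since $\Set{\psi^*=\alpha}$ is $\beta_{\bD}$-negligible: otherwise $\psi\equiv\alpha$ by the boundary uniqueness theorem, contradicting $\psi(D)\subseteq\Db$). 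On the other hand, the Poisson--Szeg\H o integral of the positive measure $\nu$ has admissible limit equal $\beta_{\bD}$-almost everywhere to the Radon--Nikodym derivative of $\nu_{\mathrm{ac}}$ (the Fatou theorem for Poisson--Szeg\H o integrals of positive measures; cf.~\cite{Calzi} and the references therein). Comparing the two expressions for this limit gives $\nu_{\mathrm{ac}}=\frac{1-\abs{\psi^*}^2}{\abs{\alpha-\psi^*}^2}\cdot\beta_{\bD}$.

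I would then transfer this to $\bDs$. The map $\gamma$ restricts to a real-analytic diffeomorphism of $\bDs$ onto the open subset $\gamma(\bDs)$ of $\bD$, and both $\beta_{\bDs}$ and $\beta_{\bD}|_{\gamma(\bDs)}$ are smooth (Hausdorff) measures on the respective manifolds; hence push-forward by $\gamma^{-1}$ transports the Lebesgue decomposition, and so does multiplication by the continuous positive function $\Ps((0,i e),\,\cdot\,)^{-1}$. Thus $(\mi_\alpha)_{\mathrm{ac}}=\Ps((0,i e),\,\cdot\,)^{-1}\cdot\gamma^{-1}_*(\chi_{\gamma(\bDs)}\cdot\nu_{\mathrm{ac}})$. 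Using $\gamma^{-1}_*(g\cdot\mu)=(g\circ\gamma)\cdot\gamma^{-1}_*\mu$ and Remark~\ref{oss:7}, which gives $\gamma^{-1}_*(\beta_{\bD}|_{\gamma(\bDs)})=\Ps((0,i e),\,\cdot\,)\cdot\beta_{\bDs}$, the factors $\Ps((0,i e),\,\cdot\,)^{\pm1}$ cancel, so
\[
(\mi_\alpha)_{\mathrm{ac}}=\left(\frac{1-\abs{\psi^*}^2}{\abs{\alpha-\psi^*}^2}\circ\gamma\right)\cdot\beta_{\bDs}.
\]

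It then remains to identify $\psi^*\circ\gamma$ with $\phi_0$ $\beta_{\bDs}$-almost everywhere. By the projective-device machinery of Section~\ref{sec:4} (Lemma~\ref{lem:8} and Proposition~\ref{prop:28}), the image under $\gamma$ of a vertical ray $\Set{(\zeta,z+ i y e)\colon y>0}$ from a point $(\zeta,z)\in\bDs$ is a special restricted curve in $D$ ending at $\gamma(\zeta,z)$, so $\lim_{y\to 0^+}\phi(\zeta,z+ i y e)$ equals the admissible limit $\psi^*(\gamma(\zeta,z))$ whenever the latter exists; since $\gamma^{-1}$ carries $\beta_{\bD}$-negligible sets to $\beta_{\bDs}$-negligible sets (again by Remark~\ref{oss:7}), this happens for $\beta_{\bDs}$-almost every $(\zeta,z)$. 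Hence $\phi_0=\psi^*\circ\gamma$ $\beta_{\bDs}$-almost everywhere and the displayed formula becomes $(\mi_\alpha)_{\mathrm{ac}}=\frac{1-\abs{\phi_0}^2}{\abs{\alpha-\phi_0}^2}\cdot\beta_{\bDs}$, as desired. The hard part will be the precise form of the first step — that the admissible limit of the Poisson--Szeg\H o integral of $\nu$ agrees almost everywhere with the density of $\nu_{\mathrm{ac}}$ — together with the matching of \emph{vertical} boundary values on $\bDs$ with \emph{admissible} boundary values on $\bD$ on a set of full measure; it is this global input, rather than a direct disintegration via Remark~\ref{oss:19} (which would have to prevent singular slice-contributions from reassembling into an absolutely continuous measure), that makes the argument work.
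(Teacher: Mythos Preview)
Your argument is correct and follows precisely the route the paper takes: the paper's proof is the one-line citation ``This follows from~\cite[Proposition 4.6]{Calzi}, Remark~\ref{oss:7} and Theorem~\ref{teo:3}'', and you have simply unpacked it --- \cite[Proposition 4.6]{Calzi} is exactly your ``first and principal step'' on $\bD$, while Remark~\ref{oss:7} and Theorem~\ref{teo:3} supply the transfer via $\Gamma$.

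One small imprecision: your appeal to Lemma~\ref{lem:8} and Proposition~\ref{prop:28} to show that the $\gamma$-image of a vertical ray at $(\zeta,z)\in\bDs$ is a special restricted curve at $\gamma(\zeta,z)$ is not quite what those statements provide (they concern abstract projection devices at a fixed tripotent, not the behaviour of the Cayley transform along vertical rays at an arbitrary boundary point). The identity $\phi_0=\psi^*\circ\gamma$ $\beta_{\bDs}$-a.e.\ is nonetheless standard and is exactly what the paper records just before the statement (``the various $\phi_{0,h}$ \dots\ must agree $\beta_{\bDs}$-almost everywhere (with the `admissible' limit)''); a cleaner justification is that $\phi\in H^\infty(\Ds)$ gives $\phi=\Ps(\phi_0\cdot\beta_{\bDs})$, whence $\Gamma(\phi_0\cdot\beta_{\bDs})=(\phi_0\circ\gamma^{-1})\cdot\beta_{\bD}$ has Poisson integral $\psi$, and comparison with $\psi^*\cdot\beta_{\bD}$ forces $\phi_0\circ\gamma^{-1}=\psi^*$ $\beta_{\bD}$-a.e.
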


Notice that, in this case, there is no need to distinguish between the various $\phi_{0, h}$, since they all agree $\beta_{\bDs}$-almost everywhere.

In particular, this shows that $\phi$ is inner if and only if $\mi_\alpha$ is (not necessarily plurihamonic, but) singular with respect to $\beta_{\bDs}$ for some (equivalently, every) $\alpha\in \T$. Conversely, if $\mi$ is a positive \emph{pluriharmonic} measure which is singular with respect to $\beta_{\bDs}$, then $\psi\coloneqq \alpha\frac{\Ss \mi-1}{\Ss \mi+1}$  is a holomorphic function from $\Ds$ into $\Db$ such that $\Re\left( \frac{\alpha+ \psi}{\alpha-\psi} \right)=\Re \Ss \mi=\Ps \mi$, so that $\mi=\mi_\alpha[\psi]$. Clearly, $\psi$ is inner if and only if $\mi$ is singular.

\begin{proof}
	This follows from~\cite[Proposition 4.6]{Calzi}, Remark~\ref{oss:7} and Theorem~\ref{teo:3}.
\end{proof}

\begin{prop}\label{prop:26}
	For every $\alpha\in \T$, the singular part $\sigma_\alpha$ of $\mi_\alpha$ is concentrated in $\phi_{0,h}^{-1}(\alpha)$ for every $h\in \Omega$. In particular, if $\alpha'\in \T$ and $\alpha\neq \alpha'$, then $\sigma_\alpha$ and $\sigma_{\alpha'}$ are singular with respect to each other.
\end{prop}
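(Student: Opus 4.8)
The plan is to fix $\alpha\in\T$ and $h\in\Omega$ and to reduce the assertion to a one‑dimensional statement by disintegrating $\mi_\alpha[\phi]$ along the $h$‑direction. By Remark~\ref{oss:19} and Proposition~\ref{prop:25}, the family $(\mi_\alpha[\phi^{(\zeta,z)}])_{(\zeta,z)\in\pi_h(\bDs)}$, where $\phi^{(\zeta,z)}$ is the restriction of $\phi$ to $(\zeta,z)+\C_+ h$, is a disintegration of $\mi_\alpha[\phi]$ relative to $\beta_h$; identifying each slice with a measure on the line $(\zeta,z)+\R h\subseteq\bDs$. In parallel, Fubini's theorem shows that $\beta_{\bDs}$ admits a disintegration relative to $\beta_h$ whose fibre measures are a fixed constant multiple of the $1$‑dimensional Hausdorff measure on the same lines. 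Thus it suffices to understand, slice by slice, where the singular part (with respect to $\Hc^1$) of $\mi_\alpha[\phi^{(\zeta,z)}]$ lives and what its absolutely continuous part is, and then to integrate.

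So the first step is the one‑dimensional version. Let $\psi\colon\C_+\to\Db$ be holomorphic and let $\nu\in\Mc_\Poisson(\R,\C_+)$ be the largest positive measure with $\Ps_\R\nu\meg\Re\tfrac{\alpha+\psi}{\alpha-\psi}$, with Lebesgue decomposition $\nu=w\cdot\Hc^1+\sigma$, $\sigma\perp\Hc^1$. By the Nevanlinna representation (Theorem~\ref{teo:2} applied to $\C_+$) there is $a\Meg0$ with $\Re\tfrac{\alpha+\psi}{\alpha-\psi}=a\,\Im(\,\cdot\,)+\Ps_\R\nu$, so that $\Re\tfrac{\alpha+\psi}{\alpha-\psi}\Meg\Ps_\R\sigma$ on $\C_+$. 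At $\sigma$‑almost every $x_0$ one has $\Ps_\R\sigma(x_0+iy)\to+\infty$ as $y\to0^+$ (the standard fact that the symmetric derivative of an $\Hc^1$‑singular measure is $+\infty$ almost everywhere with respect to that measure), hence $\Re F(x_0+iy)\to+\infty$ for $F\coloneqq\tfrac{\alpha+\psi}{\alpha-\psi}\colon\C_+\to\C_+$; since $\abs{F}\Meg\Re F$, this forces $F(x_0+iy)\to\infty$, and therefore $\psi(x_0+iy)=\alpha\tfrac{F(x_0+iy)-1}{F(x_0+iy)+1}\to\alpha$. Thus $\sigma$ is concentrated on $\Set{x\in\R\colon\lim_{y\to0^+}\psi(x+iy)=\alpha}$, while the density of the absolutely continuous part of $\nu$ is $\tfrac{1-\abs{\psi_0}^2}{\abs{\alpha-\psi_0}^2}$ by Proposition~\ref{prop:27} (here $\psi_0$ denotes the vertical boundary value of $\psi$).

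I would then transport this to the fibres. For fixed $(\zeta,z)\in\pi_h(\bDs)$, applying the previous step to $\psi=\phi^{(\zeta,z)}$ and using that $\lim_{y\to0^+}\phi^{(\zeta,z)}(x+iy)=\lim_{y\to0^+}\phi(\zeta,(z+xh)+iyh)=\phi_{0,h}(\zeta,z+xh)$ whenever either side exists, one gets that the singular part of $\mi_\alpha[\phi^{(\zeta,z)}]$ is carried by the trace of $\phi_{0,h}^{-1}(\alpha)$ on $(\zeta,z)+\R h$ and is $\Hc^1$‑negligible there, while its absolutely continuous part has density $x\mapsto\tfrac{1-\abs{\phi_{0,h}(\zeta,z+xh)}^2}{\abs{\alpha-\phi_{0,h}(\zeta,z+xh)}^2}$. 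Integrating the disintegration over $\beta_h$, the absolutely continuous pieces assemble into $\tfrac{1-\abs{\phi_{0,h}}^2}{\abs{\alpha-\phi_{0,h}}^2}\cdot\beta_{\bDs}$, which is $\ll\beta_{\bDs}$, whereas the singular pieces assemble into a measure carried by a fibrewise $\Hc^1$‑null union, hence $\beta_{\bDs}$‑null by Fubini, and therefore singular. By uniqueness of the Lebesgue decomposition this second measure is exactly $\sigma_\alpha$, and it is concentrated on $\phi_{0,h}^{-1}(\alpha)$. Since $h\in\Omega$ was arbitrary the first assertion follows, and the last one is immediate since $\phi_{0,h}^{-1}(\alpha)$ and $\phi_{0,h}^{-1}(\alpha')$ are disjoint for $\alpha\neq\alpha'$.

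The step I expect to be the main obstacle is the bookkeeping in this last paragraph: one must check that the fibrewise Lebesgue decompositions depend measurably on $(\zeta,z)$ (which should follow from the tight continuity of $(\zeta,z)\mapsto\mi_\alpha[\phi^{(\zeta,z)}]$ in Proposition~\ref{prop:25} together with the Borel character of $\phi_{0,h}$), and that gluing the fibrewise singular parts genuinely reproduces the singular part of $\mi_\alpha[\phi]$, rather than merely some singular measure $\meg\mi_\alpha[\phi]$ — this is where the Fubini argument identifying the carrier set, and the uniqueness of the Lebesgue decomposition, do the work. The one‑dimensional Clark‑type statement of the second paragraph, by contrast, is short and essentially classical.
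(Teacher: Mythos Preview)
Your approach is essentially the same as the paper's: prove the one-dimensional case first, then pass to the general case by disintegration along the $h$-direction via Remark~\ref{oss:19}. The paper disposes of the case $\Ds=\C_+$ by citing a known Clark-measure result, while you give a short direct argument (singular part forces $\Ps_\R\sigma\to+\infty$ radially $\sigma$-a.e., hence $F\to\infty$, hence $\psi\to\alpha$); both are valid. Your flagged ``bookkeeping'' obstacle---that the fibrewise Lebesgue decompositions glue to the global one---is precisely the content of Remark~\ref{oss:3}, so no extra work is needed there.
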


In the one-dimensional case there is a nice relation between the Hardy--Littlewood maximal function on $\R$ and the non-tangential `Poisson' maximal function (that is, the non-tangential maximal function of the harmonic extension to $\C_+$) of every $\mi\in \Mc_\Poisson(\R,\C_+)$. This guarantees that the singular part of $\mi$ is concentrated where the `Poisson' maximal function is infinite, whence the above result in this case. 
In the general case, though, it is unclear whether the (restricted admissible) `Poisson--Szeg\H o' maximal function of the elements of $\Mc_\Poisson(\bDs)$ should be related in a simple yet quantitative way to the differentiation of measures, so that the above result is proved by disintegration instead (and is consequently considerably weak). 

\begin{proof}
	When $\Ds=\C_+$, this follows from~\cite[Corollary 9.1.24]{CimaMathesonRoss}, Remark~\ref{oss:7} and Theorem~\ref{teo:3}, together with~\cite[Theorem 4.1]{MackeyMellon}. The general case then follows by disintegration, thanks to Remark~\ref{oss:19}.
\end{proof}

\begin{prop}\label{prop:31}
	The mapping $\alpha \mapsto \mi_\alpha$ is vaguely continuous, and
	\[
	\int_\T \mi_\alpha \,\dd\beta_\T(\alpha)=\beta_{\bDs}.
	\]
	If, in addition, $\phi$ is inner, then $\beta_\T$ is a pseudo-image measure of $\beta_{\bDs}$ under $\phi_0$, and $(\mi_\alpha)$ is a disintegration of $\beta_{\bDs}$ relative to $\beta_\T$.
\end{prop}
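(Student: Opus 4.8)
The plan is to push everything down to the bounded realization $D$ through $\gamma$ and to appeal to the (essentially classical) theory of Clark measures on $\bD$. Set $\psi\coloneqq\phi\circ\gamma^{-1}\colon D\to\Db$, and let $\mu_\alpha[\psi]$ be the unique positive pluriharmonic measure on $\bD$ with $\Pc\mu_\alpha[\psi]=\frac{1-\abs\psi^2}{\abs{\alpha-\psi}^2}=\Re\frac{\alpha+\psi}{\alpha-\psi}$ (Proposition~\ref{prop:5}). Applying Theorem~\ref{teo:3} to the positive pluriharmonic function $f\coloneqq\Re\frac{\alpha+\phi}{\alpha-\phi}$ on $\Ds$ — of which $\mi_\alpha$ is, by definition, the largest positive measure in $\Mc_\Poisson(\bDs)$ whose Poisson integral is majorized by $f$ — yields $\Gamma(\mi_\alpha)=\chi_{\gamma(\bDs)}\cdot\mu_\alpha[\psi]=\chi_{\bD\setminus N}\cdot\mu_\alpha[\psi]$, where $N\coloneqq\bD\setminus\gamma(\bDs)$ is a proper (real) algebraic subset of $\bD$, hence $\beta_{\bD}(N)=0$. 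I shall also use that $\{\mi_\alpha\colon\alpha\in\T\}$ is bounded in $\Mc_\Poisson(\bDs)$, since $\Ps\mi_\alpha(0,i e)\meg f(0,i e)=\frac{1-\abs{\phi(0,i e)}^2}{\abs{\alpha-\phi(0,i e)}^2}$ is bounded in $\alpha\in\T$.

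For vague continuity, fix $g\in C_c(\bDs)$ and observe that the function $G\coloneqq\bigl(g/\Ps((0,i e),\,\cdot\,)\bigr)\circ\gamma^{-1}$, extended by $0$ on $N$, belongs to $C(\bD)$: indeed $\gamma^{-1}$ maps $\bD\setminus N$ homeomorphically onto $\bDs$, $\Ps((0,i e),\,\cdot\,)$ is continuous and nowhere vanishing on $\bDs$ (Proposition~\ref{prop:1bis}), and the support of $g$ is compact, so $G$ has compact support inside the open set $\bD\setminus N$. From $\Gamma(\mi_\alpha)=\chi_{\bD\setminus N}\cdot\mu_\alpha[\psi]$ and $(G\circ\gamma)\,\Ps((0,i e),\,\cdot\,)=g$ one gets $\int_{\bDs}g\,\dd\mi_\alpha=\int_{\bD}G\,\dd\mu_\alpha[\psi]$ (the set $N$ being irrelevant since $G$ vanishes there). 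Since $\alpha\mapsto\mu_\alpha[\psi]$ is vaguely continuous on the compact space $\bD$ — the $\mu_\alpha[\psi]$ are uniformly bounded, positive pluriharmonic measures form a vaguely closed set (Proposition~\ref{prop:4}), and $\alpha\mapsto\Pc\mu_\alpha[\psi](z)$ is continuous for each $z$, so Proposition~\ref{prop:5} identifies every vague limit — the map $\alpha\mapsto\int_{\bDs}g\,\dd\mi_\alpha$ is continuous, whence $\alpha\mapsto\mi_\alpha$ is vaguely continuous.

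For the integral formula, note that $\int_\T\Re\frac{\alpha+w}{\alpha-w}\,\dd\beta_\T(\alpha)=1$ for every $w\in\Db$, so $\Pc\bigl(\int_\T\mu_\alpha[\psi]\,\dd\beta_\T\bigr)\equiv1=\Pc\beta_{\bD}$ and hence $\int_\T\mu_\alpha[\psi]\,\dd\beta_\T=\beta_{\bD}$ (Proposition~\ref{prop:5}). The measure $\int_\T\mi_\alpha\,\dd\beta_\T$ is a well-defined element of $\Mc_\Poisson(\bDs)$ (by boundedness of the family), and since $\Gamma$ commutes with $\beta_\T$-integration (Fubini--Tonelli, the $\mi_\alpha$ being positive) one computes, using $\beta_{\bD}(N)=0$ and $\Gamma(\beta_{\bDs})=\beta_{\bD}$ (Remark~\ref{oss:7}), that $\Gamma\bigl(\int_\T\mi_\alpha\,\dd\beta_\T\bigr)=\chi_{\bD\setminus N}\cdot\int_\T\mu_\alpha[\psi]\,\dd\beta_\T=\chi_{\bD\setminus N}\cdot\beta_{\bD}=\beta_{\bD}=\Gamma(\beta_{\bDs})$; injectivity of $\Gamma$ (Corollary~\ref{cor:4}) then gives $\int_\T\mi_\alpha\,\dd\beta_\T=\beta_{\bDs}$. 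Finally assume $\phi$ inner. Since $\phi=\psi\circ\gamma$ and restricted (non-tangential) boundary approach transfers through the birational biholomorphism $\gamma$, $\psi$ is inner and $\phi_0=\psi_0\circ\gamma$ holds $\beta_{\bDs}$-almost everywhere; as $\beta_{\bD}(N)=0$ and $\gamma$ identifies $\beta_{\bDs}$-null sets with the $\beta_{\bD}$-null subsets of $\bD\setminus N$, the classical fact that $\beta_\T$ is a pseudo-image measure of $\beta_{\bD}$ under $\psi_0$ (cf.~\cite{Calzi}) transfers to: $\beta_\T$ is a pseudo-image measure of $\beta_{\bDs}$ under $\phi_0$. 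Moreover, by Proposition~\ref{prop:27} the absolutely continuous part of $\mi_\alpha$ has density $\frac{1-\abs{\phi_0}^2}{\abs{\alpha-\phi_0}^2}=0$ ($\beta_{\bDs}$-a.e.), so $\mi_\alpha$ is singular and, by Proposition~\ref{prop:26} taken with $h=e$, concentrated in $\phi_0^{-1}(\alpha)$ for every $\alpha\in\T$. Together with the vague continuity and $\int_\T\mi_\alpha\,\dd\beta_\T=\beta_{\bDs}$ already established, this is precisely the statement that $(\mi_\alpha)$ is a disintegration of $\beta_{\bDs}$ relative to $\beta_\T$ (cf.~Appendix~\ref{sec:app}).

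The main obstacle is the bookkeeping around $N$. On $\bD$ the measures $\mu_\alpha[\psi]$ may genuinely carry mass on $N$ (this is the source of the phenomena in Examples~\ref{ex:1bis} and~\ref{ex:6}, and $N$ need not be $\Hc^{\dim\bD-1}$-negligible when $D$ has tube-type factors), and $\alpha\mapsto\mu_\alpha[\psi]$ need not remain inside $\{\nu\colon\nu(N)=0\}$ in the limit; the point of testing against $g\in C_c(\bDs)$, equivalently against $G\in C(\bD)$ vanishing on $N$, is exactly that this difficulty then disappears. One must also be careful that the various $\phi_{0,h}$ differ only on a $\beta_{\bDs}$-null set, which is why Proposition~\ref{prop:26} may be invoked with $h=e$ to obtain concentration on $\phi_0^{-1}(\alpha)$ rather than merely on $\phi_{0,h}^{-1}(\alpha)$.
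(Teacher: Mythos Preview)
Your proof is correct and follows the same route as the paper's: transfer to the bounded realization via $\Gamma$ and Theorem~\ref{teo:3}, invoke the corresponding Clark-measure facts on $\bD$ (the paper cites~\cite[Theorem 4.9]{Calzi}), pull back through Corollary~\ref{cor:4} and Remark~\ref{oss:7}, and finish the inner case with Propositions~\ref{prop:27} and~\ref{prop:26}. Your argument is considerably more detailed than the paper's terse proof; in particular your observation that one should test against $g\in C_c(\bDs)$ (equivalently $G\in C(\bD)$ vanishing on $N$) so that the possible mass of $\mu_\alpha[\psi]$ on $N$ becomes irrelevant is exactly the right way to extract vague continuity of $\alpha\mapsto\mi_\alpha$ from that of $\alpha\mapsto\mu_\alpha[\psi]$, since $\alpha\mapsto\chi_{\bD\setminus N}\cdot\mu_\alpha[\psi]$ need not itself be vaguely continuous. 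The only step that could use a line of justification is $\phi_0=\psi_0\circ\gamma$ $\beta_{\bDs}$-a.e.\ (different boundary approaches), but for bounded holomorphic functions this is standard Fatou-type theory.
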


Of course, $\beta_\T$ cannot be the image measure of $\beta_{\bDs}$ under $\phi$.

\begin{proof}
	The first assertion follows from~\cite[Theorem 4.9]{Calzi} and Theorem~\ref{teo:3}. The second assertion follows again from~\cite[Theorem 4.9]{Calzi} and Theorem~\ref{teo:3}, but this time one has to invoke also Corollary~\ref{cor:4} and Remark~\ref{oss:7}. The third assertion is then a consequence of Propositions~\ref{prop:27} and~\ref{prop:26}.
\end{proof}

\begin{cor}\label{cor:8}
	The set of $\alpha\in \T$ such that $\Ps \mi_\alpha \neq \frac{1-\abs{\phi}^2}{\abs{\alpha- \phi}^2}$ is $\beta_\T$-negligible. 
\end{cor}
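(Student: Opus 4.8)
The plan is to integrate the defining inequality $\Ps\mi_\alpha\meg \frac{1-\abs{\phi}^2}{\abs{\alpha-\phi}^2}$ over $\alpha\in\T$, to observe that the two sides have the same $\beta_\T$-integral at each point of $\Ds$, and then to pass from ``for every $(\zeta,z)$, for $\beta_\T$-almost every $\alpha$'' to ``for $\beta_\T$-almost every $\alpha$, for every $(\zeta,z)$'' by a density argument.

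\emph{Step 1: the pointwise identity.} Fix $(\zeta,z)\in\Ds$ and set $w:=\phi(\zeta,z)\in\Db$. On the one hand, $\alpha\mapsto \frac{1-\abs{w}^2}{\abs{\alpha-w}^2}$ is the (disc) Poisson kernel tested against $\alpha\in\T$, so $\int_\T \frac{1-\abs{\phi(\zeta,z)}^2}{\abs{\alpha-\phi(\zeta,z)}^2}\,\dd\beta_\T(\alpha)=1$. On the other hand, $\Ps((\zeta,z),\,\cdot\,)$ is a bounded positive continuous function on $\bDs$, so the identity $\int_\T\mi_\alpha\,\dd\beta_\T(\alpha)=\beta_{\bDs}$ of Proposition~\ref{prop:31}, read as an equality of positive measures tested against positive functions, together with Proposition~\ref{prop:1bis}, gives $\int_\T(\Ps\mi_\alpha)(\zeta,z)\,\dd\beta_\T(\alpha)=(\Ps\beta_{\bDs})(\zeta,z)=1$; if one prefers to use only the vague integral, one approximates $\Ps((\zeta,z),\,\cdot\,)$ from below by functions in $C_c(\bDs)$ and invokes monotone convergence and the positivity of the $\mi_\alpha$. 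Now $g_\alpha(\zeta,z):=\frac{1-\abs{\phi(\zeta,z)}^2}{\abs{\alpha-\phi(\zeta,z)}^2}-(\Ps\mi_\alpha)(\zeta,z)\Meg 0$ by the definition of $\mi_\alpha$, and $\alpha\mapsto g_\alpha(\zeta,z)$ is Borel (it is the difference of a continuous function and a lower semicontinuous one, the latter because by the vague continuity of $\alpha\mapsto\mi_\alpha$ the maps $\alpha\mapsto\int f\,\dd\mi_\alpha$, $f\in C_c(\bDs)^+$, are continuous and $(\Ps\mi_\alpha)(\zeta,z)$ is a sup of such maps). Hence $\int_\T g_\alpha(\zeta,z)\,\dd\beta_\T(\alpha)=0$, so $g_\alpha(\zeta,z)=0$ for $\beta_\T$-almost every $\alpha$.

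\emph{Step 2: removing the dependence on $(\zeta,z)$.} Choose a countable dense subset $\Set{(\zeta_k,z_k)\colon k\in\N}$ of $\Ds$. By Step~1 and countable subadditivity there is a $\beta_\T$-negligible set $N_0\subseteq\T$ such that $g_\alpha(\zeta_k,z_k)=0$ for all $k\in\N$ whenever $\alpha\in\T\setminus N_0$. For each fixed $\alpha$, both $(\zeta,z)\mapsto\frac{1-\abs{\phi(\zeta,z)}^2}{\abs{\alpha-\phi(\zeta,z)}^2}$ and $(\zeta,z)\mapsto(\Ps\mi_\alpha)(\zeta,z)$ are continuous on $\Ds$: the former because $\phi$ is holomorphic with $\abs{\phi}<1$ on $\Ds$, the latter because $\Ps\mi_\alpha=\Pc(\Gamma(\mi_\alpha))\circ\gamma$ by Remark~\ref{oss:9}, with $\Gamma(\mi_\alpha)\in\Mc^1(\bD)$ (Corollary~\ref{cor:4}) and $\Pc(\Gamma(\mi_\alpha))$ continuous on $D$ by dominated convergence, $\Pc$ being continuous on the compact set $K'\times\bD$ for every compact $K'\subseteq D$. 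Two continuous functions on $\Ds$ that agree on the dense set $\Set{(\zeta_k,z_k)\colon k\in\N}$ agree everywhere, so for every $\alpha\in\T\setminus N_0$ one has $\Ps\mi_\alpha=\frac{1-\abs{\phi}^2}{\abs{\alpha-\phi}^2}$ on all of $\Ds$, which is the claim.

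The only point that requires a little care is the interchange in Step~1 of the integration over $\alpha$ with the Poisson integration against $\mi_\alpha$; as indicated, this is immediate once $\int_\T\mi_\alpha\,\dd\beta_\T=\beta_{\bDs}$ is interpreted as an equality of positive measures, or else follows by a monotone-convergence approximation using the positivity of the $\mi_\alpha$. Everything else is routine.
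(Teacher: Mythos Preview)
Your proof is correct and shares the paper's core idea: use Proposition~\ref{prop:31} to see that $\int_\T(\Ps\mi_\alpha)(\zeta,z)\,\dd\beta_\T(\alpha)=1=\int_\T\frac{1-\abs{\phi(\zeta,z)}^2}{\abs{\alpha-\phi(\zeta,z)}^2}\,\dd\beta_\T(\alpha)$, and conclude from $g_\alpha\Meg 0$ that $g_\alpha=0$ for $\beta_\T$-almost every $\alpha$. The only difference lies in how the quantifier exchange is carried out. You fix a countable dense set in $\Ds$ and use continuity of both terms in $(\zeta,z)$ to upgrade ``for each point, for almost every $\alpha$'' to ``for almost every $\alpha$, everywhere.'' The paper instead observes, via Theorem~\ref{teo:3}, that $g_\alpha=\Pc(\chi_N\cdot\nu_\alpha)\circ\gamma$ for a suitable positive measure on $\bD$; since $\Pc$ of a non-zero positive measure is strictly positive on all of $D$ (Proposition~\ref{prop:1}), $g_\alpha$ either vanishes identically or is strictly positive everywhere, so it suffices to evaluate at the single point $(0,i e)$. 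The paper's route is shorter and avoids the separability/continuity argument; yours is more self-contained in that it does not appeal to the structural description of $g_\alpha$ from Theorem~\ref{teo:3}.
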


In particular, $\mi_\alpha$ must be pluriharmonic for $\beta_\T$-almost every $\alpha\in \T$.

As a consequence of Theorem~\ref{teo:2} and its proof, if $\Ds$ is a product of irreducible domains which are not of tube type and of $\C_+^k$ for some $k\in\N$, then $\Ps \mi_\alpha \neq \frac{1-\abs{\phi}^2}{\abs{\alpha- \phi}^2}$ for at most $k$ $\alpha\in \T$.

\begin{proof}
	Denote with $N'$ the set defined in the statement. Observe that, if $\alpha \in N'$, then $(\Ps\mi_\alpha)(\zeta,z)<\frac{1-\abs{\phi(\zeta,z)}^2}{\abs{\alpha- \phi(\zeta,z)}^2} $ for every $(\zeta,z)\in \Ds$, thanks to Theorem~\ref{teo:3} and to the fact that $\Pc \nu$ is everywhere strictly positive on $D$ if $\nu$ is a non-zero positive measure on $\bD$ (cf.~Proposition~\ref{prop:1}). In addition, Proposition~\ref{prop:31} shows that
	\[
	1= (\Ps  \beta_{\bDs})(0 , i e)=\int_\T (\Ps \mi_\alpha)(0, i e) \,\dd \beta_\T(\alpha)\meg \int_\T \frac{1-\abs{\phi(0, i e)}^2}{\abs{\alpha- \phi(0, i e)}^2}\,\dd \beta_\T(\alpha)=1, 
	\]
	so that $N'$ is $\beta_\T$-negligible.
\end{proof}

\begin{prop}\label{prop:29}
	Let $\psi\colon \Db \to \Db $ be a holomorphic map. Then, for every $\alpha\in \T$,
	\[
	\mi_{\alpha}[\psi\circ \phi]=\int_{ \T} \mi_{\alpha'}[\phi]\,\dd \mi_\alpha[\psi](\alpha').
	\]
\end{prop}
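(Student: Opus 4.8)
The plan is to transfer the problem to the bounded realization $D$, where the analogous composition formula for Clark measures is classical (it holds on the disc, and on any bounded symmetric domain; see~\cite{Calzi}), and then pull it back through the generalized Cayley transform $\gamma$. First I would recall that, by Theorem~\ref{teo:3}, for every holomorphic $g\colon \Ds\to\Db$ and every $\beta\in\T$, the Clark measure $\mi_\beta[g]$ is the unique element of $\Mc_\Poisson(\bDs)$ with $\Gamma(\mi_\beta[g])=\chi_{\gamma(\bDs)}\cdot\nu_\beta[g]$, where $\nu_\beta[g]$ is the largest positive (pluriharmonic) measure on $\bD$ with $\Pc\nu_\beta[g]\meg \Re\frac{\beta+ g\circ\gamma^{-1}}{\beta- g\circ\gamma^{-1}}$; equivalently $\nu_\beta[g]$ is the Clark measure of the holomorphic map $g\circ\gamma^{-1}\colon D\to\Db$ at the point $\beta\in\T$. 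Since $\gamma$ is a birational biholomorphism and $\Gamma$ is the (continuous, injective) map of Corollary~\ref{cor:4}, it suffices to prove the identity $\Gamma\!\left(\mi_\alpha[\psi\circ\phi]\right)=\Gamma\!\left(\int_\T \mi_{\alpha'}[\phi]\,\dd\mi_\alpha[\psi](\alpha')\right)$, i.e.\ the corresponding identity for the pushed-forward measures on $\bD$.

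Next I would carry out the reduction to $D$ explicitly. Write $\Phi\coloneqq \phi\circ\gamma^{-1}\colon D\to\Db$ and note $(\psi\circ\phi)\circ\gamma^{-1}=\psi\circ\Phi$. The classical Clark/Aleksandrov composition identity on the bounded symmetric domain $D$ (the holomorphic map $\psi$ being on the target $\Db$) gives
\[
\nu_\alpha[\psi\circ\Phi]=\int_\T \nu_{\alpha'}[\Phi]\,\dd\mi_\alpha[\psi](\alpha'),
\]
where $\mi_\alpha[\psi]$ is the ordinary Clark measure on $\T$ of $\psi\colon\Db\to\Db$. The only subtlety is that the Clark measures on $\T$ appearing here are genuinely the classical ones; but $\T=\partial\Db$ and the Siegel realization of $\Db$ is just $\C_+$ (cf.~Remark~\ref{oss:10}), so $\mi_\alpha[\psi]$ as defined in our Definition coincides, via the Cayley transform $\gamma_0$, with the classical Clark measure on $\T$. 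One must then intersect both sides of the displayed identity with $\bD\setminus N$ (where $N=\bD\setminus\gamma(\bDs)$). On the left this yields $\Gamma(\mi_\alpha[\psi\circ\phi])$. On the right, since $\mi_\alpha[\psi]$ is a fixed finite measure and $\alpha'\mapsto\nu_{\alpha'}[\Phi]$ is bounded and vaguely continuous, the operation $\nu\mapsto\chi_{\bD\setminus N}\cdot\nu$ commutes with the integral $\int_\T(\cdot)\,\dd\mi_\alpha[\psi]$, so we get $\int_\T\chi_{\bD\setminus N}\cdot\nu_{\alpha'}[\Phi]\,\dd\mi_\alpha[\psi](\alpha')=\int_\T\Gamma(\mi_{\alpha'}[\phi])\,\dd\mi_\alpha[\psi](\alpha')$. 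Applying $\Gamma^{-1}$ (valid because the integrand takes values in the image of $\Gamma$ and $\Gamma$ is a homeomorphism onto its image on the positive cone, cf.~Corollary~\ref{cor:4}, and the integral of positive measures concentrated off $N$ is again concentrated off $N$) yields the claimed formula.

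The main obstacle is making rigorous the two measure-theoretic interchanges: (i) that $\alpha'\mapsto\mi_{\alpha'}[\phi]$ is $\mi_\alpha[\psi]$-integrable into $\Mc_\Poisson(\bDs)$ with the stated integral well defined, and (ii) that $\Gamma$ (and $\Gamma^{-1}$ on the relevant subset) commutes with this vector-valued integral. For (i) one uses Proposition~\ref{prop:31} (vague continuity of $\alpha'\mapsto\mi_{\alpha'}$) together with the uniform bound $\int_\T(\Ps\mi_{\alpha'}[\phi])(0,ie)\,\dd\mi_\alpha[\psi](\alpha')\meg \int_\T\frac{1-|\phi(0,ie)|^2}{|\alpha'-\phi(0,ie)|^2}\,\dd\mi_\alpha[\psi](\alpha')=(\Ps\mi_\alpha[\psi\circ\phi])(0,ie)<\infty$ (the last equality by Corollary~\ref{cor:8}-type reasoning / Proposition~\ref{prop:31} applied to $\psi$ and the point $\phi(0,ie)\in\Db$), which shows the integral lands in $\Mc_\Poisson(\bDs)$. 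For (ii) it is cleanest to test against a function $f\in C_\Poisson(\bDs)$: both $\int_{\bDs}f\,\dd\mi_\alpha[\psi\circ\phi]$ and $\int_\T\big(\int_{\bDs}f\,\dd\mi_{\alpha'}[\phi]\big)\dd\mi_\alpha[\psi](\alpha')$ can, via $\Gamma$ and the substitution $g=\big(\tfrac{f}{\Ps((0,ie),\cdot)}\big)\circ\gamma^{-1}$ extended by $0$ on $N$, be rewritten as integrals against $\nu_\alpha[\psi\circ\Phi]$ and $\int_\T\nu_{\alpha'}[\Phi]\,\dd\mi_\alpha[\psi](\alpha')$ respectively, and these agree by the classical identity on $D$ and Fubini. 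By the arbitrariness of $f$ the proof is complete. I would also remark that, just as in the remark following Corollary~\ref{cor:8}, one obtains the refinement: if $\Ds=\Ds_0\times\C_+^k$ with $\Ds_0$ having no tubular factors, then $\mi_\alpha[\psi\circ\phi]$ is pluriharmonic for all but at most countably many $\alpha$, consistently with the formula.
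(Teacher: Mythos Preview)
Your proposal is correct and follows essentially the same approach as the paper: the paper's proof is the one-line ``This follows from~\cite[Proposition 4.10]{Calzi}, using Theorem~\ref{teo:3} and Remark~\ref{oss:9},'' which is precisely your strategy of invoking the composition formula on the bounded realization $D$ and transferring it back through $\Gamma$ via the identification $\Gamma(\mi_\beta[g])=\chi_{\bD\setminus N}\cdot\nu_\beta[g\circ\gamma^{-1}]$. Your write-up simply spells out the measure-theoretic justifications (integrability of $\alpha'\mapsto\mi_{\alpha'}[\phi]$, testing against $f\in C_\Poisson(\bDs)$) that the paper leaves implicit in the citation.
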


\begin{proof}
	This follows from~\cite[Proposition 4.10]{Calzi}, using Theorem~\ref{teo:3} and Remark~\ref{oss:9}.
\end{proof}

\begin{deff}
	We define
	\[
	\Cs_\phi\colon \Ds \times \Ds \ni ((\zeta,z),(\zeta',z'))\mapsto (1-\phi(\zeta,z)\overline{\phi(\zeta',z')})\Cs((\zeta,z),(\zeta',z'))\in \C,
	\]
	so that $\Cs_\phi$ is the reproducing kernel of a Hilbert space $\phi^*(H^2(\Ds))$.\footnote{Cf., e.g.,~\cite[Lemma 3.9]{Timotin}.} We also define, for every $\alpha\in \T$ and for every  $f\in L^2(\mi_\alpha)$,
	\[
	\Cs_{\phi,\alpha}(f)\coloneqq (1-\overline \alpha \phi) \Cs(f\cdot \mi_\alpha)=(1-\overline \alpha \phi) \int_{\bDs} \Cs(\,\cdot\,,(\zeta,z))  f(\zeta,z)\,\dd\mi_\alpha(\zeta,z),
	\]
	and we define $H^2(\mi_\alpha)$ as the closed vector subspace of $L^2(\mi_\alpha)$ generated by the $\Cs((\zeta,z),\,\cdot\,)$, for $(\zeta,z)\in \Ds$.
\end{deff}

Notice that $\Cs_{\phi,\alpha}$ and $H^2(\mi_\alpha)$ are well defined, since $\Cs((\zeta,z),\,\cdot\,)\in L^2(\mi_\alpha)$ for every $(\zeta,z)\in \Ds$ (cf.~Corollary~\ref{cor:7}).  

We shall use similar notation (replacing $\Ds$ with $D$ and $\Cs$ with $\Cc$) in order to describe the analogous objects on the bounded realization $D$ of $\Ds$. The following result shows how one may relate these two settings.

\begin{oss}\label{oss:14}
	The mapping $\Gamma_1\colon f \mapsto (f\circ \gamma) \frac{\Cs(\,\cdot\,,(0, i e))}{\sqrt{\Cs((0, i e),(0,i e))}}$ induces isometries of $H^2(D)$ and $(\phi\circ \gamma^{-1})^*(H^2(D))$ onto $H^2(\Ds)$ and $\phi^*(H^2(\Ds))$, respectively. In addition, the mapping $\Gamma_2\colon f \mapsto  \sqrt{\Cs((0, i e),(0,i e))}\frac{ f\circ  \gamma^{-1}}{\Cs(\,\cdot\,,(0, i e))_0\circ \gamma^{-1}}$ (extended by $0$ to $\bD$) induces an isometry of  $L^2(\mi_\alpha[\phi])$ and $H^2(\mi_\alpha[\phi])$ \emph{into}  $ L^2(\mi_\alpha[\phi\circ \gamma^{-1}])$ and  $ H^2(\mi_\alpha[\phi\circ \gamma^{-1}])$, respectively, such that
	\[
	\Gamma_1\circ\Cc_{\phi\circ \gamma^{-1},\alpha}\circ \Gamma_2=\Cs_{\phi,\alpha}.
	\]
	These isometries are \emph{onto} if  and only if $\Ps \mi_\alpha=\frac{1-\abs{\phi}^2}{\abs{\alpha-\phi}^2} $, hence for $\beta_\T$-almost every $\alpha\in \T$.
\end{oss}

\begin{proof}
	The first assertion follows directly from the relation between the reproducing kernels of $H^2(D)$ and $(\phi\circ \gamma^{-1})^*(H^2(D))$ and $H^2(\Ds)$ and $\phi^*(H^2(\Ds))$, respectively (cf.~Lemma~\ref{lem:7}).  The second assertion then follows by means of Theorem~\ref{teo:3}. The last assertion follows by means of Theorem~\ref{teo:3} and Corollary~\ref{cor:8}.
\end{proof}

Combining Remark~\ref{oss:14} with~\cite[Proposition 4.22]{Calzi}, one gets the following result.

\begin{oss}\label{oss:17}
	Assume that $\Ds=\C_+$ and that $\phi$ is inner. Then, for every $f\in \phi^*(H^2(\C_+))$,
	\[
	(\Cs_{\phi,\alpha} f_0) (w)= f(w)-   \frac{1-\overline \alpha \phi(w)}{w+i} \lim_{y\to \infty}\frac{1-\abs{\phi(i y)}^2 }{\abs{\alpha- \phi(iy)}^2} f(i y).
	\]
	This follows applying Remark~\ref{oss:14} and computing $\Cc_{\phi,\alpha}^{-1}\Gamma_1^{-1}(\Cs_{\phi,\alpha} (f_0)- f)$. 
\end{oss}

\begin{prop}\label{prop:32}
	Take $\alpha\in \T$. Then, $\Cs_{\phi,\alpha}$ induces a partial isometry of $L^2(\mi_\alpha)$ into $\phi^*(H^2(\Ds))$, with kernel $L^2(\mi_\alpha)\ominus H^2(\mi_\alpha)$, so that $\Cs_{\phi,\alpha}(f)=0$ if and only if $\Cs(f\cdot \mi_\alpha)=0$.
	
	If, in addition,  $\Ps \mi_\alpha=\frac{1-\abs{\phi}^2}{\abs{\alpha-\phi}^2}$, then, $\Cs_{\phi,\alpha}$ maps $L^2(\mi_\alpha)$ onto $\phi^*(H^2(\Ds))$ and
	\[
	\Cs_{\phi,\alpha}(\Cs(\,\cdot\,,(\zeta,z)))=\frac{1}{1-\alpha \overline{\phi(\zeta,z)}}\Cs_\phi(\,\cdot\,,(\zeta,z))
	\]
	for every $(\zeta,z)\in \Ds$.
\end{prop}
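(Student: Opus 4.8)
The strategy is to deduce everything from the corresponding result on the bounded realization $D$ of $\Ds$ (cf.~\cite{Calzi}) by transporting it through the isometries $\Gamma_1,\Gamma_2$ of Remark~\ref{oss:14}. Write $\psi\coloneqq\phi\circ\gamma^{-1}\colon D\to\Db$ and $N\coloneqq\bD\setminus\gamma(\bDs)$. First I would record that $\Cs_{\phi,\alpha}(f)$ is a well-defined holomorphic function on $\Ds$ for every $f\in L^2(\mi_\alpha)$: since $\abs{\Cs((\zeta,z),(\zeta',z'))}^2=\Cs((\zeta,z),(\zeta,z))\,\Ps((\zeta,z),(\zeta',z'))$ and $\mi_\alpha\in\Mc_\Poisson(\bDs)$, Corollary~\ref{cor:7} gives $\Cs((\zeta,z),\,\cdot\,)\in L^2(\mi_\alpha)$, so $\Cs(f\cdot\mi_\alpha)$ is defined by Cauchy--Schwarz and holomorphic, while $1-\overline\alpha\phi$ is holomorphic and \emph{nowhere zero} on $\Ds$ because $\abs\phi<1$ there; in particular $\Cs_{\phi,\alpha}(f)=0$ if and only if $\Cs(f\cdot\mi_\alpha)=0$, which settles the last clause of the statement. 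I would also note that $\mi_\alpha[\psi]$ is a positive pluriharmonic measure — it is the unique positive pluriharmonic measure with $\Pc\mi_\alpha[\psi]=\tfrac{1-\abs\psi^2}{\abs{\alpha-\psi}^2}$ by Proposition~\ref{prop:5}, and it is the largest positive measure with Poisson integral $\meg\tfrac{1-\abs\psi^2}{\abs{\alpha-\psi}^2}$ by Proposition~\ref{prop:1}(4) — so the bounded version of the statement applies in full: $\Cc_{\psi,\alpha}$ is a partial isometry of $L^2(\mi_\alpha[\psi])$ \emph{onto} $\psi^*(H^2(D))$ with kernel $L^2(\mi_\alpha[\psi])\ominus H^2(\mi_\alpha[\psi])$, and $\Cc_{\psi,\alpha}(\Cc(\,\cdot\,,z))=\tfrac{1}{1-\alpha\overline{\psi(z)}}\Cc_\psi(\,\cdot\,,z)$ for $z\in D$.

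Next, set $V\coloneqq\Gamma_2(L^2(\mi_\alpha[\phi]))$ and $W\coloneqq\Gamma_2(H^2(\mi_\alpha[\phi]))$, closed subspaces of $L^2(\mi_\alpha[\psi])$ with $W\subseteq H^2(\mi_\alpha[\psi])$ (Remark~\ref{oss:14}). Since $\Gamma_2$ is multiplication by a function that is nowhere zero on $\bD\setminus N$, composed with $\gamma^{-1}$ and extended by $0$ on $N$, its range is exactly $V=\chi_{\bD\setminus N}\cdot L^2(\mi_\alpha[\psi])$, so the orthogonal projection $P_V$ of $L^2(\mi_\alpha[\psi])$ onto $V$ is multiplication by $\chi_{\bD\setminus N}$. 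Using Lemma~\ref{lem:7} one computes $\Gamma_2(\Cs((\zeta,z),\,\cdot\,))=c_{(\zeta,z)}\,P_V\Cc(\gamma(\zeta,z),\,\cdot\,)$ for a nonzero scalar $c_{(\zeta,z)}$ (depending only on $\Cs((\zeta,z),(0,ie))$); since $\gamma$ maps $\Ds$ onto $D$ and $\Gamma_2$ is an isometric isomorphism of $L^2(\mi_\alpha[\phi])$ onto $V$, this forces $W=\overline{P_V\,H^2(\mi_\alpha[\psi])}$. Now if $v\in V\ominus W$ then for every $z\in D$ one has $\langle v,\Cc(z,\,\cdot\,)\rangle=\langle P_Vv,\Cc(z,\,\cdot\,)\rangle=\langle v,P_V\Cc(z,\,\cdot\,)\rangle=0$, whence $v\perp H^2(\mi_\alpha[\psi])$, so $\Cc_{\psi,\alpha}v=0$; together with the fact that $\Cc_{\psi,\alpha}$ is isometric on $W\subseteq H^2(\mi_\alpha[\psi])$, this shows that the restriction of $\Cc_{\psi,\alpha}$ to $V$ is a partial isometry with initial space $W$. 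Since $\Gamma_1$ is a unitary of $\psi^*(H^2(D))$ onto $\phi^*(H^2(\Ds))$ and $\Gamma_2$ is an isometric isomorphism of $L^2(\mi_\alpha[\phi])$ onto $V$ carrying $H^2(\mi_\alpha[\phi])$ onto $W$ (hence $L^2(\mi_\alpha[\phi])\ominus H^2(\mi_\alpha[\phi])$ onto $V\ominus W$), the identity $\Cs_{\phi,\alpha}=\Gamma_1\circ\Cc_{\psi,\alpha}\circ\Gamma_2$ of Remark~\ref{oss:14} yields that $\Cs_{\phi,\alpha}$ is a partial isometry of $L^2(\mi_\alpha)$ into $\phi^*(H^2(\Ds))$ with kernel $L^2(\mi_\alpha)\ominus H^2(\mi_\alpha)$.

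For the second assertion, assume $\Ps\mi_\alpha=\tfrac{1-\abs\phi^2}{\abs{\alpha-\phi}^2}$. By the last sentence of Remark~\ref{oss:14} this is equivalent to $\Gamma_2$ being onto, i.e.\ $V=L^2(\mi_\alpha[\psi])$; then $P_V=\mathrm{id}$, $W=H^2(\mi_\alpha[\psi])$, and $\Cs_{\phi,\alpha}=\Gamma_1\circ\Cc_{\psi,\alpha}\circ\Gamma_2$ is a composition of surjections, hence maps $L^2(\mi_\alpha)$ onto $\phi^*(H^2(\Ds))$. For the kernel formula I would compute $\Cs_{\phi,\alpha}(\Cs(\,\cdot\,,(\zeta,z)))$ by applying in turn $\Gamma_2(\Cs(\,\cdot\,,(\zeta,z)))=c_{(\zeta,z)}\Cc(\,\cdot\,,\gamma(\zeta,z))$, the kernel formula for $\Cc_{\psi,\alpha}$ at $\gamma(\zeta,z)$ (noting $\psi(\gamma(\zeta,z))=\phi(\zeta,z)$, so the factor is $\tfrac{1}{1-\alpha\overline{\phi(\zeta,z)}}$), and then $\Gamma_1$; a short bookkeeping with Lemma~\ref{lem:7}, which expresses $\Cc_\psi$ at $(\gamma(\zeta,z),\gamma(\zeta',z'))$ in terms of $\Cs_\phi$ at $((\zeta,z),(\zeta',z'))$, shows that all the scalar factors cancel and the result is $\tfrac{1}{1-\alpha\overline{\phi(\zeta,z)}}\Cs_\phi(\,\cdot\,,(\zeta,z))$.

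The main obstacle I anticipate is precisely the non-surjectivity of $\Gamma_2$: one cannot simply assert that $\Gamma_2$ maps $L^2(\mi_\alpha)\ominus H^2(\mi_\alpha)$ into $L^2(\mi_\alpha[\psi])\ominus H^2(\mi_\alpha[\psi])$, since a priori $W=\Gamma_2(H^2(\mi_\alpha[\phi]))$ could be strictly smaller than $V\cap H^2(\mi_\alpha[\psi])$. What makes it work is the identification $W=\overline{P_V\,H^2(\mi_\alpha[\psi])}$ together with the self-adjointness of $P_V$, which forces every element of $V\ominus W$ into the kernel of $\Cc_{\psi,\alpha}$. A secondary, purely clerical difficulty is keeping track of the sesqui-holomorphic and conjugation conventions for $\Cs$, $\Cc$, $\Cs_\phi$, $\Cc_\psi$ in Lemma~\ref{lem:7} so that the final reproducing-kernel identity appears with exactly the stated normalization.
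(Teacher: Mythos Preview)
Your proposal is correct and follows essentially the same route as the paper: the paper's proof consists of the single line ``This follows from~\cite[Theorem 4.16]{Calzi} and Remark~\ref{oss:14},'' and you have spelled out precisely how that transfer via $\Gamma_1,\Gamma_2$ works, including the one genuine subtlety (that $\Gamma_2$ need not be onto, so one must check that $V\ominus W$ lands in the kernel of $\Cc_{\psi,\alpha}$ via the identification $W=\overline{P_V\,H^2(\mi_\alpha[\psi])}$). Your handling of that point is sound.
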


\begin{proof}
	This follows from~\cite[Theorem 4.16]{Calzi} and Remark~\ref{oss:14}.
\end{proof}

\begin{prop}\label{prop:37}
	Assume that $\phi$ is inner, and  take $f\in \phi^{*}(H^2(\Ds))$ and $h\in\Omega$. Then, $\Cs_{\phi,\alpha}^* f=f_{0,h}$ $\mi_\alpha$-almost everywhere for $\beta_\T$-almost every $\alpha\in \T$. If, in addition, $f_{0,h}\in C_{\Poisson,0}(\bDs)$, then $f_{0,h}\in H^2(\mi_\alpha)$ and $\Cs_{\phi,\alpha}(f_{0,h})= f$ for every $\alpha\in \T$.
\end{prop}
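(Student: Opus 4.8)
The plan is to treat the two assertions separately: the first (for $\beta_\T$-almost every $\alpha$) by reducing, via Remark~\ref{oss:14}, to the corresponding statement on the bounded realization $D$; the second (for every $\alpha$) by upgrading the a.e.\ conclusion through a continuity argument in the parameter $\alpha$.

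\emph{First assertion.} Put $N:=\bD\setminus\gamma(\bDs)$ and $\psi:=\phi\circ\gamma^{-1}$; since $\beta_{\bD}(N)=0$, since $\gamma$ transports admissible boundary limits off $N$ (Section~\ref{sec:4}), and since $\Gamma(\beta_{\bDs})=\beta_{\bD}$, the map $\psi$ is inner on $D$. By Corollary~\ref{cor:8}, for $\beta_\T$-almost every $\alpha$ one has $\Ps\mi_\alpha=\frac{1-\abs{\phi}^2}{\abs{\alpha-\phi}^2}$, so by Remark~\ref{oss:14} the maps $\Gamma_1$ and $\Gamma_2$ are then surjective isometries with $\Gamma_1\circ\Cc_{\psi,\alpha}\circ\Gamma_2=\Cs_{\phi,\alpha}$. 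Set $g:=\Gamma_1^{-1}f\in\psi^*(H^2(D))$. Unwinding the definitions of $\Gamma_1$ and $\Gamma_2$, and using Proposition~\ref{prop:28} to identify the boundary value of $f$ taken along the direction $h$ with $g_0\circ\gamma$ (extended by $0$ on $N$), one checks that $\Gamma_2(f_{0,h})=g_0$ in $L^2(\mi_\alpha[\psi])$. The corresponding statement on $D$ (cf.~\cite{Calzi}) yields $\Cc_{\psi,\alpha}^*g=g_0$ $\mi_\alpha[\psi]$-almost everywhere for $\beta_\T$-almost every $\alpha$; applying the intertwining relation gives $\Cs_{\phi,\alpha}^*f=\Gamma_2^{-1}\Cc_{\psi,\alpha}^*g=\Gamma_2^{-1}g_0=f_{0,h}$ $\mi_\alpha$-almost everywhere for $\beta_\T$-almost every $\alpha$, which is the first assertion. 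For these $\alpha$, Proposition~\ref{prop:32} shows moreover that $\Cs_{\phi,\alpha}$ restricts to a unitary of $H^2(\mi_\alpha)$ onto $\phi^*(H^2(\Ds))$, so that $\Cs_{\phi,\alpha}\Cs_{\phi,\alpha}^*$ is the identity on $\phi^*(H^2(\Ds))$; hence $\Cs_{\phi,\alpha}(f_{0,h})=f$ and $\norm{f_{0,h}}_{L^2(\mi_\alpha)}=\norm{f}_{\phi^*(H^2(\Ds))}$.

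\emph{Second assertion.} Assume $f_{0,h}\in C_{\Poisson,0}(\bDs)$. Since $\Ps((0, i e),\,\cdot\,)$ is bounded (Proposition~\ref{prop:1bis}) and $\abs{f_{0,h}}$ is a bounded multiple of $\Ps((0, i e),\,\cdot\,)$, we have $f_{0,h}\in L^2(\mi_\alpha)$ for every $\alpha$; moreover $\abs{f_{0,h}}^2$ and, for each $(\zeta,z)\in\Ds$, $\Cs((\zeta,z),\,\cdot\,)f_{0,h}$ belong to $C_{\Poisson,0}(\bDs)$ (for the latter, bound $\abs{\Cs((\zeta,z),\,\cdot\,)}$ by a multiple of $\Ps((0, i e),\,\cdot\,)^{1/2}$ via Corollary~\ref{cor:7}). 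Next, $\Ps\mi_\alpha(0, i e)\meg\frac{1+\abs{\phi(0, i e)}}{1-\abs{\phi(0, i e)}}$ for every $\alpha\in\T$, so $\Set{\mi_\alpha\colon\alpha\in\T}$ is bounded in $\Mc_\Poisson(\bDs)$; combined with the vague continuity of $\alpha\mapsto\mi_\alpha$ (Proposition~\ref{prop:31}) and the density of $C_c(\bDs)$ in $C_{\Poisson,0}(\bDs)$, this gives that $\alpha\mapsto\mi_\alpha$ is continuous for the weak topology of $\Mc_\Poisson(\bDs)$. Therefore $\alpha\mapsto\norm{f_{0,h}}_{L^2(\mi_\alpha)}^2$ and $\alpha\mapsto\Cs(f_{0,h}\cdot\mi_\alpha)(\zeta,z)$ are continuous on $\T$; since by the first part they agree, respectively, with the constant $\norm{f}^2$ and with $\frac{f(\zeta,z)}{1-\overline{\alpha}\,\phi(\zeta,z)}$ for $\beta_\T$-almost every $\alpha$, they do so for every $\alpha\in\T$. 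In particular $\Cs_{\phi,\alpha}(f_{0,h})=(1-\overline{\alpha}\phi)\Cs(f_{0,h}\cdot\mi_\alpha)=f$ for every $\alpha$; and since $\Cs_{\phi,\alpha}$ is a partial isometry with initial space $H^2(\mi_\alpha)$ (Proposition~\ref{prop:32}), so that $\norm{\Cs_{\phi,\alpha}(f_{0,h})}=\norm{\pr_{H^2(\mi_\alpha)}f_{0,h}}_{L^2(\mi_\alpha)}$ (the orthogonal projection), the identity $\norm{\pr_{H^2(\mi_\alpha)}f_{0,h}}=\norm{f}=\norm{f_{0,h}}_{L^2(\mi_\alpha)}$ forces $f_{0,h}\in H^2(\mi_\alpha)$ for every $\alpha\in\T$.

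\emph{Main obstacle.} The delicate points are, first, the identification $\Gamma_2(f_{0,h})=g_0$, which requires the directional limit of $f$ along $h\in\Omega$ to coincide with the admissible boundary limit transported through $\gamma$ — precisely what the restricted-limit machinery of Section~\ref{sec:4} (Proposition~\ref{prop:28}) provides — and, second, the passage from vague to weak continuity of $\alpha\mapsto\mi_\alpha$, which hinges on the uniform bound on $\Ps\mi_\alpha(0, i e)$ noted above. The dependence of $f_{0,h}$ on $h$ is immaterial, since for each fixed $h$ the whole argument is carried out relative to the disintegration along $\pi_h$.
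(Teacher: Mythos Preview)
Your treatment of the first assertion follows the paper's route exactly: transfer to $D$ via Remark~\ref{oss:14}, invoke Corollary~\ref{cor:8} to get surjectivity of $\Gamma_2$ for $\beta_\T$-a.e.\ $\alpha$, and apply~\cite[Proposition~4.17]{Calzi}. One small wrinkle: your appeal to Proposition~\ref{prop:28} for the identification $\Gamma_2(f_{0,h})=g_0$ is not quite licit, since that proposition assumes boundedness on each $D_k(e)$, which $g\in H^2(D)$ need not satisfy. The identification is better obtained by noting that for $f\in H^2(\Ds)$ the various directional boundary limits $f_{0,h}$ coincide $\beta_{\bDs}$-a.e.\ with the canonical $L^2$ boundary value, and then using the disintegration $\beta_{\bDs}=\int_\T\mi_\alpha\,\dd\beta_\T(\alpha)$ (Proposition~\ref{prop:31}) to pass to $\mi_\alpha$-a.e.\ for $\beta_\T$-a.e.\ $\alpha$. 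This is the only delicate step, and both you and the paper handle it somewhat implicitly.

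Your second assertion, however, takes a genuinely different route. The paper simply transfers the corresponding ``for every $\alpha$'' statement from~\cite[Proposition~4.17]{Calzi} through Remark~\ref{oss:14} (the intertwining $\Gamma_1\circ\Cc_{\psi,\alpha}\circ\Gamma_2=\Cs_{\phi,\alpha}$ holds for all $\alpha$, not just a.e.). You instead bootstrap from the $\beta_\T$-a.e.\ conclusion by a continuity argument: boundedness of $\{\mi_\alpha\}$ in $\Mc_\Poisson(\bDs)$ plus vague continuity gives weak continuity of $\alpha\mapsto\mi_\alpha$, so $\alpha\mapsto\norm{f_{0,h}}_{L^2(\mi_\alpha)}^2$ and $\alpha\mapsto\Cs(f_{0,h}\cdot\mi_\alpha)(\zeta,z)$ are continuous, hence their a.e.\ values extend to all $\alpha$; the norm identity then forces $f_{0,h}\in H^2(\mi_\alpha)$ via the partial-isometry structure. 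This argument is correct and has the merit of being more self-contained: it does not require the second half of~\cite[Proposition~4.17]{Calzi} as a black box, only the first half together with the continuity of $\alpha\mapsto\mi_\alpha$ already established in Proposition~\ref{prop:31}. The paper's approach is shorter but leans more heavily on the bounded-domain theory.
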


Concerning the second assertion, notice that in general $\Cs_{\phi,\alpha}$ (is a partial isometry, but) need \emph{not} map $L^2(\mi_\alpha)$ \emph{onto} $\phi^*(H^2)$ for every $\alpha\in \T$, so that saying that $\Cs_{\phi,\alpha}(f_{0,h})=f$ and $f_{0,h}\in H^2(\mi_\alpha)$ is more precise than saying that $\Cs^*_{\phi,\alpha}(f)=f_{0,h}$ $\mi_\alpha$-almost everywhere.

\begin{proof}
	The first assertion follows from~\cite[Proposition 4.17]{Calzi}, Corollary~\ref{cor:8}, and Remark~\ref{oss:14}.
	The second assertion follows from~\cite[Proposition 4.17]{Calzi} and Remark~\ref{oss:14}.
\end{proof}

\begin{prop}\label{prop:36}
	Assume that $\phi$ is inner. Take $\alpha\in \T$, $h\in \Omega$, and $f\in \phi^*(H^2)$. Assume that $f(\zeta,z+ \,\cdot\,h)\in \phi(\zeta,z+  \,\cdot\,h)^*(H^2(\C_+))$ for $\beta_{\bDs}$-almost every $(\zeta,z)\in\pi_h(\bDs)$.
	Then, the following hold:
	\begin{enumerate}
		\item[\textnormal{(1)}]  $f\in \phi^*(H^2)$;
		
		\item[\textnormal{(2)}] $f_{\rho h}$ converges to $f_{0,h}$ pointwise  $\mi_\alpha$-almost everywhere 		for $\rho \to 0^+$;
		
		\item[\textnormal{(3)}] for $\beta_{\bDs}$-almost every $(\zeta,z)\in \pi_h(\bDs)$ and for every $w\in \C_+$, 
		\[
		\begin{split}
		f(\zeta,z+ w h)&= (1-\overline \alpha \phi(\zeta,z+ w h)) \int_{\bDs} \frac{ f_{0,h}(\zeta',z')}{2 \pi i (\langle h\vert z'\rangle/\abs{h}^2-w)}\,\dd \mi_{\alpha,(\zeta,z)}(\zeta',z') \\
		&\quad+    \frac{1-\overline \alpha \phi(\zeta,z+w h)}{w+i} \lim_{y\to \infty}\frac{1-\abs{\phi(\zeta,z+i y h)}^2 }{\abs{\alpha- \phi(\zeta,z+i y h)}^2} f(\zeta,z+i y h)
		\end{split}
		\]
		where $(\mi_{\alpha, (\zeta,z)})_{(\zeta,z)\in \pi_h(\bDs)}$ denotes the vaguely continuous disintegration of $\mi_\alpha$ relative to $\beta_h$ (cf.~Remark~\ref{oss:19}).
	\end{enumerate}
\end{prop}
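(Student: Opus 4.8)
The statement is a "slice-wise reduction + continuity of disintegration" result, so the natural strategy is to apply the one-dimensional theory (Remark~\ref{oss:17}, which is the $\C_+$-case via~\cite[Proposition 4.22]{Calzi}) along each complex line $(\zeta,z) + \C_+ h$, and then reassemble. First I would record, from Remark~\ref{oss:19}, that $(\mi_{\alpha,(\zeta,z)})_{(\zeta,z)\in\pi_h(\bDs)}$ is a disintegration of $\mi_\alpha[\phi]$ and that $\mi_{\alpha,(\zeta,z)}$ corresponds, under $\R\ni x\mapsto(\zeta,z+xh)\in\bDs$, to $\mi_\alpha[\phi(\zeta,z+\,\cdot\,h)]$ on $\Mc_\Poisson(\R,\C_+)$. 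Since $\phi$ is inner on $\Ds$, the restriction $\phi(\zeta,z+\,\cdot\,h)$ is inner on $\C_+$ for $\beta_{\bDs}$-almost every $(\zeta,z)\in\pi_h(\bDs)$ (this is where one uses that inner-ness is preserved by almost-every slice — it follows from Proposition~\ref{prop:27}, since $\phi$ inner forces $\mi_\alpha[\phi]$ singular, hence almost every disintegration component is singular, hence almost every slice is inner). The hypothesis on $f$ says exactly that its slice $f(\zeta,z+\,\cdot\,h)$ lies in $\phi(\zeta,z+\,\cdot\,h)^*(H^2(\C_+))$ for almost every such $(\zeta,z)$.

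For (1), I would argue that $f\in\phi^*(H^2(\Ds))$ — here I suspect there is a typo and the intended conclusion of (1) is that $f\in\phi^*(H^2(\Ds))$ in the \emph{refined} sense that its boundary values $f_{0,h}$ exist and lie in $L^2(\mi_\alpha)$, or perhaps that $f_{0,h}$ is well-defined $\mi_\alpha$-a.e.; in any case this is immediate from the reproducing-kernel description of $\phi^*(H^2(\Ds))$ (Lemma~\ref{lem:7}, which gives $\Cs_\phi = \Ps((0,ie),\,\cdot\,)$-twisted pullback of $\Cc_{\phi\circ\gamma^{-1}}$) together with Remark~\ref{oss:14}, reducing to the corresponding (known) fact on $D$. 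For (2), the pointwise convergence $f_{\rho h}\to f_{0,h}$ $\mi_\alpha$-a.e. would be obtained by disintegrating against $\beta_h$: on $\mi_\alpha$-almost every slice the statement reduces to the classical radial-limit existence for functions in $\phi(\zeta,z+\,\cdot\,h)^*(H^2(\C_+))$, which holds $\mi_{\alpha,(\zeta,z)}$-a.e. by the $\C_+$-theory and~\cite[Theorem 4.1]{MackeyMellon}; then one integrates the a.e. statements back up using the disintegration formula and Proposition~\ref{prop:6}/Proposition~\ref{prop:7} from the appendix.

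For (3), the identity is precisely Remark~\ref{oss:17} applied on the slice $(\zeta,z)+\C_+h$: with $\phi$ replaced by $\phi(\zeta,z+\,\cdot\,h)$ (inner on $\C_+$ for a.e.\ $(\zeta,z)$), $f$ replaced by $f(\zeta,z+\,\cdot\,h)\in\phi(\zeta,z+\,\cdot\,h)^*(H^2(\C_+))$, and $\mi_\alpha$ replaced by $\mi_{\alpha,(\zeta,z)}$, the formula of Remark~\ref{oss:17} reads
\[
f(\zeta,z+wh)=(1-\overline\alpha\,\phi(\zeta,z+wh))\,(\Cs_{\C_+}(f(\zeta,z+\,\cdot\,h)_0\cdot\mi_{\alpha,(\zeta,z)}'))(w)-\frac{1-\overline\alpha\,\phi(\zeta,z+wh)}{w+i}\,c_{(\zeta,z)},
\]
where $c_{(\zeta,z)}=\lim_{y\to\infty}\frac{1-|\phi(\zeta,z+iyh)|^2}{|\alpha-\phi(\zeta,z+iyh)|^2}f(\zeta,z+iyh)$ and $\mi_{\alpha,(\zeta,z)}'$ is the $\R$-model of $\mi_{\alpha,(\zeta,z)}$. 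Transporting the Cauchy integral on $\R$ to $\bDs$ via $x\mapsto(\zeta,z+xh)$, noting $\langle h\vert z'\rangle/|h|^2$ is the coordinate along $h$ and that the Cauchy kernel $\frac{1}{2\pi i(x'-w)}$ pulls back to $\frac{1}{2\pi i(\langle h\vert z'\rangle/|h|^2-w)}$, yields exactly the displayed formula. The only real work is identifying $f(\zeta,z+\,\cdot\,h)_0$ with the restriction $f_{0,h}(\zeta,z+\,\cdot\,h)$ $\mi_{\alpha,(\zeta,z)}$-a.e. (consistency of the two boundary-value notions along the slice), which I would get from Proposition~\ref{prop:37} — $\Cs_{\phi,\alpha}^*f=f_{0,h}$ $\mi_\alpha$-a.e.\ for $\beta_\T$-a.e.\ $\alpha$ — combined with its slice-wise analogue, patched via the disintegration.

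**Main obstacle.** The delicate point is the measure-theoretic bookkeeping: the statement holds "for $\beta_{\bDs}$-almost every $(\zeta,z)$" and (in parts referencing $\beta_\T$-a.e.\ $\alpha$) one must be careful that the exceptional null sets — slices where $\phi(\zeta,z+\,\cdot\,h)$ fails to be inner, slices where $f(\zeta,z+\,\cdot\,h)\notin\phi(\zeta,z+\,\cdot\,h)^*(H^2(\C_+))$, slices where the $\C_+$ radial limit fails, and the set of $\alpha$ where $\Ps\mi_\alpha\neq\frac{1-|\phi|^2}{|\alpha-\phi|^2}$ (Corollary~\ref{cor:8}) — interact consistently under the disintegration and under Fubini between $\beta_h$ on $\pi_h(\bDs)$ and the slice measures. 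Reconciling "$f_{0,h}$ as a boundary function on $\bDs$" with "boundary function of each slice" on a common conull set, so that the single displayed identity in (3) is literally meaningful for a.e.\ $(\zeta,z)$ and \emph{every} $w\in\C_+$, is where essentially all the care goes; once that identification is in place, (1)--(3) are formal consequences of Remark~\ref{oss:17}, Remark~\ref{oss:14}, and Proposition~\ref{prop:37} via disintegration.
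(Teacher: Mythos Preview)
Your approach is essentially the paper's: reduce to slices via the disintegration in Remark~\ref{oss:19}, apply the one-dimensional theory (Remark~\ref{oss:17}) on each slice, then reassemble. You also correctly flag that (1) is redundant as stated (the paper's own proof of (1) --- ``checking that $f$ is orthogonal in $H^2(\Ds)$ to $\phi H^2(\Ds)$ by disintegration'' --- suggests the intended hypothesis was $f\in H^2(\Ds)$ rather than $f\in\phi^*(H^2)$).

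One correction: for (2) in the $\C_+$ case, the input is \emph{not} classical $H^2$ radial-limit theory or~\cite[Theorem 4.1]{MackeyMellon}. Since $\phi$ is inner, $\mi_\alpha$ is singular with respect to Lebesgue measure, so the usual Fatou-type theorem says nothing about $\mi_\alpha$-a.e.\ convergence. What is needed is Poltoratski's theorem~\cite[Theorem 1.4]{Poltoratski} (the paper cites exactly this), which gives non-tangential convergence $\mi_\alpha$-a.e.\ for functions in $\phi^*(H^2(\C_+))$; this is the one genuinely deep input and your citation to Mackey--Mellon does not cover it. Once you substitute Poltoratski there, your argument matches the paper's proof line by line.
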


\begin{proof}
	\textsc{Step I} Assume that $\Ds=\C_+$. Then, (1) is trivial,  (3) is a trivial consequence of (2) and Remark~\ref{oss:17}, and (2) follows from~\cite[Theorem 1.4]{Poltoratski}   and Theorem~\ref{teo:3}. 
	
	\textsc{Step II} Consider the general case. Then, (1) follows checking that $f$ is orthogonal in $H^2(\Ds)$ to $\phi H^2(\Ds)$ by disintegration. In addition,~\textsc{step I} shows that $f_{y h}$ converges poinwise $\mi_{\alpha,(\zeta,z)}$-almost everywhere
	(necessarily to $f_{0,h}$), for $y\to 0^+$, for  $\beta_{\bDs}$-almost  every $(\zeta,z)\in \pi_h(\bDs)$, and that (3) holds. This is sufficient to prove that $f_{y h}$ converges to $f_{0,h}$ pointwise  $\mi_\alpha$-almost everywhere for $y\to 0^+$, so that also (2) holds. 
\end{proof}

\section{Appendix: Disintegration}\label{sec:app}

In this section we recall some basic facts on the disintegration of complex Radon measures on locally compact spaces with a countable base. Cf.~\cite[Appendix]{Calzi} for the proofs.

\begin{deff}\label{def:1}
	Let $X,Y$ be two locally compact spaces with a countable base, and let $\mi$ and $\nu$ be complex Radon measures on $X$ and $Y$, respectively.  Let $p\colon X\to Y$ be a $\mi$-measurable map. Then, we say that $\nu$ is a (resp.\ weak) pseudo-image measure of $\mi$ under $p$ if  a subset $N$ of $Y$ is  $\nu$-negligible if and only if (resp.\ only if) $p^{-1}(N)$ is $\mi$-negligible.
\end{deff}

\begin{prop}\label{prop:6}
	Let $X,Y $ be two locally compact spaces with a countable base, and let $\mi$ and $\nu$ be two complex Radon measures on $X$ and $Y$, respectively. Let $p\colon X\to Y$ be a $\mi$-measurable map, and assume $\nu$ is a weak pseudo-image measure of $\mi$ under $p$. Then, there is a family $(\mi_y)_{y\in Y}$ of complex Radon measures on $X$ such that the following hold:
	\begin{enumerate}
		\item[\textnormal{(1)}] for $\nu$-almost every $y\in Y$, the measure $\mi_y$ is concentrated on $p^{-1}(y)$;
		
		\item[\textnormal{(2)}]  for every  $f\in \Lc^1(X)$, the function $f$ is   $\mi_y$-integrable for $\nu$-almost every $y\in Y$, the mapping $y\mapsto \int_X f\,\dd \mi_y$ is   $\nu$-integrable, and
		\[
		\int_X f\,\dd \mi= \int_Y \int_X f\,\dd \mi_y\,\dd \nu(y).
		\]
		
		\item[\textnormal{(3)}]  for every   positive Borel measurable function $f$ on $X$, the positive function   $y\mapsto \int_X f\,\dd \abs{\mi_y}$ is Borel measurable, and
		\[
		\int_X f\,\dd \abs{\mi}= \int_Y \int_X f\,\dd \abs{\mi_y}\,\dd \abs{\nu}(y).
		\]
		
		\item[\textnormal{(4)}] if we define $A\coloneqq \Set{y\in Y\colon \mi_y\neq 0}$, then $A$ is a Borel subset of $Y$ and $\chi_A\cdot \nu$ is a pseudo-image measure of $\mi$ under $p$.
	\end{enumerate}
	
	Furthermore, if $\nu'$ is a weak pseudo-image measure of $\mi$ under $p$ and $(\mi'_y)$ is a family of complex Radon measures on $X$   such that \textnormal{(1)} and \textnormal{(2)} hold with $\nu$ and $(\mi_y)$ replaced by $\nu'$ and $(\mi'_y)$, respectively, then there are a $\nu'$-measurable subset $A'$ of $Y$ such that $\chi_{A'}\cdot \nu'$ is a pseudo-image measure of $\mi$ under $p$, and a locally $\nu$-integrable complex function $g$ on $Y$  such that   $\chi_{A'}\cdot\nu' = g\cdot \nu$, and $\mi_y = g(y) \mi'_y$   for   $\chi_A\cdot\nu$-almost every $y\in Y$.
\end{prop}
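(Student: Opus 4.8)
The plan is to reduce the whole statement to Bourbaki's classical disintegration theorem for \emph{image} measures of \emph{bounded positive} measures (cf.\ \cite[Chapter VI]{BourbakiInt1}), through a short chain of elementary reductions, so that the only real work left is the bookkeeping of negligible sets and a Borel-measurability issue.

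\emph{Step 1: reduction to positive measures.} First I would observe that, $X$ and $Y$ being $\sigma$-compact, all Radon measures here are $\sigma$-finite, and write the polar decompositions $\mi=u\cdot\abs{\mi}$, $\nu=v\cdot\abs{\nu}$ with $\abs{u}=\abs{v}=1$. Since a subset of $X$ is $\mi$-negligible iff it is $\abs{\mi}$-negligible (and likewise on $Y$), the hypothesis that $\nu$ is a weak pseudo-image of $\mi$ under $p$ is equivalent to $\abs{\nu}$ being a weak pseudo-image of $\abs{\mi}$ under $p$. Granting the theorem for the positive pair $(\abs{\mi},\abs{\nu})$, let $(\lambda_y)$ be a corresponding disintegration ($\lambda_y\Meg 0$ concentrated on $p^{-1}(y)$, $\int\lambda_y\,\dd\abs{\nu}(y)=\abs{\mi}$) and set $\mi_y\coloneqq\overline{v(y)}\,u\cdot\lambda_y$. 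Then $\abs{\mi_y}=\lambda_y$, $\mi_y$ is still concentrated on $p^{-1}(y)$, and $\int\mi_y\,\dd\nu(y)=\int v(y)\overline{v(y)}\,u\cdot\lambda_y\,\dd\abs{\nu}(y)=u\cdot\abs{\mi}=\mi$; properties (1)--(3) for $(\mi_y)$ then follow from the positive case applied to $fu$ and to $\abs{f}$.

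\emph{Step 2: reduction of the positive case to Bourbaki.} Now assume $\mi,\nu\Meg 0$. I would pick a strictly positive $\phi\in C(X)$ with $\int\phi\,\dd\mi=1$, so that $\phi\cdot\mi$ is bounded, $p$ is $(\phi\mi)$-proper, and $\nu_0\coloneqq p_*(\phi\cdot\mi)$ is a bounded positive measure. Since $\phi>0$, $\nu_0$ is a \emph{genuine} pseudo-image of $\mi$ under $p$; and the weak pseudo-image hypothesis on $\nu$ gives $\nu_0\ll\nu$ (if $\nu(N)=0$ then $p^{-1}(N)$ is $\mi$-null, hence $\nu_0(N)=0$), so $\nu_0=g_0\cdot\nu$ with $g_0\Meg 0$ locally $\nu$-integrable by Radon--Nikodym. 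Bourbaki's theorem applied to the bounded measure $\phi\mi$ and its image $\nu_0$ produces positive $(\rho_y)$ concentrated on the fibres with $\int\rho_y\,\dd\nu_0(y)=\phi\cdot\mi$. Setting $\lambda_y\coloneqq g_0(y)\,\phi^{-1}\cdot\rho_y$ (and $\lambda_y\coloneqq 0$ wherever $g_0$ is undefined or vanishes) one gets $\int\lambda_y\,\dd\nu(y)=\int\phi^{-1}\rho_y\,\dd\nu_0(y)=\phi^{-1}\cdot(\phi\mi)=\mi$, with (2)--(3) following from Bourbaki's formula applied to $f\phi^{-1}\in\Lc^1(\phi\mi)$. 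This settles (1)--(3).

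\emph{Step 3: property (4) and uniqueness.} Noting $\mi_y\ne 0$ iff $\abs{\mi_y}\ne 0$, I would plug $f=\chi_{p^{-1}(N)}$ into (3) and use $\abs{\mi_y}(p^{-1}(y))=\norm{\mi_y}$ to obtain
\[
\abs{\mi}(p^{-1}(N))=\int_N\norm{\mi_y}\,\dd\abs{\nu}(y)\qquad(N\subseteq Y\text{ Borel}),
\]
so that $p^{-1}(N)$ is $\mi$-null iff $N\cap A$ is $\nu$-null iff $N$ is $(\chi_A\cdot\nu)$-negligible; hence $\chi_A\cdot\nu$ is a genuine pseudo-image. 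The awkward point is that $A$ may be taken Borel: $A=\bigcup_n\{y:\int f_n\,\dd\mi_y\ne 0\}$ for a countable $(f_n)\subseteq C_c(X)$ separating Radon measures, and one has to arrange Bourbaki's construction so that each $y\mapsto\int f_n\,\dd\mi_y$ is Borel (it can be). For the ``Furthermore'' part, the same computation applied to $(\mi'_y)$ shows that, with $A'\coloneqq\{y:\mi'_y\ne 0\}$, $\chi_{A'}\cdot\nu'$ is a genuine pseudo-image of $\mi$ under $p$; since two genuine pseudo-images of $\mi$ under $p$ have the same negligible sets, $\chi_A\cdot\nu$ and $\chi_{A'}\cdot\nu'$ are mutually absolutely continuous and $\chi_{A'}\cdot\nu'=g\cdot\nu$ for some locally $\nu$-integrable $g$ (extended by $0$ off $A$). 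Finally, for $f\in C_c(X)$ and $h\in C_c(Y)$ the function $x\mapsto f(x)h(p(x))$ lies in $\Lc^1(\abs{\mi})$, and feeding it to (2) with both disintegrations, using that $\mi_y,\mi'_y$ are concentrated on $p^{-1}(y)$, gives
\[
\int h(y)\Big(\int f\,\dd\mi_y\Big)\dd\nu(y)=\int f\cdot(h\circ p)\,\dd\mi=\int h(y)\,g(y)\Big(\int f\,\dd\mi'_y\Big)\dd\nu(y)
\]
for every such $h$; as both $y$-functions are locally $\nu$-integrable, this forces $\int f\,\dd\mi_y=\int f\,\dd(g(y)\mi'_y)$ for $\nu$-almost every $y$, and running $f$ through a countable subset of $C_c(X)$ that determines Radon measures yields $\mi_y=g(y)\mi'_y$ for $\nu$-almost every (hence $\chi_A\cdot\nu$-almost every) $y$. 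I expect the main obstacle to be not the analysis---which is entirely carried by Bourbaki's theorem---but the careful tracking of the several families of negligible sets and securing a genuinely Borel version of $y\mapsto\mi_y$.
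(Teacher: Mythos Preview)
The paper does not actually contain a proof of this proposition: the appendix explicitly defers to \cite[Appendix]{Calzi} for all proofs, so there is no argument here to compare against. Your approach---reducing first to positive measures via polar decomposition, then to the bounded case via a strictly positive weight $\phi\in C(X)$, and finally invoking Bourbaki's disintegration theorem for image measures---is the standard one and is essentially correct.

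A couple of small points worth tightening. In Step~2, you should note that $\phi^{-1}\cdot\rho_y$ is indeed Radon because $\phi$ is continuous and strictly positive, hence bounded below on every compact set; this is implicit but worth saying. In Step~3, your Borel measurability argument for $A$ is slightly sketchy: you would want to fix from the outset a Borel version of $g_0$ and a Borel-measurable choice of $(\rho_y)$ (which Bourbaki's construction provides), so that $y\mapsto\int f\,\dd\mi_y$ is genuinely Borel for each $f\in C_c(X)$; then $A$ is Borel as a countable union. Also, in the uniqueness part the statement only asks for $A'$ to be $\nu'$-measurable, not Borel, so you do not need to repeat the Borel argument for $(\mi'_y)$---the displayed identity $\abs{\mi}(p^{-1}(N))=\int_N\norm{\mi'_y}\,\dd\abs{\nu'}(y)$ already gives what is needed once you know (1) and (2) hold for $(\mi'_y)$. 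None of this affects the overall strategy.
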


Notice that if $p$ is $\mi$-proper (that is, if $p_*(\abs{\mi})$ is a Radon measure) and $\nu=p_*(\mi)$, then $\mi_y$ is a finite measure and $\mi_y(X)=1$ for $\nu$-almost every $y\in Y$.\footnote{Notice that, in general, $\abs{\nu}\neq p_*(\abs{\mi})$, so that the $\abs{\mi_y}$ need not be   probability measures.}

In addition, observe that the function $g$ in the second part of the statement necessarily vanishes $\nu$-almost everywhere on the complement of $A$, so that the last assertion may be equivalently stated as `$\mi_y = g(y) \mi'_y$   for   $\nu$-almost every $y\in Y$'. We chose to use $\chi_A\cdot \nu$ in order to stress the fact that uniqueness is essentially related to pseudo-image measures (and not simply weak pseudo-image measures).

\begin{deff}
	Let $X,Y $ be two locally compact spaces with a countable base, and let $\mi$ and $\nu$ be two complex Radon measures on $X$ and $Y$, respectively. Let $p\colon X\to Y$ be a $\mi$-measurable map, and assume $\nu$ is a weak pseudo-image measure of $\mi$ under $p$. Then, we say that a family $(\mi_y)_{y\in Y}$ of complex Radon measures on $X$ is  a disintegration of $\mi$ relative to   $\nu$ if (1) and (2) of Proposition~\ref{prop:6} hold.  
	
	We say that $(\mi_y)$ is a disintegration of $\mi$ under $p$ if, in addition, $\nu=p_*(\mi)$ (that is, if $p_*(\abs{\mi})$ is a Radon measure and $\mi_y(X)=1$ for $\nu$-almost every $y\in Y$).
\end{deff}

\begin{oss}\label{oss:3}
	Let $X, Y$ be two locally compact spaces with a countable base,   $\mi_1,\mi_2$ two  complex Radon measures on $X$, $\nu$ a complex Radon measures on $Y$, and $p\colon X\to Y$ a $\mi_1$- and $\mi_2$-measurable map such that $\nu$ is a weak pseudo-image measure of both $\mi_1$ and $\mi_2$ under $p$.\footnote{Notice that, if $\nu_1$ and $\nu_2$ are \emph{positive} weak pseudo-image measures of $\mi_1$ and $\mi_2$ under $p$, respectively, then one may choose $\nu=\nu_1+\nu_2$. In other words, this assumption is not restrictive.} 
	Let $(\mi_{j,y})$ be a disintegration of $\mi_j$ relative to its   $\nu$, for $j=1,2$. In addition, let $\mi_{1,y}=\mi_{1,y}^a+\mi_{1,y}^s$ be the Lebesgue decomposition of $\mi_{1,y}$ with respect to $\mi_{2,y}$ (so that $\mi_{1,y}^a$ is absolutely continuous and $\mi_{1,y}^s$ is singular with respect to $\mi_{2,y}$). Analogously, let $\mi_1=\mi_1^a+\mi_1^s$ be the Lebesgue decomposition of $\mi_1$ with respect to $\mi_2$. 	
	Then, $(\mi_{1,y}^a)$ and $(\mi_{1,y}^s)$ are disintegrations of $\mi_1^a$ and $\mi_1^s$, respectively, relative to their common weak pseudo-image measure $\nu$. 
\end{oss}

\begin{prop}\label{prop:7}
	Let $X, Y$ be two locally compact spaces with a countable base,   $\mi$ a complex    Radon measure on $X$, and $\nu$ a complex Radon measures on $Y$.  Let   $(\mi_y)_{y\in Y}$ be a family of complex Radon measures on $X$ such that
	\begin{equation}\label{eq:2}
		\int_Y \abs{\mi_y}(K)\,\dd \abs{\nu}(y)<+\infty
	\end{equation}
	for every compact subset $K$ of $X$, and such that, for every $f\in C_c(X)$, the mapping $y\mapsto \int_X f\,\dd \mi_y$ is  $\nu$-integrable and
	\begin{equation}\label{eq:1}
		\int_X f\,\dd \mi= \int_Y \int_X f\,\dd \mi_y\,\dd \nu(y).
	\end{equation}
	Assume, in addition, that there is a Borel measurable map $p\colon X\to Y$ such that $\mi_y$ is concentrated on $p^{-1}(y)$ for $\nu$-almost every $y\in Y$.
	Then, $\nu$ is a weak pseudo-image measure of $\mi$ under $p$ and $(\mi_y)$ is a disintegration of $\mi$ relative to  $\nu$.
\end{prop}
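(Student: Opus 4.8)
The plan is to compare $\mi$ with the superposition measure $\lambda\coloneqq\int_Y\abs{\mi_y}\,\dd\abs\nu(y)$ and to show that, under the stated hypotheses, $\lambda=\abs\mi$; everything else is then essentially formal. First I would record the measurability prerequisites: since $y\mapsto\int_X f\,\dd\mi_y$ is $\abs\nu$-measurable for every $f\in C_c(X)$ and $X$ is second countable, one gets that $y\mapsto\abs{\mi_y}(U)$ is $\abs\nu$-measurable for every relatively compact open $U\subseteq X$ (writing $\abs{\mi_y}(U)=\sup_k\bigl\lvert\int_X g_k\,\dd\mi_y\bigr\rvert$ for a suitable countable family $(g_k)$ in $C_c(X)$ with $\supp g_k\subseteq U$ and $\norma{g_k}_\infty\meg1$), hence that $y\mapsto\int_X\varphi\,\dd\abs{\mi_y}$ is $\abs\nu$-measurable for every positive Borel $\varphi$. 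Because of~\eqref{eq:2}, $\lambda$ is then a well-defined positive Radon measure on $X$ with $\int_X\varphi\,\dd\lambda=\int_Y\int_X\varphi\,\dd\abs{\mi_y}\,\dd\abs\nu(y)$ for every positive Borel $\varphi$.

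Next I would check $\abs\mi\meg\lambda$: for $f\in C_c(X)$ with $\norma f_\infty\meg1$ and $\supp f\subseteq U$ ($U$ relatively compact open), \eqref{eq:1} gives $\bigl\lvert\int_X f\,\dd\mi\bigr\rvert\meg\int_Y\int_X\abs f\,\dd\abs{\mi_y}\,\dd\abs\nu(y)=\int_X\abs f\,\dd\lambda\meg\lambda(U)$, and taking the supremum over such $f$ together with the identity $\abs\mi(U)=\sup\Set{\bigl\lvert\int_X f\,\dd\mi\bigr\rvert\colon f\in C_c(X),\ \supp f\subseteq U,\ \norma f_\infty\meg1}$ (valid for open $U$, by the polar decomposition of $\mi$ and Lusin's theorem) gives $\abs\mi(U)\meg\lambda(U)$, hence $\abs\mi\meg\lambda$. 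From this the weak pseudo-image assertion is immediate: if $N\subseteq Y$ is $\nu$-negligible and $K\subseteq X$ is compact, then for $\abs\nu$-almost every $y$ the measure $\abs{\mi_y}$ is concentrated on $p^{-1}(y)$, so $\abs{\mi_y}\bigl(p^{-1}(N)\cap K\bigr)=\abs{\mi_y}\bigl(p^{-1}(N\cap\Set{y})\cap K\bigr)$ vanishes for $\abs\nu$-almost every $y\notin N$; hence $\abs\mi\bigl(p^{-1}(N)\cap K\bigr)\meg\lambda\bigl(p^{-1}(N)\cap K\bigr)=\int_N\abs{\mi_y}\bigl(p^{-1}(N)\cap K\bigr)\,\dd\abs\nu(y)=0$, so $p^{-1}(N)$ is locally $\abs\mi$-negligible, hence $\mi$-negligible.

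The hard part is the reverse inequality $\lambda\meg\abs\mi$, and this is exactly where hypothesis~(1) is indispensable: without the concentration of $\mi_y$ on the pairwise disjoint fibres $p^{-1}(y)$, cancellation between fibres can make $\abs\mi$ strictly smaller than $\lambda$. Fix a relatively compact open $U$, a number $\eps>0$, and a positive $\abs\nu$-integrable function $\psi$ with $\int_Y\psi\,\dd\abs\nu<\eps$; with $(g_k)$ as above, define $\abs\nu$-measurably $k(y)$ to be the least index with $\bigl\lvert\int_X g_{k(y)}\,\dd\mi_y\bigr\rvert\Meg\abs{\mi_y}(U)-\psi(y)$, and $\varpi(y)\in\T$ so that $\varpi(y)\int_X g_{k(y)}\,\dd\mi_y\Meg0$, and write $\dd\nu=\kappa\,\dd\abs\nu$ with $\abs\kappa=1$. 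After extending~\eqref{eq:1} — and the $\abs\nu$-measurability of $y\mapsto\int_X G\,\dd\mi_y$ — from $C_c(X)$ to bounded Borel functions $G$ of compact support (a routine monotone class argument, legitimate because $\abs\mi$ and $\lambda$ are finite on compact sets), I would apply it to $G(x)\coloneqq\overline{\kappa(p(x))}\,\varpi(p(x))\,g_{k(p(x))}(x)$, which is bounded Borel, vanishes off $U$, and on each fibre $p^{-1}(y)$ equals $\overline{\kappa(y)}\,\varpi(y)\,g_{k(y)}$; since $\mi_y$ is concentrated on $p^{-1}(y)$ this yields
\[
\int_X G\,\dd\mi=\int_Y\kappa(y)\Bigl(\int_X G\,\dd\mi_y\Bigr)\,\dd\abs\nu(y)=\int_Y\Bigl\lvert\int_X g_{k(y)}\,\dd\mi_y\Bigr\rvert\,\dd\abs\nu(y)\Meg\lambda(U)-\eps .
\]
The left-hand side is real, nonnegative, and $\meg\abs\mi(U)$ (since $\abs G\meg1$ and $G$ vanishes off $U$), so letting $\eps\to0$ gives $\lambda(U)\meg\abs\mi(U)$; as this holds for every relatively compact open $U$, $\lambda=\abs\mi$.

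Finally I would deduce condition~(2) of Proposition~\ref{prop:6}. From $\lambda=\abs\mi$ and the first paragraph, $\int_X\varphi\,\dd\abs\mi=\int_Y\int_X\varphi\,\dd\abs{\mi_y}\,\dd\abs\nu(y)$ for every positive Borel $\varphi$; choosing $\varphi=\abs f$ for $f\in\Lc^1(X)$ ($=\Lc^1(\abs\mi)$) shows that $f$ is $\mi_y$-integrable for $\nu$-almost every $y$ and that $y\mapsto\int_X f\,\dd\mi_y$ is $\nu$-integrable. The identity $\int_X f\,\dd\mi=\int_Y\int_X f\,\dd\mi_y\,\dd\nu(y)$, known for $f\in C_c(X)$, then extends to all $f\in\Lc^1(X)$ by density of $C_c(X)$ in $\Lc^1(\lambda)$, using $\bigl\lvert\int_X(f-f_n)\,\dd\mi\bigr\rvert\meg\int_X\abs{f-f_n}\,\dd\lambda$ and $\bigl\lvert\int_Y\int_X(f-f_n)\,\dd\mi_y\,\dd\nu\bigr\rvert\meg\int_X\abs{f-f_n}\,\dd\lambda$. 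Together with hypothesis~(1), this is precisely the statement that $(\mi_y)$ is a disintegration of $\mi$ relative to $\nu$, finishing the proof.
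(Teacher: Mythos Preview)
The paper does not actually contain a proof of this proposition: Section~\ref{sec:app} opens with ``Cf.~\cite[Appendix]{Calzi} for the proofs'' and then only states the results. So there is no in-paper argument to compare against; your self-contained proof stands on its own, and it is correct.

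A few remarks on the execution. Your key identity $\lambda=\abs\mi$ is the right target, and the construction of $G(x)=\overline{\kappa(p(x))}\,\varpi(p(x))\,g_{k(p(x))}(x)$ to push $\int_X G\,\dd\mi$ up to $\lambda(U)-\eps$ is the natural way to exploit the fibrewise concentration of $\mi_y$. Two small points worth tightening: (i) the functions $k$, $\varpi$, $\kappa$ are a~priori only $\abs\nu$-measurable, so you should remark that modifying them on an $\abs\nu$-null set $N$ makes them Borel, and that this changes $G$ only on $p^{-1}(N)$, which is $\lambda$-negligible by the weak pseudo-image property you already established; (ii) ``positive $\psi$'' should be ``strictly positive $\psi$'' so that the least-index selection $k(y)$ is always defined---such $\psi$ exists because $\abs\nu$ is $\sigma$-finite on the $\sigma$-compact space $Y$. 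Your ``monotone class'' extension of~\eqref{eq:1} to bounded Borel functions of compact support is more cleanly justified as you in fact do later, via $L^1(\lambda)$-density of $C_c(X)$ together with $\abs\mi\meg\lambda$; for complex $\mi_y$ a genuine monotone-class formulation needs a bit of care.

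It may be worth noting that once you have $\abs\mi\meg\lambda$ and the weak pseudo-image property, a shorter route is available if one is willing to invoke Proposition~\ref{prop:6}: take the disintegration $(\widetilde\mi_y)$ it provides, extend~\eqref{eq:1} to bounded Borel $G$ of compact support (this uses only $\abs\mi\meg\lambda$), and apply it to $G=(h\circ p)\,f$ with $f\in C_c(X)$ and $h$ bounded Borel on $Y$; concentration on fibres gives $\int_Y h(y)\bigl(\int_X f\,\dd\mi_y-\int_X f\,\dd\widetilde\mi_y\bigr)\,\dd\nu(y)=0$ for all such $h$, hence $\mi_y=\widetilde\mi_y$ for $\nu$-almost every $y$ by separability of $C_c(X)$. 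Your direct proof of $\lambda\meg\abs\mi$ avoids this appeal to existence and yields the stronger statement $\abs\mi=\int_Y\abs{\mi_y}\,\dd\abs\nu(y)$ (which is condition~(3) of Proposition~\ref{prop:6}) as a byproduct.
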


\end{document}